\documentclass[a4paper]{amsart}

\usepackage{amsmath,amsfonts,amssymb,latexsym,epic,eepic}
\usepackage[overload]{empheq}
\usepackage[dvips]{graphicx,graphics,epsfig,color}
\usepackage{ifthen}
\usepackage[a4paper, margin=3cm]{geometry}
\usepackage{xcolor}
\usepackage{amsthm}
\usepackage{mathrsfs}
\usepackage{tikz,tkz-tab,pstricks,fp}
\usepackage{multirow}
\usepackage{array}
\usepackage{enumerate}
\usepackage{setspace}
\usepackage{enumitem}
\usepackage{pdfpages}
\usepackage{tcolorbox}
\usepackage[utf8]{inputenc}
\usepackage[T1]{fontenc}
\usepackage[english]{babel}
\usepackage{url}
\usepackage[colorlinks=true, pdfstartview=FitV, linkcolor=blue, citecolor=red, urlcolor=blue,unicode,psdextra]{hyperref}
\makeatletter
\def\Hy@Warning#1{}
\makeatother
\usepackage{todonotes}

\let\oldsection\section            % sauvegarde de la commande \section originale
\usepackage[explicit]{titlesec}    % chargement de titlesec
\let\section\oldsection            % restauration de \section d'amsart
\numberwithin{equation}{section}
\newtheorem{thrm}{Theorem}[section]
\newtheorem{lem}[thrm]{Lemma}

\newtheorem{prop}[thrm]{Proposition}
\newtheorem{defi}{Definition}[section]
\newtheorem{rmrk}{Remark}[section]
\newtheorem{corol}{Corollary}[section]

\def\xR{{\mathbb R}}

\def\xN{{\mathbb N}}
\def\xd{{\mathrm d}}

\newcommand{\R}{\mathbb{R}}

\begin{document}

\title[]{A quasi-linear Schrödinger equation for nucleons : existence, uniqueness and asymptotic behavior}

\author{Meriem Bahhi}
\address{Université Côte d'Azur, CNRS, Laboratoire J.A.Dieudonné, UMR 7351\\Parc Valrose, 28, av. Valrose, 06108 Nice, France\\E-mail: Meriem.BAHHI@univ-cotedazur.fr}

\author{Jonas Lampart}
\address{Université Bourgogne Europe, CNRS, Laboratoire Interdisciplinaire Carnot de Bourgogne, UMR 6303, 9 Av. Alain Savary, 21000 Dijon, France\\E-mail: jonas.lampart@u-bourgogne.fr}

\author{Simona Rota Nodari}
\address{Université Côte d'Azur, CNRS, Laboratoire J.A.Dieudonné, UMR 7351\\Parc Valrose, 28, av. Valrose, 06108 Nice, France\\Institut Universitaire de France\\
E-mail: simona.rotanodari@univ-cotedazur.fr}

\date{\today}

\thanks{This work was funded by the EIPHI Graduate School (ANR-17-EURE-0002) and the Bourgogne-Franche-Comté Region through the project EEQuaR}

\begin{abstract}
We study a quasi-linear Schrödinger-type equation arising in the description of particle dynamics within atomic nuclei.
Under suitable assumptions on the model parameters, we establish the existence, uniqueness, and non-degeneracy of positive radial solutions.
Moreover, we analyze how these solutions depend on the parameters.
\end{abstract}

\maketitle
\tableofcontents

\section{Introduction}

This work is concerned with the study of the following quasi-linear Schrödinger equation:
\begin{equation}
\label{SNL_Rescaled}
i\partial_t \Phi
=
-\nabla \cdot \left( \frac{\nabla \Phi}{1-|\Phi|^{2\alpha}} \right)
+ \alpha |\Phi|^{2\alpha-2}
\frac{|\nabla \Phi|^2}{(1-|\Phi|^{2\alpha})^2}\Phi
- |\Phi|^{2\alpha}\Phi.
\end{equation}
Here, $\Phi \in L^2(\mathbb{R}\times\mathbb{R}^d,\mathbb{C})$, with $d\geq 1$, denotes the quantum state of a nucleon, and $\alpha>0$ is a parameter characterizing the non-linearity.

Equation \eqref{SNL_Rescaled} arises in a particular non-relativistic regime of nuclear physics. It describes the effective dynamics of a particle confined within an atomic nucleus and can be formally derived from a relativistic model involving a Dirac operator; see Appendix A of \cite{klein2021nonlinear}. More precisely, it appears as an effective equation obtained from relativistic mean-field theory in a suitable asymptotic regime.

The theoretical description of atomic nuclei has been a central topic in nuclear physics for several decades, leading to the development of a variety of relativistic and non-relativistic models. Among the relativistic approaches, the relativistic mean-field theory (RMFT) has emerged as one of the most successful frameworks (\cite{ring1996relativistic},\cite{meng2006relativistic},\cite{reinhard1989relativistic},\cite{greiner1996nuclear}). In this theory, nucleons are described as Dirac particles interacting through meson fields. Since its introduction, the RMFT approach has provided an accurate description of numerous nuclear phenomena. Despite its remarkable success in physical applications, the mathematical analysis of relativistic nuclear models remains relatively limited (\cite{esteban2012symmetric},\cite{EstRot-13},\cite{lampart2021dirac}). This motivates the study of effective models such as \eqref{SNL_Rescaled}, which retain important features of the underlying relativistic dynamics while being more amenable to mathematical analysis.

In particular, equation \eqref{SNL_Rescaled} exhibits a quasi-linear structure, due to the singular coefficient $(1-|\Phi|^{2\alpha})^{-1}$ appearing in the kinetic term. This feature gives rise to significant analytical difficulties and distinguishes the model from the classical nonlinear Schrödinger equation. 

A central object of study in this context is the class of solitary wave solutions, which are sought in the form
\begin{equation}
\Phi(x,t) = e^{i \omega t} \varphi(x),
\end{equation}
where $\varphi$ is a real-valued, square-integrable function solving the associated stationary problem.

\begin{equation}
\label{SSNL}
-\omega \varphi
=
-\nabla \cdot \left(\frac{\nabla \varphi}{1-|\varphi|^{2\alpha}} \right)
+ \alpha |\varphi|^{2\alpha-2}
\frac{|\nabla \varphi|^2}{(1-|\varphi|^{2\alpha})^2}\varphi
- |\varphi|^{2\alpha}\varphi.
\tag{QLS}
\end{equation}

Solutions of \eqref{SSNL} arise as critical points of the energy functional
\begin{equation}
\label{energy}
\mathcal{E}(\varphi)
=
\frac{1}{2}\int_{\mathbb{R}^d}
\frac{|\nabla \varphi|^2}{1-|\varphi|^{2\alpha}}\,\mathrm{d}x
-
\frac{1}{2(\alpha+1)}\int_{\mathbb{R}^d} |\varphi|^{2\alpha+2}\,\mathrm{d}x.
\tag{$\mathcal{E}$}
\end{equation}
under the mass constraint
\[
\int_{\mathbb{R}^d} |\varphi|^2\,\mathrm{d}x = \mu > 0.
\]

In particular, a (normalized) ground state can be defined as a solution of the minimization problem
\begin{equation}\label{minimization_pb}\tag{$I$}
I(\mu)
=
\inf_{\substack{\varphi \in X \\ \int_{\mathbb{R}^d} |\varphi|^2 = \mu}}
\mathcal{E}(\varphi),
\end{equation}
where the functional space $X$ is given by
\begin{equation*}
X
=
\left\{
\varphi \in L^2(\mathbb{R}^d)
:\;
\int_{\mathbb{R}^d}
\frac{|\nabla \varphi|^2}{(1-|\varphi|^{2\alpha})_+}\,\mathrm{d}x
< +\infty
\right\}
\subset H^1(\mathbb{R}^d),
\end{equation*}
and $f_+$ denotes the positive part of a function $f$. This choice of functional setting is natural for the definition of ground states associated with this energy (see~\cite{EstRot-13}). Indeed, the energy $\mathcal E$ is not bounded from below on the constraint set
\[
\left\{
\varphi \in H^1(\mathbb{R}^d)
:\;
\int_{\mathbb{R}^d} |\varphi|^2\,\mathrm{d}x = \mu
\right\}.
\]
It has been shown in~\cite{EstRot-13} that if $\varphi \in X$, then $|\varphi|^2 \le 1$ almost everywhere in $\mathbb{R}^d$. Moreover, 
any solution to \eqref{minimization_pb} can be replaced by its absolute value without increasing the energy, since $|\nabla |\varphi||\le |\nabla \varphi|$ a.e. in $\R^d$. Therefore, we restrict our attention to solitary waves satisfying $0 \le \varphi < 1$.
The term $\int_{\mathbb{R}^d} |\varphi|^{2\alpha+2}\,\mathrm{d}x$ in \eqref{energy} corresponds to the standard focusing nonlinear Schrödinger interaction, which models the effective attraction responsible for nucleon confinement. The factor $(1 - |\varphi|^{2\alpha})^{-1}$ in the kinetic energy introduces a nonlinear saturation effect, enforcing the physical constraint $|\varphi| < 1$.
\medskip

 The special case $\alpha = 1$ and $d=3$ was first investigated by Esteban and Rota Nodari~\cite{esteban2012symmetric}, who proved the existence of positive spherically symmetric solutions to the associated stationary equation for frequencies $0<\omega<\frac{1}{\alpha+1}$. Subsequently, in~\cite{EstRot-13}, the same authors established the existence of normalized ground states for this equation. Later, Lewin and Rota Nodari~\cite{lewin2015uniqueness} proved the uniqueness and non-degeneracy of these solutions and extended the existence result of normalized ground states to dimension $d\geq 2$.

More recently, Klein and Rota Nodari~\cite{klein2021nonlinear} obtained existence and uniqueness results for $\alpha > 1$ in one spatial dimension ($d=1$), while Le Treust and Rota Nodari ~\cite{le2013symmetric} provided the first insights into the existence of excited states in the case $\alpha=1$ and $d=3$.  

From a mathematical perspective, the main goal of this paper is to establish existence, uniqueness, and non-degeneracy of positive solution for all $d \ge 2$ and $\alpha > 0$, as stated in the following theorem.

\begin{thrm}[Existence and Uniqueness of a Positive Solution]
\label{Thrm_existence_uniqueness_quasi0}
Let $d \ge 2$, $\alpha > 0$, and $\omega > 0$. 
\begin{itemize}
\item For $\omega \ge \frac{1}{\alpha+1}$, the nonlinear equation \eqref{SSNL} has no non-trivial solution $\varphi_\omega \in H^1(\xR^d)$ satisfying $0 \le \varphi_\omega < 1$.
\item For $0 < \omega < \frac{1}{\alpha+1}$, the nonlinear equation \eqref{SSNL} has a unique solution (up to translations) $\varphi_\omega \in H^1(\xR^d)$ such that $0 < \varphi_\omega < 1$ and $\lim_{|x| \to +\infty} \varphi_\omega(x) = 0$. Moreover, this solution satisfies:  
\begin{itemize}
    \item There exists a decreasing function $\psi: [0,+\infty) \to \xR$ and $T \in \xR^d$ such that $\varphi_\omega(x) = \psi(|x+T|)$ for all $x \in \xR^d$.
    \item $\varphi_\omega$ is non-degenerate in the sense that $\ker \mathcal L = \mathrm{span}\{\partial_{x_1}\varphi_\omega, \ldots, \partial_{x_d}\varphi_\omega, i \varphi_\omega\}$, where $\mathcal L$ is the linearized operator associated with \eqref{SSNL}.
    \item The mapping $\omega \mapsto \varphi_\omega$ is in $C^1((0, \frac{1}{\alpha+1}), H^2_\mathrm{rad}(\xR^d))$.
\end{itemize}
\end{itemize}
\end{thrm}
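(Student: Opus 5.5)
The strategy is to remove the quasi-linear structure by a change of unknown that turns \eqref{SSNL} into a semilinear equation, and then use the known theory for semilinear scalar field equations. Set $\varphi = G(u)$, where $G$ solves $G'(s) = \sqrt{1-G(s)^{2\alpha}}$ with $G(0)=0$. Then $G$ is an increasing diffeomorphism from $[0,S)$ onto $[0,1)$, where $S=\int_0^1 (1-t^{2\alpha})^{-1/2}\,\xd t$ may be finite; if needed, extend $G$ so that it is constant equal to $1$ beyond $S$. With this choice the kinetic density becomes
\[
\frac{|\nabla\varphi|^2}{1-\varphi^{2\alpha}} = |\nabla u|^2 .
\]
Hence $\mathcal E(\varphi)+\frac{\omega}{2}\int\varphi^2$ becomes $\frac12\int|\nabla u|^2 - \int F(u)$ with $F(u)=\frac{G(u)^{2\alpha+2}}{2(\alpha+1)} - \frac{\omega}{2}G(u)^2$. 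A direct computation, which I would carry out carefully (the term $\alpha\varphi^{2\alpha-2}|\nabla\varphi|^2/(1-\varphi^{2\alpha})^2$ is exactly what is produced by differentiating $G'$), shows that $\varphi$ solves \eqref{SSNL} with $0\le\varphi<1$ if and only if $u$ solves $-\Delta u = f(u)$, where
\[
f(u)=F'(u)=G'(u)\,G(u)\bigl(G(u)^{2\alpha}-\omega\bigr).
\]
The regularity and the $H^1$ correspondence follow because $G$ is smooth on $(0,S)$ and $G(u)\sim u$ near $0$.

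\textbf{Nonexistence.} For $\omega\ge\frac1{\alpha+1}$, I apply the Pohozaev identity for $-\Delta u=f(u)$ in $d\ge2$. This gives $\frac{d-2}{2}\int|\nabla u|^2 = d\int F(u)$, together with $\int|\nabla u|^2=\int uf(u)$. For $d=2$ one directly needs $\int F(u)=0$. Now $F(u)=\frac12 G^2\bigl(\frac{G^{2\alpha}}{\alpha+1}-\omega\bigr)<0$ whenever $0<G<1$ and $\omega\ge\frac1{\alpha+1}$. So $\int F<0$, which contradicts Pohozaev for $d\ge2$, since the left-hand side is $\ge0$. This requires justifying Pohozaev: for $\varphi\in H^1$ with $\varphi<1$ and $\varphi\to0$ at infinity, $u$ is bounded by a value strictly below $S$ on compact regions. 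I would handle this with elliptic regularity and exponential decay, the decay coming from $f(u)\approx-\omega u$ near $0$.

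\textbf{Existence and uniqueness.} For $0<\omega<\frac1{\alpha+1}$, $F$ is negative near $0$ and becomes positive before $G$ reaches $1$, since $F(s)>0$ iff $G(s)^{2\alpha}>(\alpha+1)\omega$. The Berestycki--Lions conditions therefore hold, and they give a positive radial decreasing solution $u$. Moreover $u$ stays below the zero of $f$ beyond which $f>0$, i.e.\ it stays in the range where $G<1$. This follows from an energy argument on the radial ODE: $\frac12u'^2+F(u)$ is decreasing, so $F(u(0))\ge0$ and $u(0)<S$, since $F(S)$ is finite and the ODE prevents reaching the flat region.

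Radial symmetry of every positive solution decaying at infinity follows from Gidas--Ni--Nirenberg. This uses that $f$ is $C^1$ near $0$ with $f'(0)=-\omega<0$; I need to check this for $\alpha<1/2$, where $G^{2\alpha+1}$ is only $C^{1}$-ish near $0$, and if necessary use the Li--Ni version requiring only $f'(0)<0$ and local Lipschitz continuity.

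Uniqueness is the main obstacle. I would follow the approach of Lewin and Rota Nodari~\cite{lewin2015uniqueness} (alternatively McLeod/Serrin--Tang, or Kwong--Zhang type criteria). Write $f(u)=-\omega u + g(u)$ and verify a monotonicity condition of the form: $u\mapsto uf'(u)/f(u)$ is decreasing on the interval where $f>0$, or the Serrin--Tang condition. In terms of $t=G(u)$ this becomes an explicit inequality involving $t^{2\alpha}$ and $\omega$, which I would have to check for all $\alpha>0$ and $0<\omega<\frac1{\alpha+1}$. This is the delicate computation. Following Lewin--Rota Nodari, the same shooting/Sturm argument also yields non-degeneracy of the radial linearization $-\Delta - f'(u)$ in $L^2_{\rm rad}$. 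Nonradial directions are handled by the standard spherical harmonic decomposition, which gives kernel $\mathrm{span}\{\partial_{x_j}u\}$. Transporting back via $\varphi=G(u)$, with the phase direction $i\varphi$ coming from the $U(1)$ invariance in the complex linearization, gives the stated kernel of $\mathcal L$.

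\textbf{Dependence on $\omega$.} The $C^1$ dependence on $\omega$ follows from the implicit function theorem applied to $(\omega,u)\mapsto \Delta u + f_\omega(u)$ from $\mathbb R\times H^2_{\rm rad}$ to $L^2_{\rm rad}$. Radial non-degeneracy gives invertibility, and combined with uniqueness the local branches glue into a single $C^1$ map. This map then passes to $\varphi_\omega=G(u_\omega)$, because $G$ is smooth on the range of $u_\omega$.
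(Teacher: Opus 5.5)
Your overall architecture matches the paper's. You use the same change of unknown $\varphi=\Lambda(u)$, Berestycki--Lions for existence (for $d=2$ the paper uses the Berestycki--Gallou\"et--Kavian version), Gidas--Ni--Nirenberg for symmetry, a Lewin--Rota Nodari type criterion for uniqueness and non-degeneracy, and the implicit function theorem for the dependence on $\omega$.

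The one genuinely different step is non-existence for $\omega\ge\frac1{\alpha+1}$. The paper first reduces to positive radial solutions, using symmetry and Lemma~\ref{lem_properties}, in particular $u(0)<t^*$. It then uses the decreasing quantity $H(r)=\frac12u'(r)^2+F_\omega(u(r))$, which tends to $0$ at infinity, so $H\ge0$, whereas $H(0)=F_\omega(u(0))<0$. Your Pohozaev argument rests on the same sign information, $F_\omega=\frac12\Lambda^2\bigl(\frac{\Lambda^{2\alpha}}{\alpha+1}-\omega\bigr)<0$ on $(0,t^*)$, and it is correct: $d\int F_\omega(u)<0\le\frac{d-2}{2}\int|\nabla u|^2$ for $d\ge2$. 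The identity is justified because $f_\omega$ is bounded and Lipschitz, so $u\in H^2$, while $\nabla u\in L^2$ and $F_\omega(u)\in L^1$. Your route is more robust, since it uses neither radial symmetry nor Lemma~\ref{lem_properties}, only $u\in H^1$ with $0\le u<t^*$. The paper's route is an elementary ODE argument once the radial reduction, which it needs anyway for uniqueness, is in place.

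The genuine gap is uniqueness, the heart of the theorem, which you leave as ``the delicate computation''. Two things are missing.

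First, the criterion. As usually stated, the classical criteria you list as alternatives assume $f>0$ on all of $(t_0,\infty)$. Here $f_\omega$ vanishes again at $t^*$ with $f_\omega'(t^*)=-\alpha(1-\omega)<0$, and positive solutions are trapped in $(0,t^*)$. The paper therefore applies \cite[Theorem 1]{lewin2020double} and checks each hypothesis:
the sign structure, with $f_\omega'(0)=-\omega<0$, $f_\omega'(t_0)=2\alpha\omega(1-\omega)>0$ (where $\Lambda^{2\alpha}(t_0)=\omega$) and $f_\omega'(t^*)\le0$; the decrease of $tf_\omega'(t)/f_\omega(t)$ on $(t_0,t^*)$; and a single sign change of $f_\omega''$ from positive to negative.

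Second, the monotonicity is not an explicit inequality in $\Lambda^{2\alpha}$ and $\omega$. Writing $f_\omega=\Lambda'P(\Lambda)$ with $P(\xi)=\xi^{2\alpha+1}-\omega\xi$, one gets $tf_\omega'/f_\omega=t\Lambda''/\Lambda'+(t\Lambda'/\Lambda)\,\Lambda P'(\Lambda)/P(\Lambda)$. Here the semilinear variable $t=\int_0^{\Lambda(t)}(1-w^{2\alpha})^{-1/2}\,\xd w$ is not algebraic in $\Lambda$. The paper differentiates and signs the resulting terms separately, using only:
the bounds $t\Lambda'(t)\le\Lambda(t)\le t$; the auxiliary function $h=\Lambda'\Lambda-t(1+(\alpha-1)\Lambda^{2\alpha})$, with $h(0)=0$ and $h'\le0$; and the fact that $\xi\mapsto\xi P'(\xi)/P(\xi)$ is positive and decreasing for $\xi^{2\alpha}>\omega$.
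The $f_\omega''$ condition reduces to the decreasing function $-(6\alpha+2)\Lambda^{2\alpha}+(2\alpha+1)+\omega(1+\alpha)$ changing sign. Without these verifications, neither uniqueness nor the non-degeneracy you derive from the same theorem is established.

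Several smaller points also need repair.

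$U(1)$ invariance only shows $i\varphi\in\ker\mathcal L$. The equality $\ker L_-=\mathrm{span}\{\varphi\}$ needs the paper's ground-state argument: $L_-\varphi=0$ with $\varphi>0$ makes $0$ the simple lowest eigenvalue. For $L_+$, the transport is the conjugation $L_+w=\Lambda'(u)^{-1}\bigl(-\Delta-f_\omega'(u)\bigr)\bigl(w/\Lambda'(u)\bigr)$.

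In the implicit-function step, $f_\omega$ (identically $0$ beyond $t^*$) has a corner at $t^*$. For $d\ge4$, every $H^2_{\mathrm{rad}}$-neighbourhood of $u_{\omega_0}$ contains functions exceeding $t^*$, so your map is not $C^1$ there. The paper replaces $f_\omega$ on $(t^*,\infty)$ by $-\alpha(1-\omega)(t-t^*)$ and works in $W^{2,q}\cap H^2_{\mathrm{rad}}$ with $q>\frac d2$.

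Gluing the local branches also requires an argument that the implicit-function branch $u(\omega)$ coincides with $u_\omega$, for instance that it consists of positive solutions. The paper shows that $0$ remains the bottom eigenvalue of $-\Delta-g_\omega(u(\omega))/u(\omega)$.

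Finally, your claim that $u$ stays below $t^*$ (``the ODE prevents reaching the flat region'') should be made precise as in Lemma~\ref{lem_properties}. The maximum principle gives $u\le t^*$. Then $u(0)=t^*$ is impossible, because $f_\omega$ is Lipschitz with $f_\omega(t^*)=0$, so the radial Cauchy problem would force $u\equiv t^*$.
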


The existence result holds only for a restricted range of \(\omega\). Consequently, the asymptotic behavior of solutions near the endpoints of the admissible interval is of particular interest, as it reveals important features of the solution set and of the corresponding standing waves. Denoting $\omega^*:=\frac{1}{1+\alpha},$
we establish the following result concerning the critical regime \(\omega\to\omega^*\).

\begin{thrm}
\label{Asymptotic_omegastar}
Let $\varphi_\omega$ be the positive solution to \eqref{SSNL}. Then
\begin{equation*}
\lim_{\omega \to \omega^*} \|\varphi_\omega - 1\|_{L^\infty_{\mathrm{loc}}} = 0.
\end{equation*}
For the mass, we have
\begin{equation}
M(\omega) = \|\varphi_\omega\|^2_{L^2(\mathbb{R}^d)} =  \frac{K}{(\omega^* - \omega)^d} + o\Big( \frac{1}{(\omega^* - \omega)^d} \Big),
\end{equation}
where $K$ is a constant depending on $\alpha$ and $d$.
\end{thrm}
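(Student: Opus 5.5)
Since $\varphi_\omega$ is radial and decreasing (Theorem~\ref{Thrm_existence_uniqueness_quasi0}), I work with the radial ODE and the associated first integral. With the change of variable $u=G(\varphi)$, where $G'(s)=(1-s^{2\alpha})^{-1/2}$, the equation becomes semilinear. In one dimension, or at leading order for large $r$ where the term $\frac{d-1}{r}$ is negligible, the conserved quantity is
\[
\frac{\varphi'^2}{2(1-\varphi^{2\alpha})} = F(\varphi),\qquad F(s)=\frac{\omega}{2}s^2-\frac{s^{2\alpha+2}}{2(\alpha+1)} .
\]
Note that $F(1)=\frac12(\omega-\omega^*)$. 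This vanishes exactly at $\omega=\omega^*$, and the factor $1-\varphi^{2\alpha}$ cancels a simple zero of $F$ near $1$. This is the mechanism behind the threshold in Theorem~\ref{Thrm_existence_uniqueness_quasi0}. The plan is to show that, as $\omega\to\omega^*$, the profile is a plateau $\varphi\approx1$ on a ball of radius $R(\omega)\sim \frac{c}{\omega^*-\omega}$, followed by a transition layer of width $O(1)$ and then exponential decay. The mass is then $|B_1|R^d(1+o(1))$, giving $K=|B_1|c^d$.

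\textbf{Step 1 (Pohozaev/energy identity in radial form).} Multiply the radial ODE by $\varphi'/(1-\varphi^{2\alpha})$ (in the $u$ variable, by $u'$) and integrate from $0$ to $\infty$. This gives
\[
F(\varphi(0)) + \text{(small correction)} = (d-1)\int_0^\infty \frac{\varphi'(r)^2}{r(1-\varphi^{2\alpha})}\,dr .
\]
Because the right side is positive, $F(\varphi(0))>0$, and hence $\varphi(0)>$ the positive zero of $F$. Now suppose for contradiction that $\varphi_\omega(0)\le 1-\delta$ along a sequence $\omega\to\omega^*$. Compactness of the radial ODE with continuous dependence (the equation is uniformly elliptic while $\varphi\le 1-\delta$) would give a limit profile at $\omega=\omega^*$ with $0<\varphi<1$ decaying to $0$. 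This contradicts the nonexistence part of Theorem~\ref{Thrm_existence_uniqueness_quasi0}. Hence $\varphi_\omega(0)\to1$. One can also take limits of translates, using that the energy in each bounded annulus is bounded. This yields $\varphi_\omega\to1$ locally uniformly, which is the first assertion, provided the plateau radius $R(\omega)\to\infty$. That follows since a bounded transition radius would again produce a limit solution at $\omega^*$.

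\textbf{Step 2 (size of the plateau).} Let $R$ be the radius where $\varphi_\omega(R)=1/2$. For $r$ large, the radial equation is a perturbation of the 1D heteroclinic problem at $\omega^*$. At $\omega^*$, the 1D ODE has a front $\Psi$ connecting $1$ to $0$, and this front exists precisely because $F_{\omega^*}(1)=0$. Multiply the equation by $u'$ and integrate across the front region. The balance between the curvature drag and the energy surplus is
\[
\frac{d-1}{R}\int_{\mathbb R} \frac{\Psi'^2}{1-\Psi^{2\alpha}}\,dt \approx -F_\omega(1)+\text{error} = \tfrac12(\omega^*-\omega).
\]
The plateau interior contributes negligibly because there $\varphi'$ is exponentially small, or at least small in the singular metric. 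This gives
\[
R(\omega)=\frac{2(d-1)\,e_\Psi}{\omega^*-\omega}(1+o(1)),\qquad e_\Psi=\int_{\mathbb R}\frac{\Psi'^2}{1-\Psi^{2\alpha}}\,dt .
\]
To justify it, I would prove that $\varphi_\omega(R+\cdot)\to\Psi$ in $C^1_{loc}$, using uniqueness of the 1D front and the monotonicity of $\varphi_\omega$. I also need uniform exponential tails on both sides: decay to $0$ at rate $\sqrt{\omega}$ outside, and approach to $1$ inside. The approach to $1$ is delicate because the equation degenerates there. I would handle it in the $u$ variable, where $1$ corresponds to the finite value $u=G(1)$, and linearize there.

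\textbf{Step 3 (mass).} With Step 2, $\int\varphi^2 = |\mathbb S^{d-1}|\int_0^\infty \varphi^2 r^{d-1}dr$. Split the integral into $[0,R-L]$, $[R-L,R+L]$ and $[R+L,\infty)$, and use the tail estimates. The first piece is $\frac{|\mathbb S^{d-1}|}{d}R^d(1+o(1))$, and the others are $O(R^{d-1})$. This yields the claimed asymptotics with $K=|B_1|(2(d-1)e_\Psi)^d$.

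\textbf{Main obstacle.} I expect the hardest part to be the uniform control near the degenerate value $\varphi=1$. Specifically, I need to show that the plateau region genuinely contributes $o(\omega^*-\omega)$ to the identity in Step 2, and that the approach to $1$ is uniformly exponential even though the coefficient $(1-\varphi^{2\alpha})^{-1}$ blows up. My first attempt would be a comparison or sub/supersolution argument in the $u$ variable, around the explicit 1D front at the shifted radius $R\pm C$.
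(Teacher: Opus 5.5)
Your treatment of the mass follows the paper's route, which the paper itself borrows from Lewin--Rota Nodari. The steps are:
\begin{itemize}
\item fix a level-set radius $R_\omega$ (the paper uses $u_\omega(R_\omega)=t^*/2$);
\item show that the translates converge to the one-dimensional heteroclinic at $\omega^*$;
\item obtain exponential bounds on both sides of the front that are uniform in $\omega$, working in the $u$ variable, where linearizing at $t^*$ works because $f_\omega'(t^*)=-\alpha(1-\omega)<0$;
\item read off $R_\omega$ from $(d-1)\int_0^\infty u_\omega'(r)^2\,\frac{\mathrm{d}r}{r}=F_\omega(u_\omega(0))$;
\item conclude $M(\omega)=|\mathbb{S}^{d-1}|R_\omega^d/d+O(R_\omega^{d-1})$.
\end{itemize}
Your constant agrees with the paper's. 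Indeed $\Psi'^2/(1-\Psi^{2\alpha})=v_{\omega^*}'^2$ and $v_{\omega^*}'=-\sqrt{2|F_{\omega^*}(v_{\omega^*})|}$, so $e_\Psi=\sqrt{2}\int_0^{t^*}|F_{\omega^*}(s)|^{1/2}\,\mathrm{d}s$ and your $K$ is exactly the paper's.

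The obstacle you single out is the right one. The bound $u_\omega(0)\ge t_\omega$ only gives $F_{\omega^*}(u_\omega(0))=O(\omega^*-\omega)$, not $o(\omega^*-\omega)$, so the uniform exponential approach to $t^*$ inside the plateau is genuinely needed. The paper does not prove it either: it imports it, together with the asymptotics of $R_\omega$, from Lemmas 5.2 and 5.5 of Lewin--Rota Nodari. One small difference: to identify the limit of the translates, the paper avoids uniqueness of the 1D front. It uses instead the first-order bound $\|u_\omega'+\sqrt{2|F_{\omega^*}(u_\omega)|}\|_{L^\infty}\le\sqrt{\omega^*-\omega}$, which follows from the same identity.

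Step 1 is where you depart from the paper, and as written it is misdirected.
\begin{itemize}
\item \textbf{Sign and redundancy.} With your convention the identity reads $-F(\varphi_\omega(0))=(d-1)\int_0^\infty\cdots>0$, so $F(\varphi_\omega(0))<0$, not $>0$. Your conclusion $\varphi_\omega(0)>s_\omega:=(\omega/\omega^*)^{1/(2\alpha)}$ is nevertheless correct. Since $s_\omega\to1$, it already gives $\varphi_\omega(0)\to1$, so the contradiction argument is superfluous.
\item \textbf{The nonexistence argument would not work.} Nothing guarantees that a locally uniform limit profile decays or lies in $H^1$, and that is exactly what the nonexistence statement requires.
\item \textbf{The same applies to $R(\omega)\to\infty$.} A bounded transition radius does not produce a forbidden decaying solution at $\omega^*$. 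The correct argument is Cauchy uniqueness at $t^*$: in the $u$ variable, $f_\omega(t^*)=0$ and $f_\omega$ is Lipschitz uniformly in $\omega$. Gronwall on the radial integral equation then gives $\sup_{r\le R}|u_\omega(r)-t^*|\le|u_\omega(0)-t^*|e^{CR^2}$, hence directly $u_\omega\to t^*$ locally uniformly.
\end{itemize}

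The paper does this quantitatively instead.
\begin{itemize}
\item The identity gives $(d-1)\int_0^\infty u_\omega'^2\,\frac{\mathrm{d}s}{s}\le\frac{\omega^*-\omega}{2}$.
\item Cauchy--Schwarz then yields $|u_\omega(r)-u_\omega(0)|\le\frac{r\sqrt{\omega^*-\omega}}{2\sqrt{d-1}}$.
\item Combined with $t^*-u_\omega(0)\le C\sqrt{\omega^*-\omega}$, this gives both $\sup_{r<R}|u_\omega(r)-t^*|\le K(R+1)\sqrt{\omega^*-\omega}$ and $R_\omega\ge c(\omega^*-\omega)^{-1/2}$.
\end{itemize}
The lower bound on $R_\omega$ is what removes the drift term $\frac{d-1}{r+R_\omega}v_\omega'$ in the convergence of translates. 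It also makes errors of size $e^{-cR_\omega}$ negligible against $\omega^*-\omega$ in your Step 2.
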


Thus, $\varphi_\omega$ converges to 1 as $\omega \to \omega^*$, leading to a singularity since the denominator in \eqref{SSNL} vanishes. This aligns with the non-existence of solutions beyond this critical value. 

When $\omega$ approaches the critical value $0$, three regimes depending on $\alpha$ emerge: super-critical, critical, and sub-critical. The following theorem summarizes the behavior of the solution $\varphi_\omega$ for $\omega \to 0$.

\begin{thrm}
\label{THRM_omega_0}
Let $\varphi_\omega$ be the unique positive solution to \eqref{SSNL}. We have 
\begin{itemize}

 \item \textbf{Super-critical case:} Suppose $d \ge 3$ and $\alpha > \frac{2}{d-2}$. Then, as $\omega \to 0^+$, $\varphi_\omega \to \varphi_0$ in $\dot{H}^1(\mathbb{R}^d)\cap L^q(\xR^d)$ for all $q\ge\frac{2d}{d-2}$, where $\varphi_0$ is the unique positive radial solution to
    \begin{equation*}
            -\nabla \cdot \left( \frac{\nabla \varphi}{1 - |\varphi|^{2\alpha}} \right) + \alpha |\varphi|^{2\alpha - 2} \frac{|\nabla \varphi|^2}{(1 - |\varphi|^{2\alpha})^2} \varphi - |\varphi|^{2\alpha} \varphi = 0.
    \end{equation*}
    The $L^2$-norm satisfies
    \[
    \lim_{\omega \to 0} \|\varphi_\omega\|_{L^2(\xR^d)} = 
    \begin{cases} 
    +\infty, & d \in \{3,4\},\\
    \|\varphi_0\|_{L^2(\xR^d)}, & d \ge 5.
    \end{cases}
    \]
    
    \item \textbf{Critical case:} Suppose $d \ge 3$ and $\alpha = \frac{2}{d-2}$. Then there exists a function $\omega\mapsto \lambda_\omega\in (0,+\infty)$ such that, as $\omega \to 0^+$, $\lambda_\omega\to +\infty$ and the rescaled function $\tilde \varphi_\omega(x) = \lambda_\omega^{\frac{d-2}{2}} \varphi_\omega(\lambda_\omega x)$ converges to $S$, the positive solution to
    \[
    -\Delta S = S^{\frac{d+2}{d-2}}
    \]
    in $\dot{H}^1(\xR^d) \cap L^{\frac{2d}{d-2}}(\xR^d)$. Moreover,
    \[
    \lim_{\omega \to 0} \|\varphi_\omega\|_{L^2(\xR^d)} = +\infty.
    \]
    \item \textbf{Sub-critical case:} Suppose $d=2$, or $d \ge 3$ and $\alpha < \frac{2}{d-2}$. As $\omega \to 0$, the rescaled function $\tilde{\varphi}_\omega(x) = \omega^{-\frac{1}{2\alpha}} \varphi_\omega(\omega^{-\frac12} x)$ converges in $H^1(\xR^d) \cap L^{\infty}(\xR^d)$ to $\psi_0$, the unique positive radial solution of
    \begin{equation}
    \label{limit_RSNL01}
    -\Delta \psi + \psi - |\psi|^{2\alpha} \psi = 0.
    \end{equation}
    Moreover, the $L^2$-norm of $\varphi_\omega$ has the following asymptotics:
    \begin{itemize}
        \item \textbf{$L^2$-critical case ($\alpha = \frac{2}{d}$):} 
        \[
        \|\varphi_\omega\|_{L^2(\mathbb{R}^d)}^2 \to \|\psi_0\|_{L^2(\xR^d)}^2 \quad \text{as } \omega \to 0.
        \]
        \item \textbf{$L^2$-super-critical case ($\alpha > \frac{2}{d}$):} 
        \[
        \|\varphi_\omega\|_{L^2(\mathbb{R}^d)}^2 \to +\infty.
        \]
        \item \textbf{$L^2$-sub-critical case ($\alpha < \frac{2}{d}$):} 
        \[
        \|\varphi_\omega\|_{L^2(\mathbb{R}^d)}^2 \to 0.
        \]
    \end{itemize}
    
\end{itemize}
\end{thrm}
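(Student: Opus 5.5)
Theorem~\ref{THRM_omega_0} is proved regime by regime. In all cases we use the radial ODE reformulation behind Theorem~\ref{Thrm_existence_uniqueness_quasi0}. The key simplification is the change of unknown $\varphi = g(u)$, where $g'(u) = \sqrt{1-g(u)^{2\alpha}}$. It turns \eqref{SSNL} into a semilinear equation
\[
-\Delta u + \omega\, G(u) - F(u) = 0,
\]
with
\[
G(u)=\frac{g(u)}{g'(u)},\qquad F(u)=\frac{g(u)^{2\alpha+1}}{g'(u)},
\]
so that $G(u)\sim u$ and $F(u)\sim u^{2\alpha+1}$ as $u\to0$. We also use uniform bounds: $0<\varphi_\omega<1$, radial monotonicity, and the Pohozaev identity obtained from the energy \eqref{energy}.

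\emph{Sub-critical case.} Set $\tilde\varphi_\omega(x)=\omega^{-1/(2\alpha)}\varphi_\omega(\omega^{-1/2}x)$. A direct computation gives
\[
-\nabla\cdot\Big(\frac{\nabla\tilde\varphi}{1-\omega\tilde\varphi^{2\alpha}}\Big)+\omega\,\alpha\,\tilde\varphi^{2\alpha-1}\frac{|\nabla\tilde\varphi|^2}{(1-\omega\tilde\varphi^{2\alpha})^2}+\tilde\varphi-\tilde\varphi^{2\alpha+1}=0,
\]
which is formally a regular perturbation of \eqref{limit_RSNL01}.

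The main obstacle is a uniform $L^\infty$ bound on $\tilde\varphi_\omega$, i.e.\ $\varphi_\omega(0)\lesssim\omega^{1/(2\alpha)}$. I would get it by a blow-up argument. Suppose $M_\omega=\tilde\varphi_\omega(0)\to\infty$, and rescale by $M_\omega$ with the natural length $M_\omega^{-\alpha}$. Since $\omega M_\omega^{2\alpha}=\varphi_\omega(0)^{2\alpha}<1$, the limit equation is either $-\nabla\cdot(\nabla v/(1-c v^{2\alpha}))+\dots=v^{2\alpha+1}$ with $c\in[0,1]$, or a degenerate version of it. Its positive bounded solutions on $\mathbb R^d$ are excluded by Liouville theorems, since in this case $2\alpha+1<\frac{d+2}{d-2}$. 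For $c>0$ I would instead pass through the $u$-variable and a Gidas--Spruck type estimate. The case $c$ close to $1$, where $\varphi_\omega(0)$ approaches $1$, also needs the energy identity $\varphi(0)^{2\alpha}\ge\dots$ to rule out concentration.

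With the $L^\infty$ bound, elliptic estimates and radial decay give compactness in $H^1\cap L^\infty$. The limit is a positive radial solution of \eqref{limit_RSNL01}, so by uniqueness it is $\psi_0$ and the whole family converges. The mass then follows from
\[
\|\varphi_\omega\|_2^2=\omega^{\frac1\alpha-\frac d2}\,\|\tilde\varphi_\omega\|_2^2 .
\]
The exponent is zero, negative or positive exactly when $\alpha=2/d$, $\alpha>2/d$ or $\alpha<2/d$, which gives the three stated mass limits.

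\emph{Super-critical case.} The equation at $\omega=0$ has a positive radial solution $\varphi_0$. Its existence and uniqueness are obtained as in Theorem~\ref{Thrm_existence_uniqueness_quasi0} via the $u$-equation, which is subcritical near $u=\infty$ because of the saturation and supercritical at $0$. Its decay is $\varphi_0\sim |x|^{2-d}$.

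To show $\varphi_\omega\to\varphi_0$ I would first prove uniform $\dot H^1$ bounds from the Pohozaev identity and the energy. I would also show $\varphi_\omega(0)$ stays away from $0$ and from $1$, by contradiction with the rescaled limits. Then I would take radial limits; by uniqueness the limit is $\varphi_0$.

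The tails are controlled by a uniform radial bound $\varphi_\omega(r)\le Cr^{2-d}$ coming from comparison. This gives convergence in $L^q$ for $q\ge 2d/(d-2)$, and also $L^2$ convergence when $d\ge5$, since then $r^{2-d}\in L^2$ at infinity. For $d=3,4$, Fatou's lemma together with $\varphi_0\notin L^2$ gives $\|\varphi_\omega\|_2\to\infty$.

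\emph{Critical case.} Here the same a priori arguments show $\varphi_\omega(0)\to0$, since there is no $\omega=0$ solution by a Pohozaev obstruction. Define $\lambda_\omega$ by $\lambda_\omega^{(d-2)/2}\varphi_\omega(0)=S(0)$. The rescaled equation converges to $-\Delta S=S^{(d+2)/(d-2)}$, and the convergence is upgraded to $\dot H^1\cap L^{2^*}$ using the uniform bound $\tilde\varphi_\omega\le C(1+|x|)^{2-d}$. The divergence of the $L^2$ norm then follows as in the case $d=3,4$ above, or for $d\ge5$ from the scaling factor $\lambda_\omega^2\to\infty$ multiplying $\|\tilde\varphi_\omega\|_2^2\to\|S\|_2^2$.
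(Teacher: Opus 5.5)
Your route (a priori bounds by blow-up and ODE arguments, local compactness, then identification of the limit by uniqueness) differs from the paper's. In the super-critical and critical regimes it is missing the key ingredient, a uniform global bound. You say the uniform $\dot H^1$ bound comes from the Pohozaev identity and the energy. But for $-\Delta u_\omega=f_\omega(u_\omega)$, Nehari/Pohozaev combined with $t\Lambda'(t)\le\Lambda(t)\le\min\{1,t\}$ and Sobolev only give $\|\nabla u_\omega\|_2^2\le\int\Lambda(u_\omega)^{2\alpha+2}\le C\|\nabla u_\omega\|_2^{2d/(d-2)}$, which is a \emph{lower} bound. The energy of $\varphi_\omega$ is not a priori controlled either.

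The paper gets the upper bound variationally. By uniqueness, $u_\omega=v_\omega(I_\omega^{-1/2}\cdot)$ with $v_\omega$ a Berestycki--Lions minimiser, and $I_0\le I_\omega\le I_0(1+o(1))$ is proved with the test functions $v_0$ ($d\ge5$) or $\eta_Rv_0$ with $R=R(\omega)$ ($d=3,4$). Without this bound your steps fail:
\begin{itemize}
\item If $M=u_\omega(0)\to0$ with $\omega M^{-2\alpha}\to0$, the blow-down $M^{-1}u_\omega(M^{-\alpha}\cdot)$ converges to a positive entire solution of $-\Delta v=v^{2\alpha+1}$ with $2\alpha+1>\frac{d+2}{d-2}$. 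Such solutions exist (decay $r^{-1/\alpha}$), so ``$\varphi_\omega(0)$ stays away from $0$'' yields no contradiction.
\item For small $u(0)$, the radial solutions of $-\Delta u=f_0(u)$ stay positive, because the Pohozaev function $r^d(\tfrac12 u'^2+F_0(u))+\tfrac{d-2}{2}r^{d-1}uu'$ is decreasing and negative. These solutions are not $u_0$, so a local limit is not identified as $u_0$ ``by uniqueness''. $u_0$ is unique only in $\dot H^1$; the paper proves this with Tang's zero-mass theorem and Flucher--M\"uller decay, since the criterion used for $\omega>0$ needs $f'(0)<0$.
\end{itemize}
With $\sup_\omega\|\nabla u_\omega\|_2<\infty$ both issues disappear: the blow-down has $\|\nabla v\|_2^2=M^{\alpha(d-2)-2}\|\nabla u_\omega\|_2^2\to0$, and lower semicontinuity keeps the limit in $\dot H^1$. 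Your tail bound $\varphi_\omega\le Cr^{2-d}$ ``from comparison'' has the same problem. The paper's comparison starts from the radial-lemma bound $u_\omega\le C|x|^{-(d-2)/2}$, which uses the uniform $\dot H^1$ bound, and it only yields $r^{-(d-2-\varepsilon)}$.

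In the critical case, the bound $\tilde\varphi_\omega\le C(1+|x|)^{2-d}$ is asserted without argument, yet it is exactly the compactness to be proved. A second, flatter bubble $\mu^{-(d-2)/2}S(\cdot/\mu)$ with $\mu\to\infty$ is radial, decreasing and invisible to local convergence. The paper excludes it because $\|\nabla v_\omega\|_2^2=I_\omega\to I_\#$ and $\int v_\omega^{2d/(d-2)}\to1$ make $(v_\omega)$ a minimising sequence for the Sobolev quotient, so Lions' concentration--compactness applies. It then gets $\lambda_\omega\to\infty$ from the Pohozaev--Nehari difference identity. With your normalisation of $\lambda_\omega$ you would also need $\omega\lambda_\omega^2\to0$; otherwise the limit equation is $-\Delta v=v^{\frac{d+2}{d-2}}-\kappa v$.

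In the sub-critical case your strategy is workable, but it differs from the paper, which needs no a priori bound. The paper applies the implicit function theorem to $(\omega,u)\mapsto-\Delta u-\tilde f_\omega(u)$ on $W^{2,q}\cap H^2_{\mathrm{rad}}$ near $(0,\psi_0)$, with $\tilde f_\omega$ extended to $\omega<0$ and using the non-degeneracy of $\psi_0$. This gives a continuous branch of positive solutions, and uniqueness for each $\omega$ forces it to coincide with $\tilde u_\omega$.

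Your version has a gap at $c=1$, i.e.\ $\varphi_\omega(0)\to1$. The blow-up limit is then the constant $u\equiv t^*$, to which no Liouville theorem applies. The identity you invoke, $F_\omega(u_\omega(0))\ge0$, bounds $\varphi_\omega(0)$ from below, not from above. You can repair this as in the $\omega\to\omega^*$ analysis:
\begin{itemize}
\item $u_\omega\to t^*$ uniformly on compacts, so the radius where $u_\omega=t^*/2$ tends to infinity.
\item The shifted profiles converge to a nonnegative, nonincreasing solution of $v''+f_0(v)=0$ with $v(0)=t^*/2$.
\item Such a solution must cross zero because $f_0>0$ on $(0,t^*)$, a contradiction.
\end{itemize}
This argument is not in your proposal and has to be supplied.
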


We study the asymptotic behavior of the solution and its mass near the critical values, as this provides insight into whether the solution is a ground state. This question is investigated in detail in \cite{Bahhi-25}.

This paper is organized as follows. In Section \ref{sec:wellposdness}, we establish the existence and uniqueness of solutions. Section \ref{sec:limit_omega*} is devoted to the study of the behavior of $\varphi_\omega$ near the critical value $\omega^*$, where we prove Theorem \ref{Asymptotic_omegastar}. Finally, in Section \ref{sec:limit_0}, we investigate the asymptotic behavior of the solution as $\omega \to 0$ and provide the proof of Theorem \ref{THRM_omega_0}.

\section{Existence, uniqueness and non-degeneracy of the positive solution to the quasi-linear Schrödinger equation }

\label{sec:wellposdness}

\subsection{Change of variable.}

The solutions of the equation \eqref{SSNL}  are critical points of the energy \eqref{energy} 
under the $L^2$ normalisation constraint. The first term on the right hand side of \eqref{energy}, that can be interpreted by analogy as the kinetic energy, exhibits a singularity. Therefore we want to eliminate this singularity by employing a change of variables. Formally, we look for a function $\Lambda: \xR \to \xR $ and a function $u:\xR^d \to \xR$ such that
 \begin{equation*}
\label{change}
\left\{
\begin{aligned}
\varphi(x) &=\Lambda(u(x)),\\ 
  |\nabla u(x)|^2&=\frac{|\nabla \varphi(x)|^2}{1-|\varphi(x)|^{2\alpha}}.
\end{aligned}
\right.
\end{equation*}
In other words, the function $\Lambda$ has to be a solution of the following Cauchy problem
\begin{equation}
 \label{ODE1}
 \left \{
 \begin{array}{rcl}
 v'(t)&=&\sqrt{1-v(t)^{2\alpha}}  \\
 v(0)&=&0.
\end{array}
 \right.
\end{equation}

The solution to \eqref{ODE1} for positive $t$ can be constructed explicitly. For any $x\in [0,1)$, we define 
\begin{equation*}
    \Phi(x)=\int_0^x \frac{1}{\sqrt{1-w^{2\alpha}}}\xd w.
\end{equation*}
The function $\Phi$ is in $C^1((0,1))$, with 
\begin{equation*}
    \Phi'(x)=\frac{1}{\sqrt{1-x^{2\alpha}}}>0 \text{ for all } x\in (0,1), \text{ and } \lim_{x\to 0^+} \Phi'(x)=1.
\end{equation*}
Hence, $\Phi$ is increasing and therefore admits an inverse on its range. Since  $\int_0^{1} \frac{1}{\sqrt{1-w^{2\alpha}}}dw<\infty$ for any $\alpha>0$, we can define 
%Let us note that $1\le t^*<\infty$. In fact we know that 
\begin{equation}
\label{t*}
t^*
%=\int_0^{t^*} \frac{v'(t)}{\sqrt{1-v^{2\alpha}(t)}}\xd t
=\int_0^{1} \frac{1}{\sqrt{1-w^{2\alpha}}}\xd w.
\end{equation}
As a consequence $\Phi : [0,1)\to [0,t^*)$ is a bijection, and we can define 
\begin{equation*}
    \tilde \Lambda(t)=\Phi^{-1}(t) \text{ for any } t\in [0,t^*).
\end{equation*}
Since $\Phi(0)=0$, we deduce $\tilde \Lambda(0)=0$ and a direct computation gives
\begin{equation*}
    \tilde \Lambda'(t)= \sqrt{1-\tilde \Lambda^{2\alpha}(t)}
\end{equation*}
for any $t\in (0,t^*)$. Hence, $\tilde \Lambda(t)$ is a solution to the Cauchy problem \eqref{ODE1}. Moreover, from \eqref{t*}, we deduce that
\begin{equation*}
    1\le t^*<+\infty \text{ and } \lim_{t\to t^*}\tilde{\Lambda}(t)=1.
\end{equation*}
Furthermore, the solution of \eqref{ODE1} is uniquely defined on $[0,t^*)$. Indeed, all the solutions $v$ to \eqref{ODE1} are such that $v'(t)>0$ for $t$ close to $0$ so that $v$ is positive and increasing close to $0$. More precisely, $v'(t)>0$ whenever $v(t)<1$ and we can write $\Phi(v(t))=t$ with $\Phi$ defined as above. Since $\Phi$ is injective, this implies $v(t)=\tilde{\Lambda}(t)$ for all $t\in [0,t^*)$, and proves the uniqueness.

We can then define our change of variables as follows.

\begin{defi}
\label{defi_change_var}
  Let $\Lambda: \xR \to \xR$ be defined as
  \begin{equation*}
      \Lambda(t)=\left\{
        \begin{aligned}
            &\Tilde{\Lambda}(t) & 0\le t<t^*\\
            &1 & t\ge t^*
        \end{aligned}
      \right.
  \end{equation*}
  and $\Lambda(t)=\Lambda(-t)$ for all $t<0$.
\end{defi}

\begin{prop}
\label{prop_change_var}
The function $\Lambda:\xR\to\xR$ given by Definition \ref{defi_change_var} satisfies:
\begin{enumerate}[label=\alph*)]
    \item for all $t\in (0,t^*)$, $\Lambda'(t)= \sqrt{1-\Lambda^{2\alpha}(t)}$ and $\Lambda''(t)= -\alpha \Lambda^{2\alpha-1}(t)$; \label{prop_change_var7}
    \item for all $t\in \xR$, $0\leq\Lambda(t)\leq 1$; \label{prop_change_var1}
    \item $\Lambda$ is non-decreasing in $\xR^+$ and increasing in $(0,t^*)$; \label{prop_change_var2}
    \item for all $t\in \xR^+$, $\Lambda'(t)t \leq\Lambda(t)\leq t$;  \label{prop_change_var4} 
    \item $\Lambda$ is 1-Lipschitz on $\xR$;  \label{prop_change_var5}
    \item $\Lambda'$ is $\alpha$-Lipschitz on $\xR^+$ for $\alpha\ge \tfrac{1}{2}$ and $\Lambda'\in C^{0,2\alpha}(\xR^+)$ for $0<\alpha< \tfrac{1}{2}$; \label{prop_change_var6}
     \item $\lim\limits _{t \rightarrow  0^+} \frac{\Lambda(t)}{t}=\lim\limits_{t \rightarrow  +\infty}  \Lambda(t)=1$. \label{prop_change_var3}
\end{enumerate}
\end{prop}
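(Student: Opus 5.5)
Most items follow directly from the construction $\tilde\Lambda=\Phi^{-1}$ on $[0,t^*)$ and the fact that $\Lambda\equiv 1$ on $[t^*,\infty)$; I will take them in order.

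For a), the identity $\Lambda'=\sqrt{1-\Lambda^{2\alpha}}$ on $(0,t^*)$ was obtained above by differentiating the inverse function. Since $0<\Lambda<1$ there, the right-hand side is $C^1$ in $\Lambda$, so I can differentiate once more:
\[
\Lambda''=\frac{-\alpha\Lambda^{2\alpha-1}\Lambda'}{\sqrt{1-\Lambda^{2\alpha}}}=-\alpha\Lambda^{2\alpha-1}.
\]

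Items b), c) and g) are immediate. On $[0,t^*)$ the values of $\Lambda=\Phi^{-1}$ lie in $[0,1)$, and $\Lambda$ equals $1$ afterwards. Evenness then extends b) to all of $\xR$. Strict monotonicity on $(0,t^*)$ comes from $\Lambda'>0$. Continuity at $t^*$, because $\Lambda(t)\to1$ as $t\to t^*$, gives that $\Lambda$ is non-decreasing on $\xR^+$. For g), $\Lambda(t)/t\to\Lambda'(0^+)=1$, using $\Lambda'(t)\to\sqrt{1-0}=1$ and the mean value theorem. The limit at $+\infty$ holds trivially because $\Lambda$ is constant equal to $1$ there.

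For e), note that $|\Lambda'|\le1$ on $(0,t^*)$ and $\Lambda'=0$ on $(t^*,\infty)$. Moreover $\Lambda'(t)\to0$ as $t\to t^*$, so $\Lambda$ is $C^1$ on $(0,\infty)$ with $0\le\Lambda'\le1$. Hence $\Lambda$ is 1-Lipschitz on $\xR^+$, and by evenness on $\xR$: for $s<0<t$, use $|\Lambda(t)-\Lambda(s)|=|\Lambda(t)-\Lambda(|s|)|\le ||t|-|s||\le|t-s|$.

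For d), the upper bound $\Lambda(t)\le t$ follows from $\Lambda(0)=0$ and $\Lambda'\le1$. For the lower bound, on $[0,t^*)$ the second derivative $\Lambda''=-\alpha\Lambda^{2\alpha-1}$ is $\le0$, so $\Lambda$ is concave there. Concavity together with $\Lambda(0)=0$ gives $\Lambda(t)\ge\Lambda'(t)t$. For $t\ge t^*$, $\Lambda'(t)=0\le1=\Lambda(t)$. One technical point: when $\alpha<1/2$ the second derivative blows up at $0$. I will avoid this by writing $\Lambda(t)-t\Lambda'(t)=\int_0^t(\Lambda'(s)-\Lambda'(t))\,\xd s\ge0$, which only uses that $\Lambda'$ is non-increasing on $(0,t^*)$.

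The main obstacle is f), which needs a regularity estimate for $\Lambda'$ on all of $\xR^+$, in particular across $t^*$. I will use $\Lambda'=g(\Lambda)$ with $g(v)=\sqrt{1-v^{2\alpha}}$, so that $|\Lambda'(t)-\Lambda'(s)|=|g(\Lambda(t))-g(\Lambda(s))|$. This holds on all of $[0,\infty)$, since $g(1)=0$ matches $\Lambda'=0$ beyond $t^*$.

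For $\alpha\ge1/2$, I use $\Lambda''=-\alpha\Lambda^{2\alpha-1}$ with $0\le\Lambda\le1$. This gives $|\Lambda''|\le\alpha$ on $(0,t^*)$, while $\Lambda''=0$ beyond $t^*$. Since $\Lambda'$ is continuous and piecewise $C^1$, it is $\alpha$-Lipschitz.

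For $\alpha<1/2$, the bound $|\Lambda''|\le\alpha\Lambda^{2\alpha-1}$ combined with $\Lambda(t)\ge\Lambda'(t)t$ gives $\Lambda(t)\ge ct$ on a neighbourhood $[0,\delta]$ of $0$, with $c>0$. Hence $|\Lambda''(t)|\le Ct^{2\alpha-1}$ there. Integrating, $|\Lambda'(t)-\Lambda'(s)|\le C'|t^{2\alpha}-s^{2\alpha}|\le C'|t-s|^{2\alpha}$, by subadditivity of $x\mapsto x^{2\alpha}$. Away from $0$, $\Lambda''$ is bounded, so $\Lambda'$ is Lipschitz there. Together with the boundedness of $\Lambda'$, this yields global $C^{0,2\alpha}$ regularity on $\xR^+$.
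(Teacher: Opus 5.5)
Your proof is correct. For a)--e) and g) it follows the paper, which just says these follow from $0\le\Lambda'\le1$, $\Lambda''\le0$ on $(0,t^*)$, $\Lambda(0)=0$ and the mean value theorem; your formula $\Lambda(t)-t\Lambda'(t)=\int_0^t(\Lambda'(s)-\Lambda'(t))\,\mathrm{d}s$ is a clean way to get d) without touching $\Lambda''$ near $0$.

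The genuine difference is f) for $0<\alpha<\tfrac12$. The paper uses the identity $\Lambda'(0)-\Lambda'(t)=\Lambda^{2\alpha}(t)/\bigl(1+\sqrt{1-\Lambda^{2\alpha}(t)}\bigr)\le t^{2\alpha}$. That is only a H\"older bound at the single point $0$. The paper then combines it with ``$|\Lambda''|\le\alpha$ on $(0,t^*)$'', which fails in this range: $|\Lambda''|=\alpha\Lambda^{2\alpha-1}\ge\alpha$, and it blows up at $0$.

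Your route gives a true two-point estimate, so it supplies what the paper's sketch omits. It runs as follows. You get $\Lambda(t)\ge\Lambda'(\delta)\,t$ on $[0,\delta]$, hence $|\Lambda''(t)|\le Ct^{2\alpha-1}$. You then integrate and use subadditivity of $x\mapsto x^{2\alpha}$. On $[\delta,\infty)$ you have a Lipschitz bound, since $|\Lambda''|\le\alpha\Lambda(\delta)^{2\alpha-1}$ there. Finally, $|\Lambda'|\le1$ handles pairs with $|t-s|\ge1$.

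When you write it up, spell out the mixed case $s<\delta<t$ with $|t-s|\le1$: go through $\delta$ and use $L(t-\delta)\le L(t-\delta)^{2\alpha}$. Also drop the reformulation $\Lambda'=g(\Lambda)$, $g(v)=\sqrt{1-v^{2\alpha}}$. You never use it, and composing the H\"older modulus of $g$ with the Lipschitz $\Lambda$ would only give exponent $\min\{2\alpha,\tfrac12\}$, because $g$ has a square-root singularity at $v=1$.

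If you prefer to stay closer to the paper, its identity upgrades to
$|\Lambda'(t)-\Lambda'(s)|=|\Lambda^{2\alpha}(t)-\Lambda^{2\alpha}(s)|/(\Lambda'(t)+\Lambda'(s))\le|t-s|^{2\alpha}/(\Lambda'(t)+\Lambda'(s))$.
This handles all pairs with one point away from $t^*$, and the bounded-$\Lambda''$ argument covers a neighbourhood of $t^*$.
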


\begin{proof}
Property \ref{prop_change_var7} is a straightforward computation. Properties \ref{prop_change_var1}, \ref{prop_change_var2},  \ref{prop_change_var4} and \ref{prop_change_var5} are immediate from the  fact that $0\leq\Lambda'(t)\leq 1$ for all $t \in \xR$, $\Lambda''(t)\le 0$ for all $t>0$,  and $\Lambda(0)=0$.  For $\alpha\ge \tfrac{1}{2}$, property \ref{prop_change_var6} follows from  \ref{prop_change_var7} that implies  $|\Lambda''(t)|\leq \alpha$ for all $t\in (0,t^*)$. For $0<\alpha< \tfrac{1}{2}$, we remark that, for any $t\in (0,t^*)$,
\begin{equation*}
    |\Lambda'(t)-\Lambda'(0)|=1-\sqrt{1-\Lambda^{2\alpha}(t)}=\frac{\Lambda^{2\alpha}(t)}{1+\sqrt{1-\Lambda^{2\alpha}(t)}}\le |t-0|^{2\alpha}
\end{equation*}
which, together with $|\Lambda''(t)|\leq \alpha$ for all $t\in (0,t^*)$, leads to $\Lambda'\in C^{0,2\alpha}(\xR)$. Finally, the fact that 
$\lim\limits _{t \rightarrow  0^+}\frac{\Lambda(t)}{t}=1$ is a consequence of the mean value theorem, since $\lim\limits _{t \rightarrow  0^+}\frac{\Lambda(t)}{t}=\lim\limits _{x \rightarrow  0^+}\Lambda'(x)=1$.

\end{proof}

Now that our change of variables is well defined, we will inject it into the equation \eqref{SSNL} to transform the quasi-linear Schrödinger equation \eqref{SSNL} into the following semi-linear problem

\begin{equation}
\label{PSSL}
\left\{
\begin{aligned}
-\Delta u(x) &= f_\omega(u(x)) \qquad \text{ in } \xR^d\\   
  u\in& H^1(\xR^d),  \qquad u \neq 0
\end{aligned}
\right. \tag{SLS}
\end{equation}

with  
\begin{equation}
\label{f}
f_\omega(t)=\Lambda'(t)\left( \Lambda(t)^{2\alpha+1}-\omega\Lambda(t)\right) \text{ for all } t\in \xR\setminus \{0\},\quad f_\omega(0)=0.
\end{equation}
Note that the function $f:\xR\to \xR$ is a real continuous function which is odd and such that $f_\omega(0)=0$. Moreover, $f_\omega(t)=0$ for all $t\ge t^*$.

Our goal is then to establish the existence and uniqueness of positive solutions to this semi-linear problem. To this end, we first investigate some general properties of the non-linearity $f_\omega$ and of positive solutions to \eqref{PSSL}.

\subsection{Properties of the non-linearity to the semi-linear problem}

\begin{prop}
\label{prop_f}
Let $\Lambda $ be the change of variables defined in Definition \ref{defi_change_var}.
The function $f_\omega$ defined by \eqref{f}

satisfies:
\begin{enumerate}[label=\alph*)]
    \item for all $t\in \xR^+$, $f_\omega(t)\leq \min\{1,t\}$ and  $|f_\omega(t)|\leq (1+\omega)\min\{1,|t|\}$; \label{Prop_f1}
    \item for all $t\in (0,t^*)$,
    \begin{equation*}
         f_\omega'(t)=(\Lambda^{2\alpha}(t)-\omega)(1-(1+\alpha)\Lambda^{2\alpha}(t))+2\alpha\Lambda^{2\alpha}(t)(1-\Lambda^{2\alpha}(t))
    \end{equation*}
    and 
     \begin{equation*}
        f_\omega''(t)=2\alpha \Lambda^{2\alpha-1}(t)\Lambda'(t)\left(-(6\alpha+2)\Lambda^{2\alpha}(t)+(2\alpha+1)+\omega(1+\alpha)\right);
    \end{equation*}\label{Prop_f2}
    
    \item $f_\omega$ is k-Lipschitz with $k=\max\{1-\omega,\omega\}\max\{1,\alpha\}+ 2\alpha$; \label{Prop_f4}   
    
    \item for all $t \in (0, t^*)$,
    \begin{equation*}
      F_\omega(t) =  \int_0^t f_\omega (s) \xd s = \frac{1}{2(\alpha+1)}\Lambda^{2(\alpha+1)}(t) -\frac{\omega}{2} \Lambda^2(t)
    \end{equation*}\label{Prop_f5}   
\end{enumerate}
\end{prop}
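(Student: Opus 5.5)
The proof reduces everything to the identities of Proposition~\ref{prop_change_var}, namely $\Lambda'=\sqrt{1-\Lambda^{2\alpha}}$ and $\Lambda''=-\alpha\Lambda^{2\alpha-1}$ on $(0,t^*)$, and to the facts $0\le\Lambda\le1$ and $0\le\Lambda'\le1$. Since $f_\omega$ is odd and vanishes on $[t^*,\infty)$, it suffices to work on $(0,t^*)$, setting $L=\Lambda(t)\in(0,1)$ and $L'=\sqrt{1-L^{2\alpha}}$. I would prove the items in the order \ref{Prop_f5}, \ref{Prop_f2}, \ref{Prop_f1}, \ref{Prop_f4}.

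For \ref{Prop_f5}, observe that $f_\omega=\frac{d}{dt}\bigl(\frac{1}{2(\alpha+1)}\Lambda^{2\alpha+2}-\frac{\omega}{2}\Lambda^2\bigr)$. The right-hand side vanishes at $t=0$, so the fundamental theorem of calculus gives the formula for $F_\omega$.

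For \ref{Prop_f2}, differentiate $f_\omega=\Lambda'(\Lambda^{2\alpha+1}-\omega\Lambda)$ to get
\[
f_\omega'=\Lambda''(\Lambda^{2\alpha+1}-\omega\Lambda)+\Lambda'^2\bigl((2\alpha+1)\Lambda^{2\alpha}-\omega\bigr).
\]
Substituting $\Lambda''=-\alpha L^{2\alpha-1}$ and $\Lambda'^2=1-L^{2\alpha}$ turns this into a polynomial in $L^{2\alpha}$. Expanding and regrouping should give the stated form $(L^{2\alpha}-\omega)(1-(1+\alpha)L^{2\alpha})+2\alpha L^{2\alpha}(1-L^{2\alpha})$. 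Next, write $f_\omega'=g(L^{2\alpha})$ with $g$ a quadratic. Then $f_\omega''=g'(L^{2\alpha})\cdot 2\alpha L^{2\alpha-1}L'$, and computing $g'$ from the expanded polynomial gives the stated bracket. This algebra is routine but must be checked carefully.

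For \ref{Prop_f1}, on $(0,t^*)$ we have $f_\omega\le \Lambda'\Lambda^{2\alpha+1}\le \Lambda\le\min\{1,t\}$, using $\Lambda\le t$ from Proposition~\ref{prop_change_var}\ref{prop_change_var4}. For $t\ge t^*$, $f_\omega=0$. For the absolute value, $|f_\omega|\le L'L\,|L^{2\alpha}-\omega|\le L\max\{1,\omega\}\le(1+\omega)\min\{1,t\}$, and oddness extends this to $|t|$ on all of $\xR$.

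For \ref{Prop_f4}, bound $|f_\omega'|$ on $(0,t^*)$ with $s=L^{2\alpha}\in[0,1]$. The term $2\alpha s(1-s)$ is at most $2\alpha$. For the other term, $|s-\omega|\le\max\{\omega,1-\omega\}$ when $\omega<1$, and $|1-(1+\alpha)s|\le\max\{1,\alpha\}$. Together these give $|f_\omega'|\le k$. Then $f_\omega$ is continuous on $\xR$ and piecewise $C^1$: it is constant zero beyond $t^*$, continuous at $0$ and $t^*$, and odd. The mean value theorem on each piece therefore gives the global Lipschitz bound.

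The main obstacle is this last step when $\omega\ge1$, where $|s-\omega|\le\omega$ rather than $\max\{1-\omega,\omega\}$. Read literally, $\max\{1-\omega,\omega\}$ still equals $\omega$ when $\omega\ge1$, so the bound holds in all cases. Beyond that, the only delicate point is the bookkeeping in the algebraic identities of \ref{Prop_f2}.
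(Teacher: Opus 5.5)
Your proposal is correct and is essentially the paper's proof. Items (b) and (d) come from direct differentiation using $\Lambda'=\sqrt{1-\Lambda^{2\alpha}}$ and $\Lambda''=-\alpha\Lambda^{2\alpha-1}$, item (a) from $0\le\Lambda'\le 1$ and $0\le\Lambda\le\min\{1,t\}$, and item (c) from the same two bounds the paper uses, $|\Lambda^{2\alpha}-\omega|\le\max\{1-\omega,\omega\}$ and $|1-(1+\alpha)\Lambda^{2\alpha}|\le\max\{1,\alpha\}$. Your piecewise mean-value argument across $0$ and $\pm t^*$ only makes explicit the step from a derivative bound to a global Lipschitz bound, which the paper leaves implicit. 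The case split on $\omega\ge 1$ is unnecessary: for $s\in[0,1]$ one has $s-\omega\le 1-\omega$ and $\omega-s\le\omega$, so $|s-\omega|\le\max\{1-\omega,\omega\}$ for every $\omega$.
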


\begin{proof}
Property \ref{prop_f}\ref{Prop_f1} is a direct result of the properties  \ref{prop_change_var1} and \ref{prop_change_var4} of Proposition \ref{prop_change_var}.
Properties \ref{prop_f}\ref{Prop_f2} and \ref{prop_f}\ref{Prop_f5} follow directly from the definition. For \ref{prop_f}\ref{Prop_f4}, we remark that, for any $t\in (0,t^*)$, 
\begin{equation*}
    |1-(1+\alpha)\Lambda^{2\alpha}(t)|\le \max\{1,\alpha\}\text{ and } |\Lambda^{2\alpha}(t)-\omega|\le \max\{1-\omega,\omega\}.
\end{equation*}

As a consequence,
 \begin{align*}
   |f_\omega'(t)|&\leq|(\Lambda^{2\alpha}(t)-\omega)(1-(1+\alpha)\Lambda^{2\alpha}(t))|+|2\alpha\Lambda^{2\alpha}(t)(1-\Lambda^{2\alpha}(t))|\\
         &\leq |(\Lambda^{2\alpha}(t)-\omega)|\max\{1,\alpha\}+ |2\alpha\Lambda^{2\alpha}(t)(1-\Lambda^{2\alpha}(t))|\\
         &\underbrace{\leq}_{\ref{prop_change_var}\ref{prop_change_var1}} \max\{1-\omega,\omega\}\max\{1,\alpha\}+2\alpha.
 \end{align*}
\end{proof} 

\subsection{Properties of the solutions to the semi-linear problem.}
Any positive solution to problem \eqref{PSSL}, if it exists, satisfies the following lemma.

\begin{lem}
\label{lem_properties}
	If $u\in H^1(\xR^d)$ is a positive solution of \eqref{PSSL}, then
	\begin{enumerate}
	    \item The solution $u$ is in $ L^\infty(\xR^d)$. Moreover $u\in C^1(\xR^d)$ and  $\lim\limits_{|x|\to\infty}u(x)= 0$ and $ \lim\limits_{|x|\to\infty} \nabla u(x)= 0$. \label{L1}
         \item  There exists $v:[0,+\infty)\to \xR$ a decreasing function
    and $T\in \xR^d$ such that $u(x)=v(|x+T|)$ for all  $x\in \xR^d$.\label{L2}

     \item The solution  $u(x)<t^*$ for all $x\in\xR^d$. \label{L3}
     \item The solution  $u$ belongs to $C^2(\xR^d)$. \label{L4}
	\end{enumerate}
\end{lem}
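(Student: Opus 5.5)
We treat the four items in the order (1), (2), (3), (4). Regularity from (1) is needed for the symmetry argument in (2), and (3) is what lets us use the smoothness of $f_\omega$ on $(0,t^*)$ in (4).

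\textbf{Item (1).} By Proposition \ref{prop_f}\ref{Prop_f1}, $|f_\omega(u)|\le (1+\omega)\min\{1,|u|\}$, so $f_\omega(u)\in L^2\cap L^\infty(\xR^d)$ as soon as $u\in H^1(\xR^d)$. Hence $-\Delta u = g$ with $g\in L^2\cap L^\infty$. Writing $u=(-\Delta+1)^{-1}(g+u)$ and bootstrapping with the Bessel kernel gives $u\in W^{2,p}_{\mathrm{loc}}$ for every $p<\infty$, with bounds uniform in the base point. A direct route: $g+u\in L^2$ with $g\in L^\infty$; iterate $u\in L^{p}\Rightarrow u\in W^{2,p}$, which improves integrability by Sobolev embedding. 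This yields $u\in W^{2,p}(\xR^d)$ for some $p>d$, hence $u\in L^\infty\cap C^{1,\beta}(\xR^d)$. Because $u,\nabla u$ lie in $W^{1,p}(\xR^d)$ with $p>d$, they are uniformly continuous and $L^p$-integrable, so both tend to $0$ at infinity. (Alternatively one can use local elliptic estimates on unit balls, $\|u\|_{C^{1,\beta}(B_1(x))}\le C(\|u\|_{L^2(B_2(x))}+\|g\|_{L^\infty(B_2(x))})$, combined with $\|u\|_{L^2(B_2(x))}\to0$.)

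\textbf{Item (2).} This is the Gidas--Ni--Nirenberg-type symmetry result for positive solutions on $\xR^d$ decaying at infinity. Since $f_\omega$ is Lipschitz (Proposition \ref{prop_f}\ref{Prop_f4}) and, for small $t>0$, $f_\omega(t)=-\omega t+o(t)$ with $\omega>0$, we are exactly in the setting of Li--Ni or Gidas--Ni--Nirenberg for $f$ Lipschitz with $f'(0)<0$ near $0$. More precisely, $f_\omega(s)-f_\omega(t)\le -\tfrac{\omega}{2}(s-t)$ for $0\le t\le s$ small, by Proposition \ref{prop_f}\ref{Prop_f2} and continuity of $f'_\omega$ near $0$ (note $\Lambda(0)=0$). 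The moving-plane method in every direction then gives radial symmetry about some point $-T$ and strict monotone decrease in $|x+T|$. This is the step I expect to be most delicate: the classical GNN statement asks for $C^1$ or $C^{1,\beta}$ nonlinearity, so we must quote a version valid for Lipschitz $f$ with the decreasing property near $0$. Li--Ni (1993) covers this, and so does the version used in \cite{lewin2015uniqueness}.

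\textbf{Item (3).} Let $M=\max u = v(0)$, attained at $x_0=-T$, and suppose $M\ge t^*$. Then $\Omega=\{u>t^*\}$ is an open ball (possibly empty) and on it $-\Delta u = f_\omega(u)=0$. If $\Omega$ is nonempty, $u$ is harmonic on $\Omega$ with $u=t^*$ on $\partial\Omega$, so the maximum principle gives $u\le t^*$ in $\Omega$, a contradiction. Hence $M=t^*$.

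It remains to exclude $M=t^*$. For this, view $v$ as a solution of the radial ODE $v''+\frac{d-1}{r}v'=-f_\omega(v)$ with $v(0)=t^*$, $v'(0)=0$. The constant function $t^*$ is also a solution, since $f_\omega(t^*)=0$. Since $f_\omega$ is Lipschitz, uniqueness for this singular Cauchy problem (the standard fixed point on $v(r)=v(0)-\int_0^r s^{1-d}\int_0^s \tau^{d-1}f_\omega(v)\,d\tau\,ds$) gives $v\equiv t^*$. This contradicts $u\in H^1$. Therefore $u<t^*$ everywhere.

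\textbf{Item (4).} By (3) and (1), $u$ takes values in the compact set $[0,M]\subset[0,t^*)$. On $(0,t^*)$ the function $f_\omega$ is $C^1$ by Proposition \ref{prop_f}\ref{Prop_f2}. Near $0$, $f'_\omega$ is continuous for $\alpha>0$ only in a Hölder sense, but in any case $f_\omega$ is Lipschitz. So $f_\omega(u)\in C^{0,\beta}_{\mathrm{loc}}$, since $u\in C^1$ and $f_\omega$ is Lipschitz. Schauder estimates then give $u\in C^{2,\beta}_{\mathrm{loc}}\subset C^2(\xR^d)$.
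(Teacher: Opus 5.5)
Your proposal is correct. For (1) and (3) it is the paper's argument: the same $W^{2,p}$ bootstrap for $(-\Delta+1)u=f_\omega(u)+u$, and the same maximum principle on the ball $\{u>t^*\}$ followed by uniqueness for the radial Cauchy problem with $v(0)=t^*$. The paper stops at local uniqueness on $(0,\delta)$ and contradicts strict monotonicity; you propagate uniqueness to all $r$ and contradict decay, which is equivalent.

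The routes differ in (2) and (4).

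For (2), the paper uses the moving-plane theorem in the form of \cite[Theorem 4.1]{frank2013ground}. That version applies directly to $H^1\cap L^\infty$ solutions, but it needs the global one-sided bound $\frac{f_\omega(s)-f_\omega(t)}{s-t}\le-\tau^2+Cs^{\min\{1,2\alpha\}}$ for all $0\le t<s$, which forces a case analysis around $t^*$. You only check strict decrease of $f_\omega$ near $0$ (from $f_\omega'(0^+)=-\omega$) and quote a Li--Ni/GNN version for Lipschitz nonlinearities. That is cheaper to verify, but such versions are usually stated for classical $C^2$ solutions.

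For (4), the paper works with the radial ODE, so it relies on (2), and it checks that $u''(r)\to-f_\omega(u(0))/d$ as $r\to0$. Your Schauder argument needs no symmetry and gives more: $f_\omega$ Lipschitz and $u\in C^1$ give $f_\omega(u)\in C^{0,1}_{\mathrm{loc}}$, hence $u\in C^{2,\beta}_{\mathrm{loc}}$. Contrary to your opening remark, it uses neither (2) nor (3). You can therefore place it directly after (1), which also supplies the $C^2$ regularity your symmetry citation presupposes.
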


\begin{proof}
\begin{itemize}
    \item We start by proving \eqref{L1}. Let $u\in H^1(\xR^d)$ be a positive solution of $-\Delta u= f(u)$, we consider for $n\in \xN$ the following sequence 
 \begin{equation}
\label{qn}
\left\{
\begin{aligned}
q_{n+1}&=\frac{dq_n}{d-2q_n} \quad \text{ if } \quad q_n<\frac{d}{2} \\ 
q_{n+1}&=d+1 \quad \text{ if } \quad q_n\geq \frac{d}{2}\\ 
q_0&=2
\end{aligned}
\right.
\end{equation}
and we want to show by induction that $u \in W^{2,q_n}(\xR^d) $ for all $n\in \xN$.\\
For $n=0$, since $ u\in L^2(\xR^d)$ then by Prop.~\ref{prop_f}\ref{Prop_f1} we have $f_\omega(u)\in L^2(\xR^d) $, this implies $\Delta u\in L^2(\xR^d)$ and by the elliptical regularity result  \cite[Theorem 9.32]{brezis2011functional}, applied to $-\Delta u+ u = g(u)$ with $g(u)=f_\omega(u)-u \in L^2(\xR^d)$, we have 
$$u\in W^{2,q_0}(\xR^d).$$
Now, if $u \in W^{2,q_n}(\xR^d)$, then due to Sobolev embedding  \cite[Corollary 9.13]{brezis2011functional}, we have $u \in L^{q_{n+1}}(\xR^d)$, this implies that $g(u)\in L^{q_{n+1}}(\xR^d) $  by Prop.~\ref{prop_f}\ref{Prop_f1} and using the elliptical regularity again, we have 
$$u\in W^{2,q_{n+1}}(\xR^d).$$
In particular, there exists $N\in\xN$ such that  $q_{_{N+1}} =d+1$ so that $u\in W^{2,d+1}(\xR^d)$.
In fact, if $q_n\leq \frac{d}{2}$, the sequence $q_n$ is increasing and 
$$q_{n+1}-q_n\geq \frac{2}{d-2},$$
therefore 
$$\exists N\in \xN \quad  q_{_N}>\frac{d}{2}$$
 then $$q_{_{N+1}}=d+1.$$

Finally, by applying Morrey's inequality  \cite[Theorem 9.12]{brezis2011functional},
we have $$u\in W^{2,d+1}(\xR^d)\subset C^1(\xR^d).$$
Since $C^\infty_c(\xR^d)$ is dense in $W^{2,d+1}(\xR^d) $, we deduce that $\lim\limits_{|x|\to\infty}u(x)= 0$ and $ \lim\limits_{|x|\to\infty} \nabla u(x)= 0.$

\item Next, we prove \eqref{L2}. To this goal we use the result of Gidas, Ni, Nirenberg  \cite{gidas1981symmetry} as presented in  \cite[Theorem 4.1]{frank2013ground}. To be able to use this result which is an application of the method of moving planes, we have to show that $f_\omega$ is a function on $[0, \infty)$ satisfying
\begin{equation}
\label{Condition_F}
\frac{f_\omega(s)-f_\omega(t)}{s-t} \leq-\tau^2+C s^{\min\{1,2\alpha\}} \quad \text { for all  } 0 \leq t<s,
\end{equation}
for some $C \geq 0, \tau>0$.
In this case any positive solution $u \in H^1(\xR^d) \cap L^\infty(\xR^d)$ of $$	-\Delta u=f_\omega(u)$$ 
is radial with respect to some point and  decreasing with respect to the distance from this point.

Thanks to  \eqref{L1}, $u\in L^\infty(\xR^d) $, so it remains to show that $f$ satisfies \eqref{Condition_F}. Let $t^*\ge s > t\geq 0$,
\begin{align*}
    \frac{f_\omega(s)-f_\omega(t)}{s-t}=\int_0^1 f_\omega'(t+\sigma (s-t))\xd\sigma.
\end{align*}
We have $$f_\omega'(r)=-\omega -(3\alpha+1)\Lambda^{4 \alpha} (r)+(\alpha\omega+\omega+ 2\alpha +1)\Lambda^{2\alpha}(r)$$
which implies that 
$$ \frac{f_\omega(s)-f_\omega(t)}{s-t} \leq -\omega + \int_0^1 (\alpha\omega+\omega+ 2\alpha +1)\Lambda^{2\alpha}(\sigma)(t+\sigma(s-t))\xd\sigma.$$
Using the fact that $\Lambda$ is increasing, $\Lambda(s)\leq \min\{1,s\}$, and $t+\sigma(s-t)\leq s$, we obtain 
\begin{align*}    
    \frac{f_\omega(s)-f_\omega(t)}{s-t}&\leq -\omega + \int_0^1 (\alpha\omega+\omega+ 2\alpha +1)\Lambda^{2\alpha}(s)\xd\sigma \\  
    &\leq -\omega + (\alpha\omega+\omega+ 2\alpha +1)s^{\min\{1,2\alpha\}}
\end{align*}
with $\omega>0$. If $s>t\ge t^*\ge 1$,
\begin{align*}
    -\omega + (\alpha\omega+\omega+ 2\alpha +1)s^{\min\{1,2\alpha\}}\ge \omega((t^*)^{\min\{1,2\alpha\}}-1)\ge 0= \frac{f_\omega(s)-f_\omega(t)}{s-t}
\end{align*}
and \eqref{Condition_F} holds. Finally, if $0\le t<t^*<s$, we consider two different cases. Either $f_\omega(t)\ge 0$, so that
\begin{align*}
    \frac{f_\omega(s)-f_\omega(t)}{s-t}=\frac{-f_\omega(t)}{s-t}\le 0\le -\omega + (\alpha\omega+\omega+ 2\alpha +1)s^{\min\{1,2\alpha\}}
\end{align*}
as above. Either $f_\omega(t)< 0$, and we have 
\begin{align*}
    \frac{f_\omega(s)-f_\omega(t)}{s-t}&=\frac{f_\omega(t_*)-f_\omega(t)}{s-t}\le \frac{f_\omega(t_*)-f_\omega(t)}{t^*-t}\\
    &\le -\omega + (\alpha\omega+\omega+ 2\alpha +1)(t^*)^{\min\{1,2\alpha\}} \\
    &\le -\omega + (\alpha\omega+\omega+ 2\alpha +1)s^{\min\{1,2\alpha\}}.
\end{align*}
As a conclusion, \eqref{Condition_F} holds for any $s>t\ge 0$.

\item We prove \eqref{L3}. Let $ u $ be a positive solution of \eqref{PSSL}, then $u$ is spherically symmetric with respect to some point $T\in \xR^d$ and  decreasing with respect to the distance from $T$, thanks to \eqref{L2}. With no loss of generality one can take $T=0$. Now, we use the maximum principle in order to show that  $u(x)\leq t^*$, then, since $u $ is radial, instead of studying the PDE, we work on the ODE to prove that $u< t^*$.

Let us consider $$\Omega=\{x \in \xR^d\, |\, u(x)>t^*\}$$
and suppose by contradiction that $\Omega \ne \emptyset$. Then since $u$ is radial, $\Omega =B_r(0) \text{ with }  r>0.$ 
 We have 
$$-\Delta u(x)=f_\omega(u(x))=0 \qquad \forall x\in\Omega. $$ 
By elliptic regularity $u\in C^2(\Omega)\cap C(\bar{\Omega})$.
Now thanks to the weak maximum principle \cite[section 6.4, Theorem 1]{evans2009partial}
$$\max_{|x|\leq r} u(x)=\max_{|x|=r} u(x)= t^*$$
which contradicts the fact that $\Omega$ is not an empty set, then $\Omega = \emptyset$ and $u(x)\leq t^*$ for all $x\in \xR^d$. More precisely, since  $u$ is  decreasing 
$$t^*\geq u(0)> u(x) \qquad \forall x\in\xR^d\setminus \{0\}.$$
Then it remains to show that $t^*\ne u(0)$. Let us suppose by contradiction that $t^*= u(0)$. Then in the spherical coordinates $u$ is a solution of the following Cauchy problem for all $r\in(0,+\infty)$:
\begin{equation}
\label{Cauchy_P1}
\left\{
\begin{array}{ccc}
u^{\prime\prime} (r)+\frac{d-1}{r}u^\prime(r)+&f_\omega(u(r)) = & 0 \\
&u'(0) =&  0\\
&u(0) =& t^*.\\
\end{array}
\right.
\end{equation}
By integrating this ODE twice we have  
\begin{align*}
  0 &=u''(r)+\frac{d-1}{r}u'(r)+f_\omega(u(r))  \\
   &=r^{d-1} u''(r)+(d-1)r^{d-2}u'(r)+f_\omega(u(r)) r^{d-1} \\
   &= (r^{d-1} u'(r))'+f_\omega(u(r)) r^{d-1}\\
   u'(r)& = -\frac{1}{r^{d-1}}\int_0^r f_\omega(u(s))s^{d-1}~\xd s\\ 
 t^*-  u(r)& = \int_0^r\int_0^t\frac{s^{d-1}}{t^{d-1}} f_\omega(u(s))~\xd s~\xd t.\\ 
\end{align*}
Using that $f_\omega(t^*)=0$, we have 
\begin{align*}
t^*-u(r) &= \int_0^r\int_0^t\frac{s^{d-1}}{t^{d-1}} f_\omega(u(s))~\xd s~\xd t \\
&\leq \int_0^r\int_0^t\frac{s^{d-1}}{t^{d-1}} |f_\omega(u(s))-f_\omega(t^*)|~\xd s~\xd t .\\
\end{align*}
We know that $f_\omega$ is Lipschitz thanks to Proposition \ref{prop_f},\ref{Prop_f4}. As a consequence, for all $\delta>0$ and $r\in (0,\delta)$, we have 
\begin{align*}
|t^*-u(r)| &\leq K\| u-t^*\|_{L^\infty((0,\delta))}\int_0^r\int_0^t\frac{s^{d-1}}{t^{d-1}}~\xd s~\xd t.
\end{align*}
Then
\begin{equation}
\label{norm}
    \|t^*-u\|_{L^\infty((0,\delta))} \leq \frac{K\delta^2}{2d} \|t^*-u\|_{L^\infty((0,\delta))}.
\end{equation}
 In particular, if we choose  $\delta< \sqrt{\frac{2d}{K}}$, \eqref{norm} implies that  $u(r)=t^*$  for all $r\in(0,\delta)$. This contradicts the fact that $u $ is  decreasing.
 
 \item  Finally, it remains to prove \eqref{L4}. As above, if  $ u $ is a positive solution of \eqref{PSSL}, then $u$ is spherically symmetric, thanks to \eqref{L2}, and solves the ODE
 \begin{equation*}
     -u''(r)-\frac{d-1}{r}u'(r)=f_\omega(u(r))
 \end{equation*}
 for all $r\in (0,+\infty)$. Since $u$ is in $C^1$, thanks to \eqref{L1}, and $f_\omega$ is continuous, we  have that $u''$ is continuous except possibly at $0$. However, we have 
\begin{align*}
    u'(r)& = -\frac{1}{r^{d-1}}\int_0^r f_\omega(u(s))s^{d-1}~\xd s= -r\int_0^1 f_\omega(u(r\tau))\tau^{d-1}~\xd s
\end{align*}
so that 
\begin{align*}
    \frac{u'(r)}{r}& = -\int_0^1 f_\omega(u(r\tau))\tau^{d-1}~\xd s\xrightarrow[r\to 0]{}-\frac{f_\omega(u(0))}{d}
\end{align*}
since $u$ and $f_\omega$ are continuous functions. As a consequence, $u''(0)=-\frac{f_\omega(u(0))}{d}$ and $u\in C^2(\R^d)$.

 \end{itemize}
\end{proof}

We now state the theorem on the existence and uniqueness of a positive solution to the semi-linear problem.
\begin{thrm}
\label{Thrm_existence_uniqueness}
Let $\alpha\geq 0$ and $\omega>0$. 
\begin{itemize}
    \item For $\omega\geq \frac{1}{\alpha+1}$ the semi-linear problem \eqref{PSSL} has no non-trivial solution. 
\item For $0<\omega< \frac{1}{\alpha+1}$ the semi-linear problem \eqref{PSSL} has a unique solution $u>0 $, up to translations.\\
Moreover:
\begin{itemize}
    \item There exists $v:[0,+\infty)\to \xR$ a   decreasing function
    and $T\in \xR^d$ such that $u(x)=v(|x+T|)$ for all  $x\in \xR^d$.
    \item It is non-degenerate in the sense that $\ker (-\Delta -f'_\omega(u))=\mathrm{span}\{\partial_{x_1} u,\ldots,\partial_{x_d}u\}$   and $\ker (-\Delta -\frac{f_\omega(u)}{u})=\mathrm{span}\{u \}$.
\end{itemize}
\end{itemize}
\end{thrm}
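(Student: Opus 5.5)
Non-existence for $\omega\ge\frac{1}{\alpha+1}$ will come from the Pohozaev identity. Existence will follow from the Berestycki--Lions theorem. Uniqueness and non-degeneracy will be obtained from a uniqueness criterion of McLeod--Serrin/Kwong type for $-\Delta u=f(u)$, applied on the interval $(0,t^*)$ where Lemma \ref{lem_properties} confines $u$.

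\textbf{Non-existence.} Take a non-trivial solution $u$. Since $f_\omega$ is Lipschitz and $F_\omega$ is bounded by $C\min\{1,t^2\}$ (Proposition \ref{prop_f}), the Pohozaev identity
\[
\frac{d-2}{2}\int|\nabla u|^2=d\int F_\omega(u)
\]
holds, together with $\int|\nabla u|^2=\int f_\omega(u)u$. For $d=2$ this gives $\int F_\omega(u)=0$. For $d\ge3$ it gives $\int F_\omega(u)>0$, so in every case $\int F_\omega(u)\ge0$. By Proposition \ref{prop_f}\ref{Prop_f5},
\[
F_\omega(t)=\tfrac{\Lambda^2(t)}{2}\Big(\tfrac{\Lambda^{2\alpha}(t)}{\alpha+1}-\omega\Big)<0 \quad\text{for } 0<|t|<t^*,
\]
using $\Lambda<1$ there and $\omega\ge\frac1{\alpha+1}$. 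For $|t|\ge t^*$ the same formula gives $F_\omega\le0$. So $F_\omega(u)\le0$ everywhere, and $\int F_\omega(u)\ge0$ forces $F_\omega(u)=0$ a.e., that is, $|u|\ge t^*$ a.e. (when $\omega=\frac1{\alpha+1}$). This contradicts $u\in L^2$.

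\textbf{Existence.} For $0<\omega<\frac1{\alpha+1}$ I check the Berestycki--Lions hypotheses (and the Berestycki--Gallouët--Kavian version for $d=2$):
\begin{itemize}
\item $f_\omega$ is odd, continuous and bounded;
\item $f_\omega'(0)=-\omega<0$;
\item $F_\omega(t^*)=\frac1{2(\alpha+1)}-\frac\omega2>0$.
\end{itemize}
Boundedness makes the growth condition at infinity automatic. These results give a positive radial ground state in $H^1$. Lemma \ref{lem_properties} then gives symmetry, monotonicity, $u<t^*$ and $C^2$ regularity.

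\textbf{Uniqueness and non-degeneracy.} This is the main obstacle. Since $0<u<t^*$, only $f_\omega$ on $(0,t^*)$ matters. I would substitute $s=\Lambda(t)$, so that $f_\omega$ becomes an explicit function of $s\in(0,1)$ through the formulas of Proposition \ref{prop_f}\ref{Prop_f2}.

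The plan is to verify the hypotheses of the Serrin--Tang / McLeod uniqueness theorem, as used in Lewin--Rota Nodari for $\alpha=1$. These require that $f_\omega$ has a unique positive zero $b=\Lambda^{-1}(\omega^{1/(2\alpha)})$, with $f_\omega<0$ on $(0,b)$ and $f_\omega>0$ on $(b,t^*)$. They also require that
\[
K(t)=\frac{t f_\omega'(t)}{f_\omega(t)}
\]
is non-increasing on $(0,b)$ and on $(b,t^*)$, or an equivalent monotonicity condition. Since $f_\omega'$ is a quadratic polynomial in $S=\Lambda^{2\alpha}$ and $f''_\omega$ changes sign at most once, I expect this monotonicity check to be the hard computational core. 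It will probably need the bounds $\Lambda't\le\Lambda\le t$ of Proposition \ref{prop_change_var} and careful sign analysis.

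Non-degeneracy of $-\Delta-f_\omega'(u)$ then follows from the same ODE shooting analysis, in the form of Kwong's argument or the Lewin--Rota Nodari approach via Sturm comparison. The radial kernel element $w$, with $w(0)=1$, must have $w$ unbounded, since the non-degeneracy of the shooting map gives uniqueness. Non-radial modes reduce to spherical harmonics, and $\partial_r u$ is the only decaying solution for $\ell=1$. For $\ell\ge2$ a comparison with $\partial_r u<0$ excludes further kernel elements. Finally, $-\Delta-\frac{f_\omega(u)}{u}$ has $u>0$ in its kernel, so $u$ is its ground state, and the ground state of a Schrödinger operator is simple.
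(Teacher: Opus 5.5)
Your existence step is the paper's: Berestycki--Lions for $d\ge3$, Berestycki--Gallou\"et--Kavian for $d=2$, with the same three checks.

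Your non-existence step takes a genuinely different and slightly stronger route. The paper uses the radial energy $H(r)=\frac{(u')^2}{2}+F_\omega(u)$, which decreases to $0$, while $H(0)=F_\omega(u(0))<F_\omega(t^*)\le0$. That argument needs radial symmetry and $u<t^*$ from Lemma \ref{lem_properties}, so it only excludes \emph{positive} solutions. Your Pohozaev argument needs neither, since the bootstrap in Lemma \ref{lem_properties} never uses positivity, so $u\in L^\infty$ and the identity is justified. It therefore proves the statement as written, for all non-trivial solutions.

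One repair is needed for $d=2$, $\omega=\frac1{\alpha+1}$. There, $F_\omega(u)=0$ a.e.\ means $u(x)\in\{0\}\cup\{|t|\ge t^*\}$, not $|u|\ge t^*$. Since $u$ is continuous and $\xR^d$ is connected, either $u\equiv0$ or $|u|\ge t^*$ everywhere, and the latter contradicts $u\in L^2$.

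The genuine gap is uniqueness and non-degeneracy, which is the core of the theorem and which your proposal only announces.

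First, the hypotheses you list are not the ones needed. Because $f_\omega$ vanishes again at $t^*$, with $f'_\omega(t^*)=-\alpha(1-\omega)<0$ (so $tf'_\omega/f_\omega\to-\infty$ there), you need a theorem for nonlinearities with a second positive zero. The paper uses Theorem 1 of Lewin--Rota Nodari's double-power paper, which requires:
\begin{enumerate}
\item[(i)] $f_\omega<0$ on $(0,t_0)$ and $f_\omega>0$ on $(t_0,t^*)$, with $f'_\omega(0)<0$, $f'_\omega(t_0)>0$ and $f'_\omega(t^*)\le0$;
\item[(ii)] $t\mapsto tf'_\omega/f_\omega$ is decreasing on $(t_0,t^*)$ only;
\item[(iii)] $f''_\omega>0$ on $(0,t_1)$ and $f''_\omega<0$ on $(t_1,t^*)$.
\end{enumerate}
You note that $f''_\omega$ changes sign at most once, but you do not use it as a hypothesis. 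You do not prove (ii) at all.

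These computations are the whole content of the paper's proof. For (ii), write $f_\omega=\Lambda'P(\Lambda)$ with $P(\xi)=\xi^{2\alpha+1}-\omega\xi$. Then
\[
\frac{tf'_\omega}{f_\omega}=-\frac{\alpha t\Lambda^{2\alpha}}{\Lambda\Lambda'}+\frac{t\Lambda'}{\Lambda}\cdot\frac{\Lambda P'(\Lambda)}{P(\Lambda)}.
\]
After differentiating, every term is non-positive on $(t_0,t^*)$, for three reasons:
\begin{itemize}
\item $\Lambda+(2\alpha-1)t\Lambda'\ge0$;
\item $\Lambda\Lambda'-t\bigl(1+(\alpha-1)\Lambda^{2\alpha}\bigr)\le0$, since it vanishes at $0$ and has derivative $-2\alpha\Lambda^{2\alpha-1}\bigl(\Lambda+(\alpha-1)t\Lambda'\bigr)\le0$;
\item $\Lambda P'(\Lambda)/P(\Lambda)=2\alpha+1+\frac{2\alpha\omega}{\Lambda^{2\alpha}-\omega}$ is positive and decreasing there.
\end{itemize}
All three rest on $\Lambda''=-\alpha\Lambda^{2\alpha-1}$ and $t\Lambda'\le\Lambda\le t$. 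For (iii), $f''_\omega$ has the sign of $Q=-(6\alpha+2)\Lambda^{2\alpha}+2\alpha+1+\omega(1+\alpha)$. This function is decreasing, with $Q(0)>0>Q(t^*)=\omega(1+\alpha)-4\alpha-1$.

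Second, your non-degeneracy sketch is circular. Uniqueness does not imply that the radial solution $w$ of the linearized equation with $w(0)=1$ is unbounded. In Kwong/McLeod-type arguments the implication runs the other way: unboundedness is proved from the structural hypotheses, and uniqueness follows. In the paper, $\ker(-\Delta-f'_\omega(u))=\mathrm{span}\{\partial_{x_j}u\}$ comes out of the cited theorem once (i)--(iii) are verified. Your remarks on the $\ell\ge1$ modes and on $\ker(-\Delta-f_\omega(u)/u)=\mathrm{span}\{u\}$ are standard and fine.
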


To prove Theorem \ref{Thrm_existence_uniqueness}, we will decompose the result into three parts: existence, uniqueness and non-degeneracy for $\omega<\frac{1}{\alpha+1}$, and non-existence for $\omega\geq\frac{1}{\alpha+1}$ . We will prove each part separately.
   \begin{prop}[Existence]
   	\label{thrm_existence}
   	Let $\alpha >0$. For  $0<\omega<\frac{1}{\alpha+1}$, the problem \eqref{PSSL} has a positive solution $u$  such that $u$ is spherically symmetric  with respect to $0$, and decreases exponentially to $0$ as $|x|\to \infty$.
   \end{prop}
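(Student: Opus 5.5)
The plan is to apply the Berestycki--Lions existence theorem for zero-mass-at-infinity ("positive mass") scalar field equations $-\Delta u = g(u)$ in $\xR^d$, $d\ge 2$. For $d\ge3$ this is Berestycki--Lions; for $d=2$ it is Berestycki--Gallou\"et--Kavian. We use it with $g=f_\omega$. Recall the hypotheses. The function $g$ must be continuous and odd. We need $\limsup_{t\to0^+} g(t)/t = -m<0$. Growth at infinity must be subcritical; here this is trivial, since $f_\omega$ is bounded by Proposition~\ref{prop_f}\ref{Prop_f1}. Finally there must exist $\zeta>0$ with $G(\zeta)=\int_0^\zeta g>0$. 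The theorem then produces a positive, radial, decreasing solution $u\in H^1(\xR^d)\cap C^2$ which decays exponentially together with its derivatives.

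\textbf{Step 1 (behaviour at $0$).} By Proposition~\ref{prop_change_var}\ref{prop_change_var3} we have $\Lambda(t)\sim t$ and $\Lambda'(t)\to1$ as $t\to0^+$. Hence
\[
\frac{f_\omega(t)}{t}=\Lambda'(t)\frac{\Lambda(t)}{t}\bigl(\Lambda^{2\alpha}(t)-\omega\bigr)\to-\omega<0 .
\]
Continuity and oddness are already noted after \eqref{f}, and boundedness follows from Proposition~\ref{prop_f}\ref{Prop_f1}.

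\textbf{Step 2 (positivity of the primitive).} This is the step where the restriction $\omega<\frac1{\alpha+1}$ enters. By Proposition~\ref{prop_f}\ref{Prop_f5},
\[
F_\omega(t)=\Lambda^2(t)\Bigl(\frac{\Lambda^{2\alpha}(t)}{2(\alpha+1)}-\frac\omega2\Bigr) \quad\text{for } t\in(0,t^*),
\]
and by continuity $F_\omega(t^*)=\frac1{2(\alpha+1)}-\frac\omega2$. This is positive exactly when $\omega<\frac1{\alpha+1}$. Since $\Lambda$ increases from $0$ to $1$ on $[0,t^*]$, the zero $\zeta_0=\Lambda^{-1}\bigl(((\alpha+1)\omega)^{1/(2\alpha)}\bigr)$ lies in $(0,t^*)$, and any $\zeta\in(\zeta_0,t^*]$ gives $F_\omega(\zeta)>0$. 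We also have $F_\omega(t)=F_\omega(t^*)$ for $t\ge t^*$.

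\textbf{Step 3 (conclusion).} The Berestycki--Lions/BGK theorem gives a radial ground state $u\in H^1$. It is obtained by minimizing $\int|\nabla u|^2$ under $\int G(u)=1$ among radial functions, then rescaling to remove the Lagrange multiplier. One may replace $u$ by $|u|$, which is allowed because $G$ is even, and apply Schwarz symmetrization. The strong maximum principle then gives $u>0$, since $f_\omega(u)\ge -Cu$. Exponential decay follows from the radial ODE once $u$ is small. For $r$ large, $u(r)<\zeta_0$ and $f_\omega(u)\le-\frac\omega2 u$. Comparing with $e^{-\sqrt{\omega/2}\,r}$ via the maximum principle on exterior domains then gives the decay. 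Lemma~\ref{lem_properties} supplies $C^2$ regularity and $u<t^*$ as by-products.

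\textbf{Main obstacle.} I expect the only delicate point to be checking that the cited theorem applies in $d=2$. There the Pohozaev/constrained-minimization argument requires the Berestycki--Gallou\"et--Kavian version, and the hypotheses should be verified in that form. If needed, a direct argument is available: minimize $\frac12\int|\nabla u|^2-\int F_\omega(u)$ over radial functions on the Pohozaev manifold. Boundedness of $f_\omega$ makes the compactness (Strauss lemma) straightforward in all dimensions $d\ge2$.
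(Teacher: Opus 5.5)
Your proposal is correct and follows the paper's proof. Both apply Berestycki--Lions for $d\ge 3$ and Berestycki--Gallou\"et--Kavian for $d=2$, checking the same hypotheses for $f_\omega$: $f_\omega(t)/t\to-\omega$ as $t\to0^+$, $F_\omega(t^*)=\frac12\bigl(\frac{1}{\alpha+1}-\omega\bigr)>0$, and the growth conditions at infinity from the bound $f_\omega\le 1$. Your additional sketches of positivity, radial monotonicity and exponential decay are already part of the conclusions of those cited theorems, so they are harmless but not needed.
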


\begin{proof}
	
We use the result of Berestycki and Lions  \cite{berestycki1983nonlinear} when $d\geq3$ and the result of Berestycki, Gallouët and Kavian  \cite{berestycki1983equations} when $d=2$ in ordre to prove this proposition. So we need to check if the function $f_\omega$ satisfies the following conditions:
\begin{equation}
\label{condition1}
-\infty<\lim _{s \rightarrow 0^{+}} f_\omega(s) / s \leqq \varlimsup_{s \rightarrow 0^{+}} f_\omega(s) / s=-m<0 
\end{equation}
\begin{equation}
\label{condition3}
\text { There exists } \zeta>0 \text { such that } \quad F_\omega(\zeta)=\int_{0}^{\zeta} f_\omega(s) \xd s>0
\end{equation}
\begin{equation}
\label{condition2}
-\infty \leqq \lim _{s \rightarrow+\infty} f_\omega(s) / s^{l} \leqq 0, \quad \text { where } l=\frac{d+2}{d-2} \quad\text{and } \quad d\geq 3
\end{equation}
\begin{equation}
\label{condition4}
\forall \gamma >0  \quad\exists C_\gamma >0     \qquad  f_\omega(s)\leq C_\gamma e^{\gamma s^2} \qquad \text{when } \quad d=2
\end{equation}
We have that $f_\omega$ satisfies the condition \eqref{condition1} because 

$$
\begin{aligned}
\lim _{s \rightarrow 0^+} \frac{f_\omega(s)}{s} &=\lim _{s \rightarrow 0^+}\frac{\Lambda(s)}{s} \sqrt{1-\Lambda(s)^{2 \alpha}}\left(\Lambda(s)^{2 \alpha}-\omega\right) \\
&=-\omega < 0.
\end{aligned}
$$
For the condition \eqref{condition3}, by taking $\zeta=t^*$, we have 
$$
\begin{aligned}
F_\omega(t^*)=\int_{0}^{t^*} f_\omega(s) \xd s &=\int_{0}^{t^*} \Lambda^{\prime}(s)\left(\Lambda(s)^{2 \alpha+1}-\omega \Lambda(s)\right) \xd s \\
&=\frac{1}{2} \Lambda(t^*)^{2}\left(\frac{1}{(\alpha+1)} \Lambda(t^*)^{2\alpha}-\omega\right)>0
\end{aligned}
$$
provided that
$$\frac{1}{(\alpha+1)}  >\omega.$$
The conditions \eqref{condition2}, \eqref{condition4} are verified since $ f_\omega(s)\leq 1$ by \ref{Prop_f1} from Proposition \ref{prop_f}.

\end{proof}

We now turn to proving that the positive solution to \eqref{PSSL} is unique.
\begin{prop}[Uniqueness and non-degeneracy]
\label{thrm_uniqueness}
	Let $\alpha >0$ and  $0<\omega<\frac{1}{\alpha+1}$. If $v$ is a positive solution of \eqref{PSSL}, then there exists $T\in \xR^d$ such that, for all $x\in \xR^d$, $v(x)=u(x+T)$, where $u$ is the solution given by Proposition \ref{thrm_existence}. Moreover, this solution is non-degenerate.
\end{prop}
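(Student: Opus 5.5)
By Lemma \ref{lem_properties}, any positive solution $v$ of \eqref{PSSL} is, after a translation, radial, decreasing, of class $C^2$, and satisfies $0<v<t^*$. The problem therefore reduces to the radial ODE
\[
v''+\tfrac{d-1}{r}v'+f_\omega(v)=0,\qquad v'(0)=0,\qquad v(r)\to 0 \text{ as } r\to\infty ,
\]
with $v(0)\in(0,t^*)$. On $(0,t^*)$ the nonlinearity $f_\omega$ is $C^1$: it is Lipschitz by Proposition \ref{prop_f}\ref{Prop_f4}, and its derivative has the explicit formula of Proposition \ref{prop_f}\ref{Prop_f2}. My plan is to invoke a uniqueness theorem for ground states of $-\Delta u=f(u)$ of McLeod--Serrin--Kwong type, most conveniently in the form of Serrin--Tang, which applies to $C^1$ nonlinearities on $(0,\infty)$.

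The Serrin--Tang hypotheses are as follows. $f$ vanishes at $0$, with $f<0$ on $(0,b)$ and $f>0$ on $(b,c)$. There is a $\beta$ with $F(\beta)=0$. Finally, the quotient $g(t)=f(t)/(t-b)$ is non-increasing on $(b,c)$. Here $f_\omega(t)=\Lambda'(t)\Lambda(t)(\Lambda^{2\alpha}(t)-\omega)$, so I will take $b=\Lambda^{-1}(\omega^{1/(2\alpha)})$ and $c=t^*$. The condition $\omega<\frac1{\alpha+1}<1$ guarantees $b<t^*$. By Proposition \ref{prop_f}\ref{Prop_f5}, $F_\omega$ has a unique zero $\beta\in(b,t^*)$, given by $\Lambda^{2\alpha}(\beta)=(\alpha+1)\omega$, and this root exists precisely because $\omega<\frac1{\alpha+1}$.

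The main obstacle is the monotonicity of $f_\omega(t)/(t-b)$ on $(b,t^*)$. The first thing I would check is concavity of $f_\omega$ on $(b,t^*)$: since $f_\omega(b)=0$, concavity there gives the required monotonicity of the difference quotient immediately. By Proposition \ref{prop_f}\ref{Prop_f2}, the sign of $f_\omega''$ is the sign of $(2\alpha+1)+\omega(1+\alpha)-(6\alpha+2)\Lambda^{2\alpha}$, which is positive for small $\Lambda$ and negative near $\Lambda=1$. So $f_\omega$ is convex and then concave, with the inflection at $\Lambda^{2\alpha}=s_0:=\frac{2\alpha+1+\omega(1+\alpha)}{6\alpha+2}$. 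When $s_0\le\omega$ the concavity argument on $(b,t^*)$ is complete.

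When $s_0>\omega$, I would instead use the alternative Kwong-type criterion: the function $K(t)=t f_\omega'(t)/f_\omega(t)$ should be non-increasing on $(b,t^*)$. Equivalently, I would pass to the variable $s=\Lambda^{2\alpha}$ and work with the explicit rational expressions, using $t$ versus $\Lambda(t)$ through Proposition \ref{prop_change_var}\ref{prop_change_var4}. Should the direct computation fail, my fallback is the argument of Lewin and Rota Nodari \cite{lewin2015uniqueness} for $\alpha=1$, which proves uniqueness via a shooting argument together with a Sturm comparison on the difference of two solutions, using only the "convex then concave" structure of $f_\omega$ on $(b,t^*)$. I would adapt that argument to general $\alpha$ by replacing their explicit computations with the sign analysis of $f_\omega''$ above.

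For non-degeneracy, I would follow the standard route. Expanding the kernel of $-\Delta-f_\omega'(u)$ in spherical harmonics, the radial sector reduces to showing that the solution $w$ of the linearized ODE with $w(0)=1$, $w'(0)=0$ is unbounded. This is exactly the statement that the shooting map is transversal, which is a by-product of the Kwong/McLeod uniqueness argument. For angular momentum $\ell=1$ the kernel is spanned by $u'(r)Y_1$, i.e. by $\partial_{x_j}u$. For $\ell\ge2$ there is no kernel, since $u'<0$ is a positive ground state of the $\ell=1$ operator and the centrifugal term makes the higher sectors strictly positive. Finally, for $-\Delta-f_\omega(u)/u$, the function $u>0$ lies in the kernel, so by Perron--Frobenius it is the simple ground state at eigenvalue $0$, and the kernel is $\mathrm{span}\{u\}$.
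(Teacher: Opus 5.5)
\textbf{Where the argument breaks.} The decisive hypothesis of the uniqueness theorem is verified only in a small corner of the parameter range, and the main case is left open.

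Your concavity argument needs $\omega\ge s_0$, that is $(5\alpha+1)\omega\ge 2\alpha+1$. Combined with $\omega<\frac{1}{\alpha+1}$, this forces $0<\alpha<1$ and $\frac{2\alpha+1}{5\alpha+1}\le\omega<\frac{1}{\alpha+1}$. In every other case (all admissible $\omega$ when $\alpha\ge 1$, and all small $\omega$ for every $\alpha$) you have
\[
f_\omega''(b)=2\alpha\Lambda^{2\alpha-1}(b)\Lambda'(b)\bigl((2\alpha+1)-(5\alpha+1)\omega\bigr)>0 .
\]
Since $f_\omega$ is $C^2$ near $b>0$, a Taylor expansion gives $\frac{d}{dt}\frac{f_\omega(t)}{t-b}\to\frac12 f_\omega''(b)>0$ as $t\to b^+$. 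So the quotient condition you attribute to Serrin--Tang is false there, not merely unproved.

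For this main regime you only say that $tf_\omega'/f_\omega$ ``should be'' non-increasing. You neither compute it nor name a Kwong--McLeod-type theorem that accepts a nonlinearity vanishing again at $t^*$, with ground states that must stay below $t^*$. The fallback of adapting \cite{lewin2015uniqueness} using only the convex--concave shape of $f_\omega$ is a plan, not an argument. Your radial non-degeneracy is obtained as a by-product of that same uniqueness argument, so it inherits the gap.

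\textbf{What closes it.} The paper applies \cite[Theorem 1]{lewin2020double}, which is built for exactly this situation. Its hypotheses are:
\begin{enumerate}
\item[(i)] the sign pattern of $f_\omega$ on $(0,b)$ and $(b,t^*)$, together with $f_\omega'(0)=-\omega<0$, $f_\omega'(b)=2\alpha\omega(1-\omega)>0$ and $f_\omega'(t^*)=-\alpha(1-\omega)\le 0$ (one-sided limits);
\item[(ii)] $t\mapsto tf_\omega'(t)/f_\omega(t)$ decreasing on $(b,t^*)$;
\item[(iii)] a single sign change of $f_\omega''$ on $(0,t^*)$, which your analysis of $f_\omega''$ already gives.
\end{enumerate}

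For (ii), write $f_\omega=\Lambda'P(\Lambda)$ with $P(\xi)=\xi^{2\alpha+1}-\omega\xi$. Then
\[
\frac{tf_\omega'(t)}{f_\omega(t)}=-\alpha\frac{t\Lambda^{2\alpha-1}}{\Lambda'}+\frac{t\Lambda'}{\Lambda}\,Q(\Lambda),\qquad Q(\xi)=\frac{\xi P'(\xi)}{P(\xi)}=2\alpha+1+\frac{2\alpha\omega}{\xi^{2\alpha}-\omega}.
\]
Use $\Lambda''=-\alpha\Lambda^{2\alpha-1}$ and $t\Lambda'\le\Lambda$. Each contribution to the derivative is then negative:
\begin{enumerate}
\item[(a)] The first term has derivative $-\alpha\Lambda^{2\alpha-2}\bigl(\Lambda+(2\alpha-1)t\Lambda'\bigr)/\Lambda'-\alpha^2t\Lambda^{4\alpha-2}/(\Lambda')^2<0$.
\item[(b)] One has $(t\Lambda'/\Lambda)'=h/\Lambda^2$ with $h=\Lambda\Lambda'-t\bigl(1+(\alpha-1)\Lambda^{2\alpha}\bigr)$. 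Here $h(0)=0$ and $h'=-2\alpha\Lambda^{2\alpha-1}\bigl(\Lambda+(\alpha-1)t\Lambda'\bigr)\le 0$, so $h\le 0$.
\item[(c)] On $(b,t^*)$, $Q(\Lambda)>0$ while $Q'(\xi)=-4\alpha^2\omega\xi^{2\alpha-1}/(\xi^{2\alpha}-\omega)^2<0$.
\end{enumerate}

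That theorem also delivers both kernel identities directly. Your spherical-harmonic and Perron--Frobenius discussion is standard but then unnecessary.
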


\begin{proof}

Let $v$ be a positive solution of \eqref{PSSL}, there exists $T\in \xR^d$ such that $v$ is spherically symmetric and  decreasing with respect to $T$, thanks to Lemma \ref{lem_properties}(\ref{L2}). Moreover, by Lemma \ref{lem_properties}(\ref{L1}), 
\begin{equation}
\label{C1}
v\in C^1(\xR^d) \qquad \text{and} \qquad v,\nabla v \to 0 \qquad \text{when} \qquad |x|\to \infty
\end{equation}
and, by Lemma \ref{lem_properties}(\ref{L3}), 
\begin{equation}
\label{condi_norm_borne}
    \|v\|_{L^\infty(\xR^d)}< t^*.
\end{equation}
Hence, we can apply the result of Lewin and Rota Nodari  \cite[Theorem 1]{lewin2020double} to prove the uniqueness and the non-degeneracy of positive solutions to \eqref{PSSL}. This result states that the problem \eqref{PSSL} admits at most one positive radial solution which satisfies \eqref{C1} and \eqref{condi_norm_borne} if the function $f$ is continuously differentiable  on $[0,t^*]$ and verifies  
\begin{enumerate}[label=\arabic*)]
	\item  $f_\omega(0)=f_\omega(t_0)=f_\omega(t^*)=0, f_\omega$ is negative on $(0,t_0)$ and positive on $(t_0,t^*)$ with $f_\omega^{\prime}(0)<0, f_\omega^{\prime}(t_0)>0$ and $f_\omega^{\prime}(t^*
		) \leqslant 0$ for some $t_0\in (0,t^*)$ ;\label{cond_uniq_1}
    \item the function $t\to  \frac{t f_\omega'(t)}{f_\omega(t)}$ is  decreasing on $(t_0, t^*)$;
\label{cond_uniq_3}
    \item there exists $t_1\in (0, t^*)$ such that $f_\omega'' > 0 $ on $(0, t_1)$ and $f_\omega'' < 0$ on
$(t_1, t^*)$. \label{cond_uniq_2}
\end{enumerate}
Here, we use the convention that $f'_\omega(0)=\lim_{t\to 0^+}f'_\omega(t)$ and $f'_\omega(t^*)=\lim_{t\to (t^*)^{-}}f'_\omega(t)$.
Let us check that $f_\omega$ verifies the conditions above. 
By definition, $f_\omega(0)=0$ and, for any $t\in (0,t^*)$, we have
$$f_\omega(t)=\Lambda'(t)\left( \Lambda^{2\alpha+1}(t)-\omega\Lambda(t)\right)\mathrm.$$
Hence, we can see that $f_\omega$ verifies condition \ref{cond_uniq_1} using that $\Lambda'(t^*)=0$ and by taking $t_0\in (0,t^*)$ such that $\Lambda^{2\alpha}(t_0)=\omega$.\\
Moreover, if we denote $$f_\omega(t)= \Lambda'(t)P(\Lambda(t)) \qquad \text{with} \qquad P(\xi)=\xi^{2\alpha+1}- \omega \xi$$
and we use $\Lambda''(t)=-\alpha \Lambda^{2\alpha-1}(t)$ we obtain
$$f_\omega'(t)=-\alpha \Lambda^{2\alpha-1}(t)P(\Lambda(t))+(\Lambda'(t))^2 P'(\Lambda(t))\mathrm.$$
This gives 
$ f_\omega'(0)=-\omega<0$,  $f_\omega'(t_0)=2\alpha \omega(1-\omega)>0$,  $f_\omega'(t^*)=-\alpha(1-\omega)<0.$
For condition \ref{cond_uniq_3}, we have 
\begin{align*}	
    \frac{t f_\omega'(t)}{f_\omega(t)}	&= -\frac{\alpha t\Lambda^{2\alpha}(t)}{\Lambda'(t)\Lambda(t)}+ \frac{t\Lambda'(t)P'(\Lambda(t))}{P(\Lambda(t))}.	
\end{align*}

A direct computation leads to
\begin{equation}
\begin{aligned}
\left(\frac{t f_\omega'(t)}{f_\omega(t)}\right)'&=-\alpha\frac{\Lambda^{2\alpha}(t)\Lambda'(t)(\Lambda(t)+(2\alpha-1)t\Lambda'(t))}{(\Lambda'(t)\Lambda(t))^2}-\alpha^2\frac{t(\Lambda^{2\alpha}(t))^2}{(\Lambda'(t)\Lambda(t))^2}\\
&+\left(\frac{\Lambda(t)P'(\Lambda(t))}{P(\Lambda(t))}\right)\left(\frac{\Lambda'(t)\Lambda(t)-t(1+(\alpha-1)\Lambda(t)^{2\alpha})}{\Lambda(t)^2}\right)\\
&+t\frac{\Lambda'(t)}{\Lambda(t)}\left(\frac{\Lambda(t)P'(\Lambda(t))}{P(\Lambda(t))}\right)^\prime.
\end{aligned}
    \label{con_f'_f}
\end{equation}
The first term in the first line of \eqref{con_f'_f} is clearly positive for all $\alpha\geq\frac{1}{2}$, and for all $0<\alpha<\frac{1}{2}$ we have that 
\begin{align}
\label{estimation}
   0< 1-2\alpha < 1 &\implies  (1-2\alpha) t \Lambda'(t) \leq t \Lambda'(t)  \nonumber \\ 
   & \underbrace{\implies }_{\ref{prop_change_var}\ref{prop_change_var4}} (1-2\alpha) t \Lambda'(t) \leq \Lambda(t)  \nonumber\\
   &\implies 0 \leq \Lambda(t) + (2\alpha-1) t \Lambda'(t).
\end{align}
The second line of \eqref{con_f'_f} is negative  on $(t_0,t^*)$  because, for all $ t>t_0$,
\begin{align*}
\label{P}
    \frac{\Lambda(t)P'(\Lambda(t))}{P(\Lambda(t))}&=(2\alpha+1)+\frac{2\alpha\omega}{\Lambda^{2\alpha}(t)-\omega}>  0,
\end{align*}    
and $h(t)=\Lambda'(t)\Lambda(t)-t(1+(\alpha-1)\Lambda(t)^{2\alpha})\leq0$ for all $t>0$. Indeed, we have $h(0)=0$ and as in \eqref{estimation}  using \ref{prop_change_var}\ref{prop_change_var4} we can show that 
$$h'(t)=-2\alpha\Lambda(t)^{2\alpha-1}( \Lambda(t)+(\alpha-1)t\Lambda'(t))\leq 0.$$
The third line of \eqref{con_f'_f} is  negative on $(t_0,t^*)$ because, for all $t\in(t_0,t^*)$, $\Lambda$ and $\Lambda'$ are positive and
\begin{equation*}
\left(\frac{\Lambda(t)P'(\Lambda(t))}{P(\Lambda(t))}\right)'=\frac{-4\alpha^2\omega \Lambda^{2\alpha-1}(t)\Lambda'(t)}{(\Lambda^{2\alpha}(t)-\omega)^2}<0.
\end{equation*}

For condition \ref{cond_uniq_2}, we have 
$$f_\omega''(t)=\underbrace{2\alpha \Lambda^{2\alpha-1}(t)\Lambda'(t)}_{\geq0}\left(-(6\alpha+2)\Lambda^{2\alpha}(t)+(2\alpha+1)+\omega(1+\alpha)\right)$$
for all $t\in (0,t^*)$.
Then the sign of $f_\omega''$ is the sign of $$Q(t)=-(6\alpha+2)\Lambda^{2\alpha}(t)+(2\alpha+1)+\omega(1+\alpha).$$
Using again that $\Lambda$ and $\Lambda'$ are positive, we deduce that the function $Q$ is decreasing on $(0,t^*)$, and we have $$Q(0)=2\alpha+1+\omega(1+\alpha)>0,\qquad  Q(t^*)=\omega(1+\alpha)-4\alpha-1<0.$$
Hence there exists $t_1$ such that $f_\omega''>0$ on $(0,t_1)$ and $f_\omega''<0$ on $(t_1,t^*)$.
Hence, \cite[Theorem 1]{lewin2020double} can be applied and we conclude that the solution $u$ is unique and non-degenerate in the sense that $\mathrm{ker} (-\Delta -f'_\omega(u))=\mathrm{span}\{\partial_{x_1} u,\ldots,\partial_{x_d}u\}$   and $\mathrm{ker} (-\Delta -\frac{f_\omega(u)}{u})=\mathrm{span}\{u \}$.
\end{proof}

\begin{rmrk}
\label{Remark_Linearized_op}
Let $\mathcal{L_\omega}=(-\Delta -f'_\omega(u))$ be the real part of the linearised operator related to the problem \eqref{PSSL} at our solution $u$. Then $\mathrm{ker} \mathcal{L}_{\omega{|\mathrm{rad}}}=\{0\}$.
\end{rmrk}
Now we are going to prove that there is no positive solution of \eqref{PSSL} if $\omega\geq \frac{1}{\alpha+1}$. 

\begin{prop}[Non-existence for $\omega\geq\frac{1}{1+\alpha}$]
Let $\alpha> 0$ and $\omega\geq \frac{1}{\alpha+1}$. The nonlinear problem \eqref{PSSL} has no non-trivial positive solution.
\end{prop}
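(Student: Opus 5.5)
The plan is to use the Pohozaev identity, combined with the a priori bounds of Lemma \ref{lem_properties}. Suppose by contradiction that $u\in H^1(\xR^d)$ is a positive solution of \eqref{PSSL} with $\omega\ge\frac{1}{\alpha+1}$. The proof of Lemma \ref{lem_properties} used only continuity, oddness, the Lipschitz bound of Proposition \ref{prop_f}\ref{Prop_f4} and the bound of Proposition \ref{prop_f}\ref{Prop_f1}. The only point where $\omega$ mattered was the moving-plane condition \eqref{Condition_F}, which holds for every $\omega>0$. Hence the lemma applies here too: $u\in C^2(\xR^d)\cap W^{2,q}(\xR^d)$, $u$ is radial (after translation) and decreasing, and $0<u<t^*$ everywhere. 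In particular $F_\omega(u(x))$ is given by the explicit formula of Proposition \ref{prop_f}\ref{Prop_f5}, with $\Lambda(u(x))\in(0,1)$.

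The key observation is a sign property of $F_\omega$. For $t\in(0,t^*)$ we have $0<\Lambda(t)<1$, so
\[
F_\omega(t)=\frac{\Lambda^2(t)}{2}\Big(\frac{\Lambda^{2\alpha}(t)}{\alpha+1}-\omega\Big)<\frac{\Lambda^2(t)}{2}\Big(\frac{1}{\alpha+1}-\omega\Big)\le 0 .
\]
Consequently $F_\omega(u(x))<0$ for every $x\in\xR^d$.

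Next I apply the Pohozaev identity, in the form of Berestycki--Lions \cite{berestycki1983nonlinear}: for $d\ge 3$,
\[
\frac{d-2}{2}\int_{\xR^d}|\nabla u|^2=d\int_{\xR^d}F_\omega(u),
\]
and for $d=2$ (as in \cite{berestycki1983equations}) $\int_{\xR^2}F_\omega(u)=0$. The hypotheses needed are met: $f_\omega$ is continuous with $|f_\omega(t)|\le(1+\omega)|t|$, so $|F_\omega(u)|\le C u^2\in L^1$, and $u\in W^{2,2}$. If needed, the identity can be derived directly by multiplying the equation by $x\cdot\nabla u\,\chi(x/R)$ and letting $R\to\infty$.

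The conclusion then follows. For $d=2$, the integrand $F_\omega(u)$ is strictly negative everywhere, which contradicts $\int F_\omega(u)=0$. For $d\ge3$, the left-hand side is nonnegative while the right-hand side is strictly negative, again a contradiction. Therefore no positive solution exists.

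The main obstacle is justifying the Pohozaev identity when $f_\omega$ is only Lipschitz, together with checking that Lemma \ref{lem_properties}(\ref{L3}) indeed holds in this range of $\omega$. The latter is the step that places $u(x)$ in $(0,t^*)$, where the formula for $F_\omega$ applies. If one prefers to avoid \eqref{L3}, note that for $t\ge t^*$ we have $F_\omega(t)=F_\omega(t^*)=\frac12\big(\frac{1}{\alpha+1}-\omega\big)\le0$. This gives $F_\omega\le0$ on all of $\xR^+$, with strict inequality near $0$, where $u$ takes values on a set of positive measure. That suffices for the same contradiction.
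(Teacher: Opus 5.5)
Your proof is correct, but it follows a different route from the paper.

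\textbf{The paper's route.} The paper reduces to the radial ODE via Lemma~\ref{lem_properties}(\ref{L2}) and studies the Hamiltonian $H(r)=\frac12 (u'(r))^2+F_\omega(u(r))$. This satisfies $H'=-\frac{d-1}{r}(u')^2<0$ and $H(r)\to 0$ as $r\to\infty$, so $H\ge 0$. It then contradicts this with $H(0)=F_\omega(u(0))<0$, using $u(0)<t^*$ from (\ref{L3}). Integrated, this is the identity $(d-1)\int_0^\infty \frac{(u'(s))^2}{s}\,\xd s=F_\omega(u(0))$, which is a one-dimensional analogue of your Pohozaev obstruction.

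Both arguments rest on the sign of $F_\omega$, and you isolate it cleanly: when $\omega\ge\frac{1}{\alpha+1}$, $F_\omega<0$ on $(0,t^*)$ and $F_\omega\le 0$ on $[0,\infty)$. The paper instead argues through the zero $t_0$ of $f_\omega$ and the inequality $F_\omega(u(0))<F_\omega(t^*)$. That step only makes sense for $\omega<1$. For $\omega\ge 1$, $F_\omega$ is decreasing on $(0,t^*)$, although the conclusion actually needed, $F_\omega(u(0))<0$, still holds.

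\textbf{What your route buys.} It needs neither the moving-plane symmetry nor (\ref{L3}), thanks to your fallback remark. Moreover, since $F_\omega$ is even and nonpositive, it excludes every nontrivial, possibly sign-changing, $H^1$ solution. That is the stronger statement actually claimed in Theorem~\ref{Thrm_existence_uniqueness}. For $d=2$ you use that $\{0<|u|<t^*\}$ is a nonempty open set.

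\textbf{The price.} You must invoke the Pohozaev identity for a merely Lipschitz nonlinearity. This is covered by Berestycki--Lions for $d\ge 3$ and by Berestycki--Gallou\"et--Kavian for $d=2$, since $u\in W^{2,2}\cap L^\infty$ and $|F_\omega(u)|\le Cu^2\in L^1$. The paper's argument is elementary once radial symmetry is available. Its energy identity is also the one reused later in the analysis of $\omega\to\omega^*$.
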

\begin{proof}
Let $\omega\geq \frac{1}{\alpha+1}$, we suppose by contradiction that $u$ is a positive solution of \eqref{PSSL}.
By Lemma \ref{L2}, $u$ is radial with respect to $T\in\xR^d$, by a translation of coordinates  $r=|x-T|$, $u$ is a solution of the following problem:

\begin{equation}
\label{CPNE}
\left\{
\begin{array}{ccc}
u^{\prime\prime} (r)+\frac{d-1}{r}u^\prime(r)+f_\omega(u(r)) &= & 0 \\
u'(0) =  0, \qquad \lim\limits_{r\to+\infty} u(r) &= &0.
\end{array}
\right.
\end{equation}
We consider the following quantity 
\begin{equation}
    \label{Hamlt}
    H(r)=\frac{(u'(r))^2}{2}+F_\omega(u(r)),  \qquad \text{with} \qquad F_\omega(\xi)=\int_0^\xi f_\omega(t) \xd t.
\end{equation}
The function $H$ can be seen as an energy of the system \eqref{CPNE}, which is  decreasing. In fact,
$$H'(r)=-\frac{d-1}{r}(u^\prime(r))^2<0\mathrm.$$
Moreover, thanks to Lemma \ref{L1}, $u(r), u'(r) \to 0$
when $r\to \infty$ then 
$$\lim\limits_{r\to +\infty }H(r)=0$$
which implies that for all $r\in \xR^+ $, $H(r)\geq 0$.\\
On the other hand, by Lemma \ref{L3}, we have $u(0)<t^*$. Hence, using that $F_\omega$ is negative on $(0,t_0)$ and increasing on $(t_0,t^*)$ we have
$$H(0)=F_\omega(u(0))<F_\omega(t^*). $$
As a consequence, since $\omega\geq \frac{1}{\alpha+1}$ and $F_\omega(t^*)\le 0$, then $H(0)<0$, which contradicts $H(r)\geq0$ for all $r\in\xR^+$. 
\end{proof}
\subsection{Proof of Theorem \ref{Thrm_existence_uniqueness_quasi0}}
For the existence and the uniqueness of $\varphi$ the solution of the quasi-linear Schrödinger equation \eqref{SSNL}, we apply Theorem \ref{Thrm_existence_uniqueness},
which gives existence and uniqueness of the positive solution $u$ of \eqref{PSSL}. Next, we know that $ \varphi=\Lambda(u)$ where $\Lambda$ is the change of variables defined by \ref{defi_change_var}. Since $\Lambda$ is invertible from $(0,t^*)$ into $(0,1)$,
and, for all $x\in \xR^d$, $u(x)\in(0,t^*)$, this implies that there exists a unique $\varphi$ solution to \eqref{SSNL} such that $0<\varphi(x)<1$ for all $x\in \xR^d$.

Next, we prove the non-degeneracy of the unique positive solution $\varphi$. The linearised operator at our solution $\varphi$ is defined by
$$\mathcal{L}(\eta)=L_+ \eta_1+i L_-\eta_2   \qquad \text{with} \qquad \eta= \eta_1+i \eta_2 $$
\begin{align}
 L_+ w=&-\nabla \cdot\left(\frac{\nabla w}{1-\varphi^{2\alpha}}\right)- 2\alpha \nabla\cdot\left(\frac{\nabla\varphi}{(1-\varphi^{2\alpha})^2}\varphi^{2\alpha-1} \right)w +\alpha(2\alpha-1)\frac{\varphi^{2\alpha-2}|\nabla\varphi|^2}{(1-\varphi^{2\alpha})^2} w \nonumber\\
  &\quad + 4\alpha^2\frac{\varphi^{4\alpha-2}|\nabla\varphi|^2}{(1-\varphi^{2\alpha})^3} w -((2\alpha+1)\varphi^{2\alpha} -\omega)w
\end{align}
and
\begin{equation}
  L_- w = \omega w -\nabla \cdot \left(\frac{\nabla w}{1-|\varphi|^{2\alpha}} \right) +\alpha |\varphi|^{2\alpha-2}\frac{|\nabla\varphi|^2}{(1-|\varphi|^{2\alpha})^2}
w - |\varphi|^{2\alpha}w.
\end{equation}

\begin{prop}[Non degeneracy of the positive solution]
In $L^2(\xR^d)$, we have $$\mathrm{Ker}(L_+)=\mathrm{span}(\partial_{x_1}\varphi,\ldots,\partial_{x_d}\varphi) \qquad \text{and} \qquad \mathrm{ker}(L_-)=\mathrm{span}(\varphi).$$
\end{prop}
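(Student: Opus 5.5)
The plan is to transfer the non-degeneracy of the semi-linear problem \eqref{PSSL}, given by Theorem \ref{Thrm_existence_uniqueness}, to \eqref{SSNL} through the change of variables $\varphi=\Lambda(u)$. We treat $L_+$ and $L_-$ separately.

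\emph{The operator $L_+$.} Introduce the action functionals
\[
J(\varphi)=\mathcal E(\varphi)+\frac{\omega}{2}\int_{\xR^d}\varphi^2\,\xd x
\qquad\text{and}\qquad
S(u)=\frac12\int_{\xR^d}|\nabla u|^2\,\xd x-\int_{\xR^d}F_\omega(u)\,\xd x .
\]
By the defining property $|\nabla u|^2=|\nabla\varphi|^2/(1-\varphi^{2\alpha})$ and Proposition \ref{prop_f}\ref{Prop_f5}, we have $S(u)=J(\Lambda(u))$ for real $u$ with $0\le u<t^*$. Differentiate this identity twice at the solution $u$, in a direction $h\in H^2(\xR^d)$. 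This gives
\[
S''(u)[h,h]=J''(\varphi)[\Lambda'(u)h,\Lambda'(u)h]+J'(\varphi)[\Lambda''(u)h^2].
\]
The last term vanishes because $\varphi$ solves \eqref{SSNL}. Writing $M$ for multiplication by $m:=\Lambda'(u)=\sqrt{1-\varphi^{2\alpha}}$, this yields the operator identity
\[
-\Delta-f'_\omega(u)=M L_+ M ,
\]
which can also be checked directly by expanding $L_+(m h)$ and using $\nabla m=-\alpha\varphi^{2\alpha-1}\nabla u$.

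The key point is that $M$ is an isomorphism of $L^2$ and of $H^2$:
\begin{itemize}
\item By Lemma \ref{lem_properties}\eqref{L3} and the continuity and decay of $u$, we have $\sup u<t^*$. Hence $m\ge c>0$, and $m\to1$ at infinity.
\item $m\in W^{2,\infty}$. For $\alpha<\tfrac12$ this needs care, since $\varphi^{2\alpha-1}$ is singular where $\varphi\to0$. I would use the exponential decay of $u$ together with ODE bounds such as $|u'|\lesssim u$ and $|u''|\lesssim u$ at infinity, so that $\varphi^{2\alpha-1}|\nabla u|\lesssim u^{2\alpha}$ stays bounded. If the second derivatives fail to be bounded, I would instead work at the level of quadratic forms in $H^1$.
\end{itemize}
It follows that $\ker L_+=M\,\ker(-\Delta-f'_\omega(u))=\mathrm{span}\{\Lambda'(u)\partial_{x_i}u\}=\mathrm{span}\{\partial_{x_i}\varphi\}$, using Proposition \ref{thrm_uniqueness} for the kernel of $-\Delta-f'_\omega(u)$.

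\emph{The operator $L_-$.} I would argue directly rather than through the change of variables. First, $L_-\varphi=0$ is exactly equation \eqref{SSNL}. Since $\varphi>0$, we can write any $w$ in the form domain as $w=\varphi g$ and compute the quadratic form by the usual ground-state substitution. Integrating the cross term by parts and using $L_-\varphi=0$ gives
\[
\langle L_-(\varphi g),\varphi g\rangle=\int_{\xR^d}\frac{\varphi^2|\nabla g|^2}{1-\varphi^{2\alpha}}\,\xd x\ \ge 0 .
\]
Thus $L_-\ge0$. Any $w\in\ker L_-$ then has zero form value, so $\nabla g=0$, and $w$ is a multiple of $\varphi$. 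Alternatively, one can use $N:=\Lambda(u)/u$, which is bounded between $\Lambda(t^*)/t^*$ and $1$, and the identity $-\Delta-f_\omega(u)/u=NL_-N$, then invoke Theorem \ref{Thrm_existence_uniqueness}.

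\emph{Main obstacle.} I expect the hardest part to be rigor rather than algebra: justifying the second-variation identity at the operator level, and in particular the integration by parts in the substitution $w=\varphi g$ for $g=w/\varphi$, which may grow at infinity. I would handle this with cutoffs $\chi(x/R)$, using the exponential decay of $\varphi$ and $\nabla\varphi$ and the fact that kernel elements lie in $H^2$ and are $C^1$ by elliptic regularity, since $1-\varphi^{2\alpha}\ge c>0$.
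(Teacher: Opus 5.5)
Your treatment of $L_+$ follows the paper. The paper also substitutes $v=w/\Lambda'(u)$ and writes $L_+w=\Lambda'(u)^{-1}\bigl(-\Delta v-f'_\omega(u)v\bigr)$, which is your identity $-\Delta-f'_\omega(u)=ML_+M$. It then takes $\ker(-\Delta-f'_\omega(u))=\mathrm{span}\{\partial_{x_i}u\}$ from Lewin--Rota Nodari. You add two things the paper leaves implicit: a derivation of the identity from $S=J\circ\Lambda$ and $J'(\varphi)=0$, and the uniform bound $\Lambda'(u)\ge\Lambda'(u(0))>0$, where the paper only records $0<\Lambda'(u)<1$ pointwise.

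For $L_-$ you take a genuinely different route. The paper uses a Perron--Frobenius argument. The lowest eigenvalue of $L_-$ is simple with a positive eigenfunction, which follows from $\langle w,L_-w\rangle\ge\langle|w|,L_-|w|\rangle$, Harnack's inequality and an abstract positivity theorem. Since the positive function $\varphi$ lies in the kernel, $0$ must be that eigenvalue. Your ground-state representation $\langle L_-(\varphi g),\varphi g\rangle=\int\varphi^2|\nabla g|^2/(1-\varphi^{2\alpha})$ is more elementary and self-contained. It gives $L_-\ge0$ and identifies the kernel in one step, with no spectral-theoretic input.

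The cutoff step is cleanest via the IMS formula. For $w\in\ker L_-$ one has $\langle L_-(\chi_Rw),\chi_Rw\rangle=\int|\nabla\chi_R|^2w^2/(1-\varphi^{2\alpha})\to0$. Applying the ground-state identity to the compactly supported function $\chi_Rw$ then gives $\nabla(w/\varphi)=0$ on $B_R$. In this way the possible growth of $w/\varphi$ at infinity never enters.

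You should drop the ``alternative'' for $L_-$, because the identity $-\Delta-f_\omega(u)/u=NL_-N$ with $N=\Lambda(u)/u$ is false. The second-order part of $NL_-N$ is $-\bigl(\Lambda(u)/(u\Lambda'(u))\bigr)^2\Delta$. By strict concavity of $\Lambda$ on $(0,t^*)$ one has $\Lambda(t)>t\Lambda'(t)$ there, so this coefficient is strictly greater than $1$ wherever $u>0$. Conjugating by $M$ instead does normalize the principal part. However, the resulting operator annihilates $\Lambda(u)/\Lambda'(u)$, which is not a multiple of $u$, so it is not $-\Delta-f_\omega(u)/u$ either. Your main argument for $L_-$ does not use this remark, so the proof stands without it.
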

\begin{proof}
We note that the  first eigenvalue of the operator $L_-$ is simple
with a positive eigenfunction, this follows from \cite[Appendix, Theorem 2.4]{wolff2012topological},
  the fact that $\langle w, L_- w \rangle \ge  \langle|w|, L_-|w|\rangle$ and from Harnack’s inequality  \cite[Section 6.4, Theorem 5]{evans2009partial}.
   Since $L_-\varphi=0$ and since $\varphi$ is positive, $0$ is the first eigenvalue of $L_-$. 
   This implies that $\mathrm{ker}(L_-)=\mathrm{span}(\varphi)$.

   For $L_+$, we apply the change of variables $\varphi=\Lambda(u)$  introduced in Definition \ref{defi_change_var}, and by taking $v=\frac{w}{\Lambda'(u)}$ we obtain
   $$L_+w =\frac{-\Delta v-f'_\omega(u)v}{\Lambda'(u)} \qquad  \text{with} \qquad 0<\Lambda'(u)<1.$$
   So $w\in \mathrm{Ker}(L_+)$ if and only if $v\in \mathrm{Ker}(-\Delta-f'_\omega(u))$. Thanks to the result of Lewin and Rota Nodari  \cite{lewin2020double}, we have that 
  $ \mathrm{Ker}(-\Delta-f'_\omega(u))= \mathrm{span}(\partial_{x_1}u,\ldots,\partial_{x_d}u)$. Therefore  $$\mathrm{Ker}(L_+)= \mathrm{span}\{\partial_{x_1}\varphi,\ldots,\partial_{x_d}\varphi\}.$$
\end{proof}

Finally, using this non-degeneracy property, we prove the following proposition which completes the proof of Theorem \ref{Thrm_existence_uniqueness_quasi0}.

\begin{prop}
\label{prop_regularity}
For any $\omega\in (0,\omega^*)$, let $u_\omega$ be the unique positive solution of \eqref{PSSL}. The function $\omega \mapsto u_\omega$ is in $C^1((0,\omega^*), H^2_{\mathrm{rad}}(\xR^d))$.
\end{prop}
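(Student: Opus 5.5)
The plan is to apply the implicit function theorem to the map
\[
G:(0,\omega^*)\times H^2_{\mathrm{rad}}(\xR^d)\to L^2_{\mathrm{rad}}(\xR^d),\qquad G(\omega,u)=-\Delta u-f_\omega(u),
\]
at a point $(\omega_0,u_{\omega_0})$. By Lemma \ref{lem_properties} and Theorem \ref{Thrm_existence_uniqueness}, $G(\omega,u_\omega)=0$ with $\|u_\omega\|_{L^\infty}<t^*$. Uniqueness of the positive radial solution then identifies the branch produced by the implicit function theorem with $\omega\mapsto u_\omega$.

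\textbf{Regularity of $G$.} Since $f_\omega(t)=\Lambda'(t)\Lambda(t)^{2\alpha+1}-\omega\Lambda'(t)\Lambda(t)$ is affine in $\omega$, the partial derivative $\partial_\omega G=\Lambda'(u)\Lambda(u)$ is continuous into $L^2$, because $|\Lambda'(u)\Lambda(u)|\le|u|$. For the $u$-derivative I work in $H^2$ only near $u_{\omega_0}$. In dimensions where $H^2\subset L^\infty$ this is easy. In general the obstacle is that $f_\omega$ is only $C^1$ on $(-t^*,t^*)$ with a corner at $\pm t^*$, and that $f'_\omega$ is only Hölder near $0$ when $\alpha<\tfrac12$ (Proposition \ref{prop_change_var}\ref{prop_change_var6}).

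To handle this I would set up the implicit function theorem in $X=H^2_{\mathrm{rad}}\cap C^0$, or in $W^{2,p}_{\mathrm{rad}}$ with $p>d/2$, mapping into $L^2\cap L^p$. There a neighbourhood of $u_{\omega_0}$ stays in $\{\|u\|_\infty<t^*\}$, so only $f_\omega$ restricted to $[-t^*+\delta,t^*-\delta]$ matters. On that interval $f'_\omega$ is uniformly continuous and bounded, so the Nemytskii operator $u\mapsto f_\omega(u)$ is $C^1$ from $X\cap L^2$ into $L^2$, with derivative $h\mapsto f'_\omega(u)h$. Continuity of the derivative in operator norm follows from uniform continuity of $f'_\omega$ together with $\|f'_\omega(u)-f'_\omega(v)\|_{L^\infty}\to0$ as $\|u-v\|_\infty\to0$. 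The price is that the target must also be measured in $L^p$, which just uses the same bounds. Alternatively, any continuity of $f'_\omega$ on compact subsets of $(-t^*,t^*)$ suffices, since the $L^\infty$ control is built in.

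\textbf{Invertibility of $\partial_uG$.} This is the main step:
\[
\partial_uG(\omega_0,u_{\omega_0})=\mathcal L_{\omega_0}=-\Delta-f'_{\omega_0}(u_{\omega_0}).
\]
Since $u_{\omega_0}\to0$ at infinity (Lemma \ref{lem_properties}) and $f'_{\omega_0}(0)=-\omega_0<0$, the potential $f'_{\omega_0}(u_{\omega_0})+\omega_0$ is bounded and decays at infinity. Hence $\mathcal L_{\omega_0}$ is a relatively compact perturbation of $-\Delta+\omega_0$, and its essential spectrum is $[\omega_0,\infty)$. Thus $\mathcal L_{\omega_0}:H^2_{\mathrm{rad}}\to L^2_{\mathrm{rad}}$ is Fredholm of index zero and self-adjoint. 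By Remark \ref{Remark_Linearized_op} its radial kernel is trivial, because the $\partial_{x_j}u$ are not radial. It is therefore an isomorphism, and elliptic regularity transfers this to the $W^{2,p}$ setting.

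\textbf{Conclusion.} The implicit function theorem gives a $C^1$ curve $\omega\mapsto\tilde u_\omega$ near $\omega_0$ with $\tilde u_{\omega_0}=u_{\omega_0}$. It remains to identify $\tilde u_\omega$ with $u_\omega$. Positivity persists: $\tilde u_\omega$ is close to $u_{\omega_0}>0$ uniformly, and outside a large ball the equation $-\Delta\tilde u=f_\omega(\tilde u)$ with $f_\omega(t)/t\approx-\omega$ for small $t$ gives exponential decay and positivity by a maximum-principle argument. Alternatively, one applies the comparison to $\tilde u^-$ by testing the equation with it. Theorem \ref{Thrm_existence_uniqueness} then forces $\tilde u_\omega=u_\omega$. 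Since $\omega_0$ was arbitrary, $\omega\mapsto u_\omega$ is $C^1$ into $H^2_{\mathrm{rad}}$.
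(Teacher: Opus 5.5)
Your proposal is correct and follows the paper's strategy. You apply the implicit function theorem at $(\omega_0,u_{\omega_0})$ in $W^{2,q}(\xR^d)\cap H^2_{\mathrm{rad}}(\xR^d)\to L^q(\xR^d)\cap L^2_{\mathrm{rad}}(\xR^d)$ with $q>\frac d2$, get invertibility of $\mathcal L_{\omega_0}$ from the trivial radial kernel plus a compact-perturbation (Fredholm) argument, and identify the branch with $u_\omega$ via positivity and uniqueness; keep the $W^{2,q}\cap H^2_{\mathrm{rad}}$ setting, since your $H^2_{\mathrm{rad}}\cap C^0$ variant would lose surjectivity onto $L^2_{\mathrm{rad}}$ for $d\ge 4$.

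The differences are only technical. You localize to $\{\|u\|_{L^\infty}<t^*\}$, where $f_\omega$ is already $C^1$, instead of using the paper's $C^1$ extension $g_\omega$ beyond $t^*$; this also makes the paper's separate H\"older estimates for $\alpha<\tfrac12$ unnecessary, since uniform continuity of $f'_\omega$ on compact intervals suffices. You also obtain positivity by a maximum-principle/negative-part test (e.g.\ $\int|\nabla\tilde u_\omega^-|^2=\int f_\omega(\tilde u_\omega^-)\tilde u_\omega^-\le -c\int(\tilde u_\omega^-)^2$, because $\|\tilde u_\omega^-\|_{L^\infty}\le\|\tilde u_\omega-u_{\omega_0}\|_{L^\infty}$ is small), whereas the paper uses a spectral argument that $0$ remains the simple bottom eigenvalue of $-\Delta-g_\omega(u(\omega))/u(\omega)$.
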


\begin{proof}
 The proof of this proposition is based on an implicit function argument. However, the non-linearity $f_\omega$ is not $C^1$ at $t^*$. To overcome this regularity issue, we consider the following different extension of $f_\omega$ for $|t|>t^*$:

\begin{equation}\label{smooth_non_linearity}
g_\omega(t)=
\begin{cases}
 \sqrt{1-\Lambda(t)^{2\alpha}}\left(\Lambda(t)^{2\alpha+1}-\omega\Lambda(t)\right)  &\text{ on } [0,t^*]\\   
   -\alpha(1-\omega)(t-t^*)   &\text{ on } (t^*,+\infty)\\
   -g_\omega (-t)      &\text{ on } (-\infty,0).\\
\end{cases}
\end{equation}

For any $\alpha>0$, the function $g_\omega$, defined in this way, is in $ C^1(\xR)$, and its first derivative is bounded as follows
\begin{equation}
\label{f'_smooth}
    |g_\omega'(t)|\leq \max\{1-\omega,\omega\}\max\{1,\alpha\} + 2 \alpha 
\end{equation}
for all $t\in \xR$. As a consequence, $g_\omega$ is a Lipschitz function on $\xR$. If $\alpha\ge \tfrac{1}{2}$, then $g_\omega$ is in $C^2(\xR)$, the second derivative satisfies  
\begin{equation}
\label{f''_smooth}
    |g_\omega''(t)|\leq 2\alpha (3+ (1+\alpha)\omega+ 8\alpha).
\end{equation}
and $g_\omega'$ is a Lipschitz function on $\xR$. If $0<\alpha<\tfrac{1}{2}$, the second derivative of $g_\omega$ is well-defined for any $t\in \xR\setminus\{0\}$ but $\lim_{t\to 0^{\pm}} g_\omega''(t)=\pm \infty$. Hence, $g_\omega'$ is not Lipschitz close to $0$. However, using the explicit definition of $f'_\omega$ given in Proposition \ref{prop_f}\ref{Prop_f2} and the properties of $\Lambda$ given in Proposition \ref{prop_change_var}, we deduce that $g_\omega'\in C^{0,2\alpha}(\xR)$. 
Now, let
\[
\begin{aligned}
    \Phi :\xR \times W^{2,q}(\xR^d) \cap H^2_\mathrm{rad}(\xR^d) &\to L^q(\xR^d)\cap L^2_\mathrm{rad}(\xR^d)\\
    (\omega, u) &\mapsto  -\Delta u - g_\omega(u)
\end{aligned}
\]
with $q\in \xN$, $q>\frac{d}{2}$.

For any $\omega_0\in (0,\omega^*)$, we know, thanks to Theorem \ref{Thrm_existence_uniqueness}, that there exists $u_{\omega_0}$ such that $\Phi(\omega_0,u_{\omega_0})=0$. Hence, we want to apply the implicit function theorem  \cite[Theorem 9.28]{rudinprinciples} 
on the function $\Phi$ in a neighbourhood of $(\omega_0,u_{\omega_0})$. Therefore, we need the function $\Phi$ to be  $C^1$ from $\xR\times W^{2,q}(\xR^d) \cap H^2_\mathrm{rad}(\xR^d)$ to $L^q(\xR^d)\cap L^2_\mathrm{rad}(\xR^d)$ and $D_u \Phi(\omega_0,u_{\omega_0})$ to be an isomorphism from $W^{2,q}(\xR^d) \cap H^2_\mathrm{rad}(\xR^d)$ to $L^q(\xR^d)\cap L^2_\mathrm{rad}(\xR^d)$. 

We start by proving that $\Phi$ is continuous from $\xR \times W^{2,q}(\xR^d) \cap H^2_\mathrm{rad}(\xR^d)$ to $L^q(\xR^d)\cap L^2_\mathrm{rad}(\xR^d)$.  Let $s\in\{2,q\}$ and let $(\omega_n,u_n)\to (\omega,u)$ in $\xR \times W^{2,q}(\xR^d) \cap H^2_\mathrm{rad}(\xR^d)$. We have 
\begin{align*}
    \|\Phi(\omega_n,u_n)&-\Phi(\omega,u)\|_{L^s(\xR^d)}\\
    &\leq \|\Delta u_n-\Delta u\|_{L^s(\xR^d)}+\|g_{\omega_n}(u_n)-g_\omega(u)\|_{L^s(\xR^d)}\\
    &\leq \| u_n- u\|_{W^{2,s}(\xR^d)}+\|g_{\omega_n}(u_n)-g_{\omega_n}(u)\|_{L^s(\xR^d)}+\|g_{\omega_n}(u)-g_\omega(u)\|_{L^s(\xR^d)}\\
    &\underbrace{\leq }_{\eqref{f'_smooth},\ref{prop_change_var}\ref{prop_change_var4}}\| u_n- u\|_{W^{2,s}(\xR^d)}+C\|u_n-u\|_{L^s(\xR^d)}+|\omega_n-\omega|\|u\|_{L^s(\xR^d)}.
\end{align*}

Next, we want to show that 
\[
\begin{aligned}
    D_u \Phi(\omega,u):W^{2,q}(\xR^d) \cap H^2_\mathrm{rad}(\xR^d) &\to L^q(\xR^d)\cap L^2_\mathrm{rad}(\xR^d)\\
    v &\mapsto  -\Delta v - g'_\omega(u)v,
\end{aligned}
\]
and 
\[
\begin{aligned}
   D_\omega \Phi(\omega,u):\xR &\to L^q(\xR^d)\cap L^2_\mathrm{rad}(\xR^d)\\
    h &\mapsto -\partial_\omega g_\omega(u)h.
\end{aligned}
\]
are continuous.

Let $s\in\{2,q\}$ and let $(\omega_n,u_n)\to (\omega,u)$ in $\xR \times W^{2,q}(\xR^d) \cap H^2_\mathrm{rad}(\xR^d)$.
For any $v\in W^{2,q}(\xR^d) \cap H^2(\xR^d)$ such that $\|v\|_{  W^{2,q}(\xR^d) \cap H^2(\xR^d)}=1$, we have 
\begin{align*}
    \|D_u \Phi(\omega_n,u_n)v &- D_u \Phi(\omega,u)v\|_{L^s(\xR^d)}= \|(g'_{\omega_n}(u_n)-g'_\omega(u))v\|_{L^s(\xR^d)}\\
    &\leq \|(g'_{\omega_n}(u_n)-g'_{\omega_n}(u))v\|_{L^s(\xR^d)}+\|g'_{\omega_n}(u)-g'_\omega(u)\|_{L^s(\xR^d)}\|v\|_{L^{\infty}(\xR^d)}\\
    &\leq \|(g'_{\omega_n}(u_n)-g'_{\omega_n}(u))v\|_{L^s(\xR^d)}+|\omega_n-\omega|((1+\alpha)\|u\|_{L^s(\xR^d)}+1)\|v\|_{L^{\infty}(\xR^d)}.
\end{align*} 
If $2\alpha\ge 1$, then $g_\omega'$ is a Lipschitz function and 
\begin{align*}
    \|(g'_{\omega_n}(u_n)-g'_{\omega_n}(u))v\|_{L^s(\xR^d)}&\le \|(g'_{\omega_n}(u_n)-g'_{\omega_n}(u))\|_{L^s(\xR^d)}\|v\|_{L^{\infty}(\xR^d)}\\
    &\le C\|u_n-u\|_{L^s(\xR^d)}\|v\|_{L^{\infty}(\xR^d)}
\end{align*}
for some positive constant $C$. If $0<2\alpha< 1$, then $f_\omega'$ is a $C^{0,2\alpha}$ function and, using Hölder inequality, we have
\begin{align*}
    \|(g'_{\omega_n}(u_n)-g'_{\omega_n}(u))v\|_{L^s(\xR^d)}&\le C\||u_n-u|^{2\alpha}|v|\|_{L^s(\xR^d)}\\
    &\le C\left(\int_{\xR^d}|u_n-u|^s\right)^{\frac{2\alpha}{s}}\left(\int_{\xR^d}|v|^{\frac{s}{1-2\alpha}}\right)^{\frac{1-2\alpha}{s}}\\
    &\le C\|u_n-u\|^{2\alpha}_{L^s(\xR^d)}\|v\|^{{2\alpha}}_{L^{\infty}(\xR^d)}\|v\|^{{1-2\alpha}}_{L^{s}(\xR^d)}.
\end{align*}

To prove the continuity of $D_\omega \Phi (\omega,u)$, we first show that the function $t\mapsto \partial_\omega g_\omega(t)$ is Lipschitz continuous. Indeed, from \eqref{smooth_non_linearity}, we deduce that
\begin{equation*}
\partial_\omega g_\omega(t)=
\begin{cases}
 -\Lambda'(t)\Lambda(t)  &\text{ on } [0,t^*]\\   
   \alpha(t-t^*)   &\text{ on } (t^*,+\infty)
\end{cases}
\end{equation*}
and $\partial_\omega g_\omega(t)=-\partial_\omega g_\omega (-t)$ for $t\in (-\infty,0)$.
A direct computation shows that the function $t\mapsto \partial_\omega g_\omega(t)$ is of class $C^1$ with bounded first derivative. 

As a consequence,
\begin{align*}
    \|D_\omega \Phi (\omega_n,u_n)h- D_\omega \Phi(\omega,u)h\|_{L^2(\xR^d)}&=  |h|\|\partial_{\omega} g_{\omega_n}(u_n)- \partial_{\omega} g_{\omega}(u)\|_{L^s(\xR^d)}\\
    &\le C|h| \| u_n- u\|_{L^s(\xR^d)}.
\end{align*}  
for some positive constant $C$ that does not depend on $\omega_n$ and $\omega$.
Next, note that $D_u \Phi(\omega_0,u_{\omega_0})= -\Delta - g_{\omega_0}'(u_{\omega_0})= \mathcal{L}_{\omega_0}$, where $\mathcal{L}_{\omega_0}$ is the real part of the linearised operator at $u_{\omega_0}$. By Remark \ref{Remark_Linearized_op},
$$\mathrm{ker}(\mathcal{L}_{\omega_0|\mathrm{rad}})=\{0\},$$
which implies that $D_u \Phi (\omega_0,u_{\omega_0})$ is injective into $L^q(\xR^d) \cap L^2_\mathrm{rad}(\xR^d)$.
It remains to show that the mapping is surjective. Let $h \in L^q(\xR^d)\cap L^2_\mathrm{rad}(\xR^d)$. We have to prove that there exists $v\in  W^{2,q}(\xR^d) \cap H^2(\xR^d)$ such that 
$$\mathcal{L}_{\omega_0}v=(-\Delta +\omega_0)v +V_{\omega_0}v =h\qquad   \text{with }  \qquad   V_{\omega_0} = -g_{\omega_0}'(u_{\omega_0})-\omega_0$$
or, equivalently,
$$ \left(\operatorname{Id} +(-\Delta +\omega_0)^{-1}V_{\omega_0}\right)v =(-\Delta +\omega_0)^{-1}h\mathrm.$$
Hence, it is sufficient to show that the operator $(-\Delta +\omega_0)^{-1}V_{\omega_0}:  W^{2,q}(\xR^d) \cap  H_{\mathrm{rad}}^2(\xR^d) \to  W^{2,q}(\xR^d) \cap  H_{\mathrm{rad}}^2(\xR^d)$ is compact. Since $(-\Delta +\omega_0)^{-1}: L^q(\xR^d)\cap L_{\mathrm{rad}}^2(\xR^d)\to W^{2,q}(\xR^d) \cap  H_{\mathrm{rad}}^2(\xR^d)$ is a bounded operator, it is enough to prove that $V_{\omega_0} :  W^{2,q}(\xR^d) \cap  H_{\mathrm{rad}}^2(\xR^d) \to L^q(\xR^d)\cap L_{\mathrm{rad}}^2(\xR^d)$ is a compact operator. 

To this goal, we start by noting that $V_{\omega_0}\in L^{\infty}(\xR^d)\cap L^{p}(\xR^d)$ for any $p\ge 1$, and vanishes at infinity exponentially because of the exponential 
decay of $u_{\omega_0}$. Then, let $\{u_n\}_n$ be a bounded sequence in 
$W^{2,q}(\xR^d) \cap H_{\mathrm{rad}}^2(\xR^d) 
\subset L^{\infty}(\xR^d)$ and let $s\in \{2,q\}$. On the one hand, thanks to Rellich--Kondrachov compactness theorem 
\cite[Chapter~5, Theorem~1]{evans2009partial}, there exists a subsequence $\{u_{n_k}\}_k$ that converges to $u$ in $W^{1,s}(B_{R}(0))$. As a consequence, since
\begin{align}\label{estim_V_0}
    \|V_{\omega_0}u_{n_k}-V_{\omega_0}u\|_{L^s(B_{R}(0))}\le \|V_{\omega_0}\|_{L^{\infty}(\xR^d)}\|u_{n_k}-u\|_{L^s(B_{R}(0))},
\end{align}
we deduce that $V_{\omega_0}u_{n_k}$ converges to $V_{\omega_0}u$ in $L^{q}(B_{R}(0)) \cap L_{\mathrm{rad}}^2(B_{R}(0))$. On the other hand, for any $k$,
\begin{align*}
    \|V_{\omega_0}u_{n_k}\|_{L^s(\xR^d\setminus B_{R}(0))}
    \le \|V_{\omega_0}\|_{L^\infty(\xR^d\setminus B_{R}(0))} \|u_{n_k}\|_{L^s(\xR^d)}\le C \|V_{\omega_0}\|_{L^\infty(\xR^d\setminus B_{R}(0))} 
\end{align*}
for some positive constant $C$ that does not depend on $k$.
This implies that
\begin{equation}
\label{V_conv}
    \|V_{\omega_0} u_{n_k}\|_{L^{s}(\xR^d \setminus B_R(0))}
    \xrightarrow[R \to +\infty]{} 0
\end{equation}
uniformly in $k$. Finally, thanks to \eqref{V_conv} and \eqref{estim_V_0}, we conclude that
\[
V_{\omega_0}: W^{2,q}(\xR^d) \cap H_{\mathrm{rad}}^2(\xR^d) \mapsto L^{q}(\xR^d) \cap L_{\mathrm{rad}}^2(\xR^d)
\]
is compact. 
  
By the Fredholm alternative, this implies that $\mathcal{L}_{\omega_0}$ is surjective from $W^{2,q}(\xR^d) \cap H_{\mathrm{rad}}^2(\xR^d)$ onto 
$L^q(\xR^d) \cap L^2_{\mathrm{rad}}(\xR^d)$.

Hence, we can apply the implicit function theorem and prove that there exists a $C^1$ function $\omega\mapsto u(\omega)$ defined on a neighbourhood of $\omega_0$, such that $\Phi(\omega,u(\omega))=0$. In other words $u(\omega)$ is a solution of 
$$
-\Delta u(\omega) = g_\omega (u(\omega)).
$$

To conclude, we have to show that $u_\omega=u(\omega)$. For this, it suffices to show that $u(\omega)$ is positive and $u(\omega)\le t^*$.

For each $\omega$, the operator $-\Delta - \frac{g_\omega(u(\omega))}{u(\omega)}$
has a zero eigenvalue, with eigenfunction $u(\omega)$. Moreover, when $ \omega = \omega_0$, zero is an isolated simple eigenvalue at the bottom of the spectrum. Thus, the zero eigenvalue of  $-\Delta - \frac{g_\omega(u(\omega))}{u(\omega)}$ must also be at the bottom of the spectrum for $|\omega - \omega_0|$ small. Then, thanks to \cite[Theorem XIII 46]{reed1978iv}, $u(\omega)$ is positive. Finally, thanks to Lemma \ref{lem_properties}(\ref{L3}), $u(\omega)< t^*$.
\end{proof}

\section{ On the limit $\omega \to \omega^*$, proof Theorem \eqref{Asymptotic_omegastar}}
\label{sec:limit_omega*}

In this Section we will prove Theorem \ref{Asymptotic_omegastar}, we investigate the asymptotic behaviour of the solution to \eqref{SSNL} and its mass as \(\omega \to \omega^*\), that we give in the following theorem.

\begin{thrm}
\label{thrm_Global_convergence_phi}
Let $\varphi_\omega$ be the unique  positive, radial solution  to \eqref{SSNL} and $t^*= \mathrm{min}\{t>0, \, \Lambda(t)=1\}$, with $\Lambda$ the change of variables given by Definition \ref{defi_change_var}. We define $R_\omega$ such that 
$u_\omega(R_\omega) = \frac{t^*}{2}$. For $\omega$ close enough to $\omega^*$,
we have the uniform convergence
\begin{equation}
\lim_{\omega\to\omega^*} \left\| \varphi_{\omega} - \Lambda( v_{\omega^*}(\cdot - R_{\omega}) )\right\|_{L^\infty(\xR^+)} = 0 \label{eq:uniform-convergence_phi}
\end{equation}
 where $v_{\omega^*}$ is the solution of \begin{equation}
\label{v_*}
\left\{
\begin{aligned}
v_{\omega^*}''(r)+&f_{\omega^*}(v_{\omega^*}(r))= 0\\
v_{\omega^*}(+\infty)&=0 \\ 
v_{\omega^*}(-\infty)&= t^* \\
v_{\omega^*}(0)&=\frac{t^*}{2} .\\ 
\end{aligned}
\right. 
\end{equation}
Also we have  the following local convergence. For all $R>0$ there exists $K>0$ so that $\frac{\omega^*}{2}<\omega<\omega^*$
\begin{equation}
\label{local_convergence_phi}
 \sup_{0<r<R}|\varphi_\omega(r)-1|\leq K|\omega-\omega^*|^{\frac{1}{2}} .
\end{equation}
Moreover, define  the mass of the solution to the problem \eqref{SSNL} as follows:
\begin{equation}
\label{Mass}
    M(\omega):=\|\varphi_\omega(x)\|^2_{L^2(\xR^d)},
\end{equation}
we have 
\begin{equation}
\label{Mass_assymp}
    M(\omega) =|\mathbb{S}^{d-1}| \frac{(2\sqrt{2})^{d}(d-1)^d}{d(\omega^*-\omega)^d} \left(\int_0^{t^*}|F_{\omega^*}(s)|^{\frac{1}{2}}\xd s\right)^d + o\left(\frac{1}{(\omega^*-\omega)^d}\right)
\end{equation}
where $F_{\omega}$ is given by  \ref{prop_f}\ref{Prop_f5}.
\end{thrm}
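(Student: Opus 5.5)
The plan is to work entirely with the radial profile $u_\omega$ of \eqref{PSSL}, with $\varphi_\omega=\Lambda(u_\omega)$, and to use the Hamiltonian $H_\omega(r)=\tfrac12 u_\omega'(r)^2+F_\omega(u_\omega(r))$ from the non-existence proof. Two facts drive everything:
\[
H_\omega'(r)=-\tfrac{d-1}{r}(u_\omega')^2,\qquad F_\omega(t^*)=\tfrac12\Big(\tfrac{1}{\alpha+1}-\omega\Big)=\tfrac{\omega^*-\omega}{2}.
\]
Since $H_\omega(+\infty)=0$, this gives the identity
\[
F_\omega(u_\omega(0))=(d-1)\int_0^\infty \frac{u_\omega'(r)^2}{r}\,\xd r .
\]
Since $F_\omega$ is increasing on $(t_0,t^*)$, it also gives $0\le F_\omega(u_\omega(0))\le (\omega^*-\omega)/2$.

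\textbf{Step 1: the centre tends to $t^*$ and the layer escapes to infinity.} I first show $u_\omega(0)\to t^*$ and $R_\omega\to+\infty$. If either failed along a sequence $\omega_n\to\omega^*$, local $C^2$ compactness of the radial ODE (using $0<u_\omega<t^*$ and $f_\omega$ uniformly Lipschitz by Proposition \ref{prop_f}) would produce a nontrivial decaying radial solution at $\omega=\omega^*$. The $H$-argument of the non-existence proposition excludes this.

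\textbf{Step 2: the transition layer.} Set $v_\omega(r)=u_\omega(r+R_\omega)$ for $r>-R_\omega$. Along subsequences, $v_\omega\to v$ in $C^2_{\mathrm{loc}}(\xR)$. The limit solves $v''+f_{\omega^*}(v)=0$ with $v(0)=t^*/2$ and $v'\le0$, because the term $\tfrac{d-1}{r+R_\omega}v_\omega'$ vanishes as $R_\omega\to\infty$. The one-dimensional conserved energy $\tfrac12 v'^2+F_{\omega^*}(v)$, together with monotonicity, forces $v(\pm\infty)\in\{0,t^*\}$. Hence $v$ is the heteroclinic orbit $v_{\omega^*}$ of \eqref{v_*}, which is unique by a phase-plane argument, so the whole family converges.

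To upgrade to convergence in $L^\infty(\xR^+)$, I control the two tails uniformly. On $r\ge R_\omega+A$, I use the exponential decay coming from $f_\omega'(0)=-\omega<0$. On $r\le R_\omega-A$, the solution stays in a small neighbourhood of $t^*$, by monotonicity and Step 1. Since $\Lambda$ is 1-Lipschitz, \eqref{eq:uniform-convergence_phi} follows.

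\textbf{Step 3: local estimate \eqref{local_convergence_phi}.} Near $t^*$ we have $F_\omega(t^*)-F_\omega(t)\ge c(t^*-t)^2$, since $f_\omega'(t^*)=-\alpha(1-\omega)<0$. Combined with $F_\omega(t^*)-F_\omega(u_\omega(0))\le(\omega^*-\omega)/2$, this gives $t^*-u_\omega(0)\le C(\omega^*-\omega)^{1/2}$.

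On $[0,R]$ the function $w=t^*-u_\omega$ satisfies a linear-type radial inequality $w''+\tfrac{d-1}{r}w'\le Kw$, using the uniform Lipschitz bound on $f_\omega$. A Grönwall/Bessel comparison then gives $\sup_{[0,R]}w\le C_R\,w(0)$. Finally $1-\Lambda(t)\le C(t^*-t)^2\le C'(t^*-t)$, which yields \eqref{local_convergence_phi} with room to spare. The statement $\|\varphi_\omega-1\|_{L^\infty_{\mathrm{loc}}}\to0$ in Theorem \ref{Asymptotic_omegastar} is then immediate.

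\textbf{Step 4: asymptotics of $R_\omega$ and of the mass.} This is where I expect the main obstacle. I split $\int_0^\infty u_\omega'^2/r\,\xd r$ into three regions: the core $[0,R_\omega-A]$, the layer $[R_\omega-A,R_\omega+A]$, and the tail.

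In the layer, Step 2 gives
\[
\int \frac{u_\omega'^2}{r}\,\xd r=\frac{1+o(1)}{R_\omega}\int_{\xR}v_{\omega^*}'^2=\frac{1+o(1)}{R_\omega}\int_0^{t^*}\sqrt{2|F_{\omega^*}(s)|}\,\xd s ,
\]
using $v'^2=-2F_{\omega^*}(v)$ on the heteroclinic orbit. Core and tail contribute $o(1/R_\omega)$, by exponential smallness of $u_\omega'$ away from the layer, uniformly as $A\to\infty$. Making these exponential bounds uniform in $\omega$ on the core, where $u$ sits close to the degenerate point $t^*$, is the delicate part. I would use $H_\omega$ again there: $u'^2\le 2(F_\omega(u(0))-F_\omega(u))$, and this quantity is small until $u$ leaves a neighbourhood of $t^*$.

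I also need $F_\omega(u_\omega(0))=\tfrac{\omega^*-\omega}{2}(1+o(1))$. This follows from Step 3 together with the dual bound $F_\omega(t^*)-F_\omega(u_\omega(0))=o(\omega^*-\omega)$. I would obtain the latter by showing $t^*-u_\omega(0)$ is exponentially small in $R_\omega$, since the core radius grows while $w$ grows like $I_0$.

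Putting these together gives
\[
R_\omega=\frac{2\sqrt2\,(d-1)}{\omega^*-\omega}\int_0^{t^*}|F_{\omega^*}|^{1/2}\,\xd s\,\bigl(1+o(1)\bigr).
\]
For the mass, $\varphi_\omega\to1$ uniformly on $B_{R_\omega-A}$, and the layer and tail contribute $O(R_\omega^{d-1})$. Hence $M(\omega)=|\mathbb S^{d-1}|R_\omega^d/d+o(R_\omega^d)$, which is \eqref{Mass_assymp}.
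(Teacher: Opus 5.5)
Your overall architecture matches the paper's: the identity $F_\omega(u_\omega(0))=(d-1)\int_0^\infty u_\omega'^2/r\,\xd r$, the translated profiles $v_\omega$, a heteroclinic limit, and a core/layer/tail splitting for $R_\omega$ and the mass, which the paper delegates to Lewin--Rota Nodari. Two of your local arguments differ from the paper's and are fine. Monotonicity squeezes the inner tail, and via $0<u_\omega\le v_\omega(A)$ also the outer one, so the $L^\infty(\xR^+)$ upgrade avoids the exponential bounds and dominated convergence the paper uses. Your Volterra/Bessel comparison in Step 3 validly replaces the paper's Cauchy--Schwarz bound $|u_\omega(r)-u_\omega(0)|\le r\sqrt{\omega^*-\omega}/(2\sqrt{d-1})$; your constant is exponential rather than linear in $R$, which the statement allows.

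There are, however, genuine gaps.

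\textbf{Step 1.} $C^2_{\mathrm{loc}}$ compactness does not transmit decay at infinity, so no ``decaying solution at $\omega^*$'' is produced and the non-existence argument does not apply. You do not need it. $F_\omega(u_\omega(0))\ge 0$ already gives $u_\omega(0)\ge t_\omega$ with $\Lambda(t_\omega)^{2\alpha}=\omega/\omega^*\to 1$. Then $R_\omega\to\infty$ follows from your Step 3 bound.

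\textbf{Step 2.} This is more serious: conserved energy plus monotonicity does not identify the limit. For $\alpha=1$ one has $\Lambda=\sin$, $t^*=\pi/2$ and $F_{\omega^*}(t)=-\frac{1}{16}\sin^2(2t)$, so $t_0=\pi/4=t^*/2$. The constant $v\equiv t^*/2$ then satisfies $v''+f_{\omega^*}(v)=0$, $v(0)=t^*/2$ and $v'\le 0$. You must show the limit has zero energy. This follows from $0\le H_\omega(r)=(d-1)\int_r^\infty u_\omega'^2/s\,\xd s\le \frac{\omega^*-\omega}{2}$ together with $F_\omega=F_{\omega^*}+\frac{\omega^*-\omega}{2}\Lambda^2$. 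That is exactly the paper's estimate \eqref{Conv_loc}, and it gives $v'=-\sqrt{2|F_{\omega^*}(v)|}$.

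\textbf{Step 4, the core.} This is the main missing ingredient. The bound $u'^2\le 2(F_\omega(u(0))-F_\omega(u))\le C(t^*-u)^2$ gives no rate. At best it yields $(d-1)\int_0^{R_\omega-A}u'^2/r\,\xd r=H_\omega(0)-H_\omega(R_\omega-A)\le C(t^*-v_\omega(-A))^2$. For fixed $A$ this stays of order $(t^*-v_{\omega^*}(-A))^2$, whereas you need $O(\varepsilon_A/R_\omega)=O(\varepsilon_A(\omega^*-\omega))$.

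What is required is the uniform estimate $t^*-u_\omega(r)\le Ce^{-c(R_\omega-r)}$ on $[0,R_\omega]$, i.e. Lemma~\ref{lem_exp_bound}, which the paper itself only cites. Your Bessel idea delivers it if you compare log-derivatives rather than values at $0$:
\begin{itemize}
\item Take $r_\delta$ with $u_\omega(r_\delta)=t^*-\delta$. On $[0,r_\delta]$ you have $f_\omega(u_\omega)\ge k(t^*-u_\omega)$.
\item Let $w=t^*-u_\omega$, $q=w'/w$, and $p=\psi'/\psi$, where $\psi$ is the regular radial solution of $\psi''+\frac{d-1}{r}\psi'=k\psi$. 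Riccati comparison with $q(0)=p(0)=0$ gives $q\ge p$.
\item Hence $w(r)\le w(r_\delta)\psi(r)/\psi(r_\delta)\le Ce^{-c(r_\delta-r)}$, and $r_\delta\ge R_\omega-A_\delta$ by the layer convergence.
\item Near $r=0$, combine this with $|u_\omega'(r)|\le \frac{r}{d}\sup_{[0,r]}f_\omega(u_\omega)\lesssim r\,w(r)$ to control the weight $1/r$.
\end{itemize}

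Finally, turning ``$t^*-u_\omega(0)$ is exponentially small in $R_\omega$'' into $o(\omega^*-\omega)$ requires an a priori lower bound on $R_\omega$ before its asymptotics are known. It comes from the layer alone, $\frac{\omega^*-\omega}{2}\ge (d-1)\int_{R_\omega-1}^{R_\omega+1}u_\omega'^2/r\,\xd r\ge c/R_\omega$, or from the paper's \eqref{estim_R}.
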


In order to prove this result, we  first study the behaviour of  $u$ the solution of \eqref{PSSL} when $  \omega$ converges towards the critical value $\omega^*=\frac{1}{1+\alpha}$ with $\alpha$ the power of the non-linearity of \eqref{SSNL}.

\begin{thrm}
\label{thrm_Global_convergence}
We define $R_\omega$ by 
$u_\omega(R_\omega) = \frac{t^*}{2}$ and let $v_{\omega^*}$ be the unique radial solution to the limiting problem \eqref{v_*}. We have the following uniform convergence
\begin{equation}
\lim_{\omega\to\omega^*} \left\| u_{\omega} - v_{\omega^*}(\cdot - R_{\omega}) \right\|_{L^\infty(\xR^+)} = 0 \label{eq:uniform-convergence}
\end{equation}
and the convergence of the derivatives
\begin{equation}
\lim_{\omega\to\omega^*} \left\| u'_{\omega} - v'_{\omega^*}(\cdot - R_{\omega}) \right\|_{L^p(\xR^+)} = 0 \label{eq:derivative-convergence}
\end{equation}
in $L^p(\xR^+)$ for all $1 \leq p \leq \infty$.
\end{thrm}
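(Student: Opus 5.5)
We work with the radial ODE $u''+\frac{d-1}{r}u'+f_\omega(u)=0$, $u'(0)=0$, and the energy $H_\omega(r)=\tfrac12 u_\omega'(r)^2+F_\omega(u_\omega(r))$. By the non-existence argument, $H_\omega$ is decreasing with limit $0$ at infinity, so $F_\omega(u_\omega(0))=H_\omega(0)>0$. As $\omega\to\omega^*$, $F_\omega(t^*)=\tfrac12(\omega^*-\omega)\to 0$. Since $F_\omega$ is increasing on $(t_0,t^*)$, we get $u_\omega(0)\to t^*$.

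\textbf{Step 1: $R_\omega\to\infty$.} Suppose $R_\omega$ stays bounded along a subsequence. Local $C^2$ bounds ($f_\omega$ is uniformly Lipschitz, Proposition~\ref{prop_f}) and continuous dependence on data for the singular Cauchy problem \eqref{Cauchy_P1} give $u_\omega\to u_{\omega^*}$ locally uniformly, where $u_{\omega^*}$ solves the ODE with $u(0)=t^*$. The Lipschitz argument in Lemma~\ref{lem_properties}(\ref{L3}) shows this solution is the constant $t^*$. This contradicts $u_\omega(R_\omega)=t^*/2$.

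\textbf{Step 2: local convergence.} Set $w_\omega(s)=u_\omega(s+R_\omega)$ on $[-R_\omega,\infty)$. Then $w_\omega''+\frac{d-1}{s+R_\omega}w_\omega'+f_\omega(w_\omega)=0$, and $w_\omega$, $w_\omega'$, $w_\omega''$ are uniformly bounded ($|u'|^2\le 2\sup|F|$ from the decreasing energy). Arzelà--Ascoli gives $C^1_{\mathrm{loc}}$ convergence of a subsequence to some $w$ solving $w''+f_{\omega^*}(w)=0$ on $\xR$, with $w(0)=t^*/2$, $w'\le0$, and $\tfrac12 w'^2+F_{\omega^*}(w)$ constant. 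We must identify this constant as $0$ (the heteroclinic level). Monotonicity of $H_\omega$ gives $F_\omega(u_\omega(0))\ge H_\omega(r)\ge0$, and $F_\omega(u_\omega(0))\le F_\omega(t^*)\to0$, so the energy of $w$ is $0$. Since $w$ is monotone and bounded, its limits at $\pm\infty$ are zeros of $f_{\omega^*}$ with $F_{\omega^*}=0$. These are $0$ and $t^*$, because $F_{\omega^*}(t^*)=0$ and $F_{\omega^*}<0$ on $(0,t^*)$. Hence $w$ is the heteroclinic $v_{\omega^*}$, which is unique by the normalization $v(0)=t^*/2$ and the first-order ODE $v'=-\sqrt{-2F_{\omega^*}(v)}$. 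Uniqueness of the limit upgrades the subsequence to the whole family.

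\textbf{Step 3: uniform convergence.} This is the main obstacle: controlling the tails. For $s\ge S$, $u_\omega$ has entered a neighborhood of $0$ where $f_\omega'<-\omega/2$. A comparison/maximum-principle argument with $e^{-c(s-S)}$ then bounds both $u_\omega$ and $v_{\omega^*}$ by $\varepsilon$ uniformly, and $u_\omega'$ by the energy identity $\tfrac12u'^2\le H_\omega\le F_\omega(u)+o(1)$. For $s\le -S$, both functions lie within $\varepsilon$ of $t^*$ on $[-R_\omega,-S]$, by monotonicity and $u_\omega(0)\to t^*$. For $u'$ we again use $\tfrac12u'^2\le F_\omega(u_\omega(0))-F_\omega(u)$, which is small near $t^*$. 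Combining this with $C^1$ convergence on $[-S,S]$ gives the $L^\infty$ statements. For the $L^p$ convergence of derivatives with $p<\infty$, we use $\int|u'_\omega|\,dr=u_\omega(0)$ and the corresponding identity for $v$, which gives $L^1$ tail control; interpolating with $L^\infty$ gives all $p$.

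\textbf{Consequences for Theorem~\ref{thrm_Global_convergence_phi}.} Applying the $1$-Lipschitz map $\Lambda$ transfers \eqref{eq:uniform-convergence} to $\varphi_\omega$.

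For \eqref{local_convergence_phi}, integrate $(r^{d-1}u')'=-r^{d-1}f_\omega(u)$. Near $t^*$, $f_\omega(u)$ is bounded by the size of $f_\omega$ near $t^*$, and we use $\Lambda(t)\ge 1-c(t^*-t)^2$, since $\Lambda'(t^*)=0$. It then suffices to show $t^*-u_\omega(r)\lesssim(\omega^*-\omega)^{1/4}$ on $[0,R]$. On this interval $\tfrac12u'^2\le F_\omega(u(0))-F_\omega(u)$ and $F_\omega(u(0))\approx (t^*-u(0))^2$-type expansions hold; together with $F_\omega(t^*)=\tfrac12(\omega^*-\omega)$, a Gronwall argument on the linearization at $t^*$ (the coefficient $f'_\omega(t^*)=-\alpha(1-\omega)$, so solutions behave like Bessel functions with small amplitude) gives the bound.

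\textbf{Mass.} Use a Pohozaev/energy balance to locate $R_\omega$. Integrating $H'=-\frac{d-1}{r}u'^2$ from $0$ to $\infty$ gives $F_\omega(u_\omega(0))=(d-1)\int_0^\infty u'^2/r$. The left side is about $\tfrac12(\omega^*-\omega)$. The right side is about $\frac{d-1}{R_\omega}\int v_{\omega^*}'^2=\frac{d-1}{R_\omega}\int_0^{t^*}\sqrt{2|F_{\omega^*}|}$. Hence $R_\omega\sim\frac{2\sqrt2(d-1)\int_0^{t^*}|F_{\omega^*}|^{1/2}}{\omega^*-\omega}$. Since $\varphi_\omega\approx1$ on $B_{R_\omega}$ and decays exponentially beyond, $M(\omega)=|\mathbb S^{d-1}|R_\omega^d/d+o(R_\omega^d)$, which is \eqref{Mass_assymp}. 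The shell contribution, of order $R_\omega^{d-1}$, is justified by the uniform tail estimates of Step~3.
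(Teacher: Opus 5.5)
Your proof of the statement is correct, and it differs from the paper's in two places.

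\textbf{Step 1.} For $R_\omega\to\infty$ you argue by compactness and continuous dependence. The paper instead integrates $H_\omega'=-\frac{d-1}{r}(u_\omega')^2$ to get $(d-1)\int_0^\infty (u_\omega')^2/s\,\xd s=F_\omega(u_\omega(0))\le\frac{\omega^*-\omega}{2}$. Cauchy--Schwarz then gives $|u_\omega(r)-u_\omega(0)|\le r\sqrt{\omega^*-\omega}/(2\sqrt{d-1})$. This yields the explicit bound $R_\omega\gtrsim(\omega^*-\omega)^{-1/2}$. It also gives \eqref{local_convergence_phi} directly, once combined with $t^*-u_\omega(0)\le t^*-t_\omega\lesssim\sqrt{\omega^*-\omega}$ and the $1$-Lipschitz property of $\Lambda$. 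So your linearization/Bessel sketch for that estimate is unnecessary.

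\textbf{Step 2} is essentially the paper's. The paper packages the energy bound as $\|u_\omega'+\sqrt{2|F_{\omega^*}(u_\omega)|}\|_{L^\infty}\le\sqrt{\omega^*-\omega}$. This also shows at once that uniform convergence of $u_\omega$ implies uniform convergence of $u_\omega'$.

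\textbf{Step 3} is the genuine difference. The paper controls the tails through uniform exponential bounds on $v_\omega,v_\omega',v_\omega''$ (Lemma \ref{lem_exp_bound}, whose proof is deferred to Lewin--Rota Nodari) and dominated convergence, then integrates to get the $L^\infty$ statements. You use only the monotonicity of $u_\omega$ and $v_{\omega^*}$. On each tail, $|w_\omega-v_{\omega^*}|$ is bounded by an oscillation, $|w_\omega'|$ is bounded through the energy identity, and $\int|w_\omega'|$ equals the oscillation there. Interpolation then gives every $p$. This is more elementary and fully self-contained. One small slip: where $F_\omega(u_\omega)<0$ you should write $\tfrac12(u_\omega')^2=H_\omega-F_\omega(u_\omega)\le\frac{\omega^*-\omega}{2}+|F_\omega(u_\omega)|$, not $\tfrac12(u_\omega')^2\le H_\omega$.

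What your route does not give is any rate, and rates are exactly what the exponential bounds are used for later.

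In your mass sketch, ``the left side is about $\tfrac12(\omega^*-\omega)$'' requires $F_{\omega^*}(u_\omega(0))=o(\omega^*-\omega)$, i.e.\ $t^*-u_\omega(0)=o(\sqrt{\omega^*-\omega})$. The a priori bound only gives $O(\sqrt{\omega^*-\omega})$. It is the left exponential bound $t^*-u_\omega(0)\le Ce^{-cR_\omega}$ that closes this.

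Similarly, replacing $\int_0^\infty (u_\omega')^2/r\,\xd r$ by $R_\omega^{-1}\int(v_{\omega^*}')^2$ needs the region $r\le R_\omega/2$ to contribute $o(1/R_\omega)$. Qualitative convergence does not provide this.

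So your argument suffices for Theorem \ref{thrm_Global_convergence}, but for \eqref{Mass_assymp} you would still need the exponential lemma.
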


This theorem and its proof are similar to the results obtained in \cite{lewin2020double} for a double-power Schrödinger equation. In order to demonstrate Theorem \ref{thrm_Global_convergence}, we will require certain lemmas that contribute to establish the result.

\begin{lem}
\label{local_convergence}
For $0<\omega<\omega^*$, let $u_\omega$ be the unique positive, radial solution of \eqref{PSSL}. There exist $K>0$ so that for all $R>0$ and $\frac{\omega^*}{2}<\omega<\omega^*$
 $$ \sup_{r<R}|u_\omega(r)-t^*|\leq (R+1)K|\omega-\omega^*|^{\frac{1}{2}}\mathrm.$$
\end{lem}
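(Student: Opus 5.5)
The plan is to use the Hamiltonian $H(r)=\tfrac12(u_\omega'(r))^2+F_\omega(u_\omega(r))$ from the non-existence proof. There it was shown that $H$ is non-increasing and $H(r)\to0$ as $r\to\infty$, so $0\le H(r)\le H(0)=F_\omega(u_\omega(0))$ for all $r$. We write $\varepsilon=\omega^*-\omega$. The one identity needed, from Proposition~\ref{prop_f}\ref{Prop_f5}, is
\begin{equation*}
F_\omega(s)=F_{\omega^*}(s)+\tfrac{\varepsilon}{2}\Lambda(s)^2 ,\qquad F_{\omega^*}(s)=\tfrac{1}{2(\alpha+1)}\Lambda^2(s)\bigl(\Lambda^{2\alpha}(s)-1\bigr)\le 0 \text{ on }[0,t^*].
\end{equation*}
In particular $F_\omega(t^*)=\varepsilon/2$ and $F_\omega\le \varepsilon/2$ on $[0,t^*]$.

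\textbf{Step 1: the value at the origin.} We show $t^*-u_\omega(0)\le K_0\sqrt\varepsilon$. Since $H(0)\ge0$, we have $F_{\omega^*}(u_\omega(0))\ge -\tfrac{\varepsilon}{2}\Lambda^2\ge-\tfrac\varepsilon2$. Next we check that $F_{\omega^*}(s)\le -c\,(t^*-s)^2$ on $[\delta,t^*]$ for some $c,\delta>0$. This holds because $F_{\omega^*}<0$ on $(0,t^*)$, $F_{\omega^*}(t^*)=f_{\omega^*}(t^*)=0$, and $f'_{\omega^*}(t^*)=-\alpha(1-\omega^*)<0$, so $F_{\omega^*}$ vanishes exactly quadratically at $t^*$. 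We also need $u_\omega(0)\ge\delta$. This follows since $u_\omega(0)$ must exceed the positive zero $z_\omega$ of $F_\omega$, as $H(0)\ge0$ and $H$ cannot vanish identically; moreover $z_\omega$, defined by $\Lambda^{2\alpha}(z_\omega)=(\alpha+1)\omega$, is bounded below uniformly for $\omega\ge\omega^*/2$. Combining, $c(t^*-u_\omega(0))^2\le\varepsilon/2$, which gives the claimed bound.

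\textbf{Step 2: slope bound while $u_\omega$ stays high.} Whenever $u_\omega(r)\ge z_\omega$, we have $F_\omega(u_\omega(r))\ge0$. Hence $\tfrac12 u_\omega'(r)^2\le H(0)-F_\omega(u_\omega(r))\le F_\omega(t^*)=\varepsilon/2$, that is, $|u_\omega'(r)|\le\sqrt\varepsilon$. This bound is uniform in $r$, with no Gronwall-type growth. Let $r_\omega$ be the first radius at which $u_\omega=z_\omega$; it exists since $u_\omega$ decreases to $0$. On $[0,r_\omega]$, integrating and using Step~1 gives
\begin{equation*}
t^*-u_\omega(r)\le (K_0+r)\sqrt\varepsilon \le (r+1)\max\{K_0,1\}\sqrt\varepsilon .
\end{equation*}

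\textbf{Step 3: the remaining radii.} Set $c_1=\inf_{\omega^*/2<\omega<\omega^*}(t^*-z_\omega)>0$, which is positive because $(\alpha+1)\omega\le1$ strictly below $\omega^*$ and $z_\omega\to t^*$ only at $\omega^*$. Then argue by cases on $R$.
\begin{itemize}
\item If $R\le r_\omega$, the bound of Step~2 holds on $[0,R)$, and we are done.
\item If $R>r_\omega$, Step~2 evaluated at $r_\omega$ gives $c_1\le t^*-z_\omega\le(r_\omega+1)\max\{K_0,1\}\sqrt\varepsilon\le(R+1)\max\{K_0,1\}\sqrt\varepsilon$. Since $0<u_\omega<t^*$ by Lemma~\ref{lem_properties}, we have $\sup|u_\omega-t^*|\le t^*\le \tfrac{t^*}{c_1}(R+1)\max\{K_0,1\}\sqrt\varepsilon$.
\end{itemize}
Taking $K=\max\{1,t^*/c_1\}\max\{K_0,1\}$ therefore proves the lemma with the stated linear factor $(R+1)K$.

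The main obstacle is Step~1, specifically the uniform quadratic lower bound $-F_{\omega^*}(s)\ge c(t^*-s)^2$ near $t^*$ together with the uniform lower bound on $u_\omega(0)$. I would handle it through the explicit formula: $1-\Lambda^{2\alpha}(s)\sim \alpha^2(t^*-s)^2$, which follows from $\Lambda'=\sqrt{1-\Lambda^{2\alpha}}$ and $\Lambda''(t^*)=-\alpha$. The rest of the argument is elementary.
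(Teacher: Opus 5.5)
Your Steps 1 and 2 are correct. Step 1 is a valid substitute for the paper's direct estimate of $t^*-t_\omega$. Step 3, however, fails, because $c_1=\inf_{\omega^*/2<\omega<\omega^*}(t^*-z_\omega)$ equals $0$, not a positive number. Since $\Lambda^{2\alpha}(z_\omega)=(\alpha+1)\omega\to1$, we have $z_\omega\to t^*$ as $\omega\to\omega^*$. Quantitatively, $\Lambda'(z_\omega)^2=1-\Lambda^{2\alpha}(z_\omega)=(\alpha+1)\varepsilon$ and $\Lambda'(s)\sim\alpha(t^*-s)$ near $t^*$, so $t^*-z_\omega\asymp\sqrt{\varepsilon}$. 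Consequently the inequality $t^*-z_\omega\le(r_\omega+1)\max\{K_0,1\}\sqrt{\varepsilon}$ is compatible with $r_\omega=O(1)$. The case $R>r_\omega$, which would need something like $r_\omega\gtrsim\varepsilon^{-1/2}$, is left open.

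This cannot be repaired inside your framework. You only use that $H$ is non-increasing with $H(\infty)=0$, and that also holds for $d=1$, where the lemma is false. In $d=1$, $H\equiv0$ forces $u_\omega(0)=z_\omega$, so $r_\omega=0$. The descent time from $z_\omega$ to $t^*/2$, namely $\int_{t^*/2}^{z_\omega}\mathrm{d}s/\sqrt{-2F_\omega(s)}$, is $O(\log(1/\varepsilon))$. Hence $\sup_{r<R}|u_\omega-t^*|\ge t^*/2$ for some $R$ with $(R+1)\sqrt{\varepsilon}\to0$.

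The missing ingredient is the quantitative dissipation, which is exactly where $d\ge2$ enters and which the paper uses. Integrating $H'(r)=-\frac{d-1}{r}u_\omega'(r)^2$ over $(0,\infty)$ gives $(d-1)\int_0^\infty\frac{u_\omega'(s)^2}{s}\,\mathrm{d}s=F_\omega(u_\omega(0))\le\frac{\varepsilon}{2}$. Cauchy--Schwarz then gives, for every $r\ge0$ and whether or not $u_\omega$ is still above $z_\omega$,
\[
|u_\omega(r)-u_\omega(0)|\le\int_0^r|u_\omega'(s)|\,\mathrm{d}s\le\frac{r}{\sqrt{2}}\Bigl(\int_0^r\frac{u_\omega'(s)^2}{s}\,\mathrm{d}s\Bigr)^{1/2}\le\frac{r\sqrt{\varepsilon}}{2\sqrt{d-1}}.
\]
Combined with your Step 1, this proves the lemma with $K=\max\{K_0,(2\sqrt{d-1})^{-1}\}$, and Steps 2 and 3 become unnecessary.
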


\begin{proof}
We start with the ODE \eqref{CPNE}:  
\begin{equation}
\label{ODE}
u_\omega''(r)+ \frac{d-1}{r} u_\omega'(r)+ f_\omega (u_\omega(r))=0.
\end{equation}
Multiplying \eqref{ODE} by $u_\omega'(r)$ we get
 \begin{equation}
 \label{eq_conv_loc}
  \frac{ (u_\omega'(r))^2}{2} +(d-1)\int_0^r  \frac{ (u_\omega'(s))^2}{s} \xd s + F_\omega(u_\omega(r)) =F_\omega(u_\omega(0)).
 \end{equation}
  Evaluating at $r = +\infty$, knowing that $\lim\limits_{r\to \infty} u_\omega'(r)=0$ and $F_\omega(0)=0$, this gives
 \begin{equation}
 \label{eq_conv_1}
     (d-1)\int_0^\infty \frac{ (u_\omega'(s))^2}{s} \xd s  =F_\omega(u_\omega(0)).
 \end{equation}
On the other hand from \ref{prop_f},\ref{Prop_f5} we have 
  \begin{equation}
 \label{inq_F_omega}
    F_\omega(t)  =F_{\omega^*}(t)+\Lambda^2(t) \frac{\omega^*-\omega}{2}\leq \frac{\omega^*-\omega}{2}
 \end{equation}
 since $F_{\omega^*}(t)\le 0$ for all $t\in \R$.
 Then, for $t=u_\omega(0)$, we have 
 \begin{equation}
 \label{inq_F_omega1}
    F_\omega(u_\omega(0))  =F_{\omega^*}(u_\omega(0))+\Lambda^2(u_\omega(0)) \frac{\omega^*-\omega}{2}\leq \frac{\omega^*-\omega}{2}.
 \end{equation}
  Therefore
  \begin{equation}
  \label{A}
      |u_\omega(r)-u_\omega(0)|\leq \int_0^r (u_\omega'(s)) \xd s \leq \frac{r}{\sqrt{2}}\left(\int_0^r \frac{(u_\omega'(s))^2}{s} \xd s \right)^{\frac{1}{2}} \leq \frac{r\sqrt{\omega^*-\omega}}{2\sqrt{d-1}}.
  \end{equation}
 It remains to prove that $u_\omega(0)$ converges to $t^*$. 
Let $t_\omega=\Lambda^{-1}((\frac{\omega}{\omega^*})^{\frac{1}{2\alpha}})$ be the zero of $F_\omega(t)$ see as a function of $\omega$.
We have 
$$t^*- t_\omega =\int_{\omega}^{\omega^*}\left(\Lambda^{-1}\left(\frac{s}{\omega^*}\right)^{\frac{1}{2\alpha}}\right)'\xd s.$$
Knowing that $u_\omega(0)\geq t_\omega  $, we get
\begin{align*}
    0<t^*- u_\omega(0) &\leq\int_{\omega}^{\omega^*} \left(\Lambda^{-1}\left(\frac{s}{\omega^*}\right)^{\frac{1}{2\alpha}}\right)'\xd s\leq\frac{1+\alpha}{2\alpha} \int_{\omega}^{\omega^*}
\frac{ (\frac{s}{\omega^*})^{\frac{1}{2\alpha}-1}}{\sqrt{1-(\frac{s}{\omega^*})}}   \xd s\leq C_{\alpha,\omega^*} \int_{\omega}^{\omega^*} \frac{\sqrt{\omega^*}}{\sqrt{\omega^*-s}} \xd s.\\
\end{align*}
Finally we have:
\begin{equation}
\label{B}
    |t^*- u_\omega(0)|\leq C_{\alpha,\omega^*} |\omega-\omega^*|^{\frac{1}{2}}
\end{equation}
and then \eqref{A} and \eqref{B} give
\begin{equation*}
 \sup_{r<R}|u_\omega(r)-t^*|\leq (R+1)K|\omega-\omega^*|^{\frac{1}{2}}
\end{equation*}
where $K=\max\{C_{\alpha,\omega^*},\frac{1}{2\sqrt{d-1}}\}$.
Next, we note that the equation \eqref{eq_conv_loc} can be written in the form
\begin{equation*}
\frac{(u_\omega')^2}{2} + F_{\omega^*}(u_\omega)
= (d - 1) \int_r^\infty \frac{u_\omega'(s)^2}{s} \, \xd s
- \frac{1}{2} (\omega_* - \omega)\Lambda(u_\omega)^2.
\end{equation*}
Thanks to \eqref{eq_conv_1} and \eqref{inq_F_omega1} we obtain $$\left\|
(u_\omega')^2 + 2F_{\omega^*}(u_\omega)
\right\|_{L^\infty(\mathbb{R}_+)}
\le \omega_* - \omega.$$
Noticing that $|a^2 - b^2| \le \varepsilon^2$ implies 
$|a - b| \le \varepsilon$ whenever $a,b \ge 0$, we obtain 
\begin{equation}
\label{Conv_loc}
\left\|
u_\omega' + \sqrt{2|F_{\omega^*}(u_\omega)|}
\right\|_{L^\infty(\mathbb{R}_+)}
\le \sqrt{\omega_* - \omega}.
\end{equation}
\end{proof}

Now our aim is to prove that our solution converges uniformly on $\R^+$. To accomplish this, we will use the Arzelà–Ascoli argument. Once this convergence is established, we will proceed to prove that the limit corresponds to a solution of the Hamiltonian system \eqref{v_*}.
 We therefore introduce
$$v_\omega(r):=u_\omega(R_\omega + r),  \qquad r \in [-R_\omega, \infty)$$
where  $R_\omega$ is defined as 
$u_\omega(R_\omega) = \frac{t^*}{2}$, for $\omega$ close enough to $\omega^*$.

\begin{lem}
\label{conv_unif}
For all $R>0$, we have 
 $$\lim\limits_{\omega\to \omega^*} \|u_\omega-v_{\omega^*}(.-R_\omega)\|_{L^\infty(R_\omega-R,R_\omega+R)}=\lim\limits_{\omega\to \omega^*} \|u_\omega'-v_{\omega^*}'(.-R_\omega)\|_{L^\infty(R_\omega-R,R_\omega+R)}=0$$
  where $v_{\omega^*}$ is the unique solution to the limiting problem \eqref{v_*}.
\end{lem}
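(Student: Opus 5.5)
We will prove Lemma \ref{conv_unif} with an Arzelà--Ascoli compactness argument on the shifted functions $v_\omega(r)=u_\omega(R_\omega+r)$, followed by identification of the limit through the first-order equation \eqref{Conv_loc}.

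\textbf{Step 1: $R_\omega$ is well defined and tends to infinity.} For $\omega$ close to $\omega^*$, Lemma \ref{local_convergence} gives $u_\omega(0)>t^*/2$. Moreover $u_\omega$ is decreasing to $0$ by Lemma \ref{lem_properties}, so $R_\omega$ exists and is unique. The same lemma gives $\sup_{r<R}|u_\omega-t^*|\le (R+1)K|\omega-\omega^*|^{1/2}$. Hence $u_\omega>t^*/2$ on $[0,R]$ once $\omega$ is close enough, and therefore $R_\omega\to+\infty$. In particular, for any fixed $R$, the function $v_\omega$ is defined on $[-R,R]$ for $\omega$ near $\omega^*$.

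\textbf{Step 2: compactness.} On $[-R,R]$ we have $0<v_\omega<t^*$. By \eqref{Conv_loc}, $|v_\omega'|\le \sqrt{2\sup|F_{\omega^*}|}+1$. The radial ODE gives
$$v_\omega''=-\frac{d-1}{R_\omega+r}v_\omega'-f_\omega(v_\omega),$$
which is uniformly bounded because $f_\omega$ is bounded (Proposition \ref{prop_f}). So $(v_\omega)$ is bounded in $C^2([-R,R])$. By Arzelà--Ascoli and a diagonal extraction, any sequence $\omega_n\to\omega^*$ has a subsequence with $v_{\omega_n}\to v$ in $C^1_{\mathrm{loc}}(\xR)$. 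Since $(d-1)/(R_\omega+r)\to0$ and $f_{\omega}\to f_{\omega^*}$ uniformly, passing to the limit in the integrated form of the ODE shows that $v\in C^2$ solves $v''+f_{\omega^*}(v)=0$ with $v(0)=t^*/2$. Each $v_\omega$ is nonincreasing, so $v$ is nonincreasing with values in $[0,t^*]$.

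\textbf{Step 3: identification of the limit.} Passing to the limit in \eqref{Conv_loc} gives $v'=-\sqrt{2|F_{\omega^*}(v)|}$ on $\xR$ with $v(0)=t^*/2$. This first-order equation has to be solved rather than relied on abstractly. Since $F_{\omega^*}<0$ on $(0,t^*)$, $v$ is strictly decreasing while $v\in(0,t^*)$, and it can be written implicitly as
$$r=-\int_{t^*/2}^{v(r)}\frac{\mathrm{d}s}{\sqrt{2|F_{\omega^*}(s)|}}.$$
The main obstacle is ruling out that $v$ reaches $0$ or $t^*$ in finite time, since non-uniqueness could occur at those endpoints.
\begin{itemize}
\item Near $0$, $F_{\omega^*}(s)\sim -\omega^* s^2/2$, so the integral diverges logarithmically.
\item Near $t^*$, $F_{\omega^*}(t^*)=0$ and $F_{\omega^*}'(t^*)=f_{\omega^*}(t^*)=0$, with $f'_{\omega^*}(t^*)=-\alpha(1-\omega^*)<0$. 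Hence $|F_{\omega^*}(s)|\sim c(t^*-s)^2$, and this integral also diverges.
\end{itemize}
So $v$ stays in $(0,t^*)$ for all $r$, is uniquely determined, and has limits $t^*$ at $-\infty$ and $0$ at $+\infty$; that is, $v=v_{\omega^*}$, which also gives existence and uniqueness of the solution of \eqref{v_*}. Because the limit does not depend on the subsequence, the whole family converges, in $C^1([-R,R])$ for every $R$. After undoing the shift, this is exactly the statement of the lemma.
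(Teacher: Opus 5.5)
Your proposal is correct and follows essentially the same route as the paper: Arzelà--Ascoli compactness for the shifted profiles $v_\omega$, passage to the limit in the radial ODE and in \eqref{Conv_loc}, and identification of the limit as $\Theta^{-1}$ using the logarithmic divergence of $\int \mathrm{d}s/\sqrt{2|F_{\omega^*}(s)|}$ at $0$ and $t^*$. You are more explicit than the paper about the endpoint asymptotics of $F_{\omega^*}$ and about why the whole family converges, not just a subsequence. One caveat: your side remark that this gives uniqueness for \eqref{v_*} itself also needs the observation that every solution of \eqref{v_*} satisfies $\tfrac12 (v')^2+F_{\omega^*}(v)\equiv 0$, which follows from its boundary conditions.
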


\begin{proof}
Based on \eqref{A}, we have that for $\frac{\omega^*}{2}<\omega<\omega^*$
\begin{equation}
    \label{estim_R}
     R_\omega\geq \frac{2\sqrt{d-1}(u_\omega(0)-\frac{t^*}{2})}{\sqrt{\omega^*-\omega}}.
\end{equation}
The function $v_\omega(r)$ satisfies the ODE
  \begin{equation}
  \label{ODEv}
       v_\omega''(r)+ \frac{d-1}{r+R_\omega} v_\omega'(r)+ f_\omega (v_\omega(r))=0.
  \end{equation}
Multiplying \eqref{ODEv} by $v_\omega'(r)$  and integrating on $[-R_\omega, r]$, we get
 \begin{equation*}
  \frac{ (v_\omega'(r))^2}{2} +(d-1)\int_{-R_\omega}^r  \frac{ (v_\omega'(s))^2}{R_\omega+s} \xd s + F_\omega(v_\omega(r)) =F_\omega(v_\omega(-R_\omega)).
 \end{equation*}
  Evaluating at $r = +\infty$, knowing that $\lim\limits_{r\to \infty} v_\omega'(r)=0$ and $F_\omega(0)=0$, and using \eqref{inq_F_omega1} gives us
 \begin{equation}
 \label{estim_1}
     (d-1)\int_{-R_\omega}^\infty \frac{ (v_\omega'(s))^2}{R_\omega+s} \xd s  =F_\omega(v_\omega(-R_\omega))\leq \frac{\omega^*-\omega}{2}.
 \end{equation}
 On the other hand, multiplying \eqref{ODEv} by $v_\omega'(r)$  and integrating on $[ r, \infty)$ for all $r\geq-R_\omega$, we get
 \begin{equation*}
  \frac{ (v_\omega'(r))^2}{2} + F_\omega(v_\omega(r))= (d-1)\int_r^\infty  \frac{ (v_\omega'(s))^2}{R_\omega+s} \xd s.
 \end{equation*}
 Using again \eqref{inq_F_omega}, one has
  \begin{equation}
  \label{B1}
  \frac{ (v_\omega'(r))^2}{2} + F_{\omega^*}(v_\omega(r))= (d-1)\int_r^\infty  \frac{ (v_\omega'(s))^2}{R_\omega+s} \xd s -\frac{1}{2} \Lambda^2(v_\omega(r))(\omega^*-\omega).
 \end{equation}
 Let $t_0=\Lambda^{-1}((\omega^*)^{\frac{1}{2\alpha}})$ be the minimum of $F_{\omega^*}$ and $F_{\omega^*}(t_0)= -\frac{\alpha}{2}(\omega^*)^{\frac{2\alpha+1}{\alpha}}$.

 Using $0<\Lambda(t)<1$ and  \eqref{estim_1}
 \begin{equation}
  \frac{ (v_\omega'(r))^2}{2}\leq(d-1)\int_r^\infty  \frac{ (v_\omega'(s))^2}{R_\omega+s} \xd s -F_{\omega^*}(t_0)  -\frac{1}{2} \Lambda^2(v(r))(\omega^*-\omega)\leq \frac{\omega^*-\omega}{2} +\frac{\alpha}{2}(\omega^*)^{\frac{2\alpha+1}{\alpha}}.
 \end{equation}
Therefore $v'$ is uniformly bounded, and more precisely
\begin{equation}
\label{born_v_prim}
    |v_\omega'(r)|\leq \sqrt{\alpha(\omega^*)^{\frac{2\alpha+1}{\alpha}}+\omega^*}.
\end{equation}
 This implies that the sequence  $\{v_\omega\}_{0<\omega<\omega^*}$ is uniformly equicontinuous.
 Also for all $R>0$ and all $r\in [-R, R]$, we have 
   
   $$ |v_\omega(r)-v_\omega(0)|\leq \int_0^r |v_\omega'(s)| \xd s \leq R\sqrt{\alpha(\omega^*)^{\frac{2\alpha+1}{\alpha}}+\omega^*}.$$
    Thanks to Ascoli-Arzela theorem there exists a subsequence still denoted by $\{v_\omega\}_{0<\omega<\omega^*}$ that converges uniformly. Let $v_{\omega^*}$ be the limit of $\{v_\omega\}_{0<\omega<\omega^*} $. \eqref{estim_R} and \eqref{ODEv} imply that $v_{\omega}''(r)$ is also uniformly bounded. As a consequence, up to a subsequence, ${v_\omega'}$ converges uniformly. Furthermore thanks to \eqref{ODEv}, ${v_\omega''}$ converges uniformly to $-f_{\omega^*}(v_{\omega^*})$. Hence $(v_{\omega^*}')'=-f_{\omega^*}(v_{\omega^*})$.
  In addition, $v_{\omega^*}$ satisfies
   \begin{itemize}
       \item $\lim\limits_{\omega\to \omega^*} \frac{d-1}{R_{\omega}+r}v_\omega'(r)=0$ (thanks to \eqref{born_v_prim} and \eqref{estim_R})
       \item  $v_{\omega^*}(0)=\lim\limits_{\omega\to \omega^*} v_\omega(0)= \lim\limits_{\omega\to \omega^*} u_\omega(R_\omega)=\frac{t^*}{2}$
    \end{itemize}
and, from \eqref{Conv_loc},
\[
\frac{v_{\omega^*}'}{\sqrt{2|F_{\omega^*}(v_{\omega^*})|}} = -1
\]
since $v_\omega' < 0$.
Therefore
\[
v_{\omega^*}(r) = \Theta^{-1}(r),
\qquad
 \Theta(v) = - \int_{\frac{t^*}{2}}^{v} \frac{\xd s}{\sqrt{2|F_{\omega^*}(s)|}}.
\]
Note that $ \Theta$ diverges logarithmically at $0$ and $t^*$, so that 
$v_{\omega^*}$ converges exponentially fast towards $t^*$ at $-\infty$ and $0$ at $+\infty$.

Finally, for all $R>0$
  $$\lim\limits_{\omega\to \omega^*} \|v_\omega-v_{\omega^*}\|_{L^\infty(-R,R)}=\lim\limits_{\omega\to \omega^*} \|v_\omega'-v_{\omega^*}'\|_{L^\infty(-R,R)}=0$$
   which means   
    $$\lim\limits_{\omega\to \omega^*} \|u_\omega-v_{\omega^*}(\cdot-R_\omega)\|_{L^\infty(R_\omega-R,R_\omega+R)}=\lim\limits_{\omega\to \omega^*} \|u_\omega'-v_{\omega^*}'(\cdot-R_\omega)\|_{L^\infty(R_\omega-R,R_\omega+R)}=0.$$
  \end{proof}
  Now that we have achieved locally uniform convergence on a moving compact subset of $\xR$, our aim is to extend this convergence on $\xR$. To accomplish this, we will establish exponential estimates for our solution, as provided by the following lemma.
\begin{lem}
\label{lem_exp_bound}
 We have the bounds
\begin{equation}
\label{exp_estim_u}
v_{\omega}(r) 
\begin{cases}
\leq Ce^{-c|r|} & \text{on } [0, \infty), \\
\geq t^* - Ce^{-c|r|} & \text{on } [-R_{\omega}, 0],
\end{cases}
\end{equation}
and
\begin{equation}
|v'_{\omega}(r)|+|v''_{\omega}(r)|\leq Ce^{-c|r|} \quad \text{on } [-R_{\omega}, \infty).
\end{equation}
for some $c>0$, $C > 0$ independent of $\omega \in [\frac{\omega^*}{2}, \omega^*]$.
\end{lem}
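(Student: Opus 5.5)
The strategy is to exploit the first-order estimate \eqref{Conv_loc}, $\|v_\omega' + \sqrt{2|F_{\omega^*}(v_\omega)|}\|_{L^\infty}\le\sqrt{\omega^*-\omega}$, together with the linear behaviour of $F_{\omega^*}$ near its two zeros $0$ and $t^*$. We then upgrade to exponential decay through a comparison argument for the full ODE \eqref{ODEv}. We use $F_{\omega^*}(s)=\frac{1}{2(\alpha+1)}\Lambda^{2\alpha+2}(s)-\frac{\omega^*}{2}\Lambda^2(s)$. Near $s=0$ this gives $|F_{\omega^*}(s)|\ge c_0 s^2$, since $\Lambda(s)\sim s$. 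Near $s=t^*$ we have $1-\Lambda(s)\sim \frac{\alpha}{2}(t^*-s)^2$ (because $\Lambda'(t^*)=0$ and $\Lambda''(t^*)=-\alpha$), and also $\frac{d}{d\Lambda}\bigl(\frac{\Lambda^{2\alpha+2}}{2(\alpha+1)}-\frac{\omega^*\Lambda^2}{2}\bigr)|_{\Lambda=1}=1-\omega^*>0$. Together these give $|F_{\omega^*}(s)|\ge c_0 (t^*-s)^2$. Since $|F_{\omega^*}|>0$ on $(0,t^*)$, we obtain globally $\sqrt{2|F_{\omega^*}(s)|}\ge 2c\,\min\{s,t^*-s\}$ for some $c>0$.

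\textbf{Step 1 (decay on $[0,\infty)$).} The difficulty is that \eqref{Conv_loc} only gives $v_\omega'\le -2c v_\omega+\sqrt{\omega^*-\omega}$, and the error does not decay. We therefore use it only to reach a small threshold. On the region where $v_\omega\ge \delta$ with $\delta$ fixed small, we take $\omega$ close to $\omega^*$ so that $\sqrt{\omega^*-\omega}\le c\delta$. Then $v_\omega'\le -c\min\{v_\omega,t^*-v_\omega\}$, and the time to go from $t^*/2$ to $\delta$ is bounded uniformly, say by $r_\delta$. For $r\ge r_\delta$ we have $v_\omega\le\delta$, so $f_\omega(v_\omega)\le -\frac{\omega}{2}v_\omega\le -\frac{\omega^*}{4}v_\omega$ by condition \eqref{condition1} (uniformly in $\omega\in[\omega^*/2,\omega^*]$). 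Since also $v_\omega'<0$, the ODE gives $v_\omega''\ge \frac{\omega^*}{4}v_\omega$. Comparing with $Ce^{-c(r-r_\delta)}$ via the maximum principle on $[r_\delta,\infty)$, using $v_\omega\to0$ at infinity, yields exponential decay. Decay of $v_\omega'$ follows from integrating $(r^{d-1}u')'=-r^{d-1}f_\omega(u)$ from infinity, or from \eqref{Conv_loc}-type energy bounds. Decay of $v_\omega''$ then follows from the equation, since $|f_\omega(t)|\le 2t$ and $\frac{d-1}{R_\omega+r}$ is bounded for $r\ge0$.

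\textbf{Step 2 (behaviour on $[-R_\omega,0]$).} We set $w=t^*-v_\omega$. On $[-R_\omega,0]$, as long as $w\ge\delta$, the same argument run backwards bounds the length of the interval uniformly. Once $w\le\delta$, we need $w$ to decrease exponentially as $r\to -R_\omega$. Near $t^*$ we have $f_\omega(t)\approx f_\omega'(t^*)(t-t^*)$ with $f_\omega'(t^*)=-\alpha(1-\omega)<0$, so $w''=-v_\omega''\ge c_1 w-\frac{d-1}{R_\omega+r}|v_\omega'|$. The damping term has the favourable sign: $v_\omega'<0$ makes $\frac{d-1}{R_\omega+r}v_\omega'\le 0$, so $w''=\frac{d-1}{R_\omega+r}v_\omega'+f_\omega(v_\omega)$. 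This sign is wrong for a clean subsolution, so instead we work with first-order information. From \eqref{B1} and \eqref{estim_1} we get $|v_\omega'|\ge \sqrt{2|F_{\omega^*}(v_\omega)|}-\sqrt{\omega^*-\omega}\ge 2c w-\sqrt{\omega^*-\omega}$. By Lemma \ref{local_convergence}, $w(-R_\omega)=t^*-u_\omega(0)\le C\sqrt{\omega^*-\omega}$, so the error term is of the order of the smallest value of $w$ attained. The resulting inequality $w'\ge 2cw-C'w(-R_\omega)$ integrates to $w(r)\le Ce^{-c|r|}+C''w(-R_\omega)$. Finally, $w(-R_\omega)\lesssim e^{-cR_\omega}$ is needed to absorb the remainder. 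For this we use \eqref{estim_R}, i.e.\ $R_\omega\gtrsim(\omega^*-\omega)^{-1/2}$, and the fact that $\sqrt{\omega^*-\omega}\le e^{-c/\sqrt{\omega^*-\omega}}$ fails. So this route requires instead a sharper first-order comparison: using $w'\ge c w$ in the form of the Hamiltonian identity with the positive term $(d-1)\int_r^\infty\frac{(v_\omega')^2}{R_\omega+s}$, which gives $|v_\omega'|^2\ge 2|F_{\omega^*}(v_\omega)|-(\omega^*-\omega)\Lambda^2+\text{positive}$.

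\textbf{Main obstacle.} We expect this last point — obtaining the bound $t^*-v_\omega\le Ce^{-c|r|}$ all the way down to $r=-R_\omega$, uniformly in $\omega$ — to be the crux. Our first attempt will be a direct ODE comparison on $[-R_\omega,-r_\delta]$ for $w$, exploiting $w'(-R_\omega)=0$ and $w''\ge c_1w-\frac{d-1}{R_\omega+r}|v_\omega'|$. We will bound $|v_\omega'(r)|\le \frac{R_\omega+r}{d}\sup|f_\omega(v_\omega)|\lesssim (R_\omega+r)w$, using the integral representation of $u'$ from Lemma \ref{lem_properties} and $|f_\omega(v)|\lesssim w$ near $t^*$. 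This makes the bad term $\lesssim w$, absorbable when $R_\omega+r$ is small. Away from the origin, the factor $\frac{d-1}{R_\omega+r}$ is small compared to $c_1$. In both regimes we get $w''\ge \frac{c_1}{2}w$, and the maximum principle with $w'(-R_\omega)=0$ gives a $\cosh$-type bound, hence the exponential estimate. The bounds on $v_\omega'$ and $v_\omega''$ on $[-R_\omega,0]$ then follow from the same integral representation and the equation.
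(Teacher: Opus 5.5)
\textbf{Verdict: your Step 1 is sound, but Step 2 has a genuine gap at the one inequality your comparison needs.}

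Step 1 works. Using \eqref{Conv_loc} to reach the thresholds $v_\omega\le\delta$ on $[r_\delta,\infty)$ and $t^*-v_\omega\le\delta$ on $[-R_\omega,-r_\delta]$, in a time independent of $\omega$, is a legitimate and more quantitative substitute for the locally uniform convergence of Lemma \ref{conv_unif}.

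The gap is the inequality $w''\ge \frac{c_1}{2}w$ on all of $[-R_\omega,-r_\delta]$, which you assert but do not prove. Put $\rho=R_\omega+r$. Your representation of $v_\omega'$ gives
\[
\frac{d-1}{\rho}\,|v_\omega'(r)|\le \frac{d-1}{d}\,\sup_{-R_\omega\le s\le r}|f_\omega(v_\omega(s))| ,
\]
in which $\rho$ cancels. So the damping term is of the same order as the good term $f_\omega(v_\omega(r))$, uniformly in $\rho$, and it does \emph{not} become small when $R_\omega+r$ is small. At the centre, $w''(-R_\omega)=\frac1d f_\omega(u_\omega(0))$: the damping removes exactly a fraction $\frac{d-1}{d}$ of the good term.

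In the other regime, smallness of $\frac{d-1}{\rho}$ would only help if you knew $|v_\omega'|\lesssim w$, and you do not. The representation gives only $|v_\omega'|\lesssim\rho\,w$, which grows with $\rho$. The bound \eqref{Conv_loc} gives $|v_\omega'|\lesssim w+\sqrt{\omega^*-\omega}$, whose error term does not decay and is not controlled by $w$ deep inside the ball.

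Your earlier first-order attempt cannot be repaired either. Since $w'(-R_\omega)=0<w(-R_\omega)$, no inequality $w'\ge cw-(\text{error})$ can hold up to $r=-R_\omega$ with an error below $c\,w(-R_\omega)$. Also, $\sqrt{\omega^*-\omega}\lesssim w(-R_\omega)$ is backwards: Lemma \ref{local_convergence} only bounds $w(-R_\omega)$ from above.

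There are two ways to close the step.

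\emph{Sharp constants.} Since $f_\omega(v)/(t^*-v)\to\alpha(1-\omega)$ as $v\to t^*$, uniformly in $\omega$, and $w$ is increasing, choosing $\delta$ small gives
\[
f_\omega(v_\omega(r))-\tfrac{d-1}{d}\sup_{s\le r}f_\omega(v_\omega(s))\ge\big(\tfrac1d-2\eta\big)\alpha(1-\omega)\,w(r).
\]
This is positive only because $\frac{d-1}{d}<1$. It yields $w''\ge\kappa w$ with $\kappa$ of order $\alpha(1-\omega)/d$, not $c_1/2$. Your $\cosh$ comparison with $w'(-R_\omega)=0$ then closes.

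\emph{Keep the damping inside the Laplacian (simpler).} Let $c_1>0$ be a lower bound for $f_\omega(v)/(t^*-v)$ on $[t^*-\delta,t^*)$. Then $W=t^*-u_\omega$ satisfies $\Delta W=f_\omega(u_\omega)\ge c_1W$ in $B_{R_\omega-r_\delta}$. Take $\Phi(x)=\delta\cosh(\mu|x|)/\cosh(\mu(R_\omega-r_\delta))$ with $\mu^2=c_1/d$. Because $\sinh s\le s\cosh s$, it satisfies $\Delta\Phi\le c_1\Phi$. The maximum principle then gives $W\le\Phi\le 2\delta e^{-\mu(|r|-r_\delta)}$, and the centre is handled automatically.

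The paper gives no proof of this lemma and refers to \cite[Lemma 5.2]{lewin2020double}. The maximum-principle comparison for the radial Laplacian is the standard way to carry it out: apply it to $u_\omega$ outside $B_{R_\omega+r_\delta}$, which is your Step 1, and to $t^*-u_\omega$ inside $B_{R_\omega-r_\delta}$. Once $w$ is controlled, your derivative bounds on $[-R_\omega,0]$ from the integral representation and the equation are fine.
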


The proof is the same as that of \cite[Lemma 5.2]{lewin2020double}, so we omit it.

Now, we have all the necessary ingredients to prove the result of Theorem \ref{thrm_Global_convergence}.

\begin{proof}[Proof (Theorem \ref{thrm_Global_convergence})]
The exponential bounds from Lemma \ref{lem_exp_bound} give for $r \ge  -R_{\omega}$
$$|v'_{\omega}(r) - v'_{\omega^*}(r)| \leq Ce^{-c|r|}.$$
Hence, the dominated convergence theorem implies that, for all $1\leq p < +\infty $,
\begin{equation}
\lim_{\omega\to\omega^*} \int^{+\infty}_{-R_{\omega}} |v'_{\omega}(r) - v'_{\omega^*}(r)|^p \, \xd r = \lim_{\omega\to\omega^*} \int_0^{\infty} |u'_{\omega}(r) - v'_{\omega^*}(r - R_{\omega})|^p \, \xd r = 0.
\end{equation}
This gives the convergence of the derivatives in $L^p$, for all $1\leq p < +\infty $.
The convergence of the derivatives in $L^1$, and the fact that $u(R_\omega)=v_\omega(0) = v_{\omega^*}(0)$, gives 
\begin{equation}
   \int_0^{r} |u'_{\omega}(s+R_\omega) - v'_{\omega^*}(s)| \, \xd s \geq  |u_{\omega}(r+R_\omega) - v_{\omega^*}(r)|
\end{equation}
which implies the uniform convergence of $u_\omega$. Similarly, using the exponential bounds on $v''_\omega$, we obtain the $L^1$ convergence of $v''_\omega-v''_{\omega^*}$ on $(-R_\omega,+\infty)$ which leads to the uniform convergence of $ u'_{\omega} - v'_{\omega^*}(\cdot - R_{\omega})$ on $\R^+$.
\end{proof}
After establishing the convergence of \( u_\omega \) the solution of \eqref{PSSL}, we are now able to prove  Theorem \ref{thrm_Global_convergence_phi} for \( \varphi_\omega \) the solution of \eqref{SSNL} using the change of variables defined in Definition \ref{defi_change_var}.

The uniform convergence \eqref{eq:uniform-convergence_phi} of Theorem \ref{thrm_Global_convergence_phi} follows immediately from the fact that $\varphi_\omega(r)=\Lambda(u_\omega(r))$, the change of variables $\Lambda $ is Lipschitz by Proposition \ref{prop_change_var} and the uniform convergence of $u_\omega$ given by Theorem \ref{thrm_Global_convergence}. The local convergence \eqref{local_convergence_phi} of $\varphi_\omega$ can be proved thanks to Lemma \ref{local_convergence} 

To conclude the proof of the Theorem \ref{thrm_Global_convergence_phi}, it remains to prove the following lemma.

\begin{lem}
The mass of $\varphi_\omega$ the unique positive solution to \eqref{SSNL} behaves as follows
\begin{equation}
    M(\omega)=|\mathbb{S}^{d-1}| \frac{(2\sqrt{2})^{d}(d-1)^d}{d(\omega^*-\omega)^d} \left(\int_0^{t^*}|F_{\omega^*}(s)|^{\frac{1}{2}}\xd s\right)^d + o\left(\frac{1}{(\omega^*-\omega)^d}\right)
\end{equation}

when $\omega$ approaches $\omega^*$.
\end{lem}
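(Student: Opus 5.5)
The mass is $M(\omega)=|\mathbb S^{d-1}|\int_0^\infty \Lambda(u_\omega(r))^2 r^{d-1}\,\xd r$. By Theorem \ref{thrm_Global_convergence} and Lemma \ref{lem_exp_bound}, $u_\omega$ is close to $t^*$ on $[0,R_\omega-L]$ and decays exponentially beyond $R_\omega+L$, uniformly in $\omega$. Hence $\Lambda(u_\omega)^2$ is essentially the indicator of the ball of radius $R_\omega$, and
\[
M(\omega)=|\mathbb S^{d-1}|\frac{R_\omega^d}{d}+O(R_\omega^{d-1}).
\]
Concretely, I split $\int_0^\infty$ into $[0,R_\omega-L]$, $[R_\omega-L,R_\omega+L]$ and $[R_\omega+L,\infty)$. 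On the first interval, Lemma \ref{lem_exp_bound} gives $1-\Lambda(u_\omega)^2\le Ce^{-c|r-R_\omega|}$, since $\Lambda$ is $1$-Lipschitz. Weighting this by $r^{d-1}\le R_\omega^{d-1}$ yields an error $O(R_\omega^{d-1})$. The middle and tail pieces are also $O(R_\omega^{d-1})$, using $\Lambda(t)\le t$ and exponential decay. The whole problem therefore reduces to the asymptotics of $R_\omega$: I need $R_\omega\sim \frac{2\sqrt2(d-1)}{\omega^*-\omega}\int_0^{t^*}\sqrt{|F_{\omega^*}|}$, which gives exactly the stated constant.

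To obtain $R_\omega$, I use the Pohozaev-type identity \eqref{estim_1}:
\[
(d-1)\int_{-R_\omega}^\infty\frac{(v_\omega')^2}{R_\omega+s}\,\xd s=F_\omega(u_\omega(0))=F_{\omega^*}(u_\omega(0))+\tfrac{\omega^*-\omega}{2}\Lambda^2(u_\omega(0)).
\]
The right-hand side equals $\frac{\omega^*-\omega}{2}(1+o(1))$. Indeed, $\Lambda(u_\omega(0))\to1$ by Lemma \ref{local_convergence}, and $F_{\omega^*}(u_\omega(0))=o(\omega^*-\omega)$. The latter holds because $F_{\omega^*}$ vanishes to second order at $t^*$: there $f_{\omega^*}(t^*)=0$ and $F_{\omega^*}(t)\approx -c(t^*-t)^2$ near $t^*$. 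It remains to check that $t^*-u_\omega(0)=o(\sqrt{\omega^*-\omega})$.

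\textbf{This is the main obstacle.} Lemma \ref{local_convergence} only gives $O(\sqrt{\omega^*-\omega})$. I would sharpen it with the exponential estimate of Lemma \ref{lem_exp_bound}, $t^*-u_\omega(0)\le Ce^{-cR_\omega}$, together with the lower bound \eqref{estim_R}, $R_\omega\gtrsim(\omega^*-\omega)^{-1/2}$. These make the term exponentially small in $(\omega^*-\omega)^{-1/2}$, hence $o(\omega^*-\omega)$.

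For the left-hand side, the weight $\frac1{R_\omega+s}=\frac1{R_\omega}(1+o(1))$ holds uniformly for $|s|\le \sqrt{R_\omega}$. The remaining region contributes $o(1/R_\omega)$: for $s>\sqrt{R_\omega}$ I use exponential decay, and for $s<-\sqrt{R_\omega}$ I use $|v_\omega'|\le Ce^{-c|s|}$ with a careful treatment near $s=-R_\omega$, where $v_\omega'\to0$ and $v'_\omega(s)\sim s$ keeps the integral finite. The $L^2$ convergence $v_\omega'\to v_{\omega^*}'$ from Theorem \ref{thrm_Global_convergence} then gives
\[
\text{LHS}=\frac{d-1}{R_\omega}\Bigl(\int_{\mathbb R}(v_{\omega^*}')^2+o(1)\Bigr).
\]
Using $v_{\omega^*}'=-\sqrt{2|F_{\omega^*}(v_{\omega^*})|}$ and the substitution $t=v_{\omega^*}(r)$,
\[
\int_{\mathbb R}(v_{\omega^*}')^2=\int_0^{t^*}\sqrt{2|F_{\omega^*}(t)|}\,\xd t.
\]
Equating the two sides gives
\[
R_\omega=\frac{2\sqrt2\,(d-1)}{\omega^*-\omega}\int_0^{t^*}|F_{\omega^*}|^{1/2}\,(1+o(1)).
\]
Substituting into $M(\omega)=|\mathbb S^{d-1}|R_\omega^d/d+O(R_\omega^{d-1})$ yields \eqref{Mass_assymp}, the error $O(R_\omega^{d-1})$ being $o((\omega^*-\omega)^{-d})$.
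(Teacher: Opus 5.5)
Your proposal is correct and follows the paper's proof. The paper uses the same reduction $M(\omega)=|\mathbb{S}^{d-1}|R_\omega^d/d+O(R_\omega^{d-1})$ from the exponential bounds of Lemma~\ref{lem_exp_bound}, but it imports the asymptotics of $R_\omega$ from Lemma 5.5 of Lewin--Rota Nodari, whereas you derive them directly from the identity \eqref{estim_1}, and the exponential smallness of $t^*-u_\omega(0)$ correctly disposes of the $F_{\omega^*}(u_\omega(0))$ term. The one step to tighten is near $r=0$: there you need the contribution to be $o(1/R_\omega)$ uniformly in $\omega$, not merely finite, and this follows from $u_\omega'(0)=0$, which gives $|u_\omega'(r)|\le r\sup_{[0,r]}|u_\omega''|\le Cr\,e^{-c(R_\omega-r)}$ by Lemma~\ref{lem_exp_bound}, hence $\int_0^{R_\omega/2}(u_\omega')^2/r\,\xd r\le CR_\omega^2e^{-cR_\omega}$.
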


\begin{proof}

Using the upper estimate \eqref{exp_estim_u}, the Lemma \ref{lem_properties}(\ref{L3}) and that $\varphi_\omega$ is radial decreasing we have
\begin{align*}
\int_0^{\infty} r^{d-1} \varphi_\omega(r)^2 \xd r= &\int_0^{R_\omega} r^{d-1} \varphi_\omega(r)^2 \xd r + \int_{R_\omega}^{\infty} r^{d-1} \varphi_\omega(r)^2 \xd r\\
&\leq \varphi_\omega(0)^2  \int_0^{R_\omega} r^{d-1} \xd r + \int_{R_\omega}^{\infty} r^{d-1} u_\omega(r)^2 \xd r\\
&\leq \frac{R_\omega^d}{d}+C\int_{0}^{\infty} (r+R_\omega)^{d-1} e^{-2cr} \xd r=  \frac{R_\omega^d}{d} + O(R_\omega^{d-1}).
\end{align*}
On the other hand, using Proposition \ref{prop_change_var}\ref{prop_change_var5}, Lemma \ref{lem_exp_bound}, and knowing that $v_\omega(r):=u_\omega(R_\omega + r)$ we have for all $r\in[0,R_\omega]$ 
\begin{align*}
    0\le 1-\varphi_{\omega}(r)=\Lambda(t^*)-u_\omega(r)\le (t^*-u_\omega(r))\le Ce^{-c|r-R_\omega|}.
\end{align*}
As a consequence,
\begin{align*}
\int_0^{\infty} r^{d-1} \varphi_\omega(r)^2 \xd r = &\int_0^{\infty} r^{d-1} ((1-\varphi_\omega(r))-1)^2 \xd r\\\ge  & \int_0^{R_\omega} r^{d-1}\xd r - 2 \int_0^{R_\omega} (1-\varphi_\omega(r)) r^{d-1}  \xd r
 +  \underbrace{\int_0^{R_\omega} (1-\varphi_\omega(r))^2 r^{d-1} \xd r}_{>0}\\
 \geq &\int_0^{R_\omega} r^{d-1}\xd r - 2 C\int^0_{-R_\omega} (r+R_\omega)^{d-1}e^{-c|r|} \xd r=\frac{R_\omega^d}{d} + O(R_\omega^{d-1}).
\end{align*}
Finally, 
\begin{equation}
    \label{Mass_Assymptotique}
    M(\omega)= |\mathbb{S}^{d-1}| \frac{R_\omega^d}{d} + O(R_\omega^{d-1}).
\end{equation}

To conclude, it remains to determine the asymptotic behaviour of $R_\omega$. This can be done exactly as in \cite[Lemma 5.5]{lewin2020double}, and we get

\begin{equation}
\label{R_assymptotique}
    R_\omega =\frac{(2\sqrt{2})(d-1)}{(\omega^*-\omega)} \int_0^{t^*}|F_{\omega^*}(s)|^{\frac{1}{2}}\xd s + o\left(\frac{1}{\omega^*-\omega}\right).
\end{equation}
By inserting equation \eqref{R_assymptotique} into equation \eqref{Mass_Assymptotique}, we immediately obtain \eqref{Mass_assymp}.

\end{proof}

\section{ On the limit $\omega \to 0$, proof Theorem \eqref{THRM_omega_0} }
 \label{sec:limit_0}

In this section, we prove Theorem \ref{THRM_omega_0} that describes the asymptotic behaviour of the positive solution \( \varphi_\omega \) to \eqref{SSNL} as \( \omega \to 0 \). The behaviour of \( \varphi_\omega \) in this regime depends crucially on the exponent of the non-linearity. Accordingly, we consider three distinct cases — sub-critical, critical, and super-critical — each leading to a different asymptotic profile. 

\subsection{Super-critical case}
\label{sec:SCC}

We examine, as \( \omega \) approaches 0, the behaviour of the unique positive solution \( \varphi_\omega \) to \eqref{SSNL} for $\alpha>\frac{2}{d-2}$. When $\omega=0 $ the equation \eqref{SSNL} becomes
\begin{equation}
\label{SSNL_0}
-\nabla \cdot \left(\frac{\nabla\varphi}{1-|\varphi|^{2\alpha}} \right) +\alpha |\varphi|^{2\alpha-2}\frac{|\nabla\varphi|^2}{(1-|\varphi|^{2\alpha})^2}
\varphi - |\varphi|^{2\alpha}\varphi = 0.
\end{equation}
Our aim is to show the following theorem.
\begin{thrm}
\label{Thrm_super_critical_phi}
Suppose $d\geq 3$ and $\alpha>\frac{2}{d-2}$. Then, as $\omega \to 0^+$, $\varphi_\omega$ the solution of \eqref{SSNL} converges in $\dot{H}^1(\mathbb{R}^d)$ to the unique positive, radial solution $\varphi_0$ of \eqref{SSNL_0}.
\end{thrm}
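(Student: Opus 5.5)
We work throughout in the semi-linear variable: $\varphi_\omega=\Lambda(u_\omega)$, where $u_\omega$ is the positive radial solution of \eqref{PSSL}. The plan is to show that $u_\omega\to u_0$ in $\dot H^1(\mathbb{R}^d)$, where $u_0$ is the unique positive radial solution of $-\Delta u=f_0(u)$ with
\[
f_0(t)=\Lambda'(t)\Lambda(t)^{2\alpha+1}.
\]
The convergence then transfers back through $\varphi=\Lambda(u)$. Since $|\nabla\Lambda(u)|=\Lambda'(u)|\nabla u|$ and $\Lambda'$ is bounded and Hölder continuous (Proposition~\ref{prop_change_var}), $\dot H^1$ convergence of $u_\omega$ together with a.e. convergence gives $\nabla\varphi_\omega\to\nabla\varphi_0$ in $L^2$, by dominated convergence on $(\Lambda'(u_\omega)-\Lambda'(u_0))\nabla u_0$. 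Existence and uniqueness of $u_0$ will come from the zero-mass case. For existence we use Berestycki--Lions: $f_0(t)\sim t^{2\alpha+1}$ near $0$ with $2\alpha+1>\frac{d+2}{d-2}$, $f_0$ is bounded, and $F_0>0$. For uniqueness we use the zero-mass version of \cite[Theorem 1]{lewin2020double} (or an adapted ODE argument), after checking that $tf_0'/f_0$ is decreasing on $(0,t^*)$ and that $f_0''$ changes sign once. Both checks follow from the formulas of Proposition~\ref{prop_f} with $\omega=0$.

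\textbf{Step 1: uniform bounds.} We have $0<u_\omega<t^*$, and the Pohozaev identity gives
\[
\frac{d-2}{2}\int|\nabla u_\omega|^2=d\int F_\omega(u_\omega).
\]
Testing the equation with $u_\omega$ gives $\int|\nabla u_\omega|^2=\int f_\omega(u_\omega)u_\omega$. For a uniform $\dot H^1$ bound we exploit supercriticality: near $0$ we have $F_\omega(t)\le Ct^{2\alpha+2}$, and since $2\alpha+2>2^*$ and $u$ is bounded, $\int u^{2\alpha+2}\le C\|u\|_{L^{2^*}}^{2^*}$. Combining Pohozaev with Sobolev then yields $\|\nabla u_\omega\|_2^2\le C\|\nabla u_\omega\|_2^{2^*}$, hence a uniform lower bound. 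An upper bound requires more; we would obtain it from radial decay. For radial decreasing $u$ bounded by $t^*$, the Strauss-type estimate $u(r)\le C r^{-(d-2)/2}\|\nabla u\|_2$ combined with the comparison $-\Delta u\le C u^{2\alpha+1}$ should give $u_\omega(r)\le C r^{-(d-2)}$ uniformly. The mechanism is that $u^{2\alpha+1}$ with $(2\alpha+1)(d-2)/2>(d+2)/2$ is integrable against the Newtonian kernel, so a bootstrap of the integral representation $u=c_d|x|^{2-d}*(f_\omega(u))$ with $f_\omega(u)\le f_0(u)$ improves the decay. This uniform decay bound is the main obstacle, and it is exactly where $\alpha>\frac{2}{d-2}$ enters. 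If the bootstrap does not close directly, I would argue by contradiction with a blow-up or rescaling analysis: normalizing at a point where $r^{(d-2)/2}u_\omega(r)$ is large would produce a positive solution of $-\Delta w=c\,w^{2\alpha+1}$ on $\mathbb{R}^d$ (since $\Lambda(t)\sim t$ at $0$) or of $-\Delta w=0$, which would be excluded by Liouville-type results for supercritical equations in the radial class, or by $w(0)\le t^*$.

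\textbf{Step 2: passage to the limit.} With $u_\omega(r)\le C\min\{1,r^{2-d}\}$ uniformly and $u_\omega(0)$ bounded below (if $u_\omega(0)\to0$, rescaling contradicts the $\dot H^1$ lower bound), Arzelà--Ascoli on the radial ODE gives local $C^1$ convergence along subsequences to a radial solution $u_0$ of $-\Delta u=f_0(u)$ with $u_0(0)>0$, hence $u_0>0$. The uniform pointwise decay and the integral identities above give domination: $|\nabla u_\omega|^2$ is controlled through $\int f_\omega(u_\omega)u_\omega$, with $f_\omega(u)u\le Cu^{2\alpha+2}\le C r^{-(d-2)(2\alpha+2)}$, which is integrable at infinity precisely because $\alpha>\frac{2}{d-2}$. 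The term $\omega\int\Lambda'\Lambda u_\omega$ must also tend to $0$; this holds by Pohozaev, which forces $\omega\int\Lambda(u_\omega)^2\to0$. Along these lines we get $\|\nabla u_\omega\|_2\to\|\nabla u_0\|_2$, and together with weak convergence this gives strong $\dot H^1$ convergence. Uniqueness of $u_0$ makes the convergence hold for the full family $\omega\to0$. The $L^q$ convergence for $q\ge 2^*$ follows from Sobolev together with the $L^\infty$ bound.
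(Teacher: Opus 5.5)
\textbf{Gap in Step 1.} Your Step 1 never establishes the key estimate: a uniform-in-$\omega$ upper bound on $\|\nabla u_\omega\|_{L^2}$, or equivalently a uniform decay bound for $u_\omega$. Everything in Step 2 depends on it: the positive lower bound on $u_\omega(0)$, the tail domination, and the nontriviality of the local limit.

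Your first route is circular. The Strauss bound $u(r)\le Cr^{-(d-2)/2}\|\nabla u\|_{L^2}$ (or the radial bound through $\|u\|_{L^{2^*}}$) only transports a bound you do not yet have.

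Your second route fails because there is no Liouville theorem in the supercritical range. For $p=2\alpha+1>\frac{d+2}{d-2}$, the equation $-\Delta w=w^{p}$ has a one-parameter family of positive radial entire solutions decaying like $r^{-1/\alpha}$; Gidas--Spruck nonexistence holds only for $p<\frac{d+2}{d-2}$. These solutions have infinite $\dot H^1$ energy, so excluding them requires exactly the energy bound you are trying to prove. Moreover, for small $u(0)$ the radial ODE with nonlinearity $f_0$ produces precisely such slowly decaying, infinite-energy profiles. Local ODE compactness alone therefore cannot prevent $u_\omega(0)$ from approaching the wrong shooting value.

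Your parenthetical about $u_\omega(0)\to 0$ has the same problem in the opposite direction. Pohozaev and Sobolev give $\|\nabla u_\omega\|_{L^2}^{2^*-2}\ge c\,\|u_\omega\|_{L^\infty}^{-(2\alpha+2-2^*)}$. A small $u_\omega(0)$ therefore forces \emph{large} energy: it contradicts an upper bound, not the lower bound you proved.

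\textbf{The missing ingredient.} You need a global selection principle; the paper's is variational. By uniqueness, $u_\omega(x)=v_\omega(I_\omega^{-1/2}x)$ with $v_\omega$ a minimiser of $I_\omega$, so $\|\nabla u_\omega\|_{L^2}^2=I_\omega^{d/2}$. Since $F_\omega\le F_0$, one has $I_0\le I_\omega$. Testing $J_\omega$ with $v_0$ (for $d\ge 5$), or with $\eta_R v_0$ and $R=R(\omega)\to\infty$ (for $d\in\{3,4\}$, where $v_0\notin L^2$), gives $I_\omega\le I_0(1+o(1))$. Comparing $J_0(v_\omega)$ with $I_0$ then also yields $\omega\|\Lambda(v_\omega)\|_{L^2}^2\to 0$.

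\textbf{How your Step 2 then closes.} Once you have this bound, your argument can be completed as follows.
\begin{itemize}
\item The uniform Strauss decay gives $u_\omega^{2\alpha+2}\le Cr^{-(d-2)(\alpha+1)}$, which is integrable at infinity exactly because $\alpha>\frac{2}{d-2}$.
\item Your bootstrap to $r^{2-d}$ closes using $|u_\omega'(r)|\le r^{1-d}\int_0^r f_0(u_\omega(s))s^{d-1}\,\mathrm{d}s$.
\item Pohozaev plus weak lower semicontinuity gives $\|\nabla u_\omega\|_{L^2}\to\|\nabla u_0\|_{L^2}$. The paper instead identifies the weak limit as a minimiser of $I_0$, after proving strong $L^{2\alpha+2}$ convergence via the radial lemma.
\end{itemize}

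\textbf{Smaller points.} \cite[Theorem 1]{lewin2020double} assumes $f'(0)<0$, so it does not apply to $f_0$. The paper uses Tang's theorem \cite{tang2001uniqueness} instead. Its hypotheses are the monotonicity of $tf_0'/f_0$ that you mention, the limits $2\alpha+1>\frac{d+2}{d-2}$ at $0$ and $-\infty$ at $t^*$, and the fast decay $|x|^{d-2}u_0\to c$. The paper gets the fast decay from Flucher--M\"uller \cite{flucher1998radial}; in your route it would come from the bootstrap. Your final transfer to $\varphi_\omega$, by dominated convergence on $(\Lambda'(u_\omega)-\Lambda'(u_0))\nabla u_0$, is correct and slightly cleaner than the paper's.
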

To this goal we consider the semi-linear problem \eqref{PSSL}  and  we use the variational characterisation as in  \cite{berestycki1983nonlinear}. More precisely, for any $\omega> 0$, the unique positive solution $u_\omega$ is obtained by considering the following minimisation problem
\begin{equation}
\label{Pb_optim_I_omega}
I_\omega= \inf \left\{\int_{\xR^d}|\nabla u|^2  \xd x :\,  u\in H^1(\xR^d),\, \frac{2d}{d-2}\int_{\xR^d} F_\omega (u) \xd x = 1\right\}
\end{equation}
with $$F_\omega(t)=\int_0^t f_\omega(s)\xd s = \frac{1}{2\alpha+2}\Lambda^{2\alpha+2}(t)-\frac{\omega}{2}\Lambda^2(t) .$$
where $\Lambda$ is the change of variables given by Definition \ref{defi_change_var}. In particular, let $v_\omega$ be a minimiser of \eqref{Pb_optim_I_omega}. There exists a Lagrange multiplier $\mu $ such that 
\begin{equation*}
    -\Delta v_\omega = \mu  f_\omega(v_\omega).
\end{equation*}
Thanks to Pohozaev identity
\begin{equation*}
    \int_{\xR^d}|\nabla v_\omega|^2 \,\xd x = \frac{2d}{d-2} \mu  \int_{\xR^d}F_\omega(v_\omega)\, \xd x
\end{equation*} 
 we have 
 \begin{equation*}
     \mu= I_\omega.
 \end{equation*}
Note that $I_\omega > 0$.
Then $v_\omega$ is a solution to 
\begin{equation*}
    -\Delta v_\omega =I_\omega f_\omega(v_\omega),
\end{equation*}
so that 
\begin{equation}
\label{rescaled_solut_v_omega}
    u_\omega(x)=v_\omega (I_\omega^{\frac{-1}{2}} x)\mathrm,
\end{equation}
with $u_\omega$ the solution to \eqref{PSSL}. 

When $\omega=0$, we have to deal with \eqref{SSNL_0}. In order to solve this equation, we proceed as before. We define $u$ by $\varphi=\Lambda(u)$ where $\Lambda$ is the change of variables given in Definition \ref{defi_change_var}. We remark that $\varphi_0$ is solution to \eqref{SSNL_0} if and only if $u_0$ is a solution to the following limiting problem 
\begin{equation}
    \label{Pb_omega_0}
    -\Delta u  = f_0(u), \text{ with }  f_0(t)=\Lambda'(t)\Lambda^{2\alpha+1}(t) \text { for } t\neq 0 \text{ and }\ f_0(0)=0.
\end{equation}
First, we will show that \eqref{Pb_omega_0} has a unique positive solution $u_0\in \dot{H}^1(\xR^d)$, and then we will prove that $u_\omega $ tends to $u_0$ in $\dot{H}^1(\xR^d)$.

Let us state the theorem of existence and uniqueness of a positive solution to \eqref{Pb_omega_0}.
\begin{thrm}
Let $d\geq 3$ and $\alpha>\frac{2}{d-2}$, the problem \eqref{Pb_omega_0} has a unique positive, radial decreasing solution $u_0\in \dot{H}^1(\xR^d)$. Moreover $u_0$ is a fast decaying function, meaning that
\begin{equation}
\label{estim_radial_u0}
 |x|^{d-2} u_0(|x|) \to c \geq 0 \qquad \text{when} \qquad  |x|\to \infty. 
\end{equation}
\end{thrm}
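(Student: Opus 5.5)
The proof splits into existence, symmetry plus decay, and uniqueness, following the structure already used for $\omega>0$.

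\emph{Existence.} I would use the zero-mass case of Berestycki--Lions \cite{berestycki1983nonlinear}, $d\ge 3$, via the constrained minimisation
\[
I_0=\inf\Big\{\int_{\xR^d}|\nabla u|^2 : u\in \dot H^1(\xR^d),\ \tfrac{2d}{d-2}\int_{\xR^d}F_0(u)=1\Big\},
\qquad F_0(t)=\tfrac{1}{2\alpha+2}\Lambda^{2\alpha+2}(t).
\]
Their hypotheses reduce to the following checks on $f_0$.
\begin{itemize}
\item Near $0$: by Proposition \ref{prop_change_var}\ref{prop_change_var3} and \ref{prop_change_var4}, $0\le f_0(t)\le t^{2\alpha+1}$. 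Since $2\alpha+1>\frac{d+2}{d-2}$ exactly when $\alpha>\frac{2}{d-2}$, we get $\limsup_{t\to0}f_0(t)/t^{(d+2)/(d-2)}\le 0$. This is the zero-mass condition.
\item At infinity: $f_0\le 1$ and $f_0=0$ for $t\ge t^*$.
\item Positivity of the primitive: $F_0(t^*)>0$.
\end{itemize}
Minimisation is then done over radial decreasing functions, which is allowed by Schwarz symmetrisation. Compactness comes from Strauss' radial lemma together with the supercritical vanishing of $F_0$ at $0$. Pohozaev removes the Lagrange multiplier, and the rescaling $u_0(x)=v(I_0^{-1/2}x)$ gives a positive radial decreasing solution in $\dot H^1$.

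\emph{Symmetry, bounds and decay.} Bootstrapping as in Lemma \ref{lem_properties}(\ref{L1}), starting from $u\in L^{2d/(d-2)}$ and $f_0(u)\le \min\{1,u^{2\alpha+1}\}$, gives $u_0\in L^\infty\cap C^1$. The maximum principle argument of Lemma \ref{lem_properties}(\ref{L3}) gives $u_0<t^*$; the Lipschitz/Cauchy-uniqueness argument carries over verbatim since $f_0(t^*)=0$.

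For \eqref{estim_radial_u0}, I write $-\Delta u_0=V u_0$ with $V=f_0(u_0)/u_0\le u_0^{2\alpha}$. The radial lemma for $\dot H^1$ gives $u_0(r)\le Cr^{-(d-2)/2}$, hence $V\le Cr^{-\alpha(d-2)}$, and $\alpha(d-2)>2$. I then iterate the radial integral representation
\[
u_0'(r)=-r^{1-d}\int_0^r f_0(u_0(s))\,s^{d-1}\,\xd s
\]
to improve the decay exponent step by step until $\int_0^\infty f_0(u_0)s^{d-1}\,\xd s<\infty$. At that point $r^{d-1}u_0'(r)\to -c'$, so $r^{d-2}u_0(r)\to c'/(d-2)=c\ge 0$.

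\emph{Uniqueness.} Every positive $\dot H^1$ solution is radial by the moving-plane argument \cite{gidas1981symmetry}, using the fast decay just obtained together with monotonicity properties of $f_0$ near $0$. It then suffices to show uniqueness for the radial ODE. I plan to invoke the zero-mass version of the uniqueness theorem in \cite{lewin2020double}, which covers exactly $f(0)=f(t^*)=0$ with $f$ vanishing faster than $t^{(d+2)/(d-2)}$ at $0$. This requires three checks, done by computations paralleling Proposition \ref{thrm_uniqueness} with $\omega=0$:
\begin{itemize}
\item $f_0>0$ on $(0,t^*)$ and $f_0'(t^*)=-\alpha<0$;
\item $t\mapsto tf_0'(t)/f_0(t)$ is decreasing on $(0,t^*)$, which follows from \eqref{con_f'_f} with $P(\xi)=\xi^{2\alpha+1}$, so that $\Lambda P'(\Lambda)/P(\Lambda)\equiv 2\alpha+1$ and the third line vanishes;
\item $f_0''$ changes sign exactly once, from the sign of $Q(t)=2\alpha+1-(6\alpha+2)\Lambda^{2\alpha}(t)$.
\end{itemize}

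\emph{Expected main obstacle.} I expect the hardest step to be justifying the use of \cite{lewin2020double} (or an equivalent shooting argument) in the zero-mass setting. If that theorem is not directly applicable there, my fallback is a shooting argument. Distinct radial solutions would have initial values $u(0)\in(0,t^*)$, and I would compare their Emden--Fowler transforms $w(s)=r^{(d-2)/2}u(r)$ with $s=\log r$. The monotonicity of $tf_0'/f_0$ then yields a Sturm-type non-intersection argument.
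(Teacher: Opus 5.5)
\textbf{The gap is in the uniqueness step.} The theorem of \cite{lewin2020double}, as stated and used in Proposition~\ref{thrm_uniqueness}, is a positive-mass result. It needs some $t_0\in(0,t^*)$ with $f<0$ on $(0,t_0)$, $f>0$ on $(t_0,t^*)$, and $f'(0)<0$. None of this holds for $f_0$: $f_0>0$ on all of $(0,t^*)$ and $f_0'(0)=0$. You do not identify a zero-mass counterpart of that theorem. Your fallback (Emden--Fowler variables plus a Sturm-type non-intersection argument) is a one-line sketch of what is in fact the whole difficulty.

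The paper closes this step with Tang's uniqueness theorem for fast-decay solutions \cite{tang2001uniqueness}. In the form used there, it requires $f\in C^1([0,\infty))$, $f(0)=0$, $f>0$ near $0$, $g(s)=sf'(s)/f(s)$ non-increasing on $(0,t^*)$, and $\lim_{s\to(t^*)^-}g(s)<\frac{d+2}{d-2}<\lim_{s\to0^+}g(s)$. Your own computations already verify these:
\begin{itemize}
\item \eqref{con_f'_f} with $P(\xi)=\xi^{2\alpha+1}$ gives the monotonicity of $g$;
\item $g(0^+)=2\alpha+1>\frac{d+2}{d-2}$ is exactly the assumption $\alpha>\frac{2}{d-2}$;
\item $g\to-\infty$ at $t^*$ because $f_0(t^*)=0$ and $f_0'(t^*)=-\alpha$.
\end{itemize}
What remains is regularity. $f_0$ is not $C^1$ at $t^*$ as a function on $[0,\infty)$, so one applies the theorem to the extension $\tilde f_0(t)=-\alpha(t-t^*)$ for $t>t^*$. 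This is harmless: by your maximum-principle step, positive solutions of either problem stay in $[0,t^*]$, where the two nonlinearities coincide. The sign condition on $f_0''$ is not needed.

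\textbf{Your decay argument is correct and improves on the paper's.} The paper obtains \eqref{estim_radial_u0} from Flucher--M\"uller \cite{flucher1998radial}, which applies only to the minimiser. In your iteration, from $u_0(r)\le Cr^{-\beta}$ you get $f_0(u_0)\le Cr^{-(2\alpha+1)\beta}$. As long as $(2\alpha+1)\beta<d$, this gives $u_0(r)\le Cr^{-((2\alpha+1)\beta-2)}$. The exponent gains at least $2\alpha\beta_0-2>0$ per step precisely because $\beta_0=\frac{d-2}{2}>\frac1\alpha$. This shows that \emph{every} positive radial $\dot H^1$ solution is fast-decaying. That is exactly what is needed to deduce uniqueness among all such solutions from a theorem about fast-decay solutions.

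\textbf{Mind the order in the symmetry step.} This decay is derived from the radial ODE, so you cannot feed it into moving planes for a possibly non-radial solution. Either use a moving-plane result valid directly in $\dot H^1\cap L^{2d/(d-2)}$, or observe that the statement only concerns radial decreasing solutions. The paper takes the first route: it applies \cite[Theorem 4.1]{frank2013ground} with the bound $\frac{f_0(s)-f_0(t)}{s-t}\le(2\alpha+1)\min\{1,s^{2\alpha}\}$.
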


\begin{proof}
The existence of a positive solution to \eqref{Pb_omega_0} can be proved using the result of Berestycki and Lions   in \cite[Theorem 4]{berestycki1983nonlinear}. Similarly to the case $\omega>0$, the proof relies on the variational characterisation of the problem
\begin{equation}
\label{Pb_optim_I_0}
I_0= \inf \left\{\int_{\xR^d}|\nabla u|^2  \xd x :\,   u\in \dot{H}^1(\xR^d),\, \frac{2d}{d-2} \int_{\xR^d} F_0 (u) \xd x = 1\right\}
\end{equation}
with $$F_0 (t)=\int_0^t f_0(s)\xd s = \frac{1}{2\alpha+2}\Lambda^{2\alpha+2}(t).$$
The problem \eqref{Pb_optim_I_0} admits a positive, radial decreasing minimiser $v_0$. In particular there exists a Lagrange multiplier $\mu $ such that 
\begin{equation*}
    -\Delta v_0 = \mu  f_0(v_0).
\end{equation*}
Thanks to the Pohozaev identity 
\begin{equation*}
    \int_{\xR^d}|\nabla v_0|^2 \,\xd x = \frac{2d}{d-2} \mu  \int_{\xR^d}F_0(v_0)\, \xd x
\end{equation*} 
 we have 
\begin{equation*}
     \mu= I_0.
\end{equation*}
Note that $I_0> 0$. Finally, we define $u_0(x)= v_0(I_0^ {-\frac{1}{2}}x)$. Then $u_0$ is a positive radial decreasing solution to \eqref{Pb_omega_0}.

To prove the uniqueness, we want to use the result of Tang \cite{tang2001uniqueness}. The difficulty is that the non-linearity $f_0$ is not $C^1(\xR^+)$. In order to overcome this, we consider the following problem 
\begin{equation}
    \label{Pb_omega_0_tild}
    -\Delta u  = \tilde{f}_0(u)
\end{equation}
with
$$
\left\{
\begin{aligned}
&\tilde{f}_0(0)=0&\\
&\tilde{f}_0(t)=\Lambda'(t)\Lambda^{2\alpha+1}(t) &\text{if } 0<t\leq t^*\\ 
&\tilde{f}_0(t)=-\alpha(t-t^*) &\text{if } t> t^*
\end{aligned}
\right.
$$
where $t^*$ is defined by \eqref{t*}. The function $\tilde{f}_0$ is in $C^1((0,+\infty))$.

Our aim now is to show that all positive solutions to \eqref{Pb_omega_0} and \eqref{Pb_omega_0_tild} in $\dot{H}^1(\xR^d)$ are radial decreasing.  For this, we argue as in the proof of Lemma \ref{lem_properties}(\ref{L2}), using \cite[Theorem 4.1]{frank2013ground}.

Noticing that $0\leq \Lambda(t) \leq \min\{1,t\}$ for all $t\ge 0$, we have
\begin{align*}
    \frac{f_0(s)-f_0(t)}{s-t}=\int_0^1 f_0'(t+\sigma (s-t))d\sigma \leq (2\alpha+1) (\min \{1,s^{2\alpha}\}) 
\end{align*}
and
\begin{align*}
    \frac{\tilde{f}_0(s)-\tilde{f}_0(t)}{s-t}=\int_0^1 \tilde{f}_0'(t+\sigma (s-t))d\sigma \leq  (2\alpha+1) (\min \{1,s^{2\alpha}\}) .
\end{align*}
for any $s>t\ge 0$. Since  $\alpha>\frac{2}{d-2}$, 
we have that $u_0$, solution to \eqref{Pb_omega_0}, and $\tilde{u}_0$, solution to \eqref{Pb_omega_0_tild}, are in $\dot{H}^1(\xR^d)\cap L^{2d/d-2}(\xR^d)$. Then, all positive solutions of the  problem \eqref{Pb_omega_0} and the problem \eqref{Pb_omega_0_tild} are radial decreasing. 

Moreover, thanks to the maximum principle, we can prove that all positive solutions to \eqref{Pb_omega_0} and \eqref{Pb_omega_0_tild}  satisfy that 
$$\|u_0\|\leq t^*  \qquad \text{and} \qquad   \|\tilde{u}_0\|\leq t^*.$$
This implies that $\tilde{u}$ is a positive solution to \eqref{Pb_omega_0_tild} if and only if $u$ is a positive solution to \eqref{Pb_omega_0}, and it is enough to prove the uniqueness of positive radial solutions to \eqref{Pb_omega_0_tild}.

To this aim, we use the result of Tang \cite{tang2001uniqueness}. 
 
We have 
$$ \tilde{f}_0 \in C^1([0, \infty)),\ \tilde{f}_0(0) = 0,\text{ and } \tilde{f}_0(s) > 0 \text{ for $s$ near $0$,}
$$ 
and we want to prove that $g(s)=\frac{s f_0'(s)}{f_0(s)}$ is non-increasing on $(0,t^*)$. We have  
\begin{equation}
    \label{g}
    g(s)=\frac{(2\alpha+1)s\Lambda'(s)}{\Lambda(s)}- \frac{\alpha s \Lambda^{2\alpha}(s)}{\Lambda'(s)\Lambda(s)}.
\end{equation}
we have
\begin{align*}
g'(s)&=\frac{(2\alpha+1)}{\Lambda^2(s)}(\Lambda'(s)\Lambda(s)+(1-\alpha) s\Lambda^{2\alpha}(s)- s)\\
      &-\frac{\alpha\Lambda^{2\alpha}(s)}{(\Lambda'(s)\Lambda(s))^2} \left( \Lambda'(s)  \Lambda(s)+ (2\alpha-1)s(\Lambda'(s))^2 + \alpha s \Lambda^{2\alpha}(s) \right).
\end{align*}
For any $s\in (0,t^*)$, let $h(s)= \Lambda'(s)\Lambda(s)+(1-\alpha) s\Lambda^{2\alpha}(s)- s$. We have $\lim_{s\to 0^+}h(s)=0$ and
$$h'(s)=-2\alpha \Lambda^{2\alpha-1}(s)(\Lambda(s)+ (\alpha-1)s \Lambda'(s))  \leq 0 $$ for any $\alpha>0$ thanks to Proposition \ref{prop_change_var}\ref{prop_change_var4}. Similarly, 
$$\Lambda(s)+ (2\alpha-1)s\Lambda'(s)\ge 0$$ for all $s\in (0,t^*)$. As a consequence, $g'(s)\leq 0$ for all $s\in (0,t^*)$ and $g$ is non-increasing on $(0,t^*)$.

Next, we have to check that $-\infty=\lim\limits_{s \to (t^*)^-} g(s)<\frac{d+2}{d-2}<\lim\limits_{s \to 0^+} g(s)$. On the one hand, $\lim\limits_{s \to 0^+} g(s)= 2\alpha+1$. On the other hand, $\lim\limits_{s \to (t^*)^-} g(s)=-\infty$. Since we are in the super-critical case, this condition is satisfied.

Finally, to apply Tang's result, it remains to show that $u_0$ is a fast decaying function, meaning that
$$ |x|^{d-2} u_0(|x|) \to c \geq 0 \qquad \text{when} \qquad  |x|\to \infty. $$

First we have that $0 \leq F_0(t) \leq C |t|^{\frac{d}{d-2}}$ for some constant $C$, it is continuous and $F \not\equiv 0$. Since $u_0$ is a positive minimiser of $I_0$, using the result of Flucher and Müller \cite[Theorem 5]{flucher1998radial}, we have
\[
  \left(1 - O(|x|^{-2})\right) C K(|x|) \leq u_0(|x|) \leq C K(|x|) ,
\]
with
\[
K(|x|) := \frac{1}{(d - 2) |\mathbb{S}^{d-1}| |x|^{d-2}}, \quad \text{and}\quad C= \sqrt{2 \frac{(d - 1)}{d I_0} \int_{\mathbb{R}^d} \frac{F_0(u_0)} {K(|x|)}\,\xd x}
\]
which gives the decay of $u_0$ at infinity. 

As a consequence, we apply \cite[Theorem 2]{tang2001uniqueness} and conclude that $u_0$ is the unique positive solution to \eqref{Pb_omega_0}.

\end{proof}

Now, we can state the result of the convergence of $u_\omega$.

\begin{thrm}
\label{Thrm_super_critical}
Suppose that $d\geq 3$ and $\alpha>\frac{2}{d-2}$. Then, as $\omega \to 0^+$, $u_\omega$ the solution of \eqref{PSSL} converges in  $\dot{H}^1(\mathbb{R}^d)\cap L^q(\mathbb{R}^d)$ for any $q\ge \frac{2d}{d-2}$ to the unique positive radial solution $u_0$ of \eqref{Pb_omega_0}.
\end{thrm}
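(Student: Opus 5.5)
We argue by compactness, using the variational characterisations \eqref{Pb_optim_I_omega} and \eqref{Pb_optim_I_0} together with the uniqueness of $u_0$ just established.

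\textbf{Step 1: convergence of the minimum levels.} We first show that $I_\omega \to I_0$ as $\omega\to 0^+$. Since $F_\omega \le F_0$ pointwise, any admissible function for $I_\omega$ can be rescaled, $u\mapsto u(\lambda\,\cdot)$, to be admissible for $I_0$. Because $\int|\nabla u(\lambda\cdot)|^2=\lambda^{2-d}\int|\nabla u|^2$, this gives $I_0\le I_\omega$.

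For the reverse inequality we take a compactly supported, bounded approximation $w$ of the minimiser $v_0$ of $I_0$. Then $\frac{2d}{d-2}\int F_\omega(w) = \frac{2d}{d-2}\int F_0(w) - \frac{\omega}{2}\frac{2d}{d-2}\int \Lambda^2(w)$. The last term is $O(\omega)$, since $\Lambda^2(w)\le w^2$ and $w\in L^2$. After a dilation to restore the constraint, we obtain $\limsup_{\omega\to 0} I_\omega\le I_0+\varepsilon$ for every $\varepsilon>0$.

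\textbf{Step 2: uniform bounds and a weak limit.} Next we derive bounds on $u_\omega$ that are uniform in $\omega$. The Pohozaev identity and the Nehari-type identity $\int|\nabla u_\omega|^2=\int f_\omega(u_\omega)u_\omega$ give $\|\nabla u_\omega\|_{L^2}^2 = I_\omega^{d/2}$, which is bounded. Since $u_\omega$ is radial and decreasing, the Strauss-type bound $u_\omega(r)\le C r^{-(d-2)/2}\|\nabla u_\omega\|_{L^2}$ holds. Moreover $0<u_\omega<t^*$ by Lemma \ref{lem_properties}. Hence $\{u_\omega\}$ is bounded in $\dot H^1\cap L^\infty$.

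By the Strauss-type bound and local compactness, a subsequence converges weakly in $\dot H^1$ and locally uniformly to a radial, nonincreasing limit $\bar u$. Passing to the limit in the weak form of $-\Delta u_\omega = f_\omega(u_\omega)$ shows that $\bar u$ solves \eqref{Pb_omega_0}.

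\textbf{Step 3 (main obstacle): $\bar u\neq 0$ and strong convergence.} It remains to show that the limit is nontrivial and that the convergence is strong; this is where the supercriticality $\alpha>\frac{2}{d-2}$ enters.

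Since $\Lambda(t)\le t$, we have $F_0(t)\le \frac{1}{2\alpha+2}t^{2\alpha+2}$ with $2\alpha+2>2^*=\frac{2d}{d-2}$. Combined with the uniform bound $u_\omega\le C|x|^{-(d-2)/2}$, this gives $F_0(u_\omega)\le C|x|^{-(d-2)(\alpha+1)}$, which is integrable at infinity precisely because $(d-2)(\alpha+1)>d$. Near the origin, $F_0(u_\omega)$ is bounded since $u_\omega<t^*$. Dominated convergence therefore yields $\int F_0(u_\omega)\to\int F_0(\bar u)$.

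The $\omega$-term requires more care, because $\omega\int \Lambda^2(u_\omega)$ need not be uniformly controlled in $L^2$. Its sign, however, helps: the Pohozaev identity gives $\frac{2d}{d-2}\int F_\omega(u_\omega)=\int|\nabla u_\omega|^2>0$, so $\int F_0(u_\omega)\ge \frac{d-2}{2d}I_\omega^{d/2}$. Passing to the limit, $\int F_0(\bar u)\ge c\,I_0^{d/2}>0$, and hence $\bar u\neq 0$.

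Uniqueness of the positive radial solution $u_0$ now forces $\bar u=u_0$, so the whole family converges. For strong convergence in $\dot H^1$ we use $\|\nabla u_0\|^2=\frac{2d}{d-2}\int F_0(u_0)$ (Pohozaev for $u_0$) and compare it with the limit of $\|\nabla u_\omega\|^2=I_\omega^{d/2}\to I_0^{d/2}$. Since $u_0(x)=v_0(I_0^{-1/2}x)$ with $v_0$ normalised, these norms coincide. Weak convergence together with convergence of norms then gives strong $\dot H^1$ convergence.

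\textbf{Step 4: convergence in $L^q$.} Convergence in $L^{2^*}$ follows from the Sobolev embedding. For $q>2^*$ we interpolate with the uniform $L^\infty$ bound $t^*$, or alternatively apply dominated convergence with the radial decay bound, which is $L^q$-integrable at infinity whenever $q>2^*$.
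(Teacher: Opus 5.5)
Your proof is correct. Its skeleton matches the paper's: $I_\omega\to I_0$, a weak limit, identification of the limit by uniqueness, and strong $\dot H^1$ convergence from convergence of the gradient norms. The key steps, however, are carried out differently.

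For $\limsup I_\omega\le I_0$ you test $J_\omega$ with a fixed compactly supported approximation of $v_0$ and send $\omega\to0$ first. Since only $o(1)$ is needed, this avoids the paper's split into $d\ge5$ (testing with $v_0\in L^2$) and $d\in\{3,4\}$ (cut-off $\eta_R v_0$ with $R=R(\omega)$ and Lemma \ref{case_3_4}). The price is that you lose the paper's explicit rates.

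For nontriviality you use only the one-sided bound $\int F_0(u_\omega)\ge\int F_\omega(u_\omega)=\frac{d-2}{2d}\|\nabla u_\omega\|_{L^2}^2$. You combine it with dominated convergence through the uniform decay $u_\omega\le C|x|^{-(d-2)/2}$, which is integrable against $F_0$ at infinity precisely because $\alpha>\frac{2}{d-2}$. The paper instead proves $L^q$ convergence for $q>\frac{2d}{d-2}$ via the radial lemma. It then shows $\omega\|\Lambda(v_\omega)\|_{L^2}^2\to0$ by contradiction, so as to obtain the exact constraint $\frac{2d}{d-2}\int F_0(v^*)=1$ and conclude that $v^*$ is a minimiser of $I_0$.

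You identify $\bar u$ by passing to the limit in the equation and invoking uniqueness of positive solutions. That is legitimate given how the preceding theorem is stated. Note, though, that its proof via Tang needs fast decay, which the paper verifies only for minimisers (Flucher--Müller). Your bounds already close this. Since $\|\nabla\bar u\|_{L^2}^2\le I_0^{d/2}\le\frac{2d}{d-2}\int F_0(\bar u)$, you get $J_0(\bar u)\le I_0$, so $\bar u$ is a minimiser. Moreover $\bar u>0$ by the strong maximum principle, because $-\Delta\bar u=f_0(\bar u)\ge0$.

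Your treatment of $L^q$ for $q>\frac{2d}{d-2}$, by interpolating with the uniform bound $t^*$, is also shorter than the paper's argument. You also get convergence of the whole family, not just of a subsequence.

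One justification should be fixed. The identity $\|\nabla u_\omega\|_{L^2}^2=I_\omega^{d/2}$ does not follow from the Pohozaev and Nehari identities. After a dilation, these only give $\|\nabla u_\omega\|_{L^2}^2\ge I_\omega^{d/2}$, which is a lower bound and not the uniform $\dot H^1$ bound you need. The equality comes from $u_\omega=v_\omega(I_\omega^{-1/2}\,\cdot)$, that is, from $u_\omega$ being (by uniqueness) the rescaled minimiser.
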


Before starting the proof, we give here a technical lemma that we will need after. The proof of this lemma can be found for instance in \cite[Lemma 4.1]{GenRot-24}.
\begin{lem}
\label{case_3_4}
 Let \( d \in \{3, 4\} \). Consider a function \( z \in C(\mathbb{R}^d) \), a number \( \rho > 0 \), and \( \eta_\rho \in C_0^{\infty}(\mathbb{R}^d) \), a cut-off function such that \( \eta_\rho \equiv 1 \) on \( B_\rho(0) \) and \( \eta_\rho \equiv 0 \) on \( \mathbb{R}^d \setminus B_{2\rho}(0) \). Suppose there exists a constant \( C > 0 \) such that
\[
\lim_{|x| \to \infty} \frac{|z(x)|}{|x|^{-d+2}} = C.
\]

Then we have the following asymptotic as \( \rho \to \infty \):
\[
\int_{\mathbb{R}^d} | \eta_\rho z |^2 \, \xd x = 
\begin{cases}
O(\rho) & \text{if } d = 3, \\
O(\log(\rho)) & \text{if } d = 4.
\end{cases}
\]
\end{lem}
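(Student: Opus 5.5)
The statement is elementary, so I will compute directly in polar coordinates. The plan is to split the integral into the part where $z$ is merely continuous and the part where its decay controls everything. The cut-off satisfies $0\le \eta_\rho\le 1$ and is supported in $B_{2\rho}(0)$, so it suffices to bound
\[
\int_{B_{2\rho}(0)} |z|^2\,\xd x .
\]

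First I use the hypothesis $\lim_{|x|\to\infty}|z(x)|\,|x|^{d-2}=C$. It gives a radius $R_0>0$ such that $|z(x)|\le 2C|x|^{2-d}$ for all $|x|\ge R_0$. On the ball $B_{R_0}(0)$ the function $z$ is continuous, hence bounded, and there
\[
\int_{B_{R_0}(0)}|z|^2\,\xd x\le |B_{R_0}|\,\|z\|^2_{L^\infty(B_{R_0})}=:A,
\]
which is a constant independent of $\rho$.

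For $2\rho>R_0$, the remaining annulus contributes
\[
\int_{R_0\le |x|\le 2\rho}|z|^2\,\xd x\le 4C^2|\mathbb S^{d-1}|\int_{R_0}^{2\rho} r^{4-2d}\,r^{d-1}\,\xd r
=4C^2|\mathbb S^{d-1}|\int_{R_0}^{2\rho} r^{3-d}\,\xd r .
\]
The two cases now follow from this last integral.

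\textbf{Case $d=3$.} The integrand is $r^{0}$, so the integral equals $2\rho-R_0$, which is $O(\rho)$.

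\textbf{Case $d=4$.} The integrand is $r^{-1}$, so the integral equals $\log(2\rho/R_0)$, which is $O(\log\rho)$.

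Adding the constant $A$ in each case gives the claimed asymptotics as $\rho\to\infty$.

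There is no real obstacle here. The only point needing care is choosing $R_0$ from the limit hypothesis and using continuity of $z$ to handle the compact core. Note that the constant $C$ could even be $0$; the bound still holds, and it is only an upper bound, as the $O$-notation requires.
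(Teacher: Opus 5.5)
Your proof is correct. The paper gives no argument of its own and simply cites \cite[Lemma 4.1]{GenRot-24}; your direct computation is the standard argument: a $\rho$-independent contribution from a fixed ball by continuity, plus the tail bound $|z|^2\lesssim r^{4-2d}$, which reduces everything to $\int_{R_0}^{2\rho} r^{3-d}\,\xd r$, i.e.\ $O(\rho)$ for $d=3$ and $O(\log\rho)$ for $d=4$.

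The only implicit input is a uniform bound on the cut-off such as $0\le\eta_\rho\le 1$. This is part of what ``cut-off'' means and holds for the $\eta_R$ used in the paper. Also, in your aside on $C=0$, the bound $2C|x|^{2-d}$ would be zero, so you should use, e.g., $(C+1)|x|^{2-d}$ instead.
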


\begin{proof}[Proof Theorem \ref{Thrm_super_critical}]
 The first step of the proof is to show that $\lim\limits_{\omega\to 0}I_\omega=I_0$. To this goal we consider the following functional
$$J_\omega(u)=\frac{\int_{\xR^d}|\nabla u|^2 \xd x}{\left(\frac{2d}{d-2} \int_{\xR^d} F_\omega(u) \xd x\right)^{\frac{d-2}{d}}}\qquad \text{and} \qquad K_\omega = \{u\in H^1(\xR^d):   \int_{\xR^d} F_\omega(u) \xd x >0 \},$$
and we show that  
\begin{equation}
\label{Pb_optim_J_omega}
\inf\limits_{v\in K_\omega} J_\omega(v)=I_\omega.
\end{equation}

Let $v\in H^1(\xR^d)$ be the minimiser of \eqref{Pb_optim_I_omega}, then
$$ \int_{\xR^d} |\nabla v|^2\,\xd x =I_\omega \text{ and } \int_{\xR^d} F_\omega(v)\xd x = \frac{d-2}{2d}. $$
This implies that $v\in K_\omega$ and $\inf\limits_{w\in K_\omega}J_\omega (w)\leq J_\omega(v)= I_\omega$. 

By contradiction, we suppose that $\inf\limits_{w\in K_\omega}J_\omega (w)< I_\omega$. Then, there exists  \( v \in K_\omega \) such that \( J_\omega(v) < I_\omega \). For all $x\in \xR^d$, let
$v_\lambda (x)=v(\lambda x)$ be a dilatation of $v$ with $\lambda = \left(\frac{2d}{d-2}  \int_{\xR^d} F_\omega(v) \,\xd x \right)^{\frac{1}{d}}$,so that $v_\lambda$ satisfies 
$\int_{\xR^d} F_\omega(v_\lambda) \,\xd x  = \frac{d-2}{2d}$.
Then 
\[I_\omega \leq \int_{\xR^d} |\nabla v_\lambda|^2 \, \xd x = J_\omega(v_\lambda) = J_\omega(v) < I_\omega\]
since $J_\omega$ is invariant under dilatation. This leads to a contradiction. Thus, $\inf\limits_{v\in K_\omega} J_\omega(v)=I_\omega$. 
Similarly we consider the following functional
\begin{equation*}
  J_0(u)=\frac{\int_{\xR^d}|\nabla u|^2 \xd x}{\left(  \frac{2d}{d-2}\int_{\xR^d} F_0(u) \xd x\right)^{\frac{d-2}{d}}} \text{ and } K_0 = \{u\in \dot{H}^1(\xR^d):  \int_{\xR^d} F_0(u) \xd x >0 \}.
\end{equation*}
 Using the same argument as before we prove that  
 \begin{equation}
 \label{Pb_optim_J_0}
 \inf\limits_{v\in K_0} J_0(v)=I_0.
 \end{equation}
Next, we  prove that
 \begin{equation}
 \label{convergence_minima}
     1\le \frac{I_\omega}{I_0}< 1+o(1).
 \end{equation}
First notice that, for all $\omega \in (0,\omega^*)$ we have $F_\omega(u)\leq F_0(u)$. So we can use the minimiser \( v_\omega \) of $I_\omega$ as a test function for \( J_0 \), then we have
\begin{align*}
    I_0 \leq J_0(v_\omega) = \frac{ \int_{\mathbb{R}^d}|\nabla v_\omega|^2  \xd x }{\left( \frac{2d}{d-2}\int_{\mathbb{R}^d} F_0(v_\omega) \xd x \right)^{\frac{d-2}{d}} }&= \frac{ \int_{\mathbb{R}^d}|\nabla v_\omega|^2  \xd x }{\left(\frac{2d}{d-2} \int_{\mathbb{R}^d} F_\omega(v_\omega) \xd x \right)^{\frac{d-2}{d}} } \left(\frac{ \int_{\mathbb{R}^d} F_\omega(v_\omega) \xd x}{ \int_{\mathbb{R}^d} F_0(v_\omega) \xd x} \right)^{\frac{d-2}{d}}\\
    &\leq J_\omega(v_\omega)= \int_{\mathbb{R}^d}|\nabla v_\omega|^2  \xd x = I_\omega.
\end{align*}
For $d \geq 5$, we have that $v_0 \in L^2(\xR^d)$. Hence $v_0\in H^1(\xR^d)$. Moreover, for $\omega > 0$ small enough, $\int_{\mathbb{R}^d} F_\omega(v) \xd x =\int_{\mathbb{R}^d} (F_0(v)-\frac{\omega}{2} \Lambda^2(v)) \,\xd x >0 $. Since $v_0 \in K_\omega$, we can use $v_0$ as a test function for $J_\omega$. This gives:

\begin{align*}
 I_\omega \leq J_\omega(v_0)& = \frac{ \int_{\mathbb{R}^d} |\nabla v_0|^2 \xd x}{  \left(\frac{2d}{d-2}\int_{\mathbb{R}^d} F_\omega(v_0) \xd x\right)^{1 - \frac{2}{d}}}\\
 &=  \frac{ \int_{\mathbb{R}^d}|\nabla v_0|^2 \xd x}{ \left(\frac{2d}{d-2}\int_{\mathbb{R}^d} F_0(v_0) \xd x\right)^{1 - \frac{2}{d}} }\cdot \left(\frac{\int_{\mathbb{R}^d} F_0(v_0) \xd x} {\int_{\mathbb{R}^d} F_\omega(v_0)\xd x }\right)^{1 - \frac{2}{d}} = I_0 \cdot \left( \frac{\int_{\mathbb{R}^d} F_0(v_0) \,\xd x}{\int_{\mathbb{R}^d} F_\omega(v_0)\,\xd x} \right)^{1 - \frac{2}{d}}.
 \end{align*}

Then we have that
\begin{align*}
    \frac{\int_{\mathbb{R}^d} F_0(v_0)\,\xd x}{\int_{\mathbb{R}^d} F_\omega(v_0)\,\xd x}&= \frac{\int_{\xR^d} \frac{1}{2\alpha+2} \Lambda(v_0)^{2\alpha+2} \xd x}{\int_{\xR^d} \frac{1}{2\alpha+2}\Lambda(v_0)^{2\alpha+2} \xd x -\omega  \int_{\xR^d} \frac{1}{2}\Lambda(v_0)^{2} \xd x}\\
    &= \frac{1}{1 -(\alpha+1)\omega\frac{  \int_{\xR^d} \Lambda(v_0)^{2} \xd x}{\int_{\xR^d} \Lambda(v_0)^{2\alpha+2} \xd x}} =1 + O(\omega ).
\end{align*}

For \( d \in \{3, 4\} \), we let \( R > 0 \) and introduce a cut-off function \( \eta_R \in C^\infty_0(\mathbb{R}) \) such that:
\[
\eta_R(s) =
\begin{cases}
1, & \text{for } |s| < R, \\
0, & \text{for } |s| > 2R,
\end{cases}
\qquad \text{and } \quad 0 < \eta_R < 1 \text{ for } R < |s| < 2R,
\]
with \( |\eta_R'(s)| \leq \frac{2}{R} \text{ for all } s \in \mathbb{R}. \) We have \( \eta_R v_0 \in L^2(\mathbb{R}^d) \), thus  we use it as a test function for \( J_\omega \). As above, obtain:
\[
I_\omega \leq J_\omega(\eta_R v_0) =
\frac{\int_{\xR^d} |\nabla (\eta_R v_0)|^2 \, \xd x}{\left(\frac{2d}{d-2}
\int_{\xR^d} F_0(\eta_R v_0) \, \xd x\right)^{1-\frac{2}{d}}}
\left( \frac{\int_{\xR^d} F_0(\eta_R v_0) \, \xd x}{\int_{\xR^d} F_\omega(\eta_R v_0) \, \xd x} \right)^{1-\frac{2}{d}}.
\]
First, by dominated convergence
\[
\int_{\mathbb{R}^d} |\nabla \eta_R v_0|^2 \, \xd x \to \int_{\xR^d} |\nabla v_0|^2 \, \xd x = I_0, \quad \text{as } R \to \infty.
\]
Then, we have 
\begin{align*}
\int_{\mathbb{R}^d} \Lambda(\eta_R v_0(x))^{2\alpha+2} \, \xd x & =
\int_{\mathbb{R}^d} \Lambda(v_0(x))^{2\alpha+2} \, \xd x\\
&+\int_{\mathbb{R}^d \setminus B_R(0)} \big( \Lambda(\eta_R v_0(x))^{2\alpha+2} - \Lambda(v_0(x))^{2\alpha+2} \big) \, \xd x.    
\end{align*}
For any \( x \in \mathbb{R}^d \setminus B_R(0) \), there exists \( \tau(x) \in \big(v_0(x) - (1 - \eta_R(|x|))v_0(x), v_0(x)\big) \) such that
\[
\Lambda(v_0(x))^{2\alpha+2} - \Lambda(\eta_R v_0(x))^{2\alpha+2} = (2\alpha+2)\Lambda'(\tau(x))\Lambda(\tau(x))^{2\alpha+1}(1 - \eta_R(|x|))v_0(x).
\]
Since \( v_0(x) \) decays like \( |x|^{-(d-2)} \) as \( |x| \to \infty \), so does \( \tau(x) \). Then,
\[
\Lambda(\eta_R v_0(x))^{2\alpha+2} - \Lambda(v_0(x))^{2\alpha+2} = O(|x|^{-(2\alpha+2)(d-2)}).
\]
Then, we have 
\[
\int_{\mathbb{R}^d} \Lambda(\eta_R v_0(x))^{2\alpha+2} \, \xd x = \int_{\mathbb{R}^d} \Lambda(v_0(x))^{2\alpha+2} \, \xd x + O(R^{d-(2\alpha+2)(d-2)}).
\]
Since \( 2\alpha+1 > \frac{d+2}{d-2} \), \( d - (2\alpha+2)(d-2) < 0 \) then
\[
\int_{\xR^d} F_0(\eta_R v_0) \,\xd x \to \int_{\xR^d} F_0( v_0) \,\xd x, \quad \text{as } R \to \infty.\]
This gives
\[
\frac{\int_{\xR^d} |\nabla \eta_R v_0|^2 \, \xd x}{(\frac{2d}{d-2}\int_{\xR^d} F_0(\eta_R v_0) \,\xd x)^{1 - 2/d}} \to I_0, \quad \text{as } R \to \infty.
\]
By Lemma~\ref{case_3_4},
\[
g_d(R) := \int_{\mathbb{R}^d} \Lambda(\eta_R v_0)^2 \, \xd x =
\begin{cases}
O(R), & \text{if } d = 3, \\
O(\log(R)), & \text{if } d = 4,
\end{cases}
\quad \text{as } R \to \infty. 
\]
Then we can show for all \( \omega\) and \(R > 0 \) that
\begin{align*}
\frac{\int_{\xR^d} F_0(\eta_R v_0) \,\xd x}{\int_{\xR^d} F_\omega(\eta_R v_0)\,\xd x}&= \frac{\int_{\xR^d} \Lambda(v_0)^{2\alpha+2} \,\xd x + O(R^{d-(2\alpha+2)(d-2)})}{\int_{\xR^d} \Lambda(v_0)^{2\alpha+2} \,\xd x + O(R^{d-(2\alpha+2)(d-2)})-\frac{2\alpha+2}{2} \omega g_d(R)}\\
&=\left( 1- \frac{2\alpha+2}{2} \omega \frac{g_d(R)}{\int_{\xR^d} \Lambda(v_0)^{2\alpha+2} \,\xd x + O(R^{d-(2\alpha+2)(d-2)})}     \right)^{-1}.
\end{align*}
For \( d = 3 \), we let \( R = \omega^{-\frac{1}{2}} \) and we have
\[
\frac{\int_{\xR^d} F_0(\eta_R v_0) \,\xd x}{\int_{\xR^d} F_\omega(\eta_R v_0)\,\xd x} = 1 + O(\omega^{\frac{1}{2}}), \quad \text{as } \omega \to 0.
\]
For \( d = 4 \), we let \( R = \omega^{-1} \) and we have
\[
\frac{\int_{\xR^d} F_0(\eta_R v_0) \,\xd x}{\int_{\xR^d} F_\omega(\eta_R v_0)\,\xd x} = 1 + O(\omega \log(\omega^{-1})), \quad \text{as } \omega \to 0.
\]
As a conclusion, 
\begin{align*}
    I_\omega \leq 
\frac{\int_{\xR^d} |\nabla (\eta_R v_0)|^2 \, \xd x}{\left(\frac{2d}{d-2}
\int_{\xR^d}F_0(\eta_R v_0) \, \xd x\right)^{1-\frac{2}{d}}}(1+o(1))\le I_0(1+o(1)).
\end{align*}
This proves~\eqref{convergence_minima} for any $d\ge 3$. The second  step is to show that $v_\omega \to v_0$ in $\dot{H}^1(\xR^d)$ when $\omega \to 0$. Let $v_{\omega}$ be a minimiser for \eqref{Pb_optim_I_omega}. Then by \eqref{convergence_minima}
\begin{equation}
\|\nabla v_\omega\|_{L^2}^2 \to I_0 = \|\nabla v_0\|_{L^2}^2.
\end{equation}
Then $ v_\omega$ is bounded in $\dot{H}^1(\xR^d)$, we can extract a subsequence noted also $ v_{\omega}$ and there exists $v^*$ such that
$$ v_\omega\rightharpoonup v^* \quad \text{in  } \dot{H}^1(\xR^d) \qquad    \text{and } \quad  v_n\to v^*  \text{ almost everywhere in } \xR^d .$$
Using Lemma \ref{lem_properties}(\ref{L3}) and \eqref{rescaled_solut_v_omega}, we deduce that $\|v_\omega\|_{L^\infty(\xR^d)}$ is uniformly bounded for $\omega$ close to $0^+$. Moreover, thanks to Sobolev embedding $\dot{H}^1(\xR^d)\hookrightarrow L^{\frac{2d}{d-2}}(\xR^d)$, we have that $\|v_\omega\|_{L^{\frac{2d}{d-2}}(\xR^d)}$ is also uniformly bounded for $\omega$ close to $0^+$. As a consequence,
using Holder interpolation inequality, we conclude that there exists $\eta>0$ such that, for all $s\geq \frac{2d}{d-2}$, there exists a constant $K_s>0$ such that
\[
\sup_{\omega\in (0,\eta)}\|v_\omega\|_{L^s(\mathbb{R}^d)} \leq K_s.\]

Let $q > \frac{2d}{d-2}$ and  $w_\omega := v_\omega - v^*$. By the Radial Lemma \cite{Strauss1977}, there exists a subsequence also noted $w_\omega$ such that

\begin{equation}
\label{w_omega_convergence}
w_\omega \to 0 \text{ in } L^q(\mathbb{R}^d \setminus B_1(0)).
\end{equation}
Now, let $s > q$. Since $\{v_n\}$ is bounded in $L^s(\mathbb{R}^d)$, there exists $v^{**} \in L^s(\mathbb{R}^d)$ such that, up to a subsequence, $v_\omega \rightharpoonup v^{**}$ weakly in $L^s$ and $v_\omega\to v^{**}$ almost everywhere in $\mathbb{R}^d$. Therefore $v^{**} = v^*$ almost everywhere and $v^* \in L^s(\mathbb{R}^d)$. Thanks to the Radial Lemma \cite[Lemma 4.4]{GenRot-24}, there exists a constant $C > 0$ such that
\[ \forall x \neq 0, \quad |w_n(x)| \leq C_{s, d} \|v_\omega - v^*\|_{L^s} |x|^{-d/s} \leq C |x|^{-d/s}.
\]
As a consequence,\[
|w_\omega(x)|^q \leq C^q |x|^{-dq/s} \quad \text{for } x \in B_1(0) \setminus \{0\}.
\]
Using the dominated convergence theorem, since $|x|^{-dq/s} \in L^1(B_1(0))$ and $|w_\omega|^q \to 0$ almost everywhere in $B_1(0)$, we have that
\begin{equation}
\label{w_omega_convergence_B}
    w_\omega \to 0 \text{ in } L^q(B_1(0)).
\end{equation}
From \eqref{w_omega_convergence} and \eqref{w_omega_convergence_B}, we have $v_{\omega} \to v^* \text{ in } L^q(\mathbb{R}^d)$ for all $q>\frac{2d}{d-2}$. 
In particular, for $q=2\alpha+2$, this implies, using \ref{prop_change_var}\ref{prop_change_var5},
\begin{equation}
\label{convergence_v*_v}
 \|\Lambda(v_\omega)-\Lambda(v^*)\|_{L^{2\alpha+2}(\xR^d)}\leq \|v_\omega -v^*\|_{L^{2\alpha+2}(\xR^d)  } \to 0
\end{equation} 
which means that 
\[  
\lim_{\omega \to 0} \|\Lambda(v_\omega)\|_{L^{2\alpha+2}}=\|\Lambda(v^*)\|_{L^{2\alpha+2}} .\]
Next, we want to show that $\lim\limits_{\omega\to 0}\omega \|\Lambda(v_\omega)\|^2_{L^2(\xR^d)}=0$. Let us suppose, by contradiction, that $\limsup\limits_{\omega\to 0^+}\omega \|\Lambda(v_\omega)\|^2_{L^2(\xR^d)}>0$. Then there exists a sequence $\{\omega_n\}_n$ such that 
\begin{align*}
    I_0\le J_0(v_{\omega_n})=
    \frac{ \int_{\mathbb{R}^d}|\nabla v_{\omega_n}|^2\, \xd x }
    {\left( \frac{2d}{d-2}\int_{\mathbb{R}^d} F_0(v_{\omega_n})\,\xd x \right)^{\frac{d-2}{d}}}= 
    \frac{ I_{\omega_n} }{\left( 1+\frac{d}{d-2}\omega_n\int_{\mathbb{R}^d} \Lambda^2(v_{\omega_n}) \xd x \right)^{\frac{d-2}{d}}}<I_{0}
\end{align*}
for $n$ large enough. This contradiction prove that 
\begin{equation}
    \label{convergence_omega_mass}
    \lim\limits_{\omega\to 0}\omega \|\Lambda(v_\omega)\|^2_{L^2(\xR^d)}=0.
\end{equation}
From \eqref{convergence_v*_v} and \eqref{convergence_omega_mass}, it follows that

\[
\int_{\xR^d} F_0(v^*) \, \xd x = \frac{1}{2\alpha+2} \|\Lambda(v^*)\|_{L^{2\alpha+2}}^{2\alpha+2} = \frac{1}{2\alpha+2} \lim_{\omega \to 0} \|\Lambda(v_\omega)\|_{L^{2\alpha+2}}^{2\alpha+2} = \lim_{\omega \to 0} \int_{\xR^d} F_{\omega}(v_\omega) \, \xd x = \frac{d-2}{2d}.
\]
We deduce that $v^*\in K_0$. In order to prove that $v^* $ is a minimiser of \eqref{Pb_optim_I_0}, it remains to show that $\int_{\mathbb{R}^d} |\nabla v^*|^2 \,\xd x=I_0$. By the weak lower semi-continuity of $v\to \|\nabla v\|_{L^2(\xR^d)}$ in $\dot{H}^1(\xR^d)$, we have 
\[
I_0 \leq \| \nabla v^* \|_{L^2}^2 \leq \liminf_{\omega\to \infty} \| \nabla v_\omega \|_{L^2}^2 = \lim_{\omega \to 0} I_{\omega} = I_0.
\]
Then, $v^*$ is a minimiser for \eqref{Pb_optim_I_0}, which means that $v^*$ is a positive solution of \eqref{Pb_omega_0}. As a consequence, 
$v^* = v_0$.

Finally, since $v_\omega$ converges weakly to $v_0$ in $\dot{H}^1(\xR^d)$
and $I_\omega= \| \nabla v_\omega\|_{L^2}^2$ converges to 
$I_0= \| \nabla v_0\|_{L^2}^2$, we conclude that
\[
v_\omega \to v_0 \text{ in } \dot{H}^1(\mathbb{R}^d) \cap L^q(\mathbb{R}^d), \quad \forall q \geq \frac{2d}{d-2}.
\]
Observing that
\[
u_{\omega}(x) = v_\omega(I_{\omega}^{-\frac{1}{2}} x) \text{ and } u_{0}(x) = v_0(I_{0}^{-\frac{1}{2}} x) 
\]
we deduce from the conclusions obtained for the sequence $\{v_\omega\}$ that, up to a subsequence,
\[
u_{\omega} \to u_0 \text{ in } \dot{H}^1(\mathbb{R}^d) \cap L^q(\mathbb{R}^d) \quad \forall q \geq \frac{2d}{d-2}.
\]
\end{proof}

From Theorem \ref{Thrm_super_critical}, we deduce the following result on the convergence of $\varphi_\omega$.

\begin{corol}
\label{Cor_super_critical_phi}
Suppose that $d\geq 3$ and $\alpha>\frac{2}{d-2}$. Then, as $\omega \to 0^+$, the unique solution $0<\varphi_\omega<1$ to \eqref{SSNL}
$u_\omega$ the solution of \eqref{PSSL} converges in  $\dot{H}^1(\mathbb{R}^d)\cap L^q(\mathbb{R}^d)$ for any $q\ge \frac{2d}{d-2}$ to the unique positive radial solution $0<\varphi_0<1$ of \eqref{SSNL_0}.
\end{corol}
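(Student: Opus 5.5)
The idea is to transfer Theorem \ref{Thrm_super_critical} to $\varphi_\omega=\Lambda(u_\omega)$ and $\varphi_0=\Lambda(u_0)$, using only the regularity of $\Lambda$ from Proposition \ref{prop_change_var}. As a preliminary, Lemma \ref{lem_properties}(\ref{L3}) gives $0<u_\omega<t^*$, and the maximum principle argument in the existence/uniqueness proof for \eqref{Pb_omega_0} gives $0<u_0\le t^*$. In fact $u_0<t^*$ strictly, by the same ODE contraction argument as in Lemma \ref{lem_properties}, since $\tilde f_0$ is Lipschitz. Hence $0<\varphi_\omega,\varphi_0<1$, and $\varphi_0$ solves \eqref{SSNL_0} because the change of variables intertwines \eqref{SSNL_0} with \eqref{Pb_omega_0}. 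Uniqueness of $\varphi_0$ among positive radial solutions follows from uniqueness of $u_0$, since $\Lambda$ is a bijection from $(0,t^*)$ onto $(0,1)$.

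\textbf{$L^q$ convergence.} Since $\Lambda$ is $1$-Lipschitz (Proposition \ref{prop_change_var}\ref{prop_change_var5}),
\[
\|\varphi_\omega-\varphi_0\|_{L^q}\le\|u_\omega-u_0\|_{L^q}\to0
\]
for every $q\ge\frac{2d}{d-2}$. One should note that Theorem \ref{Thrm_super_critical} was stated up to a subsequence. Because the limit $u_0$ is unique, a standard subsequence argument upgrades this to convergence of the whole family.

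\textbf{$\dot H^1$ convergence.} We write
\[
\nabla\varphi_\omega-\nabla\varphi_0=\Lambda'(u_\omega)(\nabla u_\omega-\nabla u_0)+\bigl(\Lambda'(u_\omega)-\Lambda'(u_0)\bigr)\nabla u_0 .
\]
The first term is bounded in $L^2$ by $\|\nabla u_\omega-\nabla u_0\|_{L^2}$, because $0\le\Lambda'\le1$. For the second term, $\Lambda'$ is continuous on $\xR^+$ (it is Lipschitz or Hölder by Proposition \ref{prop_change_var}\ref{prop_change_var6}). Along any sequence, $u_\omega\to u_0$ in $L^q$, so after extracting a subsequence $u_\omega\to u_0$ a.e. The integrand $|\Lambda'(u_\omega)-\Lambda'(u_0)|^2|\nabla u_0|^2$ then tends to $0$ a.e. and is dominated by $4|\nabla u_0|^2\in L^1$. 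Dominated convergence shows the second term tends to $0$ in $L^2$. Again, uniqueness of the limit yields convergence of the full family.

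\textbf{Main obstacle.} The delicate point is that dominated convergence only works after passing to a.e.-convergent subsequences, so every conclusion must be assembled through the subsequence principle; this is legitimate because the limit is uniquely determined. A secondary point is checking the strict bound $u_0<t^*$, which is needed so that $\varphi_0<1$ and the equation \eqref{SSNL_0} makes sense without the singular denominator vanishing.
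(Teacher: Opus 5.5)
Your proof is correct. The $L^q$ part coincides with the paper's: the $1$-Lipschitz bound on $\Lambda$ combined with Theorem \ref{Thrm_super_critical}.

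The $\dot H^1$ part uses the opposite splitting, which changes what the argument needs.

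The paper writes $\nabla\varphi_\omega-\nabla\varphi_0=(\Lambda'(u_\omega)-\Lambda'(u_0))\nabla u_\omega+\Lambda'(u_0)(\nabla u_\omega-\nabla u_0)$. There the coefficient difference multiplies the \emph{moving} gradient $\nabla u_\omega$, so a uniform estimate is required. The paper gets it from $|\Lambda'(a)-\Lambda'(b)|^2\le|\Lambda^{2\alpha}(a)-\Lambda^{2\alpha}(b)|$ and the H\"older/Lipschitz continuity of $t\mapsto t^{2\alpha}$ on $[0,1]$. This gives the bound $\max\{2\alpha,1\}\|\Lambda(u_\omega)-\Lambda(u_0)\|_{L^\infty}^{\min\{2\alpha,1\}}\|\nabla u_\omega\|_{L^2}^2$. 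The gain is a quantitative rate in terms of the sup-norm distance. The cost is that it needs $L^\infty$ convergence, which the paper deduces directly from the $L^q$ convergence. For finite $q$ that step is not automatic; it needs an extra argument, e.g.\ equicontinuity of $u_\omega$ from uniform elliptic estimates.

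You instead attach the coefficient difference to the \emph{fixed} gradient $\nabla u_0$. Then continuity of $\Lambda'$ on $(0,t^*)$, a.e.\ convergence along subsequences, and the integrable majorant $|\nabla u_0|^2$ are enough. Since $0\le\Lambda'\le 1$, the factor $4$ in your majorant is unnecessary. You lose any rate and must use the subsequence principle, which you correctly justify by uniqueness of the limit.

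Your remarks on the strict inequality $u_0<t^*$ (via Lipschitz continuity of $f_0$ and the ODE argument of Lemma \ref{lem_properties}) fill in a detail the paper leaves implicit. So does your remark on the ``up to a subsequence'' in Theorem \ref{Thrm_super_critical}.
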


\begin{proof}\label{convergence_subcritical}
    For any $\omega\in (0,\omega^*)$, let $0<\varphi_\omega<1$ the unique positive solution to \eqref{SSNL}. 

    Thanks to Proposition \ref{prop_change_var}\ref{prop_change_var5} and Theorem \ref{Thrm_super_critical}, we have
    \begin{equation}\label{convergence_phi_Lq}
        \|\varphi_\omega-\varphi_0\|_{L^q(\xR^d)}=\|\Lambda(u_\omega)-\Lambda(u_0)\|_{L^q(\xR^d)}\le \|u_\omega-u_0\|_{L^q(\xR^d)} \to 0 \text{ as } \omega\to 0
    \end{equation}
    for any $q\ge \frac{2d}{d-2}$. In particular, $\varphi_\omega$ converges to $\varphi_0$ in $L^{\infty}(\xR^d)$.

    To prove the convergence of $\varphi_\omega$ in $\dot{H}^{1}(\xR^d)$, we have to prove that $\|\nabla \varphi_\omega-\nabla \varphi_0\|_{L^2(\xR^d)}\to 0$ as $\omega\to 0$. First of all, we remark that
    \begin{align*}
        \|\nabla \varphi_\omega-\nabla \varphi_0\|_{L^2(\xR^d)}=&\,\|\Lambda'(u_\omega)\nabla u_\omega-\Lambda'(u_0)\nabla u_0\|_{L^2(\xR^d)}\\
        \le&\,  \|(\Lambda'(u_\omega)-\Lambda'(u_0))\nabla u_\omega\|_{L^2(\xR^d)}+\|\Lambda'(u_0)(\nabla u_\omega-\nabla u_0)\|_{L^2(\xR^d)}
    \end{align*}

    On the one hand, since $u_\omega$ converges to $u_0$ in $\dot{H}^{1}(\xR^d)$ and using Proposition \ref{prop_change_var}\ref{prop_change_var1}, we have 
    \begin{equation*}
        \|\Lambda'(u_0)(\nabla u_\omega-\nabla u_0)\|_{L^2(\xR^d)}\le \|\nabla u_\omega-\nabla u_0\|_{L^2(\xR^d)}\to 0
    \end{equation*}
    as $\omega\to 0$ and the sequence $\{\|\nabla u_\omega\|_{L^2(\xR^d)}\}_{\omega}$ is uniformly bounded for $\omega$ close to $0$. On the other hand, 
    \begin{align*}
        \int_{\xR^d}|(\Lambda'(u_\omega)-\Lambda'(u_0))\nabla u_\omega|^2\,\xd x\le&\, \int_{\xR^d}|\Lambda'(u_\omega)-\Lambda'(u_0)|^2|\nabla u_\omega|^2\,\xd x\\
        \le & \int_{\xR^d}|\Lambda^{2\alpha}(u_\omega)-\Lambda^{2\alpha}(u_0)||\nabla u_\omega|^2\,\xd x\\
        \le &\,\max\{2\alpha,1\}\int_{\xR^d}|\Lambda(u_\omega)-\Lambda(u_0)|^{\min\{2\alpha,1\}}|\nabla u_\omega|^2\,\xd x\\
        \le &\,\max\{2\alpha,1\}\|\Lambda(u_\omega)-\Lambda(u_0)\|_{L^{\infty}(\xR^d)}^{\min\{2\alpha,1\}}\int_{\xR^d}|\nabla u_\omega|^2\,\xd x\to 0
    \end{align*}
    as $\omega\to 0$. 

    As a conclusion $\varphi_\omega$ converges to $\varphi_0$ in $\dot{H}^1(\xR^d)\cap L^{q}(\xR^d)$ for $q\ge\frac{2d}{d-2}$.
\end{proof}

We conclude this section with the asymptotic behaviour of the mass of $\varphi_\omega$ as $\omega\to 0$.

\begin{prop}[Asymptotic behaviour of the mass of $\varphi_\omega$ as $\omega\to 0$]
Let $\varphi_\omega$ be the solution to \eqref{SSNL}. The asymptotic behaviour of the $L^2$-norm of $\varphi_\omega$ as $\omega \to 0$ is given by:
\[
\lim_{\omega \to 0} \|\varphi_\omega\|_{L^2(\mathbb{R}^d)} = 
\begin{cases} 
+\infty, & \text{if } d \in \{3, 4\}, \\
\|\varphi_0\|_{L^2(\mathbb{R}^d)}, & \text{if } d \geq 5.
\end{cases}
\]
\end{prop}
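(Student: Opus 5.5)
Both regimes come from the pointwise convergence $\varphi_\omega\to\varphi_0$ together with the decay rate of $\varphi_0$ at infinity, and I would treat them separately. In every case Corollary \ref{Cor_super_critical_phi} gives $\varphi_\omega\to\varphi_0$ in $L^\infty(\xR^d)$, hence pointwise everywhere along the family $\omega\to0$. Fatou's lemma then gives
\[
\liminf_{\omega\to0}\|\varphi_\omega\|_{L^2(\xR^d)}\ge\|\varphi_0\|_{L^2(\xR^d)}.
\]
Next, $\varphi_0=\Lambda(u_0)$. Since $\Lambda(t)/t\to1$ as $t\to0^+$ (Proposition \ref{prop_change_var}\ref{prop_change_var3}), the decay \eqref{estim_radial_u0} gives $\varphi_0(x)\sim c|x|^{2-d}$. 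Here $c>0$ by the Flucher--Müller lower bound, since the constant $C$ there is positive. So $\varphi_0\in L^2(\xR^d)$ if and only if $2(d-2)>d$, that is, $d\ge5$. For $d\in\{3,4\}$ we have $\|\varphi_0\|_{L^2}=+\infty$, and the Fatou inequality already gives the divergence $\|\varphi_\omega\|_{L^2}\to+\infty$.

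For $d\ge5$ the matching upper bound $\limsup\|\varphi_\omega\|_{L^2}\le\|\varphi_0\|_{L^2}$ is needed, and the plan is dominated convergence. The first approach is to use the Pohozaev/Nehari-type identities to get a uniform bound on the mass. Testing $-\Delta u_\omega=f_\omega(u_\omega)$ against $u_\omega$ and using Pohozaev, one obtains
\[
\omega\int\Lambda(u_\omega)^2=\frac{2d}{d-2}\cdot\frac{\alpha\,(\cdots)}{\cdots},
\]
which only controls $\omega\|\varphi_\omega\|_2^2$, consistent with \eqref{convergence_omega_mass}, and is not enough by itself. So I would instead seek a uniform pointwise majorant $\varphi_\omega(x)\le\Lambda(u_\omega(x))\le u_\omega(x)\le C|x|^{2-d}$ for $|x|\ge1$ and $\omega$ small. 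That is square integrable when $d\ge5$, and dominated convergence then concludes, using $\varphi_\omega\le1$ on $B_1$.

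The majorant comes in two steps. Since $u_\omega$ is radial decreasing, the Strauss-type radial bound $u_\omega(x)\le C\|u_\omega\|_{L^{q}}|x|^{-d/q}$ with $q=\frac{2d}{d-2}$ gives $u_\omega(x)\le C|x|^{-(d-2)/2}$ uniformly, by the uniform $L^q$ bound from Theorem \ref{Thrm_super_critical}. This is not yet $L^2$ at infinity, so it has to be improved by a comparison argument. Note that $-\Delta u_\omega=f_\omega(u_\omega)\le f_0(u_\omega)\le \Lambda(u_\omega)^{2\alpha+1}\le u_\omega^{2\alpha+1}$, so $u_\omega$ is a subsolution of $-\Delta w=V_\omega w$ with $V_\omega=u_\omega^{2\alpha}\le C|x|^{-\alpha(d-2)}$. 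The key point is that $\alpha(d-2)>2$ in the super-critical regime, so the potential is short range uniformly in $\omega$. Bootstrapping with the Newtonian potential, $u_\omega(x)\le \int_{\xR^d}\frac{c_d\,u_\omega(y)^{2\alpha+1}}{|x-y|^{d-2}}\xd y$, each iteration improves the decay exponent by $\alpha(d-2)-2>0$ until it reaches $d-2$, since the integral converges at infinity once the source decays faster than $|y|^{-d}$. All constants are uniform in $\omega$.

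This uniform decay estimate is the main obstacle. The delicate points are keeping the constants uniform and handling the integral near $y=x$ and near the origin, where the uniform $L^\infty$ bound $u_\omega<t^*$ helps. If the bootstrap is awkward, an alternative is a maximum-principle comparison on $\{|x|>R_0\}$ with the supersolution $A|x|^{2-d}$ of $-\Delta w-V_\omega w\ge0$, for $R_0$ chosen uniformly using the smallness of $V_\omega$ there, and with boundary values controlled by the uniform $L^\infty$ bound.
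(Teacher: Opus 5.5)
Your proof is correct along its primary route. It follows the same skeleton as the paper: a uniform radial bound $u_\omega\le C|x|^{-(d-2)/2}$, upgraded to a uniform square-integrable majorant, then dominated convergence. The key steps differ, though.

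\textbf{Case $d\in\{3,4\}$.} The paper argues by contradiction, from an $H^1$ bound plus a compactness claim. Your Fatou argument reaches the conclusion more directly. It only needs a.e. convergence along subsequences, which the $L^{2d/(d-2)}$ convergence of Theorem \ref{Thrm_super_critical} already gives. You also justify $c>0$ in \eqref{estim_radial_u0}, which the paper leaves implicit when it asserts $\varphi_0\notin L^2(\xR^d)$.

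\textbf{Case $d\ge5$.} The paper improves the decay by a maximum-principle comparison for the radial ODE, using the barrier $|x|^{-(d-2-\varepsilon)}$. This gives $u_\omega\le C_\varepsilon|x|^{-(d-2-\varepsilon)}$ for $|x|\ge R_\varepsilon$, which suffices because $2(d-2-\varepsilon)>d$. Your Newtonian-potential bootstrap rests on
\[
u_\omega=\Gamma*f_\omega(u_\omega)\le\Gamma*u_\omega^{2\alpha+1}.
\]
This holds for each fixed $\omega>0$ by exponential decay and Liouville's theorem. Your route gives the sharp uniform rate $|x|^{2-d}$, with no $\varepsilon$-loss and no maximum principle on an unbounded domain. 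The price is the convolution estimates, together with handling the borderline exponent $b=d$ by lowering the exponent slightly.

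\textbf{The fallback does not work as written.} Since $|x|^{2-d}$ is harmonic away from the origin,
\[
(-\Delta-V_\omega)(A|x|^{2-d})=-AV_\omega|x|^{2-d}<0,
\]
so it is a subsolution, not a supersolution. A comparison argument needs a barrier whose Laplacian term is strictly positive. One option is the paper's choice $|x|^{-(d-2-\varepsilon)}$. Here
\[
-\Delta|x|^{-(d-2-\varepsilon)}=\varepsilon(d-2-\varepsilon)|x|^{-(d-\varepsilon)},
\]
which dominates $V_\omega|x|^{-(d-2-\varepsilon)}\le C|x|^{-\alpha(d-2)-(d-2-\varepsilon)}$ for large $|x|$ because $\alpha(d-2)>2$. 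The other option is a corrected profile $|x|^{2-d}-B|x|^{2-d-\gamma}$ with $\gamma=\alpha(d-2)-2$. Its Laplacian term is $B\gamma(d-2+\gamma)|x|^{-d-\gamma}$, which beats $V_\omega|x|^{2-d}\le C|x|^{-d-\gamma}$ once $B$ is large. It stays comparable to $|x|^{2-d}$ for $|x|^{\gamma}\ge 2B$. Since your main argument does not rely on this fallback, the proof stands.
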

\begin{proof}

Let us consider the case \( d\in \{3,4\} \). Our goal is to establish that \( \| \varphi_{\omega} \|_{L^2(\xR^d)} \to \infty \) as \( \omega \to 0 \).  
Assume, by contradiction, that there exists a sequence $\omega_n$ that converges to $0$ such that $\| \varphi_{\omega_n} \|_{L^2(\xR^d)}$ remains bounded. Hence, the sequence $(\varphi_{\omega_n})$ is bounded in  $H^1(\xR^d)$ by Theorem \ref{Thrm_super_critical}. Then, applying the Rellich-Kondrachov compactness theorem and the radial lemma, $\varphi_{\omega_n}  \to \varphi_0$ in  $L^2(\xR^d)$ up to a subsequence. This implies that $\varphi_0 \in L^2(\xR^d)$, leading to a contradiction because $\varphi_0 \notin L^2(\xR^d)$ for $d\in\{3,4\}$.
Then $\lim\limits_{\omega\to 0}\|\varphi_\omega\|_{L^2(\xR^d)}= +\infty$.

For  $d\ge 5$, from Proposition \ref{prop_change_var}\ref{prop_change_var5}, we know that 
\[
    \|\varphi_\omega - \varphi_0\|_{L^q(\xR^d)} = \|\Lambda(u_\omega) - \Lambda(u_0)\|_{L^q(\xR^d)} \leq \|u_\omega - u_0\|_{L^q(\xR^d)}.
\]
It remains to prove that $\|u_\omega - u_0\|_{L^q} \to 0$ when $\omega\to 0$ for all $q \ge \frac{d}{d-2}$. We know that $u_\omega \to u_0$ almost everywhere and \{$u_\omega$\} is bounded in $L^\infty(\xR^d)$, a dominated convergence argument gives that
\[
\|u_\omega - u_0\|_{L^q(B_R(0))} \to 0 \quad \text{as } \omega \to 0,
\]
for any $R > 0$ and $q > \frac{d}{d-2}$. Thanks to radial lemma \cite[Lemma 4.4]{GenRot-24}, we have 
\begin{equation}
\label{estim_radial}
    u_\omega(x) \leq \frac{C}{|x|^{\frac{d-2}{2}}}, \quad (|x| \geq 1).
\end{equation}
Furthermore, we can show that 
\begin{equation}
\label{apper_bound_u_omega}
      u_\omega(x) \leq \frac{C_\varepsilon}{|x|^{d-2-\varepsilon}}, \quad (|x| \geq R_\varepsilon)
\end{equation}
with $R_\varepsilon, C_\varepsilon$ independent of $\omega$.

In fact, using \eqref{estim_radial} together with the estimates $f_\omega(t) < \Lambda'(t)\Lambda^{2\alpha+1}(t)$, $\Lambda'(t) \leq 1$, and $\Lambda(t) \leq t$, we obtain
\begin{equation*}
    \left(-\Delta -\frac{C}{|x|^{(d-2)\alpha}}\right)u_\omega(x) = f_\omega(u)  - \frac{C}{|x|^{(d-2)\alpha}} u \leq \left(\frac{C}{|x|^{(d-2)\alpha}}-\frac{C}{|x|^{(d-2)\alpha}} \right) u \leq 0,
\end{equation*}
where $C$ denotes the constant from \eqref{estim_radial}.

We now aim to apply the maximum principle. However, the potential $-\frac{C}{|x|^{(d-2)\alpha}}$ is negative, which prevents a direct application of the principle. To overcome this difficulty, we introduce
\[
\Phi_\omega(x) = |x|^{\frac{d-1}{2}}\, u_\omega(x),
\qquad
\Psi(x) = |x|^{\frac{d-1}{2}} \frac{1}{|x|^{d-2-\varepsilon}}
       = |x|^{\frac{3-d}{2}+\varepsilon}.
\]

From Theorem~\ref{thrm_existence}, $\Phi_\omega$ is radial, decays exponentially to $0$, and satisfies
\begin{equation}
\label{L_Phi}
\left(-\Delta - \frac{C}{r^{(d-2)\alpha}}\right)\Phi_\omega(r)
= -\Phi_\omega''(r)
+ \left(\frac{(d-1)(d-3)}{4}
       - \frac{C}{r^{(d-2)\alpha-2}}\right)
\frac{1}{r^{2}}\,\Phi_\omega(r)
\leq 0.
\end{equation}

Moreover, $\Psi$ satisfies
\begin{equation}
\label{L_Psi}
\left(-\Delta - \frac{C}{r^{(d-2)\alpha}}\right)\Psi(r)
= \left(\varepsilon(d-2-\varepsilon)
        - \frac{C}{r^{(d-2)\alpha-2}}\right)
\frac{1}{r^{2}}\,\Psi(r)
\geq 0,
\end{equation}
for all $r \geq R_\varepsilon$, where
\[
R_\varepsilon \geq
\left(\frac{C}{\varepsilon(d-2-\varepsilon)}\right)^{\frac{1}{(d-2)\alpha-2}}.
\]

Define the operator
\[
Lg(r)
= -g''(r)
+ \left(\frac{(d-1)(d-3)}{4}
       - \frac{C}{r^{(d-2)\alpha-2}}\right)
\frac{1}{r^{2}}\,g(r).
\]

From \eqref{L_Phi} and \eqref{L_Psi}, we observe that $L(\Phi_\omega - C(R_\varepsilon)\Psi) \leq 0$ for sufficiently small $\omega$. Applying the maximum principle \cite[Theorem~4, Section~6.4]{evans2009partial} and imposing the boundary condition $\Phi_\omega (R_\varepsilon) - C(R_\varepsilon)\Psi(R_\varepsilon)=0$, we obtain
\[
\Phi_\omega (r) \leq C(R_\varepsilon) \Psi(r),
\]
which yields \eqref{apper_bound_u_omega}.

Now, for sufficiently small $\varepsilon>0$, using estimates \eqref{estim_radial} and \eqref{estim_radial_u0}, and noting that $q(d-2)>d$, it follows from the dominated convergence theorem that
\[
    \|u_\omega - u_0\|_{L^q(\mathbb{R}^d\setminus B_{R_\varepsilon}(0))} \to 0 \quad \text{as } \omega \to 0.
\]
Then, thanks to Proposition \ref{prop_change_var}\ref{prop_change_var5}, we have
\[
    \|\varphi_\omega - \varphi_0\|_{L^q} = \|\Lambda(u_\omega) - \Lambda(u_0)\|_{L^q} \leq \|u_\omega - u_0\|_{L^q} \to 0, \quad \omega \to 0.
\]
\end{proof}

\subsection{Sub-critical case}
 We want to study the behaviour of the positive solutions as $\omega$ approaches zero when $\alpha<\frac{2}{d-2}$. For any $\omega\in (0,\omega^*)$, let $0<\varphi_\omega<1$ the unique positive solution to \eqref{SSNL}. We consider the rescaling $\varphi_\omega (x)=\omega^{\frac{1}{2\alpha}}\tilde\varphi_\omega (\omega^{\frac{1}{2}}x)$. As consequence $\tilde\varphi_\omega$ is a positive solution to the equation  
\begin{equation}
\label{RSNL}
  -\psi = -\nabla. \left(\frac{\nabla\psi}{1-\omega |\psi|^{2\alpha}} \right) +\alpha\omega |\psi|^{2\alpha-2}\frac{|\nabla\psi|^2}{(1-\omega|\psi|^{2\alpha})^2}\psi - |\psi|^{2\alpha}\psi.
\end{equation}  
When $\omega=0$, we obtain
    \begin{equation}
    \label{limit_RSNL}
-\Delta \psi+ \psi - |\psi|^{2\alpha}\psi=0,
    \end{equation}
the usual nonlinear Schrödinger equation with a power non-linearity, which has a unique positive, radial solution $\psi_0\in H^2(\xR^d)$ (see \cite{tao2006nonlinear}). Our aim is to show that $\omega^{\frac{-1}{2\alpha}}\varphi_\omega (\omega^{\frac{-1}{2}}x)$,  where $\varphi_\omega$ is the unique positive solution to \eqref{SSNL}, tends to $\psi_0$ in $H^1(\xR^d)\cap L^{\infty}(\xR^d)$ when $\omega$ approaches zero. To achieve this goal, it is more convenient to work on the semi-linear problem. For this purpose, we will apply the same rescaling to the problem \eqref{PSSL}. Let $u_\omega $ be the unique positive solution to \eqref{PSSL}. We consider the rescaling $u_\omega(x)=\omega^{\frac{1}{2\alpha}}\tilde{u}_\omega (\omega^{\frac{1}{2}}x)$ with $\omega>0$. Then $\tilde{u}_\omega$ is a solution to 
    $$-\Delta \tilde{u}_\omega (x)=\Lambda'(\omega^{\frac{1}{2\alpha}} \tilde{u}_\omega (x))
    \left(\omega^{-\frac{1}{2\alpha}-1}\Lambda^{2\alpha+1}(\omega^{\frac{1}{2\alpha}} \tilde{u}_\omega (x))-   \omega^{-\frac{1}{2\alpha}}\Lambda(\omega^{\frac{1}{2\alpha}} \tilde{u}_\omega (x)) \right).$$
 
 \begin{defi}
 \label{defi_change_var_omega0}
 Let $t\in \xR^+$ and  $\Lambda$ be the change of variables given in Definition \ref{defi_change_var}. For all $\omega>0$, we define $\Lambda_\omega (t) = \omega^{\frac{-1}{2\alpha}} \Lambda (\omega^{\frac{1}{2\alpha}}t)$, and $\Lambda_0(t)=t$.
\end{defi}

Since $\Lambda'_\omega (t) =\Lambda'(\omega^{\frac{1}{2\alpha}}t)$, we have  the following semi-linear problem for $\tilde{u}$:
\begin{equation}
\label{PSSL_omega0}
\left\{
\begin{aligned}
-\Delta \tilde{u}(x) &= \tilde{f}_\omega (\tilde{u}(x)) \qquad \text{ in } \xR^d\\   
  \tilde{u}\in& H^1(\xR^d),  \qquad u \neq 0,
\end{aligned}
\right.
\end{equation}
with 
\begin{equation}\label{def_f_tilde}
    \tilde{f}_\omega(t)=\Lambda_\omega'(t)\left( \Lambda_\omega(t)^{2\alpha+1}-\Lambda_\omega (t)\right) \text{ for all } t\in \xR^+.
\end{equation}

When $\omega=0$,  we recover
\begin{equation}
\label{PSSL_0}
    -\Delta v(x) = \tilde{f}_0 (v(x))  \qquad  \text{with} \qquad  \tilde{f}_0 (t)=t^{2\alpha+1}-t.
\end{equation}
 and the function $\psi_0$ defined above is the unique positive solution to \eqref{PSSL_0}.

In Proposition \ref{prop_change_variable_omega0} and Lemma \ref{convergence_uniform_Lambda_omega}, we state some properties of $\Lambda_\omega$ and  $\tilde{f}_\omega$ that will be used later.

\begin{prop}
\label{prop_change_variable_omega0}
For any $\omega>0$, let $t^*_\omega= t^*\omega^{\frac{-1}{2\alpha}}$, with $t^*$ given by \eqref{t*} and $t^*_0=+\infty$.
\begin{enumerate}[label=\alph*)]

\item \label{prop_change_variable_omega0_1} For all $t\in (0,t_\omega^*)$, $\Lambda_\omega'(t)=\sqrt{1-\omega \Lambda_\omega^{2\alpha}(t)}  $ and $\Lambda_\omega''(t)=-\alpha \omega \Lambda_\omega^{2\alpha-1}(t) $.

\item For all $t\ge 0$, $0\le \Lambda_\omega (t)\leq \mathrm{min}\{t,\omega^{-\frac{1}{2\alpha}}\}$.  \label{prop_change_variable_omega0_2}

\item \label{prop_change_variable_omega0_4} For all $\omega\geq 0$, the function \( \tilde{f}_\omega \)  in \eqref{PSSL_omega0}, is of class \( C^2 ((0,t^*_\omega))\), and its first and second derivatives in $(0,t^*_\omega)$ are given by 
\begin{equation}\label{df_tilde_omega}
\tilde{f}_\omega'(t)= \left( \Lambda^{2\alpha}_\omega(t)-1\right)\left( 1-\omega(\alpha+1)\Lambda^{2\alpha}_\omega(t)\right) + 2\alpha \Lambda^{2\alpha}_\omega(t)\left(1-\omega \Lambda^{2\alpha}_\omega(t)\right),
\end{equation}

\begin{equation}\label{ddf_tilde_omega}
    \tilde{f}_\omega ''(t)=2\alpha \Lambda'_\omega(t) \Lambda^{2\alpha-1}_\omega (t) \left( 1+\omega(\alpha+1)+ 2\alpha - 2\omega(3\alpha+1) \Lambda^{2\alpha}_\omega(t)\right).
\end{equation}
In particular, for all $\omega>0$, $\tilde f_\omega \in C^{1,\min\{1,2\alpha\}}((0,t^*_\omega))$.
\item \label{prop_change_variable_omega0_3} For all $\omega\ge 0$, the first derivative of $\tilde{f}$ is bounded as follows
\begin{equation}
\label{Lipshitz_caracter_f}
    |\tilde{f}_\omega'(t)|\leq (2\alpha+\max\{\alpha,1\}) t^{2\alpha}+ \max\{\alpha,1\}
\end{equation}
for all $t\in(0,t^*_\omega)$.
\end{enumerate}

\end{prop}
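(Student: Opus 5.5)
The plan is to reduce every item to the properties of $\Lambda$ in Proposition \ref{prop_change_var} via the scaling identity $\Lambda_\omega(t)=\omega^{-\frac{1}{2\alpha}}\Lambda(\omega^{\frac{1}{2\alpha}}t)$. Set $s=\omega^{\frac{1}{2\alpha}}t$. Then $t\in(0,t^*_\omega)$ if and only if $s\in(0,t^*)$.

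For \ref{prop_change_variable_omega0_1}, differentiate and get $\Lambda_\omega'(t)=\Lambda'(s)$. By Proposition \ref{prop_change_var}\ref{prop_change_var7},
\[
\Lambda'(s)=\sqrt{1-\Lambda^{2\alpha}(s)}=\sqrt{1-\omega\Lambda_\omega^{2\alpha}(t)},
\]
since $\Lambda(s)=\omega^{\frac{1}{2\alpha}}\Lambda_\omega(t)$. Differentiating once more gives
\[
\Lambda_\omega''(t)=\omega^{\frac{1}{2\alpha}}\Lambda''(s)=-\alpha\,\omega^{\frac{1}{2\alpha}}\bigl(\omega^{\frac{1}{2\alpha}}\Lambda_\omega(t)\bigr)^{2\alpha-1}=-\alpha\omega\Lambda_\omega^{2\alpha-1}(t).
\]
For \ref{prop_change_variable_omega0_2}, use $0\le\Lambda(s)\le\min\{1,s\}$ from Proposition \ref{prop_change_var}\ref{prop_change_var1} and \ref{prop_change_var4}, and multiply by $\omega^{-\frac{1}{2\alpha}}$. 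The case $\omega=0$ is trivial because $\Lambda_0(t)=t$, $t_0^*=+\infty$, and the formulas in a) hold formally.

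For \ref{prop_change_variable_omega0_4}, write $\tilde f_\omega=\Lambda_\omega'\,P(\Lambda_\omega)$ with $P(\xi)=\xi^{2\alpha+1}-\xi$. The product rule together with a) gives
\[
\tilde f_\omega'=-\alpha\omega\Lambda_\omega^{2\alpha-1}P(\Lambda_\omega)+(1-\omega\Lambda_\omega^{2\alpha})P'(\Lambda_\omega).
\]
Expanding with $P(\xi)=\xi(\xi^{2\alpha}-1)$ and $P'(\xi)=(2\alpha+1)\xi^{2\alpha}-1$ and regrouping yields \eqref{df_tilde_omega}. This is the same algebra as in Proposition \ref{prop_f}\ref{Prop_f2} with $\omega$ replaced by the scaled constants. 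I will check it by verifying that the two sides agree as polynomials in $X=\Lambda_\omega^{2\alpha}$:
\[
-\alpha\omega X(X-1)+(1-\omega X)\bigl((2\alpha+1)X-1\bigr)=(X-1)\bigl(1-\omega(\alpha+1)X\bigr)+2\alpha X(1-\omega X).
\]
Next, differentiate \eqref{df_tilde_omega} in $t$ using $(\Lambda_\omega^{2\alpha})'=2\alpha\Lambda_\omega^{2\alpha-1}\Lambda_\omega'$. The derivative of the right-hand side in $X$ is $1+\omega(\alpha+1)+2\alpha-2\omega(3\alpha+1)X$, and the chain rule gives \eqref{ddf_tilde_omega}.

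The $C^2$ regularity on $(0,t^*_\omega)$ follows because $\Lambda_\omega$ is smooth there: it is the inverse of a smooth function with nonvanishing derivative away from $0$. For the H\"older claim, consider two cases.
\begin{itemize}
\item If $2\alpha\ge1$, then $\tilde f_\omega''$ is bounded on $(0,t^*_\omega)$, so $\tilde f_\omega'$ is Lipschitz.
\item If $2\alpha<1$, then $\tilde f_\omega'$ is a Lipschitz function of $X=\Lambda_\omega^{2\alpha}$. Moreover $X$ is $C^{0,2\alpha}$ in $t$, since $|a^{2\alpha}-b^{2\alpha}|\le|a-b|^{2\alpha}$ and $\Lambda_\omega$ is $1$-Lipschitz. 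Hence $\tilde f_\omega'\in C^{0,2\alpha}$.
\end{itemize}
I expect this H\"older step to be the only delicate point; the rest is computation.

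For \ref{prop_change_variable_omega0_3}, bound each factor in \eqref{df_tilde_omega} on $(0,t^*_\omega)$, using $0\le\omega\Lambda_\omega^{2\alpha}\le1$ and $\Lambda_\omega^{2\alpha}\le t^{2\alpha}$ from b).
\begin{itemize}
\item Since $\omega(\alpha+1)X\in[0,\alpha+1]$, we get $|1-\omega(\alpha+1)X|\le\max\{1,\alpha\}$.
\item Since $|X-1|\le X+1$, the first product is at most $\max\{\alpha,1\}(t^{2\alpha}+1)$.
\item The second term satisfies $0\le 2\alpha X(1-\omega X)\le 2\alpha t^{2\alpha}$.
\end{itemize}
Summing these bounds gives \eqref{Lipshitz_caracter_f}.
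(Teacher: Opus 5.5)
Your proposal is correct and follows essentially the same route as the paper: a) and b) by scaling from Proposition~\ref{prop_change_var}, and the formulas for $\tilde f_\omega'$ and $\tilde f_\omega''$ by direct computation. For the H\"older claim, both use boundedness of $\tilde f_\omega''$ when $2\alpha\ge 1$, and $|a^{2\alpha}-b^{2\alpha}|\le |a-b|^{2\alpha}$ together with the $1$-Lipschitz property of $\Lambda_\omega$ when $2\alpha<1$; your composition argument in $X=\Lambda_\omega^{2\alpha}$ is a tidier version of the paper's term-by-term estimate. Part d) is obtained, as in the paper, by bounding the factors of \eqref{df_tilde_omega} with $\omega\Lambda_\omega^{2\alpha}\le 1$ and $\Lambda_\omega\le t$.
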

\begin{proof}
Properties \ref{prop_change_variable_omega0_1} and \ref{prop_change_variable_omega0_2} follow from Definition \ref{defi_change_var_omega0}, $\Lambda'_\omega (t) =\Lambda'(\omega^{\frac{1}{2\alpha}}t)$ and Proposition \ref{prop_change_var}\ref{prop_change_var1}. A straightforward computation gives \eqref{df_tilde_omega} and \eqref{ddf_tilde_omega}. As a consequence, for $\omega>0$ and $2\alpha\ge 1$, 
\begin{equation}
    |\tilde{f}_\omega ''(t)|\le \frac{2\alpha(3+\omega(\alpha+1)+8\alpha)}{\omega^{1-\frac{1}{2\alpha}}}
\end{equation}
and $\tilde f'_\omega \in C^{0,1}((0,t^*_\omega))$. For $0<2\alpha<1$, $\tilde{f}_\omega ''$ has a singularity at $0$. However, since 
\begin{equation*}
    |1-\omega \Lambda^{2\alpha}_\omega(t)|\le 1 \text{ and } |1-(\alpha+1)\omega \Lambda^{2\alpha}_\omega(t)|\le \max\{\alpha,1\}=1,
\end{equation*}
we obtain, for all $s,t\in (0,t^*_\omega)$
\begin{align*}
    |\tilde f'_\omega(s)-&\tilde f'_\omega(t)|\\
    \le&\, \omega(\alpha+1)|\Lambda^{2\alpha}_\omega(s)-\Lambda^{2\alpha}_\omega(t)|+|\Lambda^{2\alpha}_\omega(s)\left( 1-\omega(\alpha+1)\Lambda^{2\alpha}_\omega(s)\right) -\Lambda^{2\alpha}_\omega(t)\left( 1-\omega(\alpha+1)\Lambda^{2\alpha}_\omega(t)\right)|\\
    &+2\alpha|\Lambda^{2\alpha}_\omega(s)\left(1-\omega \Lambda^{2\alpha}_\omega(s)\right)-\Lambda^{2\alpha}_\omega(t)\left(1-\omega \Lambda^{2\alpha}_\omega(t)\right)|\\
    \le&\, \left(\omega(\alpha+1)+|1-\omega(\alpha+1)\Lambda^{2\alpha}_\omega(s)|+\omega(\alpha+2) |\Lambda^{2\alpha}_\omega(t)|+|1-\omega \Lambda^{2\alpha}_\omega(s)|\right)|\Lambda_\omega(s)-\Lambda_\omega(t)|^{2\alpha}  \\
   \le&\, \left(\omega(\alpha+1)+(\alpha+4)\right)|s-t|^{2\alpha}.
\end{align*}

Finally, starting from \eqref{df_tilde_omega}, we have, for all $t\in (0,t^*)$,
\begin{align*}
    |\tilde f'_\omega(t)|\le (|\Lambda^{2\alpha}_\omega(t)|+1)\max\{\alpha,1\}+2\alpha |\Lambda^{2\alpha}_\omega(t)|\le (2\alpha+\max\{\alpha,1\})t^{2\alpha}+\max\{\alpha,1\}
\end{align*}
which gives the property \ref{prop_change_variable_omega0_3}.

\end{proof}

Let $\omega\ge 0$ and consider $\Lambda_\omega$ and $\tilde f_\omega$ as defined in Definition \ref{defi_change_var_omega0} and equation \eqref{def_f_tilde} respectively. 
For any $\omega<0$, we define 
\begin{equation*}
    \label{def_change_variable0_neg_omega}
    t^*_{\omega}=t^*_{-\omega},\ \Lambda_{\omega}(t)=\Lambda_{-\omega}(t),\text{ and } \tilde f_{\omega}(t)=\tilde f_{-\omega}(t)\text{ for all }t\in (0,t^*_\omega).
\end{equation*}
Moreover, we extend the definition of the change of variable and of the non-linearity to the interval $(-t^*_\omega,0)$ by setting 
\begin{equation*}
    \Lambda_\omega(t)=\Lambda_\omega(-t) 
\end{equation*}
so that 
\begin{equation*}
    \tilde f_\omega(t)=-\tilde f_\omega(-t).
\end{equation*}
\begin{rmrk}\label{properties_change_variable_omega}
    Note that $\Lambda_\omega(t)$ is now defined for any $\omega\in \R$. The function $\Lambda_\omega$ is continuous on $[-t^*_\omega,t^*_\omega]$ and differentiable on $(-t^*_\omega,0)\cup (0,t^*_\omega)$. Moreover, if $\omega\neq 0$, $|\Lambda'_\omega(t)|\le 1$ on $(-t^*_\omega,0)\cup (0,t^*_\omega)$, so that $\Lambda_\omega$ is a Lipschitz function on $[-t^*_\omega,t^*_\omega]$ for any $\omega\neq 0$.
\end{rmrk}

We have the following lemma.

\begin{lem}
\label{convergence_uniform_Lambda_omega}
Let $s\geq 2$ and $u \in  L^s(\xR^d)\cap L^\infty(\xR^d)$. Let $\{\omega_n\}_n$ be a sequence in $(-\delta,\delta)$ that converges to $\omega$ as $n\to +\infty$. Assume that  $\|u\|_{L^{\infty}(\xR^d)}< t^*_{\omega}$. Then 
\begin{equation}
       \|\Lambda_{\omega_n}(u)-\Lambda_\omega(u)\|_{L^s(\xR^d)}\to 0
\end{equation}
as $n \to +\infty$.
\end{lem}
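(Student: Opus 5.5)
The argument is a pointwise estimate on $|\Lambda_{\omega_n}(u)-\Lambda_\omega(u)|$ by a constant times $|u|$, so that the $L^s$ convergence follows from dominated convergence. Since $\Lambda_\omega$ is even in $t$ and depends only on $|\omega|$, we may assume $u\ge 0$ by replacing $u$ with $|u|$. Set $M=\|u\|_{L^\infty(\xR^d)}<t^*_\omega$. Because $t^*_{\omega'}=t^*|\omega'|^{-\frac{1}{2\alpha}}$ (with $t^*_0=+\infty$) depends continuously on $\omega'$ in the extended sense, we can fix $M'\in(M,t^*_\omega)$ and get $t^*_{\omega_n}>M'$ for all $n$ large. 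In particular $\Lambda_{\omega_n}(u(x))$ is given by the smooth branch $\omega_n^{-\frac{1}{2\alpha}}\Lambda(\omega_n^{\frac{1}{2\alpha}}u(x))$ (or by $u(x)$ when $\omega_n=0$) at a.e. $x$.

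\textbf{Step 1 (pointwise convergence).} For fixed $t\in[0,M']$, we show $\omega'\mapsto\Lambda_{\omega'}(t)$ is continuous at $\omega$.
\begin{itemize}
\item If $\omega\neq0$, this is immediate from continuity of $\Lambda$ and of $\omega'\mapsto|\omega'|^{\pm\frac{1}{2\alpha}}$.
\item If $\omega=0$, we use Proposition~\ref{prop_change_var}\ref{prop_change_var3}: $\Lambda(\tau)/\tau\to1$ as $\tau\to0^+$. Writing $\tau=|\omega_n|^{\frac{1}{2\alpha}}t$, we get $\Lambda_{\omega_n}(t)=t\,\Lambda(\tau)/\tau\to t=\Lambda_0(t)$.
\end{itemize}
In fact, since $\tau\le|\omega_n|^{\frac{1}{2\alpha}}M'\to0$ uniformly in $t\in(0,M']$, the convergence is uniform on $[0,M']$. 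The same uniformity holds for $\omega\ne0$ by the uniform continuity of $\Lambda$ on compacts.

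\textbf{Step 2 (domination).} By Proposition~\ref{prop_change_variable_omega0}\ref{prop_change_variable_omega0_2}, $0\le\Lambda_{\omega'}(t)\le t$ for every $\omega'$, hence
\[
|\Lambda_{\omega_n}(u)-\Lambda_\omega(u)|^s\le 2^s|u|^s\in L^1(\xR^d).
\]
Together with the a.e. (indeed uniform) pointwise convergence from Step 1, dominated convergence gives $\|\Lambda_{\omega_n}(u)-\Lambda_\omega(u)\|_{L^s}\to0$.

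A sharper, quantitative version follows from the mean value theorem in $t$. For $\omega=0$ and $\omega_n\ne0$, we have $|\Lambda_{\omega_n}(t)-t|\le \sup_{[0,t]}(1-\Lambda'_{\omega_n})\,t\le |\omega_n|\,t^{2\alpha+1}$, which follows from $1-\sqrt{1-x}\le x$ and $\Lambda_{\omega_n}\le t$. This yields $\|\Lambda_{\omega_n}(u)-u\|_{L^s}\le|\omega_n|M^{2\alpha}\|u\|_{L^s}$, and we would include it since it is essentially free.

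\textbf{Main obstacle.} The only delicate point is the case $\omega=0$ (where $t^*_0=\infty$) or $\omega_n$ of varying sign: we must make sure the values $u(x)$ stay inside the smooth branch $[0,t^*_{\omega_n})$ uniformly in $n$, so that no point falls on the constant part $\Lambda_{\omega_n}\equiv|\omega_n|^{-\frac{1}{2\alpha}}$. This is exactly what the choice of $M'$ secures. Once it is in place, all estimates are pointwise and uniform.
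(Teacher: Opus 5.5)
Your proof is correct, but its main line differs from the paper's. The paper does not use dominated convergence. For $\omega\neq0$ it combines the $1$-Lipschitz property of $\Lambda$ with $\Lambda(\tau)\le\tau$ to get the explicit bound
\[
|\Lambda_{\omega_n}(t)-\Lambda_\omega(t)|\le\Big(\omega_n^{-\frac{1}{2\alpha}}\big|\omega_n^{\frac{1}{2\alpha}}-\omega^{\frac{1}{2\alpha}}\big|+\omega^{\frac{1}{2\alpha}}\big|\omega_n^{-\frac{1}{2\alpha}}-\omega^{-\frac{1}{2\alpha}}\big|\Big)|t|.
\]
For $\omega=0$ it integrates $1-\Lambda'_{\omega_n}$ exactly as in your closing remark. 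It gets the constant $\frac{1}{1+2\alpha}$ instead of your $1$, because it integrates $s^{2\alpha}$ rather than taking the supremum. So in both cases $\|\Lambda_{\omega_n}(u)-\Lambda_\omega(u)\|_{L^s}$ is bounded by an explicit $c_n\to0$ times $\|u\|_{L^s}$ (times $\|u\|_{L^\infty}^{2\alpha}$ when $\omega=0$).

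Your route uses pointwise convergence, from $\Lambda(\tau)/\tau\to1$ and continuity of $\Lambda$, together with the domination $|\Lambda_{\omega_n}(u)-\Lambda_\omega(u)|\le|u|$. It is softer and treats both cases at once. It uses the $L^\infty$ hypothesis only to keep the values of $u$ where $\Lambda_{\omega_n}$ is defined for negative $\omega_n$. The cost is that it gives no rate and no uniformity in $u$.

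The quantitative form is what the paper actually relies on afterwards. In the proof of Corollary~\ref{cor_convergence_tilde_phi_subcritical}, the pointwise estimate \eqref{uniform_bound_Lambda_omega} is applied to $\tilde u_\omega$, which moves with $\omega$. A dominated-convergence argument for a fixed $u$ would not cover that case, whereas your closing quantitative remark does.
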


\begin{proof}
    We start by proving the result for $\omega\neq 0$. Since $\Lambda_\omega=\Lambda_{|\omega|}$, we can assume $\omega>0$, without loss of generality. As a consequence, $\omega_n>0$ at least for $n$ large enough. 
    For any $t\in \xR$,
    \begin{align*}
        |\Lambda_{\omega_n}(t)-\Lambda_\omega(t)|&=|   \omega_n^{\frac{-1}{2\alpha}} \Lambda(\omega_n^{\frac{1}{2\alpha}} t)-\omega^{\frac{-1}{2\alpha}}\Lambda(\omega^{\frac{1}{2\alpha}}t)|\\
        &\leq \omega_n^{\frac{-1}{2\alpha}}| \Lambda(\omega_n^{\frac{1}{2\alpha}} t)-\Lambda(\omega^{\frac{1}{2\alpha}}t)|+|\omega_n^{\frac{-1}{2\alpha}}-\omega^{\frac{-1}{2\alpha}}|\Lambda(\omega^{\frac{1}{2\alpha}}t)\\
        &\le \omega_n^{\frac{-1}{2\alpha}}| \omega_n^{\frac{1}{2\alpha}} -\omega^{\frac{1}{2\alpha}}||t|+\omega^{\frac{1}{2\alpha}}|\omega_n^{\frac{-1}{2\alpha}}-\omega^{\frac{-1}{2\alpha}}||t|
    \end{align*}
thanks to Proposition \ref{prop_change_var}. 
As a consequence, for any $u\in L^{s}(\xR^d)$,
\begin{align*}
     \|\Lambda_{\omega_n}(u)-\Lambda_\omega(u)\|_{L^s(\xR^d)}\le \left(\omega_n^{\frac{-1}{2\alpha}}| \omega_n^{\frac{1}{2\alpha}} -\omega^{\frac{1}{2\alpha}}|+\omega^{\frac{1}{2\alpha}}|\omega_n^{\frac{-1}{2\alpha}}-\omega^{\frac{-1}{2\alpha}}|\right)\|u\|_{L^s(\xR^d)}\to 0
\end{align*}
as $n\to +\infty$.

For $\omega = 0$, we have $\Lambda_0(t) = t$ for all $t\ge 0$, and our goal is to estimate the quantity $|\Lambda_{\omega_n}(t) - \Lambda_0(t)|$ for all $t\in (-t^*_{\omega_n},t^*_{\omega_n})$. Since $\Lambda_\omega$ is an even function and $\Lambda_\omega(0)=0$ for all $\omega$, it is enough to get the estimate for $t\in (0,t^*_{\omega_n})$. Let $t\in (0,t^*_{\omega_n})$, we have 
\begin{align}\label{uniform_bound_Lambda_omega}
    |\Lambda_{\omega_n}(t) -\Lambda_0(t)|=&\,|\Lambda_{\omega_n}(t)-\Lambda_{\omega_n}(0) -t|=\left|\int_{0}^t\Lambda'_{\omega_n}(s)\,\xd s-t\right|\le \int_{0}^t|\Lambda'_{\omega_n}(s)-1|\,\xd s \nonumber\\
    \le&\,  \int_0^t\frac{\omega_n\Lambda_{\omega_n}^{2\alpha}(s)}{1+\sqrt{1-\omega_n\Lambda_{\omega_n}^{2\alpha}(s)}}\,\xd s\le \frac{\omega_n}{1+2\alpha}|t|^{2\alpha+1}.
\end{align} 
Hence, for any $u\in L^s(\xR^d)\cap L^{\infty}(\xR^d)$ such that $\|u\|_{L^{\infty}(\xR^d)}< t^*_{\omega_n}$ for all $n\ge n_0$, we have 
\begin{equation*}
    \|\Lambda_{\omega_n}(u)-\Lambda_\omega(u)\|_{L^s(\xR^d)}\le \frac{\omega_n}{1+2\alpha}\|u\|^{2\alpha}_{L^{\infty}(\xR^d)}\|u\|_{L^s(\xR^d)}\to 0
\end{equation*}
as $n\to +\infty$.

\end{proof}

Now, we can state the result on the convergence of $\tilde u_\omega$.

\begin{thrm}\label{thm_convergence_tilde_u_subcritical}
Suppose $d=2$, or $d \geq 3$ and $\alpha < \frac{2}{d-2}$. Then, as $\omega \to 0, \, \tilde{u}_\omega (x)= \omega^\frac{-1}{2\alpha} u_\omega (\omega^\frac{-1}{2\alpha} x) $ with $u_\omega$ the unique positive

solution to \eqref{PSSL} converges in $L^{\infty}(\xR^d)\cap H^2_{\mathrm{rad}}(\xR^d)$ to $\psi_0$ the unique positive solution of \eqref{limit_RSNL}.
\end{thrm}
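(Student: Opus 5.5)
The plan is to prove convergence by an implicit function argument in $\omega$ around $\omega=0$, exploiting the non-degeneracy of $\psi_0$. This mirrors the proof of Proposition \ref{prop_regularity}; the preparations of Definition \ref{defi_change_var_omega0}, the even/odd extension to negative $\omega$, and Lemma \ref{convergence_uniform_Lambda_omega} are designed for exactly this. The scaling in the statement should be read as $u_\omega(x)=\omega^{\frac{1}{2\alpha}}\tilde u_\omega(\omega^{\frac12}x)$, so that $\tilde u_\omega$ solves \eqref{PSSL_omega0}.

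First, as for $g_\omega$ in \eqref{smooth_non_linearity}, I will replace $\tilde f_\omega$ outside a fixed interval $[-M,M]$ by a $C^1$ linear extension, calling the result $\tilde g_\omega$. Here $M>\|\psi_0\|_{L^\infty}+1$, and since $t^*_\omega=t^*|\omega|^{-1/(2\alpha)}\to\infty$, we have $M<t^*_\omega$ for $|\omega|<\delta$. By Proposition \ref{prop_change_variable_omega0}\ref{prop_change_variable_omega0_4}--\ref{prop_change_variable_omega0_3}, $\tilde g_\omega$ is $C^1$ with $\tilde g'_\omega$ bounded and uniformly H\"older, uniformly for $|\omega|<\delta$. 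At $\omega=0$ it coincides with $t^{2\alpha+1}-t$ on $[-M,M]$.

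Next, I define
\[
\Phi(\omega,v)=-\Delta v-\tilde g_\omega(v),\qquad \Phi:(-\delta,\delta)\times \bigl(W^{2,q}\cap H^2_{\mathrm{rad}}(\xR^d)\bigr)\to L^q\cap L^2_{\mathrm{rad}}(\xR^d),\quad q>\tfrac d2 .
\]
I will show it is $C^1$ in $v$ and continuous in $\omega$, with $D_v\Phi$ jointly continuous. The continuity in $\omega$ is exactly where Lemma \ref{convergence_uniform_Lambda_omega} enters, together with the explicit formula \eqref{df_tilde_omega}. Both $\tilde f_\omega$ and $\tilde f'_\omega$ are polynomial expressions in $\Lambda_\omega$ and $\omega$, so $\Lambda_{\omega_n}(v)\to\Lambda_\omega(v)$ in $L^s$ on $\{|v|<M\}$ gives $\tilde g_{\omega_n}(v)\to\tilde g_\omega(v)$ and $\tilde g'_{\omega_n}(v)\to \tilde g'_\omega(v)$, uniformly on bounded sets of $W^{2,q}\cap H^2\subset L^\infty$. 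The derivative $D_v\Phi(0,\psi_0)=-\Delta+1-(2\alpha+1)\psi_0^{2\alpha}$ is injective on radial functions by the classical non-degeneracy of $\psi_0$ (Kwong, Weinstein). Its surjectivity follows from the compactness argument used in Proposition \ref{prop_regularity}, since the potential decays exponentially.

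With these ingredients, a version of the implicit function theorem that requires only continuity in the parameter, with $C^1$ dependence in $v$ and continuity of $D_v\Phi$, yields a continuous branch $\omega\mapsto v(\omega)$ with $v(0)=\psi_0$ and $\Phi(\omega,v(\omega))=0$. Continuity alone suffices for the convergence statement. Since $v(\omega)\to\psi_0$ in $W^{2,q}\subset L^\infty$, we get $\|v(\omega)\|_\infty<M$ for small $\omega$, so $v(\omega)$ solves \eqref{PSSL_omega0} with the true nonlinearity $\tilde f_\omega$.

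The remaining step is to show $v(\omega)=\tilde u_\omega$ for $\omega>0$ small. I will argue positivity exactly as at the end of Proposition \ref{prop_regularity}. The operator $-\Delta-\tilde f_\omega(v(\omega))/v(\omega)$ has $0$ as an eigenvalue with eigenfunction $v(\omega)$. At $\omega=0$ this eigenvalue is simple and lies at the bottom of the spectrum, with the essential spectrum in $[1,\infty)$, and this persists by continuity of the potential in $L^\infty$. Hence $v(\omega)>0$ by \cite[Theorem XIII 46]{reed1978iv}. Undoing the scaling, $\omega^{1/(2\alpha)}v(\omega)(\omega^{1/2}\cdot)$ is then a positive $H^1$ solution of \eqref{PSSL}, and by uniqueness (Proposition \ref{thrm_uniqueness}, with radial centering at $0$) it equals $u_\omega$. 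Therefore $\tilde u_\omega=v(\omega)\to\psi_0$ in $W^{2,q}\cap H^2_{\mathrm{rad}}$, hence in $L^\infty\cap H^2_{\mathrm{rad}}$.

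The main obstacle is the parameter regularity at $\omega=0$. The map $\omega\mapsto\Lambda_\omega$ is not smooth there (it behaves like $\omega^{\cdot}$ powers), so a $C^1$ implicit function theorem does not apply directly, and one must verify carefully the uniform-in-$\omega$ continuity estimates for $\tilde g'_\omega$. This is where the H\"older bound of Proposition \ref{prop_change_variable_omega0} is needed when $2\alpha<1$. If this becomes delicate, the fallback is a direct compactness argument: uniform $H^1$ bounds from the Pohozaev/Nehari identities, radial compactness, and identification of the limit by uniqueness of $\psi_0$. That route has its own difficulty, namely excluding vanishing of the limit, i.e. a uniform positive lower bound on $\tilde u_\omega(0)$.
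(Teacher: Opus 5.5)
Your proposal is correct and follows essentially the same route as the paper. Both arguments apply an implicit function theorem that needs only continuous dependence on $\omega$ (the paper cites Krantz--Parks) to $\Phi(\omega,v)=-\Delta v-\tilde f_\omega(v)$ near $(0,\psi_0)$, with Lemma \ref{convergence_uniform_Lambda_omega} giving continuity in $\omega$, non-degeneracy of $\psi_0$ plus the compactness/Fredholm argument giving invertibility of $D_v\Phi(0,\psi_0)$, and the bottom-of-spectrum positivity argument of Proposition \ref{prop_regularity} followed by uniqueness to identify the branch with $\tilde u_\omega$. The only difference is cosmetic: you enforce $|v|<t^*_\omega$ by extending $\tilde f_\omega$ linearly outside $[-M,M]$, whereas the paper restricts $\Phi$ to the open set $\mathcal V=\{\|v-\psi_0\|_{L^\infty}<\|\psi_0\|_{L^\infty}\}$ and chooses $\delta$ so that every $v\in\mathcal V$ satisfies $\|v\|_{L^\infty}<t^*_\omega$ for $|\omega|<\delta$.
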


\begin{proof}

To prove this theorem, we will use the implicit function argument in a neighbourhood of $(0,\psi_0)$. 

Given $q\in \xN$ with $q>\frac{d}{2}$, we define the open set 
\begin{equation}\label{def_open_set_implicit_arg}
   \mathcal{V}= \left\{ v \in W^{2,q}(\mathbb{R}^d) \cap H^2_{\mathrm{rad}}(\mathbb{R}^d) \ : \ \|v-\psi_0\|_{L^\infty(\mathbb{R}^d)} < \|\psi_0\|_{L^\infty(\mathbb{R}^d)} \right\}
\end{equation}
and we choose  $\delta = \left(\frac{t^*}{4\|\psi_0\|_{L^\infty(\xR^d)}}\right)^{2\alpha}$.

This choice implies that, for any $(\omega,u)\in (-\delta,\delta)\setminus\{0\}\times \mathcal{V}$, we have 
\begin{equation*}
    \|u\|_{L^\infty(\xR^d)} \leq \frac{t^* |\omega|^{-\frac{1}{2\alpha}}}{2}<t^*_{\omega}
\end{equation*}
so that $\tilde f_\omega(u(x))$ is well-defined for any $x\in \xR^d$. 

Hence, we define the map 
\[
\begin{aligned}
    \Phi: (-\delta,\delta) \times  \mathcal{V}\subset \xR\times W^{2,q}(\mathbb{R}^d) \cap H^2_{\mathrm{rad}}(\mathbb{R}^d) &\to L^q(\xR^d)\cap L^2_\mathrm{rad}(\xR^d) \\
    (\omega, u) &\mapsto  -\Delta u - \tilde{f}_\omega(u).
\end{aligned}
\]
Since $\Phi(0,\psi)=0$, we aim to apply the implicit function theorem in $(-\delta,\delta) \times  \mathcal{V}$ which is an open subset of $\xR\times W^{2,q}(\mathbb{R}^d) \cap H^2_{\mathrm{rad}}(\mathbb{R}^d)$.
To proceed, we must show that the application  $\Phi$ is continuous from $(-\delta,\delta)\times \mathcal{V} $ to $L^q(\xR^d)\cap L^2_\mathrm{rad}(\xR^d)$, for $s\in\{q,2\}$. Let $\{(\omega_n,u_n)\}_n$ be a sequence in $(-\delta,\delta) \times  \mathcal{V}$ that converges to $(\omega,u)$ in $\xR\times  W^{2,q}(\mathbb{R}^d) \cap H^2_{\mathrm{rad}}(\mathbb{R}^d)$. We have  
 \begin{align*}
    \|\Phi(\omega_n,u_n)-\Phi(\omega,u)\|_{L^s(\xR^d)}=&\|\Delta u_n-\Delta u\|_{L^s(\xR^d)}+\|\tilde{f}_{\omega_n}(u_n)-\tilde{f}_\omega(u)\|_{L^s(\xR^d)}\\
    \leq& \| u_n- u\|_{W^{2,s}(\xR^d)}+\|\tilde{f}_{\omega_n}(u_n)-\tilde{f}_{\omega_n}(u)\|_{L^s(\xR^d)}+\\
    &\|\tilde{f}_{\omega_n}(u)-\tilde{f}_\omega(u)\|_{L^s(\xR^d)}.
\end{align*}
Note that, since $\|u_n-u\|_{L^\infty(\xR^d)}\to 0$ as $n\to \infty$, $\|u\|_{L^\infty(\xR^d)}< t_{\omega_n}^*$ for $n$ large enough. Hence, for the second term on the right hand side, using \eqref{Lipshitz_caracter_f}, we have
\begin{align*}
\|\tilde{f}_{\omega_n}(u_n)-\tilde{f}_{\omega_n}(u)\|^s_{L^s(\xR^d)}=&\int_{\xR^d} |\tilde{f}_{\omega_n}(u_n(x))- \tilde{f}_{\omega_n}( u(x))|^s \xd x\\
\leq&\int_{\xR^d} ((2\alpha+\max\{\alpha,1\}) \max \{|u_n(x)|^{2\alpha},|u(x)|^{2\alpha}\}\\ &+\max\{\alpha,1\})^s |u_n(x)- u(x)|^s \, \xd x\\
\leq &((2\alpha+\max\{\alpha,1\}) (2\|\psi_0\|_{L^\infty(\xR^d)})^{2\alpha}+\max\{\alpha,1\})^s\|u_n- u\|^s_{L^s(\xR^d)}.
\end{align*}
To prove the continuity, it remains to show that $\|\tilde{f}_{\omega_n}(u)-\tilde{f}_{\omega}(u)\|_{L^s(\xR^d)} \to  0$ as $n \to +\infty$. First,
$$
\tilde{f}_{\omega_n}(u(x))-\tilde{f}_{\omega}(u(x))\to 0 \text{ a.e. in } \xR^d
$$
as $n\to +\infty$. Indeed, if $\omega\neq 0$, this follows from the continuity of the map $\omega\in (0,+\infty)\mapsto \tilde f_\omega(u(x))\in \R$. If $\omega=0$, since $\tilde f_{\omega_n}(0)=0$ and $\tilde f_{\omega_n}$ is an odd function, we can restrict our analysis to $x\in \xR^d$ such that $u(x)>0 $. In this case, we write 
\begin{align*}
    \tilde{f}_{\omega_n}(u(x))-\tilde{f}_{0}(u(x))=&\Lambda'(\omega_n^{\frac{1}{2\alpha}}u(x))\left(u(x)^{2\alpha+1}\frac{\Lambda^{2\alpha+1}(\omega_n^{\frac{1}{2\alpha}}u(x))}{(\omega_n^{\frac{1}{2\alpha}}u(x))^{2\alpha+1}}-u(x)\frac{\Lambda(\omega_n^{\frac{1}{2\alpha}}u(x))}{\omega_n^{\frac{1}{2\alpha}}u(x)}\right)\\
    &-(u(x)^{2\alpha+1}-u(x)),
\end{align*}
and we use that $\lim\limits_{t\to 0^+}\frac{\Lambda(t)}{t}=1=\lim\limits_{t\to 0^+}\Lambda'(t)$ (see Proposition \ref{prop_change_var}\ref{prop_change_var3}) to conclude. 
Next, since $0\le\Lambda'_\omega(t)\le 1$ in $(-t^*_\omega,t^*_\omega)$ and thanks to Proposition \ref{prop_change_variable_omega0}\ref{prop_change_variable_omega0_2}, for all $x\in \xR^d$, 
$$|\tilde{f}_{\omega_n}(u(x))-\tilde{f}_{\omega}(u(x))|\leq 2 |u(x)|(|u(x)|^{2\alpha}+1)\in L^{s}(\xR^d), $$
since  $u\in W^{2,q} (\xR^d) \hookrightarrow L^\infty(\xR^d)$. Then using the dominated convergence theorem, we deduce that $\|\tilde{f}_{\omega_n}(u)-\tilde{f}_{\omega}(u)\|_{L^s(\xR^d)} \to  0$ as $n \to +\infty$,
and $\Phi$ is continuous.

Now, we want to show that for all $\omega \in (-\delta, \delta)$, the map $u\in  \mathcal{V} \mapsto \Phi(\omega, u)$ is Fréchet differentiable, and its differential is the linear continuous operator given by  
\[
\begin{aligned}
    D_u \Phi(\omega, u) : W^{2,q}(\mathbb{R}^d) \cap H^2_{\mathrm{rad}}(\mathbb{R}^d) &\to L^q_{\mathrm{rad}}(\mathbb{R}^d) \cap L^2_{\mathrm{rad}}(\mathbb{R}^d), \\
    v &\mapsto -\Delta v - \tilde{f}'_\omega(u)v.
\end{aligned}
\]  
This means that we need to show that  
\[
\lim_{\|v\|_{W^{2,q}(\mathbb{R}^d) \cap H^2(\mathbb{R}^d)} \to 0} \frac{\|\Phi(\omega, u+v) - \Phi(\omega, u) - D_u \Phi(\omega, u)v\|_{L^q(\mathbb{R}^d) \cap L^2(\mathbb{R}^d)}}{\|v\|_{W^{2,q}(\mathbb{R}^d) \cap H^2(\mathbb{R}^d)}} = 0.
\]  
We have:  
\begin{align*}
\|\Phi(\omega, u+v) - \Phi(\omega, u) -& D_u \Phi(\omega, u)v\|_{L^q (\mathbb{R}^d)\cap L^2 (\mathbb{R}^d)}\\
=& \|\tilde{f}_\omega(u+v) - \tilde{f}_\omega(u) - \tilde{f}'_\omega(u)v\|_{L^q (\mathbb{R}^d)\cap L^2 (\mathbb{R}^d)}.
\end{align*}
Since $\tilde f_\omega$ is an odd function, we deduce from Proposition \ref{prop_change_variable_omega0}\ref{prop_change_variable_omega0_4} that $\tilde f_\omega\in C^{1,\min\{1,2\alpha\}}((-t^*_\omega,t^*_\omega))$ for all $\omega\neq 0$. Hence,
\begin{equation*}
    |\tilde{f}_\omega(u+v) - \tilde{f}_\omega(u)- \tilde{f}'_\omega(u) v|\le C_{\alpha,\omega}|v|^{1+\min\{1,2\alpha\}}
\end{equation*}
for some positive constant $C_{\alpha,\omega}$ that depends on $\omega$ and $\alpha$. As a consequence, 
\begin{align*}
    \|\tilde{f}_\omega(u+v) - \tilde{f}_\omega(u)-\tilde{f}'_\omega(u) v\|_{L^q (\mathbb{R}^d)\cap L^2(\mathbb{R}^d)} &\leq C_{\alpha,\omega}\|v\|_{L^\infty(\xR^d)}^{\min\{1,2\alpha\}}\|v\|_{L^q (\mathbb{R}^d)\cap L^2(\mathbb{R}^d)} \\
    &\le C \|v\|^{1+\min\{1,2\alpha\}}_{W^{2,q} (\mathbb{R}^d)\cap H^2(\mathbb{R}^d)} 
\end{align*}
using Sobolev embedding. This implies
\begin{equation*}
     \frac{\|\Phi(\omega, u+v) - \Phi(\omega, u) - D_u \Phi(\omega, u)v\|_{L^q(\mathbb{R}^d) \cap L^2(\mathbb{R}^d)}}{\|v\|_{W^{2,q} (\mathbb{R}^d)\cap H^2(\mathbb{R}^d)}} \leq C \|v\|^{\min\{1,2\alpha\}}_{W^{2,q}(\mathbb{R}^d) \cap H^2(\mathbb{R}^d)} \to 0
\end{equation*}
for all $\omega\neq 0$. 

For $\omega=0$, we use the explicit expression of $\tilde f_0$. If $0<2\alpha\le 1$, we remark that $\tilde f_0\in \mathcal C^{1,2\alpha}(\xR)$. As above, this implies
\begin{equation*}
    |\tilde{f}_0(u+v) - \tilde{f}_0(u)- \tilde{f}'_0(u) v|\le C_{\alpha,\omega}|v|^{1+2\alpha}
\end{equation*}
and 
\begin{equation*}
     \frac{\|\Phi(\omega, u+v) - \Phi(\omega, u) - D_u \Phi(\omega, u)v\|_{L^q(\mathbb{R}^d) \cap L^2(\mathbb{R}^d)}}{\|v\|_{W^{2,q} (\mathbb{R}^d)\cap H^2(\mathbb{R}^d)}} \leq C \|v\|^{2\alpha}_{W^{2,q}(\mathbb{R}^d) \cap H^2(\mathbb{R}^d)} \to 0.
\end{equation*}
If $2\alpha>1$, we use the fact that, for all $t,s\in \xR$ 
\begin{equation*}
    |\tilde{f}_0'(t)-\tilde{f}_0'(s)|\le 2\alpha(2\alpha+1) \max\{|t|,|s|\}^{2\alpha-1}|t-s|.
\end{equation*}
This gives
\begin{align*}
    \left|\tilde{f}_0(u+v) - \tilde{f}_0(u)- \tilde{f}'_0(u) v\right|=\left|\int_{0}^1 \tilde{f}'_0(u+\sigma v)v \,\xd \sigma- \tilde{f}'_0(u) v\right|\le \alpha(2\alpha+1)|v|^2 {(|u|+|v|)^{2\alpha-1}}
\end{align*}
which implies 
\begin{equation*}
     \frac{\|\Phi(\omega, u+v) - \Phi(\omega, u) - D_u \Phi(\omega, u)v\|_{L^q(\mathbb{R}^d) \cap L^2(\mathbb{R}^d)}}{\|v\|_{W^{2,q} (\mathbb{R}^d)\cap H^2(\mathbb{R}^d)}} \leq C \|v\|_{W^{2,q}(\mathbb{R}^d) \cap H^2(\mathbb{R}^d)} \to 0
\end{equation*}
since $\|u\|_{L^{\infty}(\xR^d)}\le 2 \|\psi_0\|_{L^{\infty}(\xR^d)}$ and $\|v\|_{L^{\infty}(\xR^d)}\le \|v\|_{W^{2,q}(\mathbb{R}^d) \cap H^2(\mathbb{R}^d)} \to 0$.

It remains to show that $D_u \Phi(\omega,u)v  $ is continuous. For any $v \in W^{2,q}(\mathbb{R}^d)$,
\begin{align*}
     \|D_u \Phi(\omega,u)v \|_{L^s(\xR^d)}&\leq \|\Delta v\|_{L^s(\xR^d)} +\|\tilde{f}'_\omega(u)v\|_{L^s(\xR^d)}\\
     &\leq (1+\|\tilde{f}'_\omega(u)\|_{L^\infty(\xR^d)})\| v\|_{W^{2,s}(\xR^d)}\\
     &\le C \| v\|_{W^{2,s}(\xR^d)} 
\end{align*}
since $\|\tilde{f}'_\omega(u)\|_{L^\infty(\xR^d)}$ is bounded thanks to \eqref{Lipshitz_caracter_f}.

Now we must show that $(\omega,u)\in (-\delta,\delta)\times \mathcal V\mapsto D_u\Phi(\omega,u)$ is continuous.
Let $\{(\omega_n,u_n)\}_n$ be a sequence in $(-\delta,\delta) \times  \mathcal{V}$ that converges to $(\omega,u)$ in $\xR\times  W^{2,q}(\mathbb{R}^d) \cap H^2_{\mathrm{rad}}(\mathbb{R}^d)$, and $v\in W^{2,q}(\mathbb{R}^d) \cap H^2_{\mathrm{rad}}(\mathbb{R}^d)$ such that 
$\|v\|_{H^2(\xR^d)\cap W^{2,q}(\xR^d)}= 1$. Let $s\in \{2,q\}$, we have 

\begin{align*}
    \|D_u \Phi(\omega_n,u_n)v - D_u \Phi(\omega,u)v\|_{L^s(\xR^d)}=&\, \|(\tilde{f}'_{\omega_n}(u_n)-\tilde{f}'_\omega(u))v\|_{L^s(\xR^d)}\\
    \leq&\|(\tilde{f}'_{\omega_n}(u_n)-\tilde{f}'_{\omega_n}(u))v\|_{L^s(\xR^d)}\\
    & + \|(\tilde{f}'_{\omega_n}(u)-\tilde{f}'_\omega(u))v\|_{L^s(\xR^d)}.
\end{align*} 
If $0<2\alpha\le 1$, we deduce from (the proof of) Proposition \ref{prop_change_variable_omega0} and the explicit expression of $\tilde f'_0$ that, for all $s,t\in (-t^*_\omega,t^*_\omega)$,
\begin{equation*}
    |\tilde f'_\omega(s) -\tilde f'_\omega(t)|\le C_{\alpha}(|\omega|+1)|s-t|^{2\alpha}
\end{equation*}
with $C_\alpha$ a positive constant for all $\omega \in\xR$.
Hence, proceeding as in the proof of Proposition \ref{prop_regularity}, we deduce that
\begin{align*}
    \|(\tilde{f}'_{\omega_n}(u_n)-\tilde{f}'_{\omega_n}(u))v\|_{L^s(\xR^d)}\le C_{\alpha}(|\omega_n|+1) \|u_n-u\|_{L^s(\xR^d)}^{2\alpha}\|v\|_{L^\infty(\xR^d)}^{2\alpha}\|v\|_{L^s(\xR^d)}^{1-2\alpha}\le C  \|u_n-u\|_{L^s(\xR^d)}^{2\alpha}
\end{align*}
since $|\omega_n|\le \delta$ for all $n$.
If $2\alpha> 1$, we deduce from \eqref{ddf_tilde_omega} and the explicit expression of $\tilde f'_0$ that
\begin{equation*}
    |\tilde f'_\omega(s) -\tilde f'_\omega(t)|\le C_{\alpha}(|\omega|+1)\max\{|s|,|t|\}^{2\alpha-1}|s-t|
\end{equation*}
This leads to 
\begin{align*}
    \|(\tilde{f}'_{\omega_n}(u_n)-\tilde{f}'_{\omega_n}(u))v\|_{L^s(\xR^d)}&\le C \|\tilde{f}'_{\omega_n}(u_n)-\tilde{f}'_{\omega_n}(u)\|_{L^s(\xR^d)} \\
    &\leq C \|\psi_0\|_{L^\infty(\xR^d)}^{2\alpha -1}\|u_n-u\|_{L^s(\xR^d)}
\end{align*}
using that $\max\{\|u_n\|_{L^{\infty}(\xR^d)},\|u\|_{L^{\infty}(\xR^d)}\}\le C\|\psi_0\|$ for $n$ large enough. Consequently, as $n\to +\infty$ we have  $\|(\tilde{f}'_{\omega_n}(u_n)-\tilde{f}'_{\omega_n}(u))v\|_{L^s(\xR^d)}\to 0$.

Next, we have to prove that 
$\|(\tilde{f}'_{\omega_n}(u)-\tilde{f}'_\omega(u))v\|_{L^s(\xR^d)}\to 0$ as $n\to +\infty$.

For any $\omega_n,\omega\in [-\delta,\delta]$ and for any $t\in \bigcap_n(-t^*_{\omega_n},t^*_{\omega_n})$, we have 
\begin{align*}
    |\tilde f'_{\omega_n}(t)-\tilde f'_{\omega}(t)|
    =&\, |\left(2\alpha+1+\omega(\alpha+1)-(3\alpha+1)({\omega_n}\Lambda^{2\alpha}_{\omega_n}(t)+{\omega}\Lambda^{2\alpha}_{\omega}(t))\right)\left(\Lambda^{2\alpha}_{\omega_n}(t)-\Lambda^{2\alpha}_\omega(t)\right)\\
    &\,+({\omega_n}-\omega)\Lambda^{2\alpha}_{\omega_n}(t)\left(\alpha+1 
    -(3\alpha+1)\Lambda^{2\alpha}_\omega(t)\right)|\\
    \le&\, C_{\alpha}\left((1+|\omega|)|\Lambda^{2\alpha}_{\omega_n}(t)-\Lambda^{2\alpha}_\omega(t)|+|\omega-\omega_n||t|^{2\alpha}(1+|t|^{2\alpha})\right)
\end{align*}
for some positive constant $C_\alpha$ that depends only on $\alpha$, thanks to Proposition \ref{prop_change_variable_omega0}. Hence, if $0<2\alpha\le 1$, we obtain 
\begin{equation*}
    |\tilde f'_{\omega_n}(t)-\tilde f'_{\omega}(t)|\le C_{\alpha}\left((1+|\omega|)|\Lambda_{\omega_n}(t)-\Lambda_\omega(t)|^{2\alpha}+|\omega-\omega_n||t|^{2\alpha}(1+|t|^{2\alpha})\right)
\end{equation*}
and 
\begin{align*}
    \|(\tilde{f}'_{\omega_n}(u)-\tilde{f}'_{\omega}(u))v\|_{L^s(\xR^d)}
    &\le
     C_{\alpha}\|\Lambda_{\omega_n}(u)-\Lambda(u)\|_{L^s(\xR^d)}^{2\alpha}\|v\|_{L^\infty(\xR^d)}^{2\alpha}\|v\|_{L^s(\xR^d)}^{1-2\alpha}\\
     &\,\,\,+ C_{\alpha}|\omega_n-\omega| \|u\|^{2\alpha}_{L^{\infty}(\xR^d)}(\|u\|^{2\alpha}_{L^\infty(\xR^d)}+1)\|v\|_{L^s(\xR^d)}.
\end{align*}
If $2\alpha>1$, we have
\begin{equation*}
    |\tilde f'_{\omega_n}(t)-\tilde f'_{\omega}(t)|\le C_{\alpha}\left((1+|\omega|)|t|^{2\alpha-1}|\Lambda_{\omega_n}(t)-\Lambda_\omega(t)|+|\omega-\omega_n||t|^{2\alpha}(1+|t|^{2\alpha})\right)
\end{equation*}
and 
\begin{align*}
    \|(\tilde{f}'_{\omega_n}(u)-\tilde{f}'_{\omega}(u))v\|_{L^s(\xR^d)}&\le
     C_{\alpha}\|\Lambda_{\omega_n}(u)-\Lambda(u)\|_{L^s(\xR^d)}\|u\|_{L^\infty(\xR^d)}^{2\alpha-1}\|v\|_{L^\infty(\xR^d)}\\
     &\,\,\, + C_{\alpha}|\omega_n-\omega|\|u\|^{2\alpha}_{L^{\infty}(\xR^d)}(\|u\|^{2\alpha}_{L^\infty(\xR^d)}+1)\|v\|_{L^s(\xR^d)}.
\end{align*}
In both cases, $\|(\tilde{f}'_{\omega_n}(u)-\tilde{f}'_\omega(u))v\|_{L^s(\xR^d)}\to 0$ as $n\to +\infty$ thanks to Lemma \ref{convergence_uniform_Lambda_omega}.

This shows that the application $$(\omega,u)\mapsto D_u\Phi(\omega,u)$$ is continuous as an operator in $\mathcal{L}(W^{2,q}(\mathbb{R}^d) \cap H^2_{\mathrm{rad}}(\mathbb{R}^d), L^q(\xR^d)\cap L^2_\mathrm{rad}(\xR^d)) $. 

Finally, it remains to prove that $D_u\Phi(0,\psi_0)$ is an isomorphism.
Note that $D_u \Phi(0,\psi_0)= -\Delta - \tilde{f}_0'(\psi_0)= \mathcal{L}_{0}$,
 where $\mathcal{L}_{0}$ is the linearisation of \eqref{PSSL_0} at $\psi_0$. Since $\psi_0$ is non-degenerate, see \cite{mcleod1993uniqueness}, then
$$\mathrm{ker}(\mathcal{L}_{0|\mathrm{rad}})=\{0\}.$$
This implies that $D_u \Phi(0,\psi_0)$ is injective into $L^q(\xR^d) \cap L^2_\mathrm{rad}(\xR^d)$.
It remains to show that the mapping  
$$\mathcal{L}_0 =-\Delta  +1 + V_0 \text{ with }  V_0 =- (2\alpha+1) \psi_0^{2\alpha}$$
is surjective. As in the proof of Proposition \ref{prop_regularity}, to conclude it is enough to remark that $V_0\in L^{\infty}(\xR^d)\cap L^p(\xR^d)$ for any $p\ge 1$ and decays exponentially to $0$ at infinity.

By applying the implicit function theorem \cite[Theorem 3.4.10]{krantz2002implicit}, we obtain the existence of a continuous function \( h:\omega \in U\subset (-\delta,\delta) \to h(\omega)\in \mathcal{V} \) defined on a neighbourhood $U$ of $0$, such that \( \Phi(\omega, h(\omega)) = 0 \). In other words, \( h(\omega) \) is a solution of \eqref{PSSL_omega0}. Moreover, for any $\omega\in U$, $\|h(\omega)\|_{L^\infty}< {t^*_\omega}$ and, arguing as in the proof of Proposition \ref{prop_regularity}, we conclude that $h(\omega)$ is positive for $\omega$ close to $0$. This implies  $\tilde{u}_\omega = h(\omega)$ for all $\omega$ close to $0$, and 
\begin{equation*}
    \|\tilde u_{\omega}-\psi_0\|_{W^{2,q}(\xR^d)\cap H^2(\xR^d)}\to 0
\end{equation*}
as $\omega\to 0$.

\end{proof}

From Theorem \ref{thm_convergence_tilde_u_subcritical}, we deduce the following result on the convergence of $\tilde \varphi_\omega$

\begin{corol}\label{cor_convergence_tilde_phi_subcritical}
Suppose $d=2$, or $d \geq 3$ and $\alpha < \frac{2}{d-2}$. Then, as $\omega \to 0, \, \tilde{\varphi}_\omega (x)= \omega^\frac{-1}{2\alpha} \varphi_\omega (\omega^\frac{-1}{2\alpha} x) $ with $0<\varphi_\omega<1$ the unique positive solution to \eqref{SSNL} converges in $L^{\infty}(\xR^d)\cap H^1_{\mathrm{rad}}(\xR^d)$ to $\psi_0$ the unique positive solution of \eqref{limit_RSNL}.
\end{corol}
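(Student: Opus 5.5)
The plan is to transfer Theorem~\ref{thm_convergence_tilde_u_subcritical} through the rescaled change of variables of Definition~\ref{defi_change_var_omega0}. Since $\varphi_\omega=\Lambda(u_\omega)$ and $\tilde u_\omega(x)=\omega^{-\frac{1}{2\alpha}}u_\omega(\omega^{-\frac12}x)$, we have
\[
\tilde\varphi_\omega=\omega^{-\frac{1}{2\alpha}}\Lambda\bigl(\omega^{\frac{1}{2\alpha}}\tilde u_\omega\bigr)=\Lambda_\omega(\tilde u_\omega).
\]
Here we use the scaling $x\mapsto\omega^{-1/2}x$ consistently with the rescaling introduced at the start of the subsection. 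We also have $\psi_0=\Lambda_0(\psi_0)$. The difference then splits as
\[
\tilde\varphi_\omega-\psi_0=\bigl(\Lambda_\omega(\tilde u_\omega)-\Lambda_\omega(\psi_0)\bigr)+\bigl(\Lambda_\omega(\psi_0)-\psi_0\bigr).
\]
The two pieces are estimated separately, in $L^2\cap L^\infty$ and then at the gradient level.

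\textbf{Step 1: $L^s$ convergence.} For the first piece, $\Lambda_\omega$ is $1$-Lipschitz by Remark~\ref{properties_change_variable_omega}, since $|\Lambda_\omega'|\le1$. It is therefore bounded pointwise by $|\tilde u_\omega-\psi_0|$, which tends to $0$ in $L^\infty\cap L^2$ by Theorem~\ref{thm_convergence_tilde_u_subcritical} together with Sobolev embedding of $H^2\cap W^{2,q}$. For the second piece, apply estimate \eqref{uniform_bound_Lambda_omega} from Lemma~\ref{convergence_uniform_Lambda_omega}:
\[
|\Lambda_\omega(\psi_0)-\psi_0|\le \frac{\omega}{1+2\alpha}\|\psi_0\|_{L^\infty}^{2\alpha}|\psi_0| ,
\]
which tends to $0$ in $L^2\cap L^\infty$. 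This uses $\|\psi_0\|_{L^\infty}<t^*_\omega$ for $\omega$ small. This gives convergence in $L^2\cap L^\infty$.

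\textbf{Step 2: gradient convergence.} Write $\nabla\tilde\varphi_\omega=\Lambda'(\omega^{\frac1{2\alpha}}\tilde u_\omega)\nabla\tilde u_\omega$, and split
\[
\nabla\tilde\varphi_\omega-\nabla\psi_0=\Lambda'(\omega^{\frac1{2\alpha}}\tilde u_\omega)(\nabla\tilde u_\omega-\nabla\psi_0)+\bigl(\Lambda'(\omega^{\frac1{2\alpha}}\tilde u_\omega)-1\bigr)\nabla\psi_0 .
\]
The first term is bounded in $L^2$ by $\|\nabla\tilde u_\omega-\nabla\psi_0\|_{L^2}\to0$, since $0\le\Lambda'\le1$. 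For the second term, use Proposition~\ref{prop_change_var}\ref{prop_change_var7}:
\[
0\le 1-\Lambda'(t)=\frac{\Lambda^{2\alpha}(t)}{1+\sqrt{1-\Lambda^{2\alpha}(t)}}\le t^{2\alpha},
\]
so $|1-\Lambda'(\omega^{\frac1{2\alpha}}\tilde u_\omega)|\le\omega\|\tilde u_\omega\|_{L^\infty}^{2\alpha}$. Since $\|\tilde u_\omega\|_{L^\infty}$ is uniformly bounded by Theorem~\ref{thm_convergence_tilde_u_subcritical}, the second term is $O(\omega)\|\nabla\psi_0\|_{L^2}\to0$. Radial symmetry is preserved because $\Lambda_\omega$ acts pointwise on a radial function.

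\textbf{Main obstacle.} The only delicate point is keeping all estimates uniform in $\omega$. That requires $\|\tilde u_\omega\|_{L^\infty}$ to stay below $t^*_\omega=t^*\omega^{-1/(2\alpha)}$ uniformly, which holds because $\tilde u_\omega$ lies in the set $\mathcal V$ of \eqref{def_open_set_implicit_arg} with $|\omega|<\delta$. Note that $\Lambda'$ is not Lipschitz near $0$ when $2\alpha<1$, so a Lipschitz argument on $\Lambda'$ is unavailable there. The explicit bound $1-\Lambda'(t)\le t^{2\alpha}$ avoids this, and it is the bound I would use.
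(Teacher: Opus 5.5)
Your proof is correct and is essentially the paper's argument: it uses the same two ingredients, $0\le \Lambda_\omega'\le 1$ and $1-\Lambda'_\omega(t)\le \omega t^{2\alpha}$ (whose integrated form is \eqref{uniform_bound_Lambda_omega}), together with the uniform $L^\infty$ bound coming from $\tilde u_\omega\in\mathcal V$. The only difference is cosmetic. The paper compares $\Lambda_\omega(\tilde u_\omega)$ directly with $\tilde u_\omega$, and puts the factor $1-\Lambda'_\omega(\tilde u_\omega)$ on $\nabla\tilde u_\omega$ rather than on $\nabla\psi_0$. You are also right to use the scaling $\omega^{-1/2}x$ in the argument, which is what the paper's proof actually uses despite the exponent written in the statement.
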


\begin{proof}
\label{proof_convergence_subcritical}
    For any $\omega\in (0,\omega^*)$, let $0<\varphi_\omega<1$ the unique positive solution to \eqref{SSNL}. The function $\tilde\varphi_\omega(x)=\omega^{-\frac{1}{2\alpha}}\varphi_\omega(\omega^{-\frac{1}{2}}x)$ can be written in terms of $\tilde u_\omega$ as $\tilde\varphi_\omega=\Lambda_\omega(\tilde u_\omega)$.

    Since $\tilde u_\omega\in L^s(\R^d)$ for any $s\ge 2$ and $\|\tilde u_\omega\|_{L^\infty(\xR^d)}<t^*_\omega$, we can apply \eqref{uniform_bound_Lambda_omega} and deduce that
    \begin{equation*}
        \|\Lambda_\omega(\tilde u_\omega)-\tilde u_\omega\|_{L^s(\xR^d)}\le\frac{\omega}{2\alpha}\|\tilde u_\omega\|_{L^\infty(\xR^d)}^{1+2\alpha}\|\tilde u_\omega\|_{L^s(\xR^d)}.
    \end{equation*}
    For $\omega$ close to $0$, $\tilde u_\omega\in \mathcal V$, which implies that $\|\tilde u_\omega\|_{L^\infty(\xR^d)}$ is uniformly bounded by $2\|\psi_0\|_{L^\infty(\xR^d)}$. Moreover, the sequence $\{\|\tilde u_\omega\|_{L^s(\xR^d)}\}_\omega$ is uniformly bounded for $\omega$ close to $0$, since $\tilde u_\omega$ converges to $\psi_0$ in $L^s(\R^d)$ for any $s\ge 2$. As a consequence, 
    \begin{equation}
        \label{convergence_Lambda_u_tilde}
        \|\Lambda_\omega(\tilde u_\omega)-\tilde u_\omega\|_{L^s(\xR^d)}\to 0 \text{ as } \omega\to 0
    \end{equation}
    for any $s\ge 2$. This, together with Theorem \ref{thm_convergence_tilde_u_subcritical}, implies that
    \begin{equation}\label{convergence_psi_Ls}
        \|\tilde\varphi_\omega-\psi_0\|_{L^s(\xR^d)}\to 0 \text{ as } \omega\to 0
    \end{equation}
    for any $s\ge 2$. In particular, $\tilde\varphi_\omega$ converges to $\psi_0$ in $L^{\infty}(\xR^d)$ as $\omega\to 0$.

    To prove the convergence of $\tilde\varphi_\omega$ in $H^{1}(\xR^d)$, it remains to prove that $\|\nabla \tilde\varphi_\omega-\nabla \psi_0\|_{L^2(\xR^d)}\to 0$ as $\omega\to 0$. On the one hand, since $\tilde u_\omega$ converges to $\psi_0$ in $H^1(\xR^d)$, we have $\|\nabla \tilde u_\omega-\nabla \psi_0\|_{L^2(\xR^d)}\to 0$ as $\omega\to 0$ and the sequence $\{\|\nabla \tilde u_\omega\|_{L^2(\xR^d)}\}_{\omega}$ is uniformly bounded for $\omega$ close to $0$. On the other hand, 
    \begin{align*}
        \int_{\xR^d}|\nabla \tilde\varphi_\omega-\nabla \tilde u_\omega|^2\,\xd x\le&\, \int_{\xR^d}(1-\Lambda'_\omega(\tilde u_\omega))^2|\nabla \tilde u_\omega|^2\,\xd x\\ 
        \le& \omega^2\int_{\xR^d}\Lambda^{4\alpha}_\omega(\tilde u_\omega)|\nabla \tilde u_\omega|^2\,\xd x\\
        \le& \omega^2\int_{\xR^d}|\tilde u_\omega|^{4\alpha}|\nabla \tilde u_\omega|^2\,\xd x \\
        \le &\, \omega^2 (2\|\psi_0\|_{L^\infty(\xR^d)})^{4\alpha}\|\nabla \tilde u_\omega\|^2_{L^2(\R^d)} \to 0
    \end{align*}
    as $\omega\to 0$.

    As a conclusion $\tilde\varphi_\omega$ converges to $\psi_0$ in $H^1(\xR^d)\cap L^{\infty}(\xR^d)$.
\end{proof}

We conclude this section with the asymptotic behavior of the mass $\varphi_\omega$ as $\omega \to 0$.

\begin{prop}[Asymptotic behaviour of the mass of $\varphi_\omega$ as $\omega \to 0$]
Let $\varphi_\omega$ be the solution to \eqref{SSNL}, the mass $M(\omega)$ has the following asymptotic behaviour as $\omega \to 0$.
\begin{itemize}
    \item \textbf{$L^2$-critical case} ($\alpha = \frac{2}{d}$): 
    \[
    M(\omega) \xrightarrow[\omega \to 0]{} \|\psi_0\|_{L^2(\mathbb{R}^d)}^2
    \]
    where $\psi_0$ is the unique positive solution to \eqref{limit_RSNL}
    \item \textbf{$L^2$-super-critical case} ($\alpha > \frac{2}{d}$): 
    \[
    M(\omega) \xrightarrow[\omega \to 0]{} +\infty.
    \]
    
    \item \textbf{$L^2$-sub-critical case} ($\alpha < \frac{2}{d}$): 
    \[
    M(\omega) \xrightarrow[\omega \to 0]{} 0.
    \]
\end{itemize}
\end{prop}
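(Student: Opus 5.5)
The plan is a scaling computation combined with Corollary \ref{cor_convergence_tilde_phi_subcritical}. The rescaling $\varphi_\omega(x)=\omega^{\frac{1}{2\alpha}}\tilde\varphi_\omega(\omega^{\frac12}x)$ and the substitution $y=\omega^{\frac12}x$ give
\begin{equation*}
M(\omega)=\int_{\mathbb{R}^d}\varphi_\omega^2\,\mathrm{d}x=\omega^{\frac{1}{\alpha}}\int_{\mathbb{R}^d}\tilde\varphi_\omega(\omega^{\frac12}x)^2\,\mathrm{d}x=\omega^{\frac{1}{\alpha}-\frac{d}{2}}\,\|\tilde\varphi_\omega\|_{L^2(\mathbb{R}^d)}^2 .
\end{equation*}
The factor $\omega^{\frac{1}{\alpha}-\frac{d}{2}}$ separates the three cases. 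Its exponent $\frac1\alpha-\frac d2$ is zero when $\alpha=\frac2d$, negative when $\alpha>\frac2d$, and positive when $\alpha<\frac2d$. All three regimes lie inside the sub-critical range ($d=2$, or $d\ge3$ with $\alpha<\frac{2}{d-2}$), so the corollary applies throughout. When $d\ge3$ the condition $\alpha>\frac2d$ is compatible with $\alpha<\frac{2}{d-2}$, so the $L^2$-super-critical case is not empty.

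The key input is that $\|\tilde\varphi_\omega\|_{L^2}^2\to\|\psi_0\|_{L^2}^2\in(0,\infty)$ as $\omega\to0$. This holds because, by Corollary \ref{cor_convergence_tilde_phi_subcritical}, $\tilde\varphi_\omega\to\psi_0$ in $H^1(\mathbb{R}^d)$, hence in $L^2$. Alternatively one can combine \eqref{convergence_psi_Ls} with $s=2$ and Theorem \ref{thm_convergence_tilde_u_subcritical}. We also need $\psi_0\neq0$, which is true since it is the positive ground state of \eqref{limit_RSNL}.

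With this in hand the three cases are immediate. In the $L^2$-critical case the prefactor equals $1$ and $M(\omega)\to\|\psi_0\|^2_{L^2}$. In the $L^2$-super-critical case the prefactor blows up while the $L^2$ norm stays bounded below by, say, $\frac12\|\psi_0\|_{L^2}^2$ for small $\omega$, so $M(\omega)\to+\infty$. In the $L^2$-sub-critical case the prefactor tends to $0$ while the $L^2$ norm stays bounded, so $M(\omega)\to0$. The only care needed is to state the scaling convention consistently: the corollary's formula is written with $\omega^{-\frac{1}{2\alpha}}$ inside the argument, but the rescaling actually used in the subsection is $\omega^{-\frac12}x$, and that is the one used above. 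The $L^2$ convergence from the corollary is the only nontrivial ingredient, so no real obstacle remains.
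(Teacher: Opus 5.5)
Your proposal is correct and follows essentially the same route as the paper: the scaling identity $M(\omega)=\omega^{\frac{1}{\alpha}-\frac{d}{2}}\|\tilde\varphi_\omega\|_{L^2(\mathbb{R}^d)}^2$, combined with the $L^2$ convergence \eqref{convergence_psi_Ls} of $\tilde\varphi_\omega$ to $\psi_0\neq 0$. You are also right that the rescaling consistent with this computation is $\omega^{-\frac12}x$ in the argument, not the $\omega^{-\frac{1}{2\alpha}}x$ written in the corollary's statement.
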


\begin{proof}
We have that 
\begin{align*}
   M(\omega)= \|\varphi_\omega\|^2_{L^2(\xR^d)} = \omega^{\frac{1}{\alpha}-\frac{d}{2}} \int_{\xR^d} |\tilde\varphi_\omega|^2 \xd x 
\end{align*}

Thanks to \eqref{convergence_psi_Ls}, we have
\begin{equation}
\label{convergence_mass_phi}
   \omega^{-(\frac{1}{\alpha}-\frac{d}{2})}\|\varphi_\omega\|^2_{L^2(\xR^d)} \xrightarrow[\omega \to 0]{}  \|\psi_0\|^2_{L^2(\xR^d)}.
\end{equation}
This leads to the following three different cases
\begin{itemize}
    \item $L^2$-critical case $\alpha = \frac{2}{d}$: $\|\varphi_\omega\|^2_{L^2(\xR^d)} = \|\tilde \varphi_\omega\|^2_{L^2(\xR^d)} \xrightarrow[\omega \to 0]{} \|\psi_0\|^2_{L^2(\xR^d)}$.
    
    \item $L^2$-super-critical case $\alpha>\frac{2}{d}$: $\|\varphi_\omega\|^2_{L^2(\xR^d)} \xrightarrow[\omega \to 0]{} +\infty$.
    
    \item $L^2$-sub-critical case $\alpha<\frac{2}{d}$: $\|\varphi_\omega\|^2_{L^2(\xR^d)} \xrightarrow[\omega \to 0]{} 0$.
\end{itemize}

\end{proof}

\subsection{Critical case}

In  this section we are going to study the behaviour of the positive solution to \eqref{SSNL}, for $\alpha= \frac{2}{d-2}$ and as $\omega$ goes to $0$. Our goal is to prove the following theorem.
\begin{thrm}
\label{Asymptotic_behaviour_PHI_critical}
Let $\varphi_\omega$ be the positive solution to \eqref{SSNL}. There exists a function $\omega\mapsto\lambda_\omega \in (0,+\infty)$ such that, as $\omega \to 0^+$, $\lambda_\omega\to +\infty$ and the rescaled function  $\tilde \varphi_\omega (x)= \lambda_\omega^{\frac{d-2}{2}} \varphi(\lambda_\omega x)$
converges to the  positive solution $S$ to 
\begin{equation}\label{eq_critical_limit}
    -\Delta S = S^{\frac{d+2}{d-2}},
\end{equation}
in $\dot{H}^1(\xR^d)\cap L^{\frac{2d}{d-2}}(\xR^d)$.
Moreover,
\begin{equation*}
    \lim_{\omega \to 0} \|\varphi_{\omega} \|_{L^2(\mathbb{R}^d)} = +\infty.
\end{equation*}
\end{thrm}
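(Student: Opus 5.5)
We work, as in the super-critical case, on the semi-linear problem \eqref{PSSL} and then transfer the result to $\varphi_\omega=\Lambda(u_\omega)$. With $\alpha=\frac{2}{d-2}$ we have $2\alpha+2=\frac{2d}{d-2}$, so $F_0(t)=\frac{1}{2\alpha+2}\Lambda^{2\alpha+2}(t)\le \frac{d-2}{2d}|t|^{\frac{2d}{d-2}}$, with asymptotic equality as $t\to 0$. We use the variational characterisation \eqref{Pb_optim_I_omega}: $u_\omega(x)=v_\omega(I_\omega^{-1/2}x)$, where $v_\omega$ minimises $J_\omega$ on $K_\omega$.

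\textbf{Step 1: energy level.} Let $\mathcal S$ be the sharp Sobolev constant, so that $\inf J^{\mathrm{crit}}=\mathcal S$ for the pure critical functional $\int|\nabla u|^2/(\int|u|^{2^*})^{(d-2)/d}$, attained only by the Aubin--Talenti bubbles. Since $F_\omega(t)< F_0(t)\le \frac{d-2}{2d}|t|^{2^*}$, we get $J_\omega(v)>\mathcal S$ for every $v$, hence $I_\omega\ge \mathcal S$. For the upper bound, test $J_\omega$ with the concentrated bubble $S_\mu(x)=\mu^{-\frac{d-2}{2}}S(x/\mu)$, truncated by $\eta_R$ when $d\in\{3,4\}$ as in the super-critical proof, with $\mu\to 0$. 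Since $\|S_\mu\|_\infty\to\infty$, we must use instead a small-amplitude dilation of the form $\epsilon S(\cdot/\epsilon^{2/(d-2)})$ with $\epsilon\to0$. Because $J$ is dilation invariant, this is the same as testing the amplitude $\epsilon S$, where $\Lambda(t)\simeq t$. Then $\int F_0(\epsilon S)=\frac{d-2}{2d}\epsilon^{2^*}\int S^{2^*}(1+o(1))$. The mass term $\frac{\omega}{2}\int\Lambda^2(\epsilon S_\ell)$ at spatial scale $\ell=\epsilon^{-2/(d-2)}$ is of order $\omega\epsilon^2\ell^{2}\|S\|^2_{L^2}$ (with the logarithmic or $R$ cutoff in $d=3,4$), which is $O(\omega\ell^{2}\epsilon^{2^*})$ relative to the main term. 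Choosing $\epsilon\to0$ slowly so that $\omega\ell^2\to0$, we obtain $J_\omega\le \mathcal S(1+o(1))$. Hence $I_\omega\to\mathcal S$, and moreover $\omega\int\Lambda^2(v_\omega)\to0$ and $\int(F_0(v_\omega)-\frac{d-2}{2d}|v_\omega|^{2^*})\to0$, exactly as in the contradiction argument leading to \eqref{convergence_omega_mass}.

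\textbf{Step 2: concentration and rescaling.} The step above yields a minimising sequence for the Sobolev quotient: $\|\nabla v_\omega\|_2^2\to\mathcal S$ and $\|v_\omega\|_{2^*}^{2^*}\ge 1-o(1)$, where the inequality $\Lambda(t)\le t$ converts the constraint. This is the main obstacle, because $\Lambda$ saturates: the constraint only controls $\int\Lambda^{2^*}(v_\omega)$, not $\int v_\omega^{2^*}$. We handle it using $\Lambda(t)\le t$ together with Sobolev, $\mathcal S(\int\Lambda^{2^*}(v))^{(d-2)/d}\le\mathcal S(\int v^{2^*})^{(d-2)/d}\le\|\nabla v\|^2$, so equality holds asymptotically and $\int(v^{2^*}-\Lambda^{2^*}(v))\to0$.

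Since $v_\omega$ is radial decreasing, Lions' concentration-compactness for the Sobolev quotient among radial functions (only dilations remain) gives $\sigma_\omega>0$ such that $\sigma_\omega^{(d-2)/2}v_\omega(\sigma_\omega\cdot)\to cS$ in $\dot H^1\cap L^{2^*}$, with $c$ fixed by the normalisation. Vanishing is excluded by radial symmetry together with $L^\infty$ boundedness, since $v_\omega<t^*$. The limit is identified by the Euler--Lagrange equation, the constraint, and uniqueness of positive solutions of \eqref{eq_critical_limit}. Undoing the scaling $I_\omega^{-1/2}$ and absorbing the constant into the dilation gives $\lambda_\omega=\sigma_\omega I_\omega^{1/2}$ times a fixed factor. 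We have $\lambda_\omega\to\infty$: otherwise $u_\omega$ would converge to a multiple of $S$, which is unbounded, whereas $\|u_\omega\|_\infty<t^*$ is uniformly bounded. More precisely, boundedness forces the amplitude $\lambda_\omega^{-(d-2)/2}S(0)\lesssim t^*$, so $\lambda_\omega$ stays bounded below. Showing that it actually tends to infinity uses $u_\omega\neq$ a solution of the $\omega=0$ problem: if $\lambda_\omega$ stayed bounded, $u_\omega$ would converge to a nonzero $\dot H^1$ solution of \eqref{Pb_omega_0}, which Pohozaev forbids in the critical case, since $F_0(t)<\frac{d-2}{2d}t^{2^*}$ strictly for $t>0$.

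\textbf{Step 3: transfer and mass.} Set $\tilde u_\omega=\lambda_\omega^{(d-2)/2}u_\omega(\lambda_\omega\cdot)\to S$. Then $\tilde\varphi_\omega=\lambda_\omega^{(d-2)/2}\Lambda(\lambda_\omega^{-(d-2)/2}\tilde u_\omega)$. We have $|\tilde\varphi_\omega-\tilde u_\omega|\le$ an $o(1)$ multiple of $\tilde u_\omega^{2\alpha+1}$, using the small-amplitude expansion from \eqref{uniform_bound_Lambda_omega}, and the gradient is treated as in Corollary~\ref{cor_convergence_tilde_phi_subcritical}. This yields convergence of $\tilde\varphi_\omega$ to $S$ in $\dot H^1\cap L^{2^*}$.

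For the mass, $\|\varphi_\omega\|_2^2=\lambda_\omega^{2}\|\tilde\varphi_\omega\|_2^2$. For $d\in\{3,4\}$, Fatou's lemma gives $\liminf\|\tilde\varphi_\omega\|_2\ge\|S\|_2=\infty$. For $d\ge5$, $\|\tilde\varphi_\omega\|_2\to\|S\|_2>0$ by local convergence and Fatou, and $\lambda_\omega\to\infty$ gives the divergence.
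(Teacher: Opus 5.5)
Your architecture matches the paper's. You use the variational characterisation \eqref{Pb_optim_I_omega}, and you get $I_\omega\to I_\#$ from small-amplitude, spread-out bubbles (cut off by $\eta_R$ when $d\in\{3,4\}$). Then come concentration-compactness modulo dilations, exclusion of $\lambda_\omega\to0$ from $u_\omega<t^*$, transfer to $\varphi_\omega=\Lambda(u_\omega)$, and the mass from $\|\varphi_\omega\|_2^2=\lambda_\omega^2\|\tilde\varphi_\omega\|_2^2$. For the mass, Fatou gives $\liminf\|\tilde\varphi_\omega\|_2\ge\|S\|_2$, which is all you need; you do not have convergence of $\|\tilde\varphi_\omega\|_2$ for $d\ge5$.

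The genuinely different step is ruling out $\lambda_\omega\to\ell\in(0,\infty)$. The paper subtracts Pohozaev from a Nehari-type identity at $\omega>0$ to get \eqref{Pohozaev_Nehari_critical}. Its right-hand side vanishes because $\omega\int\Lambda^2(v_\omega)\to0$, and Fatou then forces $\int|\nabla W|^2\Lambda^{2\alpha}(\ell^{-\frac{d-2}{2}}W)/(1-\Lambda^{2\alpha}(\ell^{-\frac{d-2}{2}}W))=0$. You instead pass to the limit in the equation: $f_0$ is Lipschitz and $\omega u_\omega\to0$ in $L^1_{\mathrm{loc}}$, so $u_\infty=\ell^{-\frac{d-2}{2}}S(\cdot/\ell)$ solves $-\Delta u=f_0(u)$. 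This is correct and shorter. You do not even need Pohozaev: $u_\infty$ also solves $-\Delta u=u^{\frac{d+2}{d-2}}$, so $f_0(u_\infty)=u_\infty^{\frac{d+2}{d-2}}$. That is impossible, since $f_0(t)=\Lambda'(t)\Lambda^{2\alpha+1}(t)<t^{\frac{d+2}{d-2}}$ for $t>0$.

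The inequality you quote, $F_0(t)<\frac{d-2}{2d}t^{\frac{2d}{d-2}}$, gives a contradiction only because you know the limit is a bubble. For an arbitrary $\dot H^1$ solution you also need Nehari and the strict inequality $t\Lambda'(t)<\Lambda(t)$, which is exactly what drives the paper's identity. Also drop the claim that $S$ is unbounded ($0<S\le1$); only your amplitude argument excluding $\ell=0$ is valid there. Finally, the $L^\infty$ bound does not exclude vanishing at a fixed scale (indeed $v_\omega$ spreads out); compactness comes from Lions' theorem modulo dilations.

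The transfer step has a gap as written. The estimate in Corollary~\ref{cor_convergence_tilde_phi_subcritical} rests on a uniform $L^\infty$ bound for the rescaled solution, which there came from $W^{2,q}$ convergence via the implicit function theorem. Here $\tilde u_\omega\to S$ only in $\dot H^1\cap L^{\frac{2d}{d-2}}$, and the only bound you have is $\|\tilde u_\omega\|_\infty\le\lambda_\omega^{\frac{d-2}{2}}t^*$. Since $(d-2)\alpha=2$, the pointwise estimate $|\nabla\tilde\varphi_\omega-\nabla\tilde u_\omega|\le\lambda_\omega^{-2}\tilde u_\omega^{2\alpha}|\nabla\tilde u_\omega|$ only yields the factor $\lambda_\omega^{-4}\|\tilde u_\omega\|_\infty^{4\alpha}\le(t^*)^{4\alpha}$, which is not small. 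Likewise, $|\tilde\varphi_\omega-\tilde u_\omega|\le C\lambda_\omega^{-2}\tilde u_\omega^{2\alpha+1}$ is $o(1)$ in $L^{\frac{2d}{d-2}}$ only if $\tilde u_\omega$ is bounded in $L^{(2\alpha+1)\frac{2d}{d-2}}$, which you have not shown.

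Repair it as the paper does. Write $\nabla\tilde\varphi_\omega-\nabla S=\Lambda'(\cdot)(\nabla\tilde u_\omega-\nabla S)-(1-\Lambda'(\cdot))\nabla S$ and use $0\le1-\Lambda'(s)\le\Lambda^{2\alpha}(s)$, so the weight falls on $\nabla S\in L^2\cap L^\infty$. Then conclude by H\"older with $\int\Lambda^{\frac{2d}{d-2}}(\lambda_\omega^{-\frac{d-2}{2}}\tilde u_\omega)\le\lambda_\omega^{-d}\int\tilde u_\omega^{\frac{2d}{d-2}}$ for $d\ge4$, and with $\Lambda\le1$ for $d=3$. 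Alternatively, combine equi-integrability of $|\nabla\tilde u_\omega|^2$ with $\min\{1,\lambda_\omega^{-4}\tilde u_\omega^{4\alpha}\}\to0$ in measure. The $L^{\frac{2d}{d-2}}$ convergence of $\tilde\varphi_\omega$ then follows from the $\dot H^1$ convergence by Sobolev, with no small-amplitude expansion needed.
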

To prove this result, we first establish a similar one for $u_\omega$ the solution of the semi-linear problem \eqref{PSSL}, involving the critical power of the non-linearity, as stated in the following theorem.

\begin{thrm}
\label{thrm_convergence_critical}
Suppose that $d\geq 3$ and $\alpha=\frac{2}{d-2}$. Let $u_\omega$ the positive solution to \eqref{PSSL}. Then, there exists a function $\omega\mapsto\lambda_\omega \in (0,+\infty)$ such that, as $\omega \to 0^+$, the rescaled function $\lambda_\omega^{\frac{d-2}{2}}u_\omega(\lambda_\omega x)$ converges in $\dot{H}^1(\mathbb{R}^d)\cap L^\frac{2d}{d-2}(\xR^d)$ to $S$ the positive solution of \eqref{eq_critical_limit}.
\end{thrm}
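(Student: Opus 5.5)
We use the variational characterisation from the super-critical section. By \eqref{rescaled_solut_v_omega}, $u_\omega(x)=v_\omega(I_\omega^{-1/2}x)$, where $v_\omega$ is a radial decreasing minimiser of \eqref{Pb_optim_I_omega}; dilations only change the scaling parameter, so it suffices to study $v_\omega$. With $\alpha=\frac{2}{d-2}$ we have $2\alpha+2=\frac{2d}{d-2}=:2^*$, so near $0$
\[
F_0(t)=\tfrac{1}{2^*}\Lambda(t)^{2^*}\le \tfrac{1}{2^*}|t|^{2^*}.
\]
The Sobolev quotient $S_d=\inf \|\nabla u\|_2^2/\|u\|_{2^*}^2$ is attained by $S$ (after normalisation). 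The plan has four steps.

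\textbf{Step 1: identify the limit of $I_\omega$.} Since $F_\omega\le F_0\le \frac{1}{2^*}|t|^{2^*}$, we get $I_\omega\ge S_d$ (up to the normalising constant). For the reverse inequality, test $J_\omega$ with a concentrated bubble $\varepsilon^{-(d-2)/2}\eta_R S(\cdot/\varepsilon)$, cut off as in Lemma~\ref{case_3_4}, with $\varepsilon\to0$ chosen first. The loss $\omega\int\Lambda^2$ scales like $\omega\varepsilon^2 g_d(R/\varepsilon)$. The defect $\int (t^{2^*}-\Lambda(t)^{2^*})$ is small because $\Lambda(t)=t+O(t^{2\alpha+1})$ and the bubble has amplitude $\varepsilon^{-(d-2)/2}$. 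That last point is a problem: the amplitude is large, not small, so the bubble must be spread out instead. We therefore take $\varepsilon\to\infty$, i.e.\ $\lambda$ large, so the amplitude is small and $\Lambda(t)\approx t$. This is consistent with the theorem, where $\lambda_\omega\to\infty$ means $u_\omega$ is spread out and small. The remaining error is the mass term $\omega\lambda^2\int(\eta S)^2$, which is made $o(1)$ by choosing $\lambda_\omega\to\infty$ with $\omega\lambda_\omega^2 g_d(\cdot)\to0$. This gives $I_\omega\to S_d$.

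\textbf{Step 2: rescale the minimisers.} Set $w_\omega=\mu_\omega^{(d-2)/2}v_\omega(\mu_\omega\,\cdot)$, with $\mu_\omega$ chosen so that $w_\omega(0)=1$ (or so that a fixed fraction of $\|w_\omega\|_{2^*}^{2^*}$ lies in $B_1$). The family $(w_\omega)$ is then a minimising sequence for the Sobolev quotient: $\|\nabla w_\omega\|_2^2\to S_d$, and $\|w_\omega\|_{2^*}$ tends to its normalised value. To see the latter, note that
\[
\tfrac{1}{2^*}\int\Lambda(v_\omega)^{2^*}-\tfrac{\omega}{2}\int\Lambda(v_\omega)^2
\]
is fixed, and the same contradiction argument as in the super-critical proof (comparison with $I_\omega\to S_d$) forces $\omega\int\Lambda(v_\omega)^2\to0$ and $\int(v_\omega^{2^*}-\Lambda(v_\omega)^{2^*})\to0$.

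\textbf{Step 3: pass to the limit.} Since the $w_\omega$ are radial and decreasing, Lions' concentration-compactness for the Sobolev inequality, with the normalisation pinning the scale, gives $w_\omega\to cS$ strongly in $\dot H^1\cap L^{2^*}$. Undoing the dilations and the $I_\omega^{-1/2}$ scaling defines $\lambda_\omega$ and yields Theorem~\ref{thrm_convergence_critical}. The limit equation is fixed by uniqueness of positive radial solutions (Caffarelli--Gidas--Spruck), once the constant is determined from the Euler--Lagrange equation.

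\textbf{Step 4: show $\lambda_\omega\to\infty$ and transfer to $\varphi$.} Suppose $\lambda_\omega$ stayed bounded along a subsequence. Then $u_\omega$ would converge, after rescaling, to a bounded-scale solution of $-\Delta u=f_0(u)$. But $\|u_\omega\|_\infty=\lambda_\omega^{-(d-2)/2}\|\tilde u_\omega\|_\infty$, and the Pohozaev identity for $f_0$ at the critical exponent forces $\int(t f_0-2^*F_0)(u)=0$. Since $tf_0(t)<2^*F_0(t)$ for $t>0$, because $\Lambda'<1$, no nontrivial solution of $-\Delta u=f_0(u)$ exists; this contradiction gives $\lambda_\omega\to\infty$. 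The statement for $\varphi_\omega$ then follows as in Corollary~\ref{Cor_super_critical_phi}: $\Lambda$ is $1$-Lipschitz and $\Lambda'(u_\omega)\to1$ uniformly because $\|u_\omega\|_\infty\to0$.

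\textbf{Divergence of the mass.} Since $\|\varphi_\omega\|_2^2\ge c\|u_\omega\|_2^2=c\lambda_\omega^2\|\tilde u_\omega\|_2^2$, Fatou's lemma and $S\notin L^2$ for $d\le4$ give divergence when $d\le4$. For $d\ge5$ we have $\lambda_\omega^2\to\infty$ together with $\liminf\|\tilde u_\omega\|_2>0$, so the mass diverges as well.

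The main obstacle is Step 2: ruling out that $w_\omega$ loses mass through the $\omega\int\Lambda^2$ term and controlling the nonlinearity defect uniformly, which is what makes concentration-compactness applicable.
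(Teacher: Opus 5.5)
Your Steps 1--3 are essentially the paper's proof: the lower bound $I_\omega\ge I_\#$ from $\Lambda(t)\le t$, the upper bound by testing $J_\omega$ with spread-out Aubin--Talenti bubbles $W_\lambda$ ($\lambda\to\infty$, $\omega\lambda^2\to 0$, cut off by $\eta_R$ when $d\in\{3,4\}$), the resulting fact that $(v_\omega)$ is a minimising sequence for the Sobolev quotient, and Lions' concentration--compactness followed by undoing the $I_\omega^{-1/2}$ dilation. Only your second normalisation (a fixed fraction of the $L^{2^*}$ mass in $B_1$) pins the scale without extra elliptic estimates.

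Step 4 and the mass divergence are not needed for this statement; they belong to Theorem~\ref{Asymptotic_behaviour_PHI_critical}. If you keep them, $tf_0(t)<2^*F_0(t)$ comes from $t\Lambda'(t)<\Lambda(t)$ (strict concavity of $\Lambda$), not from $\Lambda'<1$. Also, $\|u_\omega\|_{L^\infty}\to 0$ does not follow from convergence in $\dot H^1\cap L^{2^*}$ alone, which is why the paper controls $1-\Lambda'$ through a H\"older estimate instead.
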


\begin{proof}
The idea is to use the variational characterisation as in super-critical case (see Section \ref{sec:SCC}). In particular, we consider $v_\omega$ a minimiser of \eqref{Pb_optim_I_omega} and we prove that, up to a dilation, $(v_\omega)_\omega$ is a minimising sequence for 
\begin{equation}
\label{Pb_optim_I_critical}
I_{\#}= \inf \left\{\int_{\xR^d}|\nabla u|^2  \xd x :\quad   u\in \dot{H}^1(\xR^d),  \quad \int_{\xR^d}u^{\frac{2d}{d-2}}\xd x= 1\right\}
\end{equation}
From \cite{lions1985concentration}, we know that every minimising sequence of \(I_\#\)  is relatively compact (up to dilations and translations) in $\dot H^1(\xR^d)\cap L^{\frac{2d}{d-2}}(\xR^d)$ and converges, up to a subsequence, to a minimiser of \(I_\#\). Moreover, \(I_\#\) is attained by the family of positive radial functions 
\[
\{W_\lambda\}_{\lambda > 0} \subset \dot{H}^1(\mathbb{R}^N), \quad W(x):=W_1(x) = S(\sqrt{I_\#}x)\]
with $S(x) := \left( 1+\frac{|x|^2}{d(d-2)}\right)^{1-\frac{d}{2}}$ the Aubin-Talenti function and $W_\lambda=\lambda^{-\frac{d-2}{2}} W(\lambda^{-1}x)$. Each $W_\lambda$ is solution to the following equation
\begin{equation}
\label{equation_W}
    -\Delta w = I_\# w^{\frac{d+2}{d-2}} \quad \text{in } \mathbb{R}^d,
\end{equation}
and verifies 
\begin{equation}
\label{propreties_W}
    \int_{\mathbb{R}^d} |\nabla W_{\lambda}|^2 \,\xd x = \int_{\mathbb{R}^d} |\nabla W|^2 \,\xd x = I_\#, \quad
\text{and} \quad
    \int_{\mathbb{R}^d} W_{\lambda}^{\frac{2d}{d-2}} \,\xd x = \int_{\mathbb{R}^d} W^{\frac{2d}{d-2}} \,\xd x = 1, \quad \forall \lambda > 0.
\end{equation}
Similarly to the super-critical case, we consider the following functional

$$ J_\#(u)= \frac{\int_{\xR^d}|\nabla u|^2 \xd x}{\left(\int_{\xR^d}u^{\frac{2d}{d-2}}\xd x\right)^{\frac{d-2}{d}}}$$
defined on 
$$K_\#= \{u\in \dot{H}^1(\xR^d):   \int_{\xR^d} u^{\frac{2d}{d-2}}\xd x >0 \}.$$
Noticing that $J_\#$ is invariant under the scaling $u(x) \mapsto u_\mu(x) :=u(\mu x)$ and using the same argument as in the super-critical case we can show that 

\begin{equation}
\label{PB_optim_IJcritical}
    I_\#=\inf\limits_{u\in K_{\#}} J_\#(u).
\end{equation}
As in the super-critical case, we first show that as \( \omega \to 0 \), there holds  
\begin{equation}\label{limitIomegacritical}
1 \leq \frac{I_{\omega}}{I_\#} \leq 1 + o(1).
\end{equation}
Let $\omega>0$ and $v_{\omega}$ be a minimiser of \eqref{Pb_optim_I_omega}. We have $v_\omega \in H^1 $, $v_\omega\ge 0$,
\[
\int_{\mathbb{R}^d} |\nabla v_{\omega}|^2 \,\xd x = I_{\omega} \quad\text{and} \quad \frac{2d}{d-2}\int_{\mathbb{R}^d} F_{\omega}(v_{\omega}) \,\xd x=1.
\]
Using that $0\le \Lambda(t)\leq t$ for all $t\in \xR^+$, we obtain
\begin{align}
\frac{d-2}{2d}=\int_{\mathbb{R}^d} F_{\omega}(v_{\omega}) \,\xd x =&  \int_{\mathbb{R}^d}\left(\frac{d-2}{2d}
 \Lambda(v_{\omega})^{\frac{2d}{d-2}} - \frac{\omega}{2} \Lambda(v_{\omega})^2 \right)\,\xd x \notag\\
&< \frac{d-2}{2d} \int_{\mathbb{R}^d} \Lambda(v_{\omega})^{\frac{2d}{d-2}} \,\xd x \notag\\
&\leq \frac{d-2}{2d}\int_{\mathbb{R}^d} v_{\omega}^{\frac{2d}{d-2}} \,\xd x.
\end{align}
Then, we have 
\begin{equation}
\label{bounds_optimums}
I_\# \leq J_\#(v_{\omega}) = \frac{\int_{\mathbb{R}^d} |\nabla v_{\omega}|^2 \xd x }{\left(\int_{\mathbb{R}^d} v_{\omega}^{\frac{2d}{d-2}}\xd x\right)^{\frac{d-2}{d}}} =
\frac{I_{\omega}}{\left(\int_{\mathbb{R}^d} v_{\omega}^{\frac{2d}{d-2}} \xd x\right)^{\frac{d-2}{d}}} < I_{\omega}.
\end{equation}
Now, for \( d \geq 5 \), we have \( W \in L^2(\xR^d) \), so we can use the family \( \{W_{\lambda} : \lambda > 0\} \subset H^1(\xR^d) \) as test functions for \( J_{\omega} \). Thanks to Proposition \ref{prop_change_var}, we can show that, if  $\lambda \gg 1$ and $ \omega \lambda^2 \ll 1$

\begin{align}\label{lowerboundFcritical}
\int_{\xR^d} F_{\omega}(W_{\lambda}) \,\xd x 
&= \frac{d-2}{2d} \int_{\xR^d} \Lambda(W_{\lambda})^{\frac{2d}{d-2}} \,\xd x - \frac{\omega}{2} \int_{\xR^d} \Lambda(W_{\lambda})^2 \,\xd x \nonumber\\
 &\underbrace{\geq}_{\ref{prop_change_var}\ref{prop_change_var4}} \frac{d-2}{2d}  \int_{\xR^d} W_{\lambda}^{\frac{2d}{d-2}} \Lambda'(W_{\lambda})^{\frac{2d}{d-2}} \,\xd x - \frac{\omega}{2} \int_{\xR^d} W_{\lambda}^2 \,\xd x \nonumber\\
 &\geq \frac{d-2}{2d}  \int_{\xR^d} W_{\lambda}^{\frac{2d}{d-2}} (1-W_{\lambda}^{\frac{4}{d-2}})^{\frac{d}{d-2}} \,\xd x - \frac{\omega}{2} \int_{\xR^d} W_{\lambda}^2 \,\xd x \nonumber\\
 &\geq \frac{d-2}{2d} \int_{\xR^d} W^{\frac{2d}{d-2}} (1-\lambda^{-2} W^{\frac{4}{d-2}})^{\frac{d}{d-2}} \,\xd x - \frac{\omega\lambda^2}{2} \int_{\xR^d} W^2 \,\xd x \nonumber\\
 &\underbrace{\geq}_{\eqref{propreties_W}} \frac{d-2}{2d} (1-\lambda^{-2})^\frac{d}{d-2} - \frac{\omega\lambda^2}{2} \int_{\xR^d} W^2 \,\xd x >0.
\end{align}
As a consequence \( W_{\lambda} \in K_{\omega} \). Using \eqref{propreties_W} and choosing $\lambda= \omega^{-\frac{1}{d}}$, we obtain
\begin{align*}
I_{\omega} \leq J_{\omega}(W_{\lambda}) &=
\frac{\int_{\xR^d} |\nabla W|^2 \,\xd x}{\left(\int_{\xR^d}  W^{\frac{2d}{d-2}} \,\xd x\right)^{\frac{d-2}{d}}}
\left(\frac{\int_{\xR^d}  W^{\frac{2d}{d-2}}\,\xd x}{ \frac{2d}{d-2}\int_{\xR^d} F_\omega(W_\lambda(x))\,\xd x}\right)^{\frac{d-2}{d}}\\
&= I_\# \left(\frac{1}{ \frac{2d}{d-2}\int_{\xR^d} F_\omega(W_\lambda(x))\,\xd x}\right)^{\frac{d-2}{d}}\leq I_\# \left( \frac{1}{(1-\lambda^{-2})^\frac{d}{d-2} - \frac{d}{d-2}\omega\lambda^2 \int_{\xR^d} W^2 \,\xd x }\right)^{\frac{d-2}{d}}\\
&\leq I_\# \left(1+O(\lambda^{-2})+ O(\omega\lambda^2)\right)^{-\frac{d-2}{d}}\\
&=I_\#\,(1+O(\omega^{\frac{2}{d}} )+O(\omega^{1-\frac{2}{d}}))= I_{\#}\,(1+o(1)).
\end{align*}
For \( d \in \{3, 4\} \), we use the same cut-off function \( \eta_R \) as in the proof of Theorem \ref{Thrm_super_critical}. We have \( \eta_R W_\lambda \in H^1(\mathbb{R}^d) \), and we use it as a test function for \( J_\omega \). We obtain
\[
I_\omega \leq J_\omega(\eta_R  W_\lambda) =
\frac{\int_{\xR^d} |\nabla (\eta_R  W_\lambda)|^2 \, \xd x}{\left(
\int_{\xR^d}(\eta_R  W_\lambda)^{\frac{2d}{d-2}} \, \xd x\right)^{1-\frac{2}{d}}}
\left( \frac{\int_{\xR^d}(\eta_R  W_\lambda)^{\frac{2d}{d-2}} \, \xd x}{\frac{2d}{d-2}\int_{\xR^d} F_\omega(\eta_R  W_\lambda) \, \xd x} \right)^{1-\frac{2}{d}}.
\]
First, note that by dominated convergence we have as $R \to +\infty$ 
\[
\int_{\mathbb{R}^d} |\nabla \eta_R W_\lambda|^2 \, \xd x \to \int |\nabla W_\lambda|^2 \, \xd x = I_\#.
\]
Next, proceeding as in the Proof of Theorem \ref{Thrm_super_critical} and using the same bounds as in~\eqref{lowerboundFcritical}, we have
\begin{align*}
\int_{\mathbb{R}^d} \Lambda(\eta_R W_\lambda(x))^{\frac{2d}{d-2}} \, \xd x &= \int_{\mathbb{R}^d} \Lambda(W_\lambda(x))^{\frac{2d}{d-2}} \, \xd x + O((\lambda^{-1}R)^{d-(\frac{2d}{d-2})(d-2)}) \\
&\ge (1-\lambda^{-2})^\frac{d}{d-2} + O((\lambda^{-1}R)^{-d}) .
\end{align*}
provided that $\frac{R}{\lambda}\gg 1$. Similarly, using~\eqref{propreties_W}, we obtain
\[
\int_{\mathbb{R}^d} (\eta_R W_\lambda(x))^{\frac{2d}{d-2}} \, \xd x = \int_{\mathbb{R}^d} (W_\lambda(x))^{\frac{2d}{d-2}} \, \xd x + O((\lambda^{-1}R)^{d-(\frac{2d}{d-2})(d-2)})= 1 + O((\lambda^{-1}R)^{-d})
\]
whenever $\frac{R}{\lambda}\gg 1$.
Finally, from Proposition \ref{prop_change_var} and Lemma \ref{case_3_4}, we deduce that
\[
h_d(\lambda,R)=\int_{\mathbb{R}^d} \Lambda(\eta_R W_\lambda)^2 \, \xd x \leq \int_{\mathbb{R}^N} (\eta_R W_\lambda)^2 \,\xd x =
\begin{cases}
O(\lambda R), & \text{if } d = 3, \\
O(\lambda^2\log(\lambda^{-1}R )), & \text{if } d = 4,
\end{cases}
\text{ as } R \to \infty. 
\]
As a consequence, we have 
\begin{align*}
 \frac{\int_{\xR^d}(\eta_R  W_\lambda)^{\frac{2d}{d-2}} \, \xd x}{\int_{\xR^d} \Lambda^{\frac{2d}{d-2}}_\omega(\eta_R  W_\lambda)\xd x - \frac{d}{d-2}\omega\int_{\xR^d}\Lambda^{2}_\omega(\eta_R  W_\lambda) \, \xd x}\leq \frac{1+O((\lambda^{-1}R)^{-d})}{(1-\lambda^{-2})^\frac{d}{d-2} + O((\lambda^{-1}R)^{-d})- \omega \frac{d}{d-2} h_d(\lambda,R)}.
\end{align*}
For $d=3$, we can choose $\lambda=\omega^{-\frac{1}{3}}$ and $R=\omega^{-\frac{1}{2}}$, so that $R\lambda^{-1}=\omega^{-\frac{1}{6}}\to +\infty$ as $\omega$ goes to $0$, and
\begin{align*}
    \left( \frac{\int_{\xR^d}(\eta_R  W_\lambda)^{\frac{2d}{d-2}} \, \xd x}{\frac{2d}{d-2}\int_{\xR^d} F_\omega(\eta_R  W_\lambda) \, \xd x} \right)^{1-\frac{2}{d}}&\le (1+O(\omega^{\frac{1}{2}}))(1+O(\omega^{\frac{2}{3}})+O(\omega^{\frac{1}{2}})+O(\omega^{\frac{1}{6}}))^{-\frac13}\\
    &=(1+O(\omega^{\frac{1}{2}}))(1+O(\omega^{\frac{1}{6}}))=1+o(1).
\end{align*}
For $d=4$, we can choose $\lambda=\omega^{-\frac{1}{4}}$ and $R=\omega^{-\frac{1}{2}}$, so that $R\lambda^{-1}=\omega^{-\frac{1}{4}}\to +\infty$ as $\omega$ goes to $0$, and
\begin{align*}
    \left( \frac{\int_{\xR^d}(\eta_R  W_\lambda)^{\frac{2d}{d-2}} \, \xd x}{\frac{2d}{d-2}\int_{\xR^d} F_\omega(\eta_R  W_\lambda) \, \xd x} \right)^{1-\frac{2}{d}}&\le (1+O(\omega))(1+O(\omega^{\frac{1}{2}})+O(\omega)+O(\omega^{\frac{1}{2}}\log(\omega^{-1}))^{-\frac{1}{2}}\\
    &=(1+O(\omega))(1+O(\omega^{\frac{1}{2}}\log(\omega^{-1}))=1+o(1).
\end{align*}
As a conclusion, 
\begin{align*}
    I_\omega \leq 
\frac{\int_{\xR^d} |\nabla (\eta_R  W_\lambda)|^2 \, \xd x}{\left(
\int_{\xR^d}(\eta_R  W_\lambda)^{\frac{2d}{d-2}} \, \xd x\right)^{1-\frac{2}{d}}}(1+o(1))\le I_\#\,(1+o(1)).
\end{align*}
This proves~\eqref{limitIomegacritical} for any $d\ge 3$.

Next, we show that  
\begin{equation*}
    \int_{\mathbb{R}^d} |\nabla v_\omega|^2 \,\xd x \to I_\#  
    \quad \text{and} \quad
    \int_{\mathbb{R}^d} |v_\omega|^{\frac{2d}{d-2}} \,\xd x \to 1.  
\end{equation*}
First, we note that  
\[
\int_{\mathbb{R}^d} |\nabla v_\omega|^2 \,\xd x = I_\omega.
\]
From \eqref{limitIomegacritical}, we have \( I_\omega \to I_\# \), which directly implies  
\[
\int_{\mathbb{R}^d} |\nabla v_\omega|^2 \,\xd x \to I_\#.
\]

Next, taking the limit as \( \omega \to 0 \) in \eqref{bounds_optimums} and using \( I_\omega \to I_\# \), we obtain  
\begin{equation}
    \label{convergence_nonlinearty_critical}
\int_{\mathbb{R}^d} v_\omega^{\frac{2d}{d-2}} \,\xd x \to 1.
\end{equation}
Thus, \( v_\omega \) is a minimising sequence for \eqref{Pb_optim_I_critical}. Then by the concentration-compactness principle \cite{lions1985concentration}
there exist \( (\lambda_\omega)_\omega \) in \( (0, \infty) \) such that the new minimising sequence  
\[
\tilde{v}_\omega = \lambda_\omega^{\frac{d-2}{2}} v_\omega \left( \lambda_\omega \cdot\right)
\]  is relatively compact in  \( \dot{H}_{1}(\xR^d)\cap L^{\frac{2d}{d-2}}(\xR^d) \).  Then $\tilde{v}_\omega$  converges to $W$ in $\dot{H}_{1}(\xR^d)\cap L^{\frac{2d}{d-2}}(\xR^d)$.

To conclude, we define $\tilde u_\omega= \lambda_\omega^{\frac{d-2}{2}} u_\omega \left( \lambda_\omega \cdot\right)$. Since $u_\omega=v_\omega(I_\omega^{-\frac{1}{2}}\cdot)$, we deduce that $\tilde u_\omega=\tilde v_\omega(I_\omega^{-\frac{1}{2}}\cdot)$. As a consequence, using that $W(x)=S(\sqrt{I_{\#}}x)$ for all $x\in \R^d$, we obtain 
\begin{align*}
    \|\tilde u_\omega-S\|_{L^{\frac{2d}{d-2}}(\xR^d)}\le&\, \|\tilde v_\omega(I_\omega^{-\frac{1}{2}}\cdot)-W(I_\omega^{-\frac{1}{2}}\cdot)\|_{L^{\frac{2d}{d-2}}(\xR^d)}+\|W(I_\omega^{-\frac{1}{2}}\cdot)-S\|_{L^{\frac{2d}{d-2}}(\xR^d)}\\
    =&\,I_\omega^{\frac{d-2}{4}}\|\tilde v_\omega-W\|_{L^{\frac{2d}{d-2}}(\xR^d)}+\|S((I_{\#}/I_\omega)^{\frac{1}{2}}\cdot)-S\|_{L^{\frac{2d}{d-2}}(\xR^d)}.
\end{align*}
Using dominated convergence theorem for the second term, we conclude that $\|\tilde u_\omega-S\|_{L^{\frac{2d}{d-2}}(\xR^d)}\to 0$ as $\omega\to 0$.

Next, using that $\tilde{v}_\omega$  converges to $W$ in $\dot{H}_{1}(\xR^d)$ and $\lim\limits_{\omega\to 0}I_\omega=I_\#$, we have $$\lim\limits_{\omega\to 0}\|\nabla \tilde u_\omega\|_{L^2(\R^d)}^2=\|\nabla S\|_{L^2(\R^d)}^2.$$ As a consequence,
\begin{align*}
   \|\nabla \tilde u_\omega-\nabla S\|_{L^2(\R^d)}^2&=\|\nabla \tilde u_\omega\|_{L^2(\R^d)}^2+ \|\nabla S\|_{L^2(\R^d)}^2-2\int_{\R^d}\nabla \tilde u_\omega\cdot\nabla S\,\xd x\\
    &=\|\nabla \tilde u_\omega\|_{L^2(\R^d)}^2+ \|\nabla S\|_{L^2(\R^d)}^2-2\left(\int_{\R^d} S( -\Delta S)\,\xd x+\int_{\R^d} (\tilde u_\omega-S)(-\Delta S)\,\xd x\right)\\
    &=\|\nabla \tilde u_\omega\|_{L^2(\R^d)}^2- \|\nabla S\|_{L^2(\R^d)}^2-2\int_{\R^d} (\tilde u_\omega-S) S^{\frac{d+2}{d-2}}\,\xd x \to 0
\end{align*}
as $\omega\to 0$, since 
\begin{align*}
    \left|\int_{\R^d} (\tilde u_\omega-S) S^{\frac{d+2}{d-2}}\,\xd x\right|\le \|\tilde u_\omega-S\|_{L^{\frac{2d}{d-2}}(\R^d)}\|S\|_{L^{\frac{2d}{d-2}}(\R^d)}^{\frac{d+2}{d-2}}\to 0.
\end{align*}
This conclude the proof of the theorem.
\end{proof}

Now we are able to prove Theorem \ref{Asymptotic_behaviour_PHI_critical}.

\begin{proof}[Proof of Theorem \ref{Asymptotic_behaviour_PHI_critical}]
Proving this result boils down to show that the dilatation $\lambda_\omega \to \infty $ when $\omega \to 0$.  To this goal we analyse the behaviour of solutions to the rescaled semi-linear equation using Pohozaev and Nehari type identities. The rescaled function $\tilde v_\omega$ solves the equation 
\begin{equation} \label{rescaled_semilinear}
    -\Delta \tilde{v}_\omega = \lambda_\omega ^{\frac{d+2}{2}} I_\omega f_\omega\left(\lambda_\omega^{-\frac{d-2}{2}} \tilde{v}_\omega\right).
\end{equation}
Applying the Pohozaev identity to \eqref{rescaled_semilinear} yields:
\begin{align}
\label{Pohosaev_critical}
    \int_{\mathbb{R}^d} |\nabla \tilde{v}_\omega|^2 \, \xd x &= \frac{2d}{d-2} I_\omega \lambda_\omega^d \int_{\mathbb{R}^d} F_\omega\left( \lambda_\omega^{-\frac{d-2}{2}} \tilde{v}_\omega \right) \, \xd x \notag \\
    &= I_\omega \lambda_\omega^d \int_{\mathbb{R}^d} \Lambda_\omega^{\frac{2d}{d-2}}\left( \lambda_\omega^{-\frac{d-2}{2}} \tilde{v}_\omega \right) \, \xd x - \frac{d}{d-2} \omega I_\omega \lambda_\omega^d \int_{\mathbb{R}^d} \Lambda^2\left( \lambda_\omega^{-\frac{d-2}{2}} \tilde{v}_\omega \right) \, \xd x.
\end{align}
To derive a Nehari type identity, we multiply both sides of \eqref{rescaled_semilinear} by $\left( \frac{\Lambda(\lambda_\omega^{-\frac{d-2}{2}}\tilde{v}_\omega)}{\Lambda'(\lambda_\omega^{-\frac{d-2}{2}}\tilde{v}_\omega)} \right)$ and integrate, knowing that $\Lambda'(t)=\sqrt{1-\Lambda^{2\alpha}(t)}$ and $\Lambda''(t)=-\alpha \Lambda^{2\alpha-1}(t)$ for any $t\in (0,t^*)$. We have 
\begin{align*}
\int_{\mathbb{R}^d} (-\Delta \tilde{v}_\omega) 
\left( \frac{\Lambda(\lambda_\omega^{-\frac{d-2}{2}}\tilde{v}_\omega)}{\Lambda'(\lambda_\omega^{-\frac{d-2}{2}}\tilde{v}_\omega)} \right) \xd x 
=& \lambda_\omega^{-\frac{d-2}{2}}\int_{\mathbb{R}^d} |\nabla \tilde{v}_\omega|^2 
\left( 1 + \frac{\alpha \Lambda^{2\alpha} 
\left( \lambda_\omega^{-\frac{d-2}{2}} \tilde{v}_\omega \right)}
{\left( \Lambda' \left( \lambda_\omega^{-\frac{d-2}{2}} 
\tilde{v}_\omega \right) \right)^2} \right) \xd x \\
=& I_\omega \lambda_\omega^{\frac{d+2}{2}} 
\int_{\mathbb{R}^d} \Lambda^{\frac{2d}{d-2}} 
\left( \lambda_\omega^{-\frac{d-2}{2}} \tilde{v}_\omega \right) \xd x \\
 &- \omega I_\omega \lambda_\omega ^{\frac{d+2}{2}}
\int_{\mathbb{R}^d} \Lambda^2 
\left( \lambda_\omega^{-\frac{d-2}{2}} \tilde{v}_\omega \right) \xd x, 
\end{align*}
which leads to 
\begin{align}
\label{nehari_critical}
\int_{\mathbb{R}^d} |\nabla \tilde{v}_\omega|^2 
\left( 1 + \frac{\alpha \Lambda^{2\alpha} 
\left( \lambda_\omega^{-\frac{d-2}{2}} \tilde{v}_\omega \right)}
{\left( \Lambda' \left( \lambda_\omega^{-\frac{d-2}{2}} 
\tilde{v}_\omega \right) \right)^2} \right) \xd x 
=& I_\omega \lambda_\omega^{d} 
\int_{\mathbb{R}^d} \Lambda^{\frac{2d}{d-2}} 
\left( \lambda_\omega^{-\frac{d-2}{2}} \tilde{v}_\omega \right) \xd x  \nonumber\\ 
&- \omega I_\omega \lambda_\omega^d
\int_{\mathbb{R}^d} \Lambda^2 
\left( \lambda_\omega^{-\frac{d-2}{2}} \tilde{v}_\omega \right) \xd x. 
\end{align}
Subtracting the Pohozaev identity \eqref{Pohosaev_critical} from the Nehari-type identity \eqref{nehari_critical}, we obtain
\begin{align}\label{Pohozaev_Nehari_critical}
    \int_{\mathbb{R}^d} |\nabla \tilde{v}_\omega|^2 \left( \frac{\alpha \Lambda^{2\alpha} \left( \lambda_\omega^{-\frac{d-2}{2}} \tilde{v}_\omega \right)}{\left( \Lambda' \left( \lambda_\omega^{-\frac{d-2}{2}} \tilde{v}_\omega \right) \right)^2} \right) \, \xd x 
    &= \frac{2}{d-2} \omega I_\omega \lambda_\omega^d \int_{\mathbb{R}^d} \Lambda^2 \left( \lambda_\omega^{-\frac{d-2}{2}} \tilde{v}_\omega \right) \, \xd x.
\end{align}
Suppose, by contradiction, that there exist $\ell \in [0, \infty)$ such that
\[
\lambda_{\omega} \to \ell \quad \text{as } \omega\to 0 .
\]
up to a subsequence.

\medskip
\noindent\textbf{Case 1:} $\ell = 0$. 

Thanks to Lemma \ref{lem_properties}(\ref{L3}) and the relation \eqref{rescaled_solut_v_omega}, we have 
\[
\|v_\omega\|_{L^\infty} \le t^*,
\]
 It follows that
\[
\left| \lambda_\omega^{\frac{d-2}{2}} v_\omega(\lambda_\omega x) \right| \le t^* \lambda_\omega^{\frac{d-2}{2}} \to 0 \quad \text{for all } x \in \mathbb{R}^d,
\]
as $\omega \to 0$. Therefore, $\tilde{v}_\omega\to 0$ pointwise on $\mathbb{R}^d$, which contradicts the fact that $\tilde{v}_\omega  \to W$ with $W \not\equiv 0$.
\medskip

\noindent\textbf{Case 2:} $\ell \in (0, \infty)$. 

Since $\tilde{v}_\omega$ converges (up to a subsequence) to $W$ a.e. in $\R^d$ and the function 
$$x\mapsto |\nabla \tilde{v}_\omega|^2 \left( \frac{\alpha \Lambda^{2\alpha} \left( \lambda_\omega^{-\frac{d-2}{2}} \tilde{v}_\omega \right)}{\left( \Lambda' \left( \lambda_\omega^{-\frac{d-2}{2}} \tilde{v}_\omega \right) \right)^2} \right)$$ 
is measurable, we can apply Fatou's Lemma to derive the following inequality
\begin{align} \label{ineq:fatou}
    \int_{\mathbb{R}^d} |\nabla W|^2 \left( \frac{\alpha \Lambda^{2\alpha} \left( \ell^{-\frac{d-2}{2}} W \right)}{1 - \Lambda^{2\alpha} \left( \ell^{-\frac{d-2}{2}} W \right)} \right) \, \xd x 
    \leq \liminf_{\omega \to 0} \int_{\mathbb{R}^d} |\nabla \tilde{v}_\omega|^2 \left( \frac{\alpha \Lambda^{2\alpha} \left( \lambda_\omega^{-\frac{d-2}{2}} \tilde{v}_\omega \right)}{\left( \Lambda' \left( \lambda_\omega^{-\frac{d-2}{2}} \tilde{v}_\omega \right) \right)^2} \right) \, \xd x.
\end{align}
Our goal is now to show that the right-hand side of \eqref{ineq:fatou} tends to zero as \( \omega \to 0 \). Thanks to \eqref{Pohozaev_Nehari_critical}, it is enough to show that
\begin{align*}
\omega I_\omega \lambda_\omega^d \int_{\mathbb{R}^d} \Lambda\left( \lambda_\omega^{-\frac{d-2}{2}} \tilde{v}_\omega(x) \right)^2 \, \xd x
&= I_\omega \omega \lambda_\omega^d \int_{\mathbb{R}^d} \Lambda\left( v_\omega(\lambda_\omega x) \right)^2 \, \xd x \\
&= I_\omega \omega \int_{\mathbb{R}^d} \Lambda(v_\omega(y))^2 \, \xd y \longrightarrow 0 \text{ as } \omega \to 0.
\end{align*}

To establish this convergence, we use \( v_\omega \) as a test function for the functional \( J_\# \). By the definition of \( I_\# \), we have
\[
I_\# \leq J_\#(v_\omega) = \frac{\int_{\mathbb{R}^d} |\nabla v_\omega|^2 \, \xd x}{\|v_\omega\|_{L^{\frac{2d}{d-2}}}^2}
= \frac{I_\omega}{\left( \int_{\mathbb{R}^d} v_\omega^{\frac{2d}{d-2}} \, \xd x \right)^{\frac{d-2}{d}}}
\leq \frac{I_\omega}{\left( \int_{\mathbb{R}^d} \Lambda(v_\omega)^{\frac{2d}{d-2}} \, \xd x \right)^{\frac{d-2}{d}}},
\]
where we used \eqref{Pb_optim_I_omega} and the fact that \( v_\omega \geq \Lambda(v_\omega) \).
Moreover, from the constraint satisfied by \( v_\omega \), we know
\[
1 = \int_{\mathbb{R}^d} \Lambda(v_\omega)^{\frac{2d}{d-2}} \, \xd x - \frac{d}{d-2} \omega \int_{\mathbb{R}^d} \Lambda(v_\omega)^2 \, \xd x.
\]
Rewriting, we obtain
\[
\int_{\mathbb{R}^d} \Lambda(v_\omega)^{\frac{2d}{d-2}} \, \xd x = 1 + \frac{d}{d-2} \omega \int_{\mathbb{R}^d} \Lambda(v_\omega)^2 \, \xd x.
\]
Plugging this into our earlier bound for \( I_\# \), we get
\[
I_\# \leq I_\omega \left( 1 + \frac{d}{d-2}\omega \int_{\mathbb{R}^d} \Lambda(v_\omega)^2 \, \xd x \right)^{-\frac{d-2}{d}}.
\]
which implies
\[
 0\le \frac{d}{d-2}\omega \int_{\mathbb{R}^d} \Lambda(v_\omega)^2 \, \xd x \leq \left( \frac{ I_\omega}{I_\# } \right)^{\frac{d}{d-2}} - 1.
\]
As a consequence, thanks to \eqref{limitIomegacritical},
\begin{equation}
\label{convergence_norm2_changeofvariable}
\omega\int_{\mathbb{R}^d} \Lambda(v_\omega)^2 \, \xd x \to 0 \text{ as } \omega \to 0,
\end{equation}
and 
\[
\omega I_\omega \int_{\mathbb{R}^d} \Lambda(v_\omega)^2 \, \xd x \to 0,
\]
which proves the desired convergence.
Finally,the inequality \eqref{ineq:fatou} leads to 
\[
\int_{\mathbb{R}^d} |\nabla W|^2 \left( \frac{\alpha \Lambda^{2\alpha} \left( \ell^{-\frac{d-2}{2}} W \right)}{ 1- \Lambda^{2\alpha} \left( \ell^{-\frac{d-2}{2}} W\right)} \right) \, \xd x = 0.
\]
As a consequence, we obtain
\[
|\nabla W(x)|^2 \, \alpha \, \Lambda^{2\alpha} \left( \ell^{-\frac{d-2}{2}} W(x) \right) = 0 \quad \text{for all } x \in \mathbb{R}^d.
\]
However, this leads to a contradiction, since \( W \not\equiv 0 \) and \( W \) is not constant, which implies that \( \nabla W \not\equiv 0 \) on a set of positive measure. 

Therefore, we conclude that
\[
\lambda_\omega \to +\infty.
\]

We now aim to prove that the rescaled function $
\tilde{\varphi}_\omega(x) = \lambda_\omega^{\frac{d-2}{2}} \varphi(\lambda_\omega x)$
converges to \( S \), the positive solution of \eqref{eq_critical_limit}, in \( \dot{H}^1(\mathbb{R}^d)\cap L^{\frac{2d}{d-2}}(\xR^d) \). We write
\[
\tilde{\varphi}_\omega(x) = \lambda_\omega^{\frac{d-2}{2}} \Lambda(u_\omega(\lambda_\omega x))=\lambda_\omega^{\frac{d-2}{2}} \Lambda(\lambda_\omega^{-\frac{d-2}{2}}\tilde u_\omega(x)), 
\]
and, we estimate $\| \tilde{\varphi}_\omega - S \|_{\dot{H}^1(\mathbb{R}^d)}$ in the following way
    \begin{align*}
    \| \tilde{\varphi}_\omega - S \|_{\dot{H}^1(\mathbb{R}^d)}=&\,\| \nabla\tilde{\varphi}_\omega - \nabla S \|_{L^2(\mathbb{R}^d)}= \| \Lambda'(\lambda_\omega^{-\frac{d-2}{2}}\tilde u_\omega)\nabla\tilde{u}_\omega - \nabla S \|_{L^2(\mathbb{R}^d)}\\
    \le &\, \| \Lambda'(\lambda_\omega^{-\frac{d-2}{2}}\tilde u_\omega)(\nabla\tilde{u}_\omega - \nabla S) \|_{L^2(\mathbb{R}^d)}+\| (1-\Lambda'(\lambda_\omega^{-\frac{d-2}{2}}\tilde u_\omega))\nabla S \|_{L^2(\mathbb{R}^d)}\\
    \le &\, \| \tilde{u}_\omega - S \|_{\dot{H}^1(\mathbb{R}^d)}+\| (1-\Lambda'(\lambda_\omega^{-\frac{d-2}{2}}\tilde u_\omega))\nabla S \|_{L^2(\mathbb{R}^d)}
\end{align*}
since $\Lambda'(t)<1$ for all $t\in \R^+$ (see Proposition \ref{prop_change_var}\ref{prop_change_var4}). Since $\tilde u_\omega$ converges to $S$ in $\dot{H}^1(\mathbb{R}^d)$, it remains to show that 
\begin{equation*}
    \| (1-\Lambda'(\lambda_\omega^{-\frac{d-2}{2}}\tilde u_\omega))\nabla S \|_{L^2(\mathbb{R}^d)}\to 0 \text{ as } \omega\to 0.
\end{equation*}
We recall that $0< \lambda_\omega^{-\frac{d-2}{2}}\tilde u_\omega(x)= u_\omega(\lambda_\omega x)<t^*$ for all $x\in \xR^d$. Hence, we have
\begin{equation*}
    0< 1-\Lambda'(\lambda_\omega^{-\frac{d-2}{2}}\tilde u_\omega)= 1-\sqrt{1-\Lambda^{2\alpha}(\lambda_\omega^{-\frac{d-2}{2}}\tilde u_\omega)}\le \Lambda^{2\alpha}(\lambda_\omega^{-\frac{d-2}{2}}\tilde u_\omega)
\end{equation*} 
so that 
\begin{equation}\label{estimate_derivative_S}
    \int_{\R^d} (1-\Lambda'(\lambda_\omega^{-\frac{d-2}{2}}\tilde u_\omega))^2|\nabla S|^2\,\xd x\le \int_{\R^d} \Lambda^{4\alpha}(\lambda_\omega^{-\frac{d-2}{2}}\tilde u_\omega)|\nabla S|^2\,\xd x.
\end{equation}
Moreover, from the explicit expression of $S$, $S(x)= \left( 1+\frac{|x|^2}{d(d-2)}\right)^{1-\frac{d}{2}}$ for all $x\in \R^d$, we can easily see that $|\nabla S|\in L^2(\R^d)\cap L^{\infty}(\R^d)$. Finally, since $\tilde u_\omega$ converges to $S$ in $L^{\frac{2d}{d-2}}(\mathbb{R}^d)$, the sequence $\{\|\tilde u_\omega\|_{L^{\frac{2d}{d-2}}(\mathbb{R}^d)}\}$ is uniformly bounded for $n$ large enough. As a consequence, the goal is to use Hölder inequality to deduce that the right hand side of \eqref{estimate_derivative_S} goes to $0$ as $\omega\to 0$. In particular, we want to apply Hölder inequality with $1\le p,q\le \infty$ such that 
\begin{equation*}
    4\alpha p=\frac{2d}{d-2} \Leftrightarrow 2\alpha p=\frac{d}{d-2} \Leftrightarrow p=\frac{d}{4}.
\end{equation*}
Let assume, for the moment, $d\ge 4$ and apply Hölder inequality with $p=\frac{d}{4}$ and $q=\frac{d}{d-4}$ to \eqref{estimate_derivative_S}. We obtain 
\begin{align*}
    \int_{\R^d} \Lambda^{4\alpha}\,(\lambda_\omega^{-\frac{d-2}{2}}\tilde u_\omega)|\nabla S|^2\,\xd x &\le \left(\int_{\R^d} \Lambda^{\frac{2d}{d-2}}(\lambda_\omega^{-\frac{d-2}{2}}\tilde u_\omega)\,\xd x\right)^{\frac{4}{d}}\|\nabla S\|_{L^{\infty}(\R^d)}^{\frac{8}{d}}\|\nabla S\|_{L^{2}(\R^d)}^{2-\frac{8}{d}}\\
    \le&\, \left(\int_{\R^d} (\lambda_\omega^{-\frac{d-2}{2}}\tilde u_\omega)^{\frac{2d}{d-2}}\,\xd x\right)^{\frac{4}{d}}\|\nabla S\|_{L^{\infty}(\R^d)}^{\frac{8}{d}}\|\nabla S\|_{L^{2}(\R^d)}^{2-\frac{8}{d}}\\
    =& \lambda_\omega^{-4}\left(\int_{\R^d} \tilde u_\omega^{\frac{2d}{d-2}}\,\xd x\right)^{\frac{4}{d}}\|\nabla S\|_{L^{\infty}(\R^d)}^{\frac{8}{d}}\|\nabla S\|_{L^{2}(\R^d)}^{2-\frac{8}{d}}
\end{align*}
where we use that $\Lambda(t)\le t$ for all $t\in \R^+$. Since $\lambda_\omega\to +\infty$ as $\omega\to 0$, we deduce
\begin{equation*}
    \int_{\R^d} \Lambda^{4\alpha}(\lambda_\omega^{-\frac{d-2}{2}}\tilde u_\omega)|\nabla S|^2\,\xd x \to 0 \text{ as } \omega\to 0
\end{equation*} 
for any $d\ge 4$. For $d=3$, we have $\alpha=2$ and, using $\Lambda(t)\le \min
\{t,1\}$, we obtain
\begin{align*}
    \int_{\R^d} \Lambda^{4\alpha}(\lambda_\omega^{-\frac{d-2}{2}}\tilde u_\omega)|\nabla S|^2\,\xd x=&\,\int_{\R^d} \Lambda^{2}(\lambda_\omega^{-\frac{d-2}{2}}\tilde u_\omega)\Lambda^{\frac{2d}{d-2}}(\lambda_\omega^{-\frac{d-2}{2}}\tilde u_\omega)|\nabla S|^2\,\xd x\\
    \le& \|\nabla S\|^2_{L^\infty(\xR^d)}\int_{\R^d} (\lambda_\omega^{-\frac{d-2}{2}}\tilde u_\omega)^{\frac{2d}{d-2}}\,\xd x \\
    =&\, \lambda_\omega^{-d}\|\nabla S\|^2_{L^\infty(\xR^d)}\int_{\R^d} \tilde u_\omega^{\frac{2d}{d-2}}\,\xd x\to 0 \text{ as } \omega\to 0.
\end{align*} 

This gives the desired convergence of $\tilde \varphi_\omega$ in $\dot{H}^1(\R^d)$ for any dimension $d\ge 3$. Note that this implies the convergence of $\tilde \varphi_\omega$ to $S$ in $L^{\frac{2d}{d-2}}(\R^d)$ and in $L^{2}_{\mathrm{loc}}(\R^d)$.

Finally, we show that the \( L^2 \)-norm of \( \varphi_\omega \) tends to infinity as \( \omega \to 0 \). Indeed:
\begin{align*}
    \int_{\mathbb{R}^d} \varphi_\omega^2(x) \, \xd x 
    = \int_{\mathbb{R}^d} \lambda_\omega^{2-d} \tilde{\varphi}_\omega^2(\lambda_\omega^{-1} x) \, \xd x 
    &\geq \lambda_\omega^2 \int_{B_R(0)} \tilde{\varphi}_\omega^2(x) \, \xd x \\
    &= \lambda_\omega^2 \left( \int_{B_R(0)} \left( \tilde{\varphi}_\omega^2(x) - S^2(x) \right) \, \xd x + \int_{B_R(0)} S^2(x) \, \xd x \right) \\
    &\geq \lambda_\omega^2 \left( o(1) + \int_{B_R(0)} S^2(x) \, \xd x \right).
\end{align*}
Since $\lambda_\omega \to +\infty$, then  $$\lim\limits_{\omega \to 0} \int_{\mathbb{R}^d} \varphi_\omega^2(x) \, \xd x  = +\infty.  $$
\end{proof}

\addtocontents{toc}{\protect\setcounter{tocdepth}{-1}}
\section*{Tool and computational resource disclosure}
\addtocontents{toc}{\protect\setcounter{tocdepth}{2}}

Large language models where used during the preparation of this manuscript to improve the clarity of the writing and assist in checking the final draft for errors and inconsistencies. The authors reviewed and verified all suggested changes and remain fully responsible for the content of the manuscript.

\end{document}